\newtheorem{theorem}{Theorem}[section]
\newtheorem{remark}{Remark}[section]
\newtheorem{definition}{Definition}[section]
\newtheorem{lemma}[theorem]{Lemma}
\newtheorem{prop}[theorem]{Proposition}
\numberwithin{table}{section}
\numberwithin{equation}{section}
\def\fai{\varphi}
\def\tr{{\rm tr}}
	\def\de{\Delta}
	\def\a{\alpha}
	\def\p{\rho}
	\def\ga{\gamma}
	\def\lam{\lambda}
	\def\ta{\theta}
	\def\les{\lesssim}
	\def\pa{\partial}
	\def\be{\beta}
	\def\da{\delta}
	\def\ep{\epsilon}
	\def\varep{\varepsilon}
	\def\fe{\phi}
	\def\dl{\underline{L}}
	\def\sg{\slashed{g}}
	\def\zpi{\prescript{Z}{}{\pi}}
	\def\tpi{\prescript{T}{}{\pi}}
	\def\lpi{\prescript{L}{}{\pi}}
	\def\ypi{\prescript{Y}{}{\pi}}
	\def\ppi{\prescript{P}{}{\pi}}
	\def\supda{L^{\infty}(\Sigma_{t}^{\tilde{\da}})}\def\supu{L^{\infty}(\Sigma_{t}^{u})}
	\def\ltwou{L^{2}(\Sigma_{t}^{u})}
\begin{document}        
	\title[Shock formation for Rotating Shallow Water System]{Shock formation for the $2$D rotating shallow water equations with non-zero vorticity}
\date{}

\author{Zhendong Chen}
\address{School of Mathematical Sciences, Institute of Natural Sciences, Shanghai Jiao Tong University, Shanghai, 200240, China
}	
\email{zwei1218@sjtu.edu.cn}
 
	\author{Chunjing Xie}
\address{School of Mathematical Sciences, Institute of Natural Sciences, Ministry of Education Key Laboratory of Scientific and Engineering Computing, CMA-Shanghai, Shanghai Jiao Tong University, Shanghai, 200240, China
}
\email{cjxie@sjtu.edu.cn}
\subjclass{35L67, 35L45,  35Q86, 76L05, 76U60.}
\keywords{rotating shallow water system, shock formation,  non-zero vorticity, acoustic geometry, characteristic hypersurface, inverse foliation density}

\maketitle
\begin{abstract}
	In the paper, the shock formation for the two-dimensional rotating shallow water system is established. We construct a large class of initial data which leads to the finite-time blow-up for the solutions. Moreover, the solutions are allowed to have non-zero large vorticity (in derivative sense), even up to the shock. Our results provide the first complete geometric description of the shock formation mechanism to the two-dimensional rotating shallow water system with vorticity. The formation of shock is characterized by the collapse of the characteristic hypersurfaces, where the first-order derivatives of the velocity, the height, and the specific vorticity blow up while the potential vorticity remains Lipschitz continuous. The methods developed in this paper should also be useful in studying the shock formation for the Euler equations with various source terms and a class of quasilinear Klein-Gordon equations in multi-dimensions.
\end{abstract}

\section{Introduction and Main Results}
The two-dimensional rotating shallow water system,
\begin{align}
\begin{split}\label{shallowwater0}
\frac{\pa h}{\pa t}+\frac{\pa (hv^1)}{\pa x_1}+\frac{\pa (hv^2)}{\pa x_2}&=0,\\
\frac{\pa v^1}{\pa t}+v^1\frac{\pa v^1}{\pa x_1}+v^2\frac{\pa v^2}{\pa x_2}-v^2&=-\frac{\pa h}{\pa x_1},\\
\frac{\pa v^2}{\pa t}+v^1\frac{\pa v^1}{\pa x_1}+v^2\frac{\pa v^2}{\pa x_2}+v^1&=-\frac{\pa h}{\pa x_2},
\end{split}
\end{align}
describes the behavior of fluid in the regime of large-scale geophysical fluid motion under the action of Coriolis force,  where $h$ denotes the height of the fluid, $v^1$ and $v^2$ are the velocity in $x_1$ and $x_2$ directions, respectively. One may refer to \cite{pedlosky2013} for more physical background on the rotating shallow water system. 

The major goal of this paper is to show the formation of shocks for rotating shallow water system \eqref{shallowwater0} with a large class of initial data. Before we state our main results, we give a quick review for the literature on the shock formation and development for hyperbolic equations and systems.

\subsection{Review of existing literature}
\subsubsection{Shock formation to the Compressible Euler system}
Clearly, system \eqref{shallowwater0} can be regarded as the compressible Euler system with some additional source terms, which is the prototype of the hyperbolic systems of conservation laws. The notable feature of the Cauchy problem to nonlinear hyperbolic systems is that smooth initial data can lead to singularity formation in finite time. Here we first recall some important progress on shock formation for the compressible Euler system and nonlinear hyperbolic equations. It was Riemann who first studied the nonlinear effects to the $1$D isentropic Euler equations. In Riemann's fundamental work \cite{Riemann1860},
he introduced the Riemann invariants and proved that wave compression leads to the shock waves. The shock means the first-order derivatives of the solution blow up in finite time while the solution itself remains bounded. In \cite{Lax1964}, Lax generalized Riemann's result into the $2\times2$  genuinely nonlinear hyperbolic system in one space dimension. Lax used the Riemann invariants to diagonalize the system and  was able to give sufficient and necessary conditions for the system to admit a shock or not.  
Later, John \cite{doi:10.1002/cpa.3160270307} achieved a remarkable result for the shock formation to general $n\times n$ hyperbolic systems of conservation laws in one space dimension. 
 For more progress on the shock formation for 1D hyperbolic systems, one may refer to \cite{LIU197992,Hormander1997,zhouyi} and references therein.
 
 In the multi-dimensional case, the first general result for the singularity formation to the compressible Euler system in three spatial dimensions was obtained by Sideris in \cite{Sideris1985} for polytropic gases. In particular, he constructed an open set of initial compressed data which leads to the finite lifespan of the solutions  by using dissipative energy estimates. A significant question for the multi-dimensional compressible Euler system after Sideris' result is which quantity blows up in finite time. Alinhac studied the two-dimensional compressible isentropic Euler equations with radial symmetry in \cite{Alinhac1993}. He showed that a large class of small radially symmetric data leading to the finite time blow-up of the solutions and gave a precise estimate for the blow-up time. Later, in a series of works \cite{alinhac1999blowup,alinhac1999blowup1,alinhac2001blowup,alinhac2001blowup2}, Alinhac proved the shock formation to the $2D$ quasilinear wave equations without any symmetry assumptions. 

 A major breakthrough in understanding the shock mechanism for the compressible Euler system in multi-dimensions was first made by Christodoulou and his collaborator in \cite{christodoulou2007,christodoulou2014compressible}. In\cite{christodoulou2014compressible}, Christodoulou and Miao studied the shock formation for the classical, non-relativistic, isentropic compressible Euler equations in three spatial dimensions with initial irrotational data. Starting from the short pulse data, the authors gave a detailed analysis of the solutions near the singularity (shock). The authors introduced a geometric framework equipped with the acoustic coordinates system. The solution is regular in the acoustic coordinates. The shock formation corresponds to the transformation between two coordinates degenerating and collapse of the characteristic null hypersurfaces, which is quantified by the inverse foliation density $\mu$ (see Section 2). The framework in \cite{christodoulou2014compressible} has been an important tool for understanding shock formation for hyperbolic equations in multi-dimensions. Later, Miao and Yu applied the framework in \cite{christodoulou2007,christodoulou2014compressible} to study the shock formation for a class of quasi-linear wave equations with cubic terms in three-dimensional space in \cite{Ontheformationofshocks}. They constructed the explicitly short pulse data which leads to the formation of shock in finite time. For recent progress on shock formation and global existence for multi-dimensional quasi-linear wave equations, one may refer to \cite{yin1,Speck20172,Speck2016,Speck2014} and references therein.
 
 For  multi-dimensional Euler equations with non-zero vorticity, a remarkable result on the shock formation was obtained by Luk and Speck in \cite{luk2018shock}. They developed a framework to estimate the vorticity coupled with fluid variables and proved that for perturbed plane-symmetric data, the solutions form a shock in finite time.
 Later, this result was also generalized to the problem for the 3D full compressible Euler system in \cite{Luk2021TheSO}. 
 
 Recently, in 
 a series of significant works \cite{BSV2Disentropiceuler,BSV3Disentropiceuler,BSV3Dfulleuler,formationofunstableshock}, Buckmaster, Shkoller, and Vicol proved the formation of point shock to the compressible Euler system in multi-dimensional case. In\cite{BSV2Disentropiceuler}, they considered the two-dimensional isentropic Euler system under azimuthal symmetry (but different from $1$D problem), where the smooth initial data has finite energy and nontrivial vorticity. By using the modulated self-similar variables, they showed the point shock forms in finite time with explicitly computable blow-up time and location and obtained that the solutions near the shock formation point are of cusp type. Then, the similar result was generalized to three-dimensional Euler systems without any symmetry condition in \cite{BSV3Disentropiceuler, BSV3Dfulleuler}. More recently, 
 in \cite{merle1,merle2}, Merle, Rapha{\"e}l, Rodnianski, and Szeftel made significant progress to establish another blow-up mechanism of solutions to the compressible Euler and Navier-Stokes equations so-called implosion, which means the velocity and the density blow up in finite time. 
 \subsubsection{Shock development for compressible Euler system} In addition to the shock formation results, the shock development problem considers extending the solutions as weak solutions to the compressible Euler system and constructing a shock surface, across which the solutions jump, after the first blow-up time. In \cite{lebaud}, the first result of constructing a shock wave for $1$D $p$-system was shown by Lebaud when the solution is a simple wave, and was later generalized in \cite{chenshuxing}. Under the spherically symmetric assumption, the construction and development of a shock wave to the 3D compressible Euler system were obtained first in Yin \cite{Yin_2004} and then in \cite{chrisdeve} by Christodoulou and Lisibach through a geometric approach. In \cite{Christodoulou2017TheSD}, without any symmetry conditions, Christodoulou generalized the results in \cite{chrisdeve} to the multi-dimensional compressible Euler system for the irrotational and isentropic solutions. 
 The first result in shock development to the multi-dimensional compressible Euler system with non-zero vorticity and entropy was achieved in \cite{shockdevelopmentBSV}, where the authors considered the 2D compressible Euler system in azimuthal symmetry by generalizing the method in  their former papers \cite{BSV2Disentropiceuler,BSV3Disentropiceuler,BSV3Dfulleuler}. Recently, there were some important works \cite{speckshockdeve,vicoldeve2} involving understanding the singular boundary of the maximal globally hyperbolic development (i.e., the largest spacetime where the classical solution is uniquely  determined by the initial data) to compressible Euler system with vorticity and entropy, which is necessary for solving the shock development problem in multi-dimensional case.  
\subsubsection{Shock formation and global existence to the shallow water system}
There are many studies considering the global existence or singularity formation to the Cauchy problem of system \eqref{shallowwater0}. A key feature of the rotation is that it may help prolong the lifespan of the solutions, see \cite{LIU2004262, shallowcheng} and references therein. Under the assumption of vanishing relative vorticity, Cheng and Xie \cite{ChengXieRSW} proved the global existence of classical solutions to the two-dimensional rotating  shallow water system when the initial data is small. One of the key ingredients in \cite{ChengXieRSW} is that the rotating shallow water system can be written as a quasilinear Klein-Gordon system. Later, in \cite{shallowcheng2}, it was proved  that the solutions of the one-dimensional rotating shallow water system with general initial data could develop singularities in finite time, even though the one-dimensional rotating shallow water system can have global smooth solutions where the initial data have zero relative vorticity and are sufficiently small.  Inspired by the works of Sideris in \cite{Sideris1985} and Rammaha \cite{Rammaha}, the finite time singularity formation to the two-dimensional shallow water system was proved for a general class of initial data \cite{HUANG202345}. Furthermore, under the radial symmetric assumption, it was shown that the singularity formation for some compact supported initial data by using weighted energy estimates \cite{HUANG202345}. It is desirable to give the explicit singularity mechanism for the rotating shallow water system.
 \subsection{Main Results}
 Adapted from the framework in \cite{christodoulou2014compressible} and the framework in \cite{luk2018shock}, we study the formation of a shock to the two-dimensional rotating shallow water system with non-zero vorticity. Furthermore, we construct a class of short pulse data and give the geometric description of the shock mechanism.
 
 One can write \eqref{shallowwater0} as
\begin{align}
Bh&=-h\text{div}v,\label{equationh}\\
Bv^i&=\ep_{ij}v^j-\pa_ih,\label{equationvi1}
\end{align}
where $\ep_{ij}$ denotes the anti-symmetric symbol with $\ep_{12}=1$ and $B=\pa_t+v^i\pa_i$ denotes the material derivative. Let 
\begin{align}
\p=\ln h,\quad \omega=\pa_1v^2-\pa_2v^1,\quad \zeta:=\omega e^{-\p}.
\end{align}
Here $\omega$ and $\zeta$ are said to be the fluid vorticity and  the specific vorticity, respectively. One considers $\omega$ as a vector in $\mathbb{R}^3$ with $\omega=(0,0,\pa_1v^2-\pa_2v^1)$. Taking curl in \eqref{equationvi1} yields
\begin{align*}
B\omega^i&=\ep_{iad}\pa_aBv^d+\ep_{iad}[B,\pa_a]v^d
=\ep_{iad}\ep_{3dj}\pa_av^j+\omega\cdot\nabla v^i-\omega^i(\nabla\cdot v)=-(\da_{3i}+\omega^i)\text{div}v.
\end{align*} Then, $(\p,v^i,\zeta)$ satisfies the following system
\begin{align}
B\p&=-\text{div}v,\label{equationp}\\
Bv^i&=\ep_{ij}v^j-e^{\p}\pa_i\p,\label{equationvi}\\
B\zeta&=-e^{-\p}\text{div}v.\label{equationvorticity}
\end{align}
We also call $\xi:=\zeta+e^{-\p}$ the potential vorticity of the fluid for later use. It follows from \eqref{equationp} and \eqref{equationvorticity} that the potential vorticity satisfies a homogeneous transport equation: \begin{align}
	B\xi=0.
\end{align}
This implies that if initial potential vorticity is zero, then it vanishes for all time. We define the sound speed as $\eta:=\sqrt{h}$.

Instead of studying the equations on the whole space $\mathbb{R}^2$, we consider the system \eqref{equationp}-\eqref{equationvorticity} on $(t,x)\in\mathbb{R}^+\times\Sigma_0$, where $(x_1,x_2)\in\Sigma_0:=\mathbb{R}\times \mathbb{S}^1$ with $\mathbb{S}^1$ as the unit circle. Let $\da>0$ be a suitably small parameter. We equip the initial data for the rotating shallow water system as follows:
 \begin{itemize}
 	\item[(1)] for $x_1\geq1$,  set 
 	\begin{align}
 		(\p,v^i)=(0,0,0);\label{initialdata0}
 	\end{align}
 	\item[(2)] for $1-\da \leq x_1\leq 1$, we set
 	\begin{align}
 		\p=\da\fe_0\left(\frac{1-x_1}{\da},x_2\right),\ v^1=\da\fe_1\left(\frac{1-x_1}{\da},x_2\right),\  v^2=\da^2\fe_2\left(\frac{1-x_1}{\da},x_2\right)\label{initialdata1}
 	\end{align}
 for some $C^{\infty}([0,1]\times \mathbb{S}^1)$ functions $\fe_0(s,\theta),\fe_1(s,\theta)$, and $\fe_2(s,\theta)$. Furthermore, it is required that (see also Lemma \ref{initialdata})
 	\begin{align}
 		|L_{\text{f}}(\p,v^1)|+|L_{\text{f}}\zeta|\les\da,\label{initialdata2}
 	\end{align}
 	where $L_{\text{f}}=\pa_t+(v^1+\eta)\pa_1+v^2\pa_2.$
 \end{itemize}
 The main results of this paper can be stated as follows:
 \begin{theorem}\label{main}
 	For the initial data constructed as in \eqref{initialdata0}-\eqref{initialdata2} (see also Lemma \ref{initialdata}), if the following condition holds:
 	\begin{align}
 		\max \pa_s\fe_1(s,\ta)\geq 1,
 	\end{align}
 	then the solution to the shallow water system \eqref{shallowwater0} will form a shock at $t=T_{\ast}\leq 1$, which can be computed explicitly. Furthermore, $\pa_{\a}\p$ and $\pa_{\a}v^1$ blow up at $T_{\ast}$ for some $\a\in\{0,1\}$ in the sense that 
  \begin{align}
      |\pa_{\a}\p|,|\pa_{\a}v^1|\geq\frac{C}{T_{\ast}-t},\quad \text{as}\ t\to T_{\ast}
  \end{align}
  for some constant $C>0$.  Moreover, $\pa_{\a}\zeta$ blows up at $T_{\ast}$ for some $\a\in\{0,1\}$ due to the discontinuity of height of fluid at blow-up time.
 \end{theorem}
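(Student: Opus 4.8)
\emph{Proof strategy.} The plan is to realize the shock as the collapse of the acoustic characteristic foliation, combining the geometric framework of Christodoulou and Miao \cite{christodoulou2014compressible} for the acoustic part of \eqref{equationp}--\eqref{equationvorticity} with the vorticity-transport techniques of Luk and Speck \cite{luk2018shock}, and treating the Coriolis terms in \eqref{equationvi} as lower-order sources. First I would introduce the acoustic eikonal function $u$, whose level sets are the outgoing characteristic hypersurfaces and whose renormalized null generator is, up to normalization, $L_{\text{f}}=\pa_t+(v^1+\eta)\pa_1+v^2\pa_2$, together with the inverse foliation density $\mu$ that quantifies how tightly these level sets are packed. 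In the geometric coordinate system $(t,u,\vartheta)$ built from $u$ and an angular variable the solution $(\p,v^i,\zeta)$ and all its derivatives tangential to the foliation remain bounded, so the problem splits into two tasks: (i) show $\mu$ vanishes at some $T_\ast\le 1$ while the acoustic geometry stays regular; and (ii) show that the Cartesian first derivatives, recovered from the geometric ones after division by $\mu$, blow up at the rate $1/(T_\ast-t)$.

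\emph{The blow-up mechanism.} The degeneration of $\mu$ is driven by a Riccati-type ordinary differential equation along the characteristics. Because the shallow-water pressure law is effectively $p(h)=\tfrac12 h^2$, the Riemann-type quantities adapted to the principal part are $R_{\pm}=v^1\pm 2\eta=v^1\pm 2\sqrt{h}$. Unwinding $L_{\text{f}}$, the data condition \eqref{initialdata2} says precisely that the transversal derivative of $R_-$ is of size $\da$, so the pulse is a near-simple wave; propagating this keeps $R_-$ within $O(\da)$ of a constant along the outgoing characteristics, while the transversal derivative $W$ of $R_+$ obeys, up to $O(\da)$ errors, a Riccati equation $L_{\text{f}}W=-cW^2$ with $c>0$ (equivalently, $\mu$ satisfies a coupled transport equation sourced by $W$, so that the renormalized quantity $\mu W$ stays bounded while $\mu$ decreases). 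The hypothesis $\max\pa_s\fe_1(s,\ta)\ge 1$ translates, via $\pa_{x_1}=-\da^{-1}\pa_s$ and the scaling \eqref{initialdata1}, into $\max(-\pa_{x_1}v^1)|_{t=0}\ge 1$: the initial data is compressive, with compression at least $1$. Integrating along the characteristic through the point of maximal compression then forces $W\to-\infty$, hence $\mu\to 0$, at a time $T_\ast\le 1$ which differs from the corresponding Burgers blow-up time by $O(\da)$, which is given by an explicit formula in $\pa_s\fe_1$, and near which $\mu\sim T_\ast-t$.

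\emph{A priori estimates — the main difficulty.} The preceding becomes rigorous only after closing a bootstrap argument controlling, on $[0,T_\ast)$: the inverse foliation density $\mu$ and the acoustic geometry ($\tr\chi$, the trace of the null second fundamental form, the torsion of the foliation, the conformal factor); the pointwise norms and $\da$-weighted geometric energies of $(\p,v^i)$ and their derivatives along the geometric vector fields $L_{\text{f}}$, the angular field, and a transversal field; and the matching transport estimates for the vorticity. The main obstacle is the vorticity: in \eqref{equationvorticity}, $\zeta$ is sourced by $\text{div}\,v$, which contains the singular derivative, so a naive transport estimate for $\zeta$ loses a derivative and diverges as $\mu\to 0$. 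This is resolved by passing to the potential vorticity $\xi=\zeta+e^{-\p}$, which by \eqref{equationp} and \eqref{equationvorticity} satisfies $B\xi=0$ and is therefore simply transported by the Lipschitz flow of $B$, remaining as regular as its data up to and including $T_\ast$; the top-order derivatives of $\xi$ and $\zeta$ are then handled without derivative loss by Luk--Speck-type renormalized quantities. The Coriolis terms $\ep_{ij}v^j$, being of order zero in derivatives, neither enter the principal symbol nor affect the acoustic geometry and contribute only $O(\da)$ errors, and the same holds for the $\pa_2$-directions, which by the anisotropic scaling ($v^2=O(\da^2)$) are a factor $\da$ smaller than the $\pa_1$-directions. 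Combined with local well-posedness and a continuity argument, these estimates keep the solution smooth on $[0,T_\ast)$, so $T_\ast$ is exactly the first singular time.

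\emph{Identifying the blow-up.} Once $\mu\to 0$ at $T_\ast\le 1$ with $\mu\sim T_\ast-t$, inverting the change of variables writes $\pa_\a\p$ and $\pa_\a v^1$ as bounded geometric quantities divided by $\mu$, plus bounded terms, and along the characteristic of maximal compression the numerator is bounded away from $0$; hence $|\pa_\a\p|,|\pa_\a v^1|\ge C/(T_\ast-t)$ for some $\a\in\{0,1\}$, which is the claimed rate, with $T_\ast$ the explicit value from the Riccati integration. Meanwhile $h=e^{\p}$ stays bounded above and below, so what degenerates is $\pa h$ — the height ceases to be Lipschitz at $T_\ast$. Finally, from $\pa_\a\zeta=\pa_\a\xi+e^{-\p}\pa_\a\p$ with $\pa_\a\xi$ bounded by the transport estimate and $e^{-\p}$ bounded below, $\pa_\a\zeta$ blows up at the same rate as $\pa_\a\p$, which is precisely the blow-up of $\pa_\a\zeta$ caused by the discontinuity of the height of the fluid.
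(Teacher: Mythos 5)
Your proposal is correct and follows essentially the same route as the paper: acoustic eikonal function and inverse foliation density $\mu$, reformulation as a wave--transport system with the potential vorticity $\xi=\zeta+e^{-\p}$ satisfying $B\xi=0$ to avoid derivative loss, a bootstrap/energy hierarchy in the spirit of Christodoulou--Miao and Luk--Speck, and blow-up read off from $\mu\sim T_\ast-t$ together with $\pa_\a\zeta=\pa_\a\xi+e^{-\p}\pa_\a\p$. Your Riccati equation for the transversal derivative of $R_+$ is exactly the paper's statement that $T\fai$ (hence $m=-\tfrac32 Th$) is nearly conserved along $L$, so that $L\mu$ is nearly constant and $\mu(t)=1-\tfrac32 t\,\pa_s\fe_1+O(\da)$ decays linearly to zero.
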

 
 \begin{center}
 	\begin{tikzpicture}
 		\draw (-1,0)--(5,0)--(6,1)--(0,1)--(-1,0);
 		\path[thick] (4.2,3.3) edge [out=40,in=240] (4.9,4.3);
 		\path[blue] (2.8,0) edge[out=35,in=260] (4.2,3.3);
 		\path[blue] (3.8,1) edge[out=60,in=265] (4.9,4.3);
 		\path[red] (3.5,0) edge[out=55,in=270] (4.2,3.3);
 		\path[red] (4.4,1) edge[out=70,in=270] (4.9,4.3);
 		\path[blue] (1.7,0) edge[out=35,in=260] (3.5,3.3);
 		\path[blue] (2.9,1) edge[out=45,in=265] (4.2,4.3);
 		\path[blue] (3.5,3.3) edge[out=45,in=235] (4.2,4.3);
 		\path[blue] (1,0) edge[out=35,in=260] (3.2,3.3);
 		\path[blue,dashed] (2.2,1) edge[out=45,in=265] (3.9,4.3);
 		\path[blue] (3.2,3.3) edge[out=45,in=235] (3.9,4.3);
 		\path[blue] (0.2,0) edge[out=35,in=260] (2.9,3.3);
 		\path[blue,dashed] (1.6,1) edge[out=45,in=265] (3.6,4.3);
 		\path[blue] (2.9,3.3) edge[out=45,in=235] (3.6,4.3);
 		\draw[blue] (4.2,0)--(5.4,3.3);
 		\draw[blue] (5.0,1)--(6.1,4.3);
 		\draw[blue] (5.4,3.3)--(6.1,4.3);
 		\node (mu0) at (4.5,3.6) {};
 		\node (mu02) at (3.8,5) {a shock forms};
 		\draw [->,thick] (mu02) to [out=-70, in=100] (mu0);
 		\node (x1) at (2,-0.3) {$x_1\in[1-\da,1]$};
 		\node (si0) at (5,0.5) {$\Sigma_0$};
 		\node (x2) at (-1,0.7) {$x_2\in\mathbb{T}$};
 		\node () at (2,-1) {\textbf{Figure 1: The shock mechanism to the shallow water equations}};
 		\node () at (6,2.8) {$C_0$};
 		\node () at (2.5,2.8) {$C_u$};
 		\draw[dashed] (0,3.3)--(6,3.3)--(7,4.3)--(1,4.3)--(0,3.3);
 		\node () at (7,3.7) {$\Sigma_{T_{\ast}}$};
 	\end{tikzpicture}
 \end{center}
 There are a few remarks in order.
 \begin{remark}
 	In Figure 1, $\Sigma_{t}$ is the level set at time $t$. $C_u$ is the characteristic null hypersurface which is the level set of ekinoal function $u$ (see Section 2) and disjoint initially. As time goes on, $C_u$ will become dense and eventually collapse at some time $t=T_{\ast}$. Note that the initial data outside $x_1=1$ is trivial, which implies $C_0$ is the standard null cone.
 \end{remark}
 \begin{remark}\label{main1}
 	The result can be generalized to the case for
 	\begin{align*}
 		\p=\da^{1+\iota}\fe_0\left(\frac{1-x_1}{\da},x_2\right),\ v^1=\da^{1+\iota}\fe_1\left(\frac{1-x_1}{\da},x_2\right),\ v^2=\da^{2+\iota}\fe_2\left(\frac{1-x_1}{\da},x_2\right),
 	\end{align*}
 	with $\iota\in(-1,1)$. With such initial data, the solution forms a shock at $t=T_{\ast}\leq\frac{1}{\da^{\iota}}+O(\da)$.
 \end{remark}
 \begin{remark}\label{main2}
 	Despite the blow-up of $\pa_{\a}\zeta$  due to the nonlinear interaction, the first derivatives of potential vorticity $\xi$ remain small up to the shock formation. Different from the work \cite{luk2018shock}, where all derivatives of vorticity remain small, we consider the case with large potential vorticity, which allows the higher-order ($\geq2$) derivatives  of $\xi$ to be large.
 \end{remark}
 \begin{remark}\label{main3} ($C^{\frac{1}{3}}$ blow-up)
 	Consider the 1D Burgers equation 
  \begin{align*}
      U_t+UU_x=0
  \end{align*}
  with the self-similar ansatz $U(x,t)=(-t)^{\frac{1}{2}}\bar{U}\left(\frac{x}{(-t)^{\frac{3}{2}}}\right).$ Then, $\bar{U}(y)$ solves the following equation 
 	\begin{align*}
 		-\frac{1}{2}\bar{U}+\left(\frac{3}{2}y+\bar{U}\right)\pa_y\bar{U}=0.
 	\end{align*}
 	$\bar{U}(y)$ is implicitly given as
 	\begin{align}
 		y=-\bar{U}-\bar{U}^3.\label{burgesself}
 	\end{align}
 	Let $\langle y\rangle=(1+y^2)^{\frac{1}{2}}$. Then, $\bar{W}=\langle x_2\rangle\bar{U}(\langle x_2\rangle^3x_1)$ is the solution to the following $2D$ self-similar Burgers equation\footnote{This profile is also used in \cite{BSV3Disentropiceuler} to show the shock formation to 3D Euler system.}:
 	\begin{align*}
 		-\frac{1}{2}\bar{W}+\left(\frac{3}{2}x_1+\bar{W}\right)\pa_{x_1}\bar{W}+\frac{1}{2}x_2\pa_{x_2}\bar{W}=0.
 	\end{align*} 
 	Define 
  \begin{align}	\bar{W}_0(x_1,x_2)=\bar{W}\left(\frac{1-\frac{\da}{2}-x_1}{\da},x_2\right). \label{selfsimilar}
 	\end{align} 	
  Let $\gamma(z)$ be a cut-off function such that 
 	\begin{align*}
 		\gamma(z)&=0 \ \text{for} \ z\geq 1 \ \text{or}\ z\leq 1-\da,\quad \gamma(1-\frac{\da}{2})=1.
 	\end{align*}
 	If one defines
 	\begin{align}
 		\p(0,x_1,x_2)=-\da\ga(x_1)\bar{W}_0, \label{selfsimilar2}
 	\end{align}
 	then, $\p,h,v^1,\zeta\in L^{\infty}([0,T_{\ast}),C^{\frac{1}{3}}).$ The proof of this remark will be given in Lemma \ref{C13}. 
 \end{remark}
  \begin{remark}
 	The initial data constructed in \eqref{initialdata1}-\eqref{initialdata2} is the short pulse data which was first introduced by Christodoulou in \cite{christodoulou2009} to study the formation of black holes in general relativity. This is a class of large data in the sense that along the ``bad'' direction $\pa_{x_1}$, the higher order derivatives will become suitably large.
 \end{remark}
 \begin{remark}
     Note that $v^i$ satisfy some quasilinear Klein-Gordon equations (Lemma \ref{covariant}). Then, the results in this paper can be applied to show the shock formation for a class of quasilinear Klein-Gordon equations with short pulse data.
 \end{remark}
 \begin{remark}
 	Our result can be generalized to the case in \eqref{equationvi} with $\pa_ih$ replaced by $h\pa_i\frac{h^{\ga}}{\ga}$ for any $\ga\neq -1$. For the case $\ga=-1$, the method in this paper together with \cite{yin1} can be used to show the global existence of classical solutions to the rotating shallow water system with similar short pulse data.
 \end{remark}
 \begin{remark}
 It was shown in \cite{CZD1} that shock formation occurs in the irrotational Euler system with damping and the author employed an integrating factor to absorb the damping term into the material derivative. This makes the analysis for the Euler equations with damping quite similar to that without damping. However, for the shallow water system with vorticity, the velocity $v^i$ and the potential vorticity $\zeta$ are coupled in the system. This coupling prevents a direct application of the approach used in the damping case. To this end, we find the system has some good structures (see the following subsections) and use elaborated analysis to obtain the energy estimates for $v^i$ and $\zeta$. Moreover, compared with the result in \cite{CZD1} where the damping effect changes the time of shock formation in an $O(1)$ sense, the results in Theorem \ref{main} show that rotating force only changes the blow-up time in an $O(\da)$ sense. This difference arises because the Coriolis forces, which represent dispersion, are weaker than the dissipation associated with the damping effect.
 \end{remark}
\subsection{Some key ingredients and features in this paper}
In this subsection, we outline the key ingredients and features of this paper and also compare the results with the existing literature.

 In order to outline the key idea of the proof for the main results of this paper, we first give a quick review of the two frameworks to prove the shock formation.
 \subsubsection{Review the main ideas in the existing literature on shock formation}
 In \cite{christodoulou2014compressible,christodoulou2007}, the authors considered the quasilinear wave equation of the form
\begin{align}
    \Box_{g(\fai)}\fai:=\frac{1}{\sqrt{|\det g|}}\pa_{\a}(g^{\a\be}\sqrt{|\det g|}\pa_{\be}\fai)=0\quad \text{in $\mathbb{R}^{3+1}$},
\end{align}
 where $g$ is the acoustical metric (Lorentzian metric), and achieved their results  through the following two steps.
\begin{center}
		\tikzset{global scale/.style={
			scale=#1,
			every node/.append style={scale=#1}
		}
	}
	\begin{tikzpicture}[global scale = 0.9]
		\draw (-3.3,0)--node[below] {$\Sigma_0$}(3.3,0);
		\draw (-3.5,1.5)--node[below] {$\Sigma_t$}(3.5,1.5);
		\draw[blue] (1,0)--(3,3);
		\draw[blue] (2.3,0)--node[above left] {$C_u$}(3.5,3);
		\node at (-2.5,3) {$\mu$ is small};
		\node at (-1.8,0.2) {$\mu\sim 1$};
		\filldraw (2.9,1.5) circle (.04);
		\node at (2.9,1.2) {$S_{t,u}$};
		\node[below] at (-2.3,0) {non-trivial data};
		\node[below] at (2.3,0) {non-trivial data};
		\draw[blue] (3.3,0)--node[above right] {$C_0$}(3.7,3);
		\draw[blue] (-1,0)--(-3,3);
		\draw[blue] (-2.3,0)--node[above right] {$C_u$}(-3.5,3);
		\draw[blue] (-3.3,0)--node[above left] {$C_0$}(-3.7,3);
		\filldraw (3.2,2.25) circle (.05);
		\draw[->] (3.2,2.25)--node[above left] {$L$}(3.5,3);
		\draw[->] (3.2,2.25)--node[above left] {$T$}(2.2,2.25);
		\draw[->] (3.2,2.25)--node[below right] {$X$}(2.4,2);
		\node at (-0,-1) {\textbf{Figure2: The acoustical geometry}};
	\end{tikzpicture}
\end{center}
\begin{itemize}
 \item[(1)] The first step is the geometric formulation. Given a solution $\fai$ and the a priori estimates for $\fai$ and its derivatives, they constructed the acoustical coordinates system and related objects as follows:
       \begin{itemize}
       \item[(a)] the acoustic ekional function $u$ defined as $g^{\a\be}\pa_{\a}u\pa_{\be}u=0$ whose level sets are the characteristic null-hypersurfaces $C_u$. The ekional function $u$ forms a component of the acoustical coordinates $(t,u,\ta_1,\ta_2)$ where $(\ta_1,\ta_2)$ are the coordinates on ``spheres" $S_{t,u}$;
       \item[(b)] the inverse foliation density function $\mu$ whose reciprocal measures the density of the hypersurfaces $C_u$. At the blow-up point, $\mu\to 0$ and the characteristic hypersurfaces collapse. 
       \item[(c)] as long as $\mu>0$, the authors could construct the frame $\{L,T, X\}$ which is equivalent to $\{\frac{\pa}{\pa x^{\a}} \}_{\a=0,1,2,3}$, where $L$ represents the null vector field which equals $\frac{\pa}{\pa t}$ in the acoustical coordinates while $T$ and $X_i,i=1,2$ (they are all spacelike) represent the vector fields of ``radial" derivative and the angular derivatives in the acoustical coordinates respectively.
       \end{itemize}
 \item[(2)] The second step is the energy estimates. The authors found a more geometric way to derive energy estimates up to the singularity without derivative loss through the following steps:
     deriving fundamental energy estimates to the wave equation $\Box_g\fai=F$ by using the multiplier method where $F$ represents the general inhomogeneous term. Then, commuting a list of geometric vector fields with $\Box_g\fai=F$ and applying fundamental energy estimates to obtain the top order energy estimates. The most important part of energy estimates is to obtain the following estimates of top-order energy
       \begin{align}
           \int(X^nL\fai)^2+(X^nT\fai)^2, \label{degenerate}
       \end{align}
       which degenerates near shock. The key observation is that along the integral curves of $L$, $\mu$ is monotone near shock ($L\mu<0$), then the behavior of $\mu$ near the singularity can be obtained. Hence, the rate of degeneracy of \eqref{degenerate} could be derived  which requires elaborate analysis of geometric objects so that for some $c>0$, the modified energy
       \begin{align}
           \int\mu^c((X^nL\fai)^2+(X^nT\fai)^2)\label{introenergy}
       \end{align}
       could be controlled. Therefore, one can derive the $\mu$-weighted energy estimates, but this is not sufficient to close the bootstrap assumptions. The authors introduced a decent scheme that by lowering the derivatives of $\fai$,  the weights of $\mu$ will be eliminated eventually. Then, with some low-order but sufficiently large energy estimates, the desired $L^{\infty}$-bounds in bootstrap assumptions is recovered through geometric Sobolev embedding.
 \end{itemize}
 
\subsubsection{Review of main ideas in point shock formation to compressible Euler system} As shown in the breakdown results  \cite{BSV2Disentropiceuler,BSV3Disentropiceuler,BSV3Dfulleuler,formationofunstableshock}, the authors gave another constructive and technical proof of forming a point shock in spacetime to multi-dimensional compressible Euler system without relying on any geometric analysis. In \cite{christodoulou2007,christodoulou2014compressible}, the shock is viewed as a 2-dimensional sub-manifold in the spacetime due to the intersection of characteristic hypersurfaces. The major tools used in \cite{BSV2Disentropiceuler,BSV3Disentropiceuler,BSV3Dfulleuler,formationofunstableshock} consist of the method of the modulation variables and the self-similar coordinates, which were first introduced in \cite{GigaKohnselfsimilar} to study the asymptotic behavior of the solution to the nonlinear heat equation near the point of singularity. Similar to \cite{christodoulou2007,christodoulou2014compressible}, the global existence of the solution in the self-similar coordinates was established and when coming back to the Cartesian coordinates, the transformation degenerates at one point, at which a shock forms. Since the shock forms at a single point, the authors postulated several modulation variables to control the information of the blow-up point, including the blow-up time, location, and direction. They found that these modulation variables satisfy a system of ODEs, which could be solved easily. The second difference is that in closing the bootstrap assumptions, they used the standard Friedrich's energy estimates for the symmetric hyperbolic systems so that the analysis near the singularity, which is the most important part in \cite{christodoulou2007,christodoulou2014compressible}, is not needed. This is mainly due to the fact that the point shock in the Cartesian coordinates corresponds to the time infinity in the self-similar coordinates.

\subsubsection{Key ideas of the proof for the main results in this paper}
Following the frameworks in \cite{christodoulou2014compressible,luk2018shock}, we first reformulate the shallow water system \eqref{shallowwater0} into the following covariant wave equations coupled with the transport equation for vorticity (see Lemma \ref{covariant}):
\begin{align}
    \Box_g\p=F_{\p},\quad \Box_gv^i=F_{v^i},\quad B(\zeta+e^{-\p}-1)=0,\label{covariant1}
\end{align}
where $F_{\p}$ and $F_{v^i}$ represent some inhomogeneous terms. $F_{\p}$ and $F_{v^i}$ include some additional difficult terms, as will be illustrated below. Then, based on the geometric framework in \cite{christodoulou2014compressible}, we derive the fundamental energy estimates for the wave variables and the vorticity (see section \ref{fundamentalenergy}) and in this part, to handle the most difficult terms, we fully use the accurate estimates of $\mu$. Then, we commute the system \eqref{covariant1} with a string of commutation vectorfields to obtain the top order energy estimates as presented in \eqref{introenergy}. This is one of the most crucial parts of the work since we derive the energy estimates without loss of derivatives. Finally, we apply the decent scheme from \cite{christodoulou2014compressible} to eliminate the power of $\mu$ and then recover  the bootstrap assumptions. To demonstrate the formation of a shock in finite time, one notes that in the acoustical coordinates, $\mu$ satisfies the following transport equation\footnote{For Euler system, the nonzero negative coefficient in front of $T\fai$ arise from the exponent $\ga\neq-1$ in the equation of state $p=\frac{1}{\ga}\p^{\ga}$ and in our case, $\ga=2$. Conversely, for $\ga=-1$, the coefficient becomes $0$ and one can show the global existence of the solutions, i.e., $\mu$ is positive for all $t>0$.}
 \begin{align}
L\mu=-T\fai+\text{small terms}.
 \end{align}
 Therefore, following the framework in \cite{Ontheformationofshocks}, we construct the initial data and show that by imposing some conditions on $T\fai|_{t=0}$, $\mu$ will go to $0$ in finite time and a shock forms.
\subsubsection{ Explicit short pulse data}
 \hspace*{2pt}In \cite{luk2018shock}, the authors considered initial data which is launched from simple plane symmetric data. Specifically, in the case of 2D compressible Euler equations, the simple plane symmetric data means
\begin{align}
	\p=\p(x_1),\quad v^1=v^1(x_1),\quad v^2\equiv 0,
\end{align} 
and the Riemann invariant $\mathcal{R}_-:=v^1-\int\p$ completely vanishes. For this data, the vorticity is identically zero. Then, the authors introduced a short pulse  perturbation to such data with non-vanishing vorticity, and proved that  there exists an open set in the Sobolev topology satisfying all the assumptions required for shock formation in their framework. In our paper, inspired by the work of Miao and Yu \cite{Ontheformationofshocks}, we construct an explicit formula for a large class of initial pulse data which can be generated by any smooth, compactly supported ($C^{\infty}_{c}$) functions $f_1$ and $f_2$, as detailed in Lemma \ref{initialdata}. The short pulse data considered here exhibits a broader class in the sense that the choice of $f_1$ and $f_2$ allows for greater flexibility in constructing initial conditions. By  choosing appropriate  $f_1$ and $f_2$, one is able to construct the perturbed simple plane symmetric data explicitly. One important distinction between our work and that of \cite{luk2018shock} lies in the behavior of the vorticity. In \cite{luk2018shock}, the specific vorticity  remains small for all order derivatives, while in our work, the potential vorticity is of $1+O(\da)$ and the higher order derivatives of the potential vorticity will become extremely large. This introduces difficulties in energy estimates and in controlling the higher-order derivatives of the vorticity.

\subsubsection{ The coupled energy estimates}
Because of the presence of vorticity,  besides the framework in \cite{christodoulou2014compressible}, we also used the framework in \cite{luk2018shock}, where the homogeneity of the vorticity equation plays an important role. Roughly speaking, the velocity $v^i$ and the potential vorticity $\xi$ satisfy the following wave-transport system:
\begin{align}
		&\Box_gv^i=\pa_i\zeta+D_i+\zeta Bv^i+\ep_{ij}Bv^j=\pa_i\xi+D_i+\zeta(B+1)v^i,\label{intro2}\\
		& B\zeta=0 \label{intro3}.
\end{align} 
In \eqref{intro2}, $D_i$ ($i=1,2$) are the null forms relative to $g$ which are easy to deal with, and were first introduced in \cite{luk2018shock} (see also definition \ref{nullform}). There are two main difficult terms in \eqref{intro2} to do energy estimates: $\zeta Bv^i$ and $\pa\xi$. The term $\zeta Bv^i$ is caused by the Coriolis force, which may blow up. This is difficult since in top order energy estimates, the blow-up rate of this term is not easy to control. Furthermore, the coefficients of top order energies contributed from this term may be large due to the occurrence of vorticity. This causes difficulties when applying the Gronwall inequality. Fortunately, in view of the construction of the initial data and bootstrap assumptions, the low order energies of the (potential) vorticity is a $O(\da)$ term compared with the energies of $v^i$. Hence, the contribution from this term can be finally handled in the energy estimates.

For $\pa\xi$, one may think $\xi$ as the first derivatives of $v^i$. However, this is not sufficient to close the energy estimates since system \eqref{intro2} loses one derivative. Hence, in energy estimates, it is necessary to consider $\xi$ and $v^i$ to be of the same order. Thanks to the homogeneous equation of $\xi$, one is able to gain one more derivative for $\xi$. Then, we obtains total $N-$th order energy estimates for $v^i$ and $\xi$. From this perspective, the relative vorticity is more regular than $\pa v^i$ as expected.

\subsubsection{ Large vorticity regime}
 Different from the work \cite{luk2018shock}, where the derivatives of  specific vorticity are small along characteristic hypersurfaces up to top order, we allow the potential vorticity to be large in derivative sense (with order $\geq2$). Due to the Coriolis force, the vorticity is enhanced and eventually the first order derivatives of vorticity blow up when a shock forms. To estimate the vorticity, we fully use the equation \eqref{equationvorticity} and the wave equations for the velocity. The basic important fact is that the speed of wave propagation (relative to the sound speed in Euler's case) is strictly faster than the velocity of fluid flow (along particle path). Therefore, the material derivative is a time-like vector field. Hence, it can be decomposed into the combination of the derivative along characteristic null hypersurfaces $L$ and the derivative along time-hyperplane $\Sigma_t$, i.e., $T$. Then, one can write \eqref{intro3} roughly as
\begin{align}
	L\xi=-\mu^{-1}T\xi.\label{intro1}
\end{align}
This implies the spatial derivative of potential vorticity can be changed into the temporal derivative in the acoustical coordinates. Thus, even though the first order spatial derivatives of $v^i$ may blow up as a shock forms, $\pa\xi$ remains small. However, due to the largeness of higher order derivatives of $v^i$, the high order derivatives (order $\geq2$) of $\xi$ are also large. Therefore, it is necessary to compute the blow-up rate for $\xi$ involving high order temporal derivative. This is one of the key steps for the top order energy estimates for the vorticity. Since we construct the explicit initial short pulse data, it is found that the high order derivatives of $\xi$ exhibit one more $\da$ compared with the derivatives of $v^i$ with same order, i.e., $\pa^N\xi=O(\da)\pa^Nv^i$, even though themselves may be large. As we mentioned before, this fact helps one to deal with the possible large coefficient terms caused by the presence of the vorticity in top order energy estimates. On the other hand, \eqref{intro1} is able to show that the potential vorticity remains Lipschitz continuous  even up to the shock, while the first derivatives of the specific vorticity blow up as a shock forms.
\subsubsection{ H$\ddot{\text{o}}$lder continuity for wave variables.}
 In the breakdown results presented in \cite{BSV2Disentropiceuler,BSV3Disentropiceuler,BSV3Dfulleuler,formationofunstableshock}, the solutions near the shock point exhibit cusp-type behavior with $C^{\frac{1}{3}}$-H$\ddot{\text{o}}$lder regularity. In this paper, we adopt the background solution given in \cite{BSV3Disentropiceuler}, which corresponds to a solution of the 2D self-similar Burgers equation, and perturb this solution with a short pulse. For such initial data, we prove, through a different approach, that $(h,v^1,\zeta)\in C^{\frac{1}{3}}$ near the shock. Furthermore, it can be shown that by perturbing various global solutions to 2D self-similar Burgers equation, $(h,v^1,\zeta)\in C^{\frac{1}{2n+1}}$ near the shock for $n\geq 1,$ $n\in\mathbb{Z}$. This demonstrates a broader range of H$\ddot{\text{o}}$lder regularity classes to solutions depending on the structure of perturbation and background solutions, which highlights the sensitivity of the shock structure to initial conditions. Note that in \cite{Luk2021TheSO}, the $C^{\frac{1}{3}}$-H$\ddot{\text{o}}$lder continuous for the wave variables was established by imposing some generic non-degenerated conditions on the initial data.

\subsection{Organization of the paper}
 
 Section \ref{section2} is devoted to the geometric formulation of the problem by adapting the framework from \cite{christodoulou2014compressible}. In Sections \ref{section21}-\ref{scetion23},  the acoustical coordinates system and the explicit initial data are constructed. Then we derive the covariant wave equations for $\fai\in\{\p,v^i\}$. In Sections \ref{section24}-\ref{section25}, we list the structure equations in this paper and compute the deformation tensors.
 
  In Section \ref{section3}, we start with the bootstrap assumptions and then derive the basic estimates for the geometric objects under the bootstrap assumptions. In Section \ref{section32}, the main estimates of the inverse foliation density function $\mu$ are established. They play a crucial role in our analysis. 
  
  In Section \ref{section4}, we first derive the fundamental energy estimates for the wave variables and the vorticity. These estimates are also valid in higher order cases. To this end, in Sections \ref{toporderterms}-\ref{section44}, we obtain the main terms (the top order acoustical terms) that need to be handled in top order estimates and both $L^2$ and $L^{\infty}$ estimates of the lower-order terms.

  In Section \ref{section5},  the estimates of top order acoustical terms involving $\mu$ and $\chi$ are obtained. This is based on the main estimates for $\mu$ in Section \ref{section32}. Then, in Section \ref{section53}, we estimate the various integrals after commutation.
  
 Sections \ref{section6} and \ref{section7} contain the main energy estimates in this paper. In Section \ref{section6},  we obtain the top order energy estimates. They rely on the estimates established in Sections \ref{section3}-\ref{section5}. However, the energies contain the $\mu$-weights and will go to $0$ as $\mu\to 0$. Then, in Section \ref{section7}, we eliminate the weights of $\mu$ in the energy estimates by lowering the order of derivatives in $\fai$. Based on these estimates, we recover the bootstrap assumptions and finish the proof for the main theorem in Section \ref{section8}.

\subsection{Notations}
Throughout the whole paper, the following notations will be used unless stated otherwise:
\begin{itemize}
	\item Latin indices $\{i,j,k,l,\cdots\}$ take the values $1, 2 $ and Greek indices $\{\a,\be,\ga,\cdots\}$
	take the values $ 0, 1, 2$. Repeated indices are meant to be summed.
	\item The convention $f\les h$ means that there exists a universal positive constant $C$ such that $f\leq Ch$.
	\item Let $\fai\in\{\p,v^i\}$ and $\psi\in\{\p,v^i,\zeta+e^{-\p}\}$. Denote $O(\fai)_{b}^{\leq a}$ to be the terms involving $\fai\in\{\p,v^i\}$ of order $\leq a$ with bound $\da^b$. Similarly for $O(\psi)_b^{\leq a}$.  Here, the order means the number of total derivatives acting on $\psi$ and we set $\psi$ to be order of $0$. We also use  the notation $O_b^{\leq a}$ if there is no necessity to distinguish $\fai$ or $\psi$. The notations l.o.ts (lower order terms) mean the terms are of lower order. For example, one can rewrite $\pa^2\psi+\pa\psi$ as $\pa^2\psi+$l.o.ts.   
	\item For the metric $g_{\a\be}$, $g^{\a\be}$ means its inverse such that $g_{\a\be}g^{\be\ga}=\da^{\ga}_{\be}$ with $\da^{\ga}_{\be}$ being the Kronecker symbol.
\item For an object $q$, $\slashed{q}$ means its restriction(projection) on $S_{t,u}$. In particular, $\slashed{\text{div}}$ represents the divergence operator on $S_{t,u}$ such that $\slashed{\text{div}}Y:=\sg^{-1}\slashed{D}Y$ for any $S_{t,u}$ vector field $Y$. $q^{\sharp}$ denotes the $g-$dual of $q$. 
 
	\item The box operator $\Box_g:=g^{\a\be}D^2_{\a\be}$ denotes the covariant wave operator corresponding to the spacetime metric $g$ and $\slashed{\Delta}:=\sg^{-1}\slashed{D}^2$ denotes the covariant Laplacian corresponding to $\sg$ on $S_{t,u}$, where $D$, $\slashed{D}$ are the Levi-Civita connections corresponds to $g$, $\sg$ respectively.
	
	\item For $q$ being a $(0,2)$ tensor and $Y,Z$ being the vector fields, set the contraction as $q_{YZ}:=q_{\a\be}Y^{\a}Z^{\be},$ 
	and similar for the other types of tensors. 
 \item The following important objects involving geometric objects and energies are listed in the following table:
 \begin{table}[htbp]
\setlength{\tabcolsep}{12pt}
\setlength{\belowcaptionskip}{0.2cm}
  \centering
  \caption{Some important objects}\label{table0}
  \begin{tabular}{c|c|c}
  \hline
  Definitions & Notations& \\
  \hline
  Geometric frames & $\{L,T,X\}$ and $\{L,\dl,X\}$ & Lemma\ref{frame}\\
  \hline
  The inverse foliation density & $\mu$ & section 2.1\\
  \hline
  Second fundamental forms & $k,\chi,\ta$ &\eqref{defi2ndff}\\
  \hline
  Curvature tensors & $\a,\a'$ &\eqref{curvaturea}\\
  \hline
  Set of commutation vector-fields & $\mathcal{Z}$, $\mathcal{P}$ &Definition\ref{commutationvf}\\
  \hline
  Energies and fluxes  & $E_{\a},F_{\a}$ $(\a=0,1)$ and $E^{\zeta}, F^{\zeta}$ &Definition \ref{energy} \\
  \hline
  High order energies and fluxes & $\mathbb{W},\mathbb{U},\mathbb{Q},\mathbb{V}$ &Definition \ref{highorderenergy}\\
  \hline
  Modified energies and fluxes & $\bar{\mathbb{W}},\bar{\mathbb{U}},\bar{\mathbb{Q}},\bar{\mathbb{V}}$ &\eqref{modifiedenergy}\\
  \hline
  \end{tabular}
\end{table}\\
\end{itemize}


\section{The Geometric setup}\label{section2}
In this section, we first construct the acoustical geometry and derive the basic properties.
\subsection{The acoustical coordinate system and frames}\label{section21}
One can refer to \cite{christodoulou2014compressible} and \cite{CZD1} for details of the construction of the acoustical coordinate system. The acoustical metric $g$ and its inverse are given by
\begin{align}
	\begin{split}
		g&=-\eta^2dt\otimes dt+(dx^i-v^idt)\otimes(dx^i-v^idt),\\
		g^{-1}&=-\frac{1}{\eta^2}B\otimes B+\pa_i\otimes \pa_i,
	\end{split}
\end{align}
where $\eta^2=h$. The acoustical coordinate system consists of three functions: the time function $t$, the ekinoal function $u$, and  the angular function $\vartheta$.\\
 \hspace*{2pt}On $\Sigma_0$, the ekinoal function $u$ is defined as $u=1-x_1$. Then, $u$ is extended to space-time by the following equation:
 \begin{align}
 g^{\a\be}\pa_{\a}u\pa_{\be}u=0,\quad \pa_tu>0,
 \end{align}
 so that the level sets of $u$ are the characteristic null hypersurfaces $C_u$ of $g$. Since the initial data is completely trivial for $x_1\geq1$, then $C_0$ is the standard null cone. The inverse foliation density function $\mu$ is defined as $\frac{1}{\mu}=-g^{\a\be}\pa_{\a}t\pa_{\be}u$. The null vectorfield $L$, whose integral curves are the null geodesics, on characteristic null hypersurfaces $C_u$ is defined as $L=-\mu g^{\a\be}\pa_{\a}u\pa_{\be}$. Let $S_{t,u}=C_u\cap\Sigma_t$ be an $1$-dimensional manifold. On the tangent space of $\Sigma_t$, define the ``radial" vectorfield $T$ which is $g-$orthogonal to $S_{t,u}$ as $Tu=1$. Denote:
\begin{align}
\begin{split}
\Sigma_t^u&:=\{(t,u',\vartheta)\in\Sigma_t|\ 0\leq u'\leq u\},\\
C_u^t&:=\{(t',u,\vartheta)\in C_u|\ 0\leq t'\leq t\},\\
W_t^u&:=\bigcup_{(t',u')\in[0,t)\times[0,u]} S_{t',u'}.
\end{split}
\end{align} 
Note $S_{0,0}$ is the standard torus which admits a local coordinate $\vartheta$ with $\frac{\pa}{\pa\vartheta}=\pa_2$. One extends  $\vartheta$ to $S_{0,u} $ by requiring $T\vartheta=0$ and then to $W_{t}^{u}$ with $L\vartheta=0$. This procedure results in the acoustical coordinate system $\{t,u,\vartheta\}$.\\
\hspace*{2pt}Let $X=\frac{\pa}{\pa \vartheta}$ be an $S_{t,u}$ tangential vectorfield and $\sg=g(X,X)>0$. We normalize $T$ and $X$ as $\hat{T}=\tfrac{1}{\sqrt{g(T,T)}}T $ and $\hat{X}=\sg^{-\frac{1}{2}}X$. We focus on the case\footnote{This condition is only used in Lemma \ref{faiL2} and \eqref{recover}, which is not important for shock formation. If one further assumes that
\begin{align}
	|\pa^{\a}(\fe_0,\fe_1,\fe_2)|\leq C
\end{align}
for some constant $C$ and $|\a|\leq22$, then this condition can be dropped.} $u\in[0,\tilde{\da}]:=[0,\da^2]$.

For each $u\in[0,\tilde{\da}]$ and fixed $t$, define $\mu(t,u)$ to be the minimum of $\mu$ on the set $S_{t,u}$. Denote
\begin{align}\label{defmumu}
\mu_m^u=\min\{\inf_{u'\in[0,u]}\mu(t,u'),1\},
\end{align}
and
\begin{align}
s_{\ast}=\sup\{t\ |\ t\geq0\ \text{and}\ \mu_m^{\tilde{\da}}(t)>0\}.
\end{align}

 For each $u\in[0,\tilde{\da}]$, one can define $t_{\ast}(u)$ to be the lifespan of the solution $u$ and define $t_{\ast}$ to be:
\begin{align}
t_{\ast}=\inf_{u\in[0,\tilde{\da}]}t_{\ast}(u)=\sup\{\tau|\text{smooth solution exists for all}\ (t,u)\in[0,\tau)\times[0,\tilde{\da}]\}.
\end{align}
We finally restrict time on $[0,t^{\ast})$ with
\begin{align}\label{defsasttast}
s^{\ast}=\min\{s_{\ast},1\},\quad t^{\ast}=\min\{t_{\ast},s^{\ast}\}.
\end{align}
 In the following, we work on the space-time $W_{\tilde{\da}}^{\ast}$, where
\begin{align}
W_{\tilde{\da}}^{\ast}=\bigcup_{(t,u)\in[0,t^{\ast})\times[0,\tilde{\da}]}S_{t,u}.
\end{align}

 Since the lapse function $\eta=(-g^{\a\be}\pa_{\a}t\pa_{\be}t)^{-\frac{1}{2}}$ measures the density of the foliation $\Sigma_t$ (the level set of $t$), then the vectorfield of material derivative $B=-\eta^2g^{\a\be}\pa_{\a}t\pa_{\be}$ is timelike and future-directed. The integral curves of $B$ are the orthogonal curves to the $\Sigma_t$-foliation. Moreover, $Bt=1$.            
\begin{lemma} (\cite[Proposition 2.1]{CZD1})\label{frame}
One has the following relations:
\begin{align*}
g(L,T)&=-\mu, \quad g(T,T)=\kappa^2:=(\eta^{-1}\mu)^2,\quad g(L,\dl)=-2\mu,\\
B&=L+\eta\hat{T},\quad T=\frac{\pa}{\pa u}-\Xi,\quad \hat{T}^i=\ep_{ij}\hat{X}^j,\\
g^{\a\be}&=-\frac{1}{2\mu}(L^{\a}\dl^{\be}+L^{\be}\dl^{\a})+\hat{X}^{\a}\hat{X}^{\be},\quad\frac{\pa(t,x_1,x_2)}{\pa(t,u,\vartheta)}=\eta^{-1}\mu\sg^{\frac{1}{2}},\\
g&=-2\mu dt\otimes du+\kappa^2du\otimes du+\sg(d\vartheta+\Xi^X du)\otimes(d\vartheta+\Xi^X du),
\end{align*}
where $\Xi=\Xi^{X}X$ is an $S_{t,u}$ tangential vectorfield, $\dl=\eta^{-2}\mu L+2T$, and $\{L,\dl,X\}$ is the null frame.
\end{lemma}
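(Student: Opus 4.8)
The plan is to verify each of the listed identities by a direct computation, unfolding the definitions of the eikonal function $u$ (so that $g^{\a\be}\pa_{\a}u\,\pa_{\be}u=0$), the inverse foliation density $\mu$ (so that $\mu^{-1}=-g^{\a\be}\pa_{\a}t\,\pa_{\be}u$), the null field $L=-\mu\,g^{\a\be}\pa_{\a}u\,\pa_{\be}$, the radial field $T$ ($\Sigma_t$-tangent, $g$-orthogonal to $S_{t,u}$, $Tu=1$), and the material field $B=-\eta^2 g^{\a\be}\pa_{\a}t\,\pa_{\be}$, together with the explicit expressions for $g$ and $g^{-1}$. First I would record the immediate consequences: the eikonal equation makes $L$ null and gives $Lu=0$; the definition of $\mu$ gives $Lt=1$; and $L\vartheta=0$ holds by the construction of $\vartheta$, so $L=\pa/\pa t$ in the acoustical coordinates $(t,u,\vartheta)$. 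Contracting $L=-\mu\,g^{\a\be}\pa_{\a}u\,\pa_{\be}$ against $T$ gives $g(L,T)=-\mu\,Tu=-\mu$, and against any $S_{t,u}$-tangential vector $Y$ (so $Yu=0$) gives $g(L,Y)=0$; in particular $g(L,X)=0$.

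For $g(T,T)$, I would use that $g$ restricted to $\Sigma_t$ is the Euclidean metric (set $dt=0$ in the formula for $g$), so $T$ is the Euclidean gradient of $u|_{\Sigma_t}$ rescaled so that $Tu=1$, whence $g(T,T)=|\nabla u|^{-2}$ with $\nabla$ the $\Sigma_t$-gradient. Feeding $g^{-1}=-\eta^{-2}B\otimes B+\pa_i\otimes\pa_i$, $Bt=1$, $\pa_i t=0$ into the definitions gives $Bu=\eta^2/\mu$, and then the eikonal equation, which reads $0=-\eta^{-2}(Bu)^2+|\nabla u|^2$ in this splitting, yields $|\nabla u|^2=\eta^2/\mu^2$. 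Therefore $g(T,T)=\mu^2/\eta^2=(\eta^{-1}\mu)^2=:\kappa^2$.

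Next, for $B=L+\eta\hat{T}$: the identity $B=-\eta^2 g^{-1}dt$ gives $g(B,Y)=-\eta^2\,Yt$, so $B$ is $g$-orthogonal to $\Sigma_t$; comparing $t$-components ($Bt=Lt=1$) and noting $g(B-L,X)=-g(L,X)=0$ shows $B-L$ lies in $T\Sigma_t$ and is orthogonal to $X$, hence is a multiple of $\hat{T}$, and pairing with $T$ — using $g(B,T)=-\eta^2\,Tt=0$, $g(L,T)=-\mu$, $g(\hat{T},T)=\eta^{-1}\mu$ — forces the multiple to be $\eta$. Next, $\pa/\pa u$ and $T$ both annihilate $t$ and both send $u$ to $1$, so $\pa/\pa u-T$ annihilates $t$ and $u$, hence is $S_{t,u}$-tangential; writing it as $\Xi=\Xi^X X$ gives $T=\pa/\pa u-\Xi$. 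Finally, $\hat{T}$ and $\hat{X}$ are $g$-orthonormal (equivalently Euclidean-orthonormal, since $g|_{\Sigma_t}$ is Euclidean) and both tangent to $\Sigma_t\cong\mathbb{R}^2$, so $\hat{T}$ is a quarter-turn rotation of $\hat{X}$; the orientation convention $\pa_t u>0$, $Tu=1$ selects $\hat{T}^i=\ep_{ij}\hat{X}^j$.

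It remains to assemble the frame and coordinate expressions. With $\dl=\eta^{-2}\mu L+2T$, the pairings already obtained give $g(\dl,\dl)=\eta^{-4}\mu^2 g(L,L)+4\eta^{-2}\mu g(L,T)+4g(T,T)=0$, $g(L,\dl)=-2\mu$, and $g(\dl,X)=0$, so $\{L,\dl,X\}$ is a null frame whose only nonzero Gram entries are $g(L,\dl)=-2\mu$ and $g(X,X)=\sg$; inverting this $3\times 3$ Gram matrix yields $g^{\a\be}=-\frac{1}{2\mu}(L^{\a}\dl^{\be}+L^{\be}\dl^{\a})+\hat{X}^{\a}\hat{X}^{\be}$. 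For the metric in acoustical coordinates I would use $\pa/\pa t=L$, $\pa/\pa u=T+\Xi$, $\pa/\pa\vartheta=X$, and compute the components $g_{ab}$ from the same pairings — the cross terms $g(L,\Xi)$, $g(T,\Xi)$, $g(T,X)$ all vanishing — which rearranges into $g=-2\mu\,dt\otimes du+\kappa^2\,du\otimes du+\sg(d\vartheta+\Xi^X du)\otimes(d\vartheta+\Xi^X du)$. The matrix of this $g$ in $(t,u,\vartheta)$ has determinant $-\mu^2\sg$, and equating its volume form $\mu\sg^{1/2}\,dt\,du\,d\vartheta$ with $\eta\,dt\,dx_1\,dx_2$ (since $\det g=-\eta^2$ in Cartesian coordinates) gives $\frac{\pa(t,x_1,x_2)}{\pa(t,u,\vartheta)}=\eta^{-1}\mu\sg^{1/2}$. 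Every step is routine once the definitions are unwound; the only places needing care are the identity $g(T,T)=(\eta^{-1}\mu)^2$, which is where the eikonal equation and the definition of $\mu$ genuinely enter, and keeping the sign in $\hat{T}^i=\ep_{ij}\hat{X}^j$ consistent with the orientation conventions.
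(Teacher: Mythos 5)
Your verification is correct and is exactly the standard direct computation from the definitions; the paper does not prove this lemma itself but quotes it from \cite[Proposition 2.1]{CZD1}, where the same unwinding of the eikonal equation, the definition of $\mu$, and the Gram matrix of the null frame is carried out. The only delicate point is the orientation convention behind $\hat{T}^i=\ep_{ij}\hat{X}^j$ (on $\Sigma_0$ one has $u=1-x_1$, hence $\hat{T}=-\pa_1$, while $X=\pa_2$), and you correctly isolate this as the single convention-dependent sign in the statement.
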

\subsection{Initial data}
In this subsection, we construct the explicit short pulse initial data to \eqref{shallowwater0}.
\begin{lemma}\label{initialdata}
For any given $(f_1,f_2)(s,\theta)\in C_0^{\infty}([0,1]\times[0,1))$, there exists $\da'>0$ which depends on $(f_1,f_2)$ such that  for all $\da<\da'$, if one constructs the initial data as follows:
\begin{align}
	\p=\da\fe_0\left(\frac{1-x_1}{\da},x_2\right),\quad v^1=\da\fe_1\left(\frac{1-x_1}{\da},x_2\right),\quad  v^2=\da^2\fe_2\left(\frac{1-x_1}{\da},x_2\right),
\end{align}
then for $L_{\mathrm{f}}:=\pa_t+(v^1+\eta)\pa_1+v^2\pa_2,$
\[
|L_{\mathrm{f}}(\p,v^1)|+|L_{\mathrm{f}}(\zeta+e^{-\p}-1)|\les\da.
\] 
	
\end{lemma}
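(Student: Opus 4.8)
\textbf{Proof plan for Lemma \ref{initialdata}.}

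The plan is to compute $L_{\mathrm{f}}(\p)$, $L_{\mathrm{f}}(v^1)$, and $L_{\mathrm{f}}(\zeta+e^{-\p}-1)$ directly on the initial slice $\Sigma_0$, using the evolution equations \eqref{equationp}--\eqref{equationvorticity} to trade the time derivative $\pa_t$ for spatial derivatives, and then to track the powers of $\da$ that each term carries. Recall $L_{\mathrm{f}}=\pa_t+(v^1+\eta)\pa_1+v^2\pa_2$, and note that on $\Sigma_0$ the data \eqref{initialdata1} gives $\p,v^1=O(\da)$, $v^2=O(\da^2)$, and since $\eta=\sqrt{h}=e^{\p/2}=1+O(\da)$, we have $\eta-1=O(\da)$. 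Each $\pa_1$ hitting the profile produces a factor $\da^{-1}$ (because the argument is $(1-x_1)/\da$), while $\pa_2$ produces no such factor. So from \eqref{initialdata1}: $\pa_1\p=O(1)$, $\pa_1 v^1=O(1)$, $\pa_1 v^2=O(\da)$, while $\pa_2\p,\pa_2 v^1=O(\da)$ and $\pa_2 v^2=O(\da^2)$.

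First I would handle $\p$. From \eqref{equationp}, $B\p=-\mathrm{div}\,v$, i.e. $\pa_t\p=-v^i\pa_i\p-\pa_1 v^1-\pa_2 v^2$. Hence
\[
L_{\mathrm{f}}(\p)=\pa_t\p+(v^1+\eta)\pa_1\p+v^2\pa_2\p=-\pa_1 v^1-\pa_2 v^2+\eta\,\pa_1\p.
\]
The point is that the two $O(1)$ terms $-\pa_1 v^1$ and $\pa_1\p$ must cancel to leading order; this is precisely the ``simple wave''/null-constraint structure that the profile must satisfy. Writing $\p=\da\fe_0(s,x_2)$, $v^1=\da\fe_1(s,x_2)$ with $s=(1-x_1)/\da$, one gets $\pa_1\p=-\pa_s\fe_0$, $\pa_1 v^1=-\pa_s\fe_1$, so $L_{\mathrm{f}}(\p)=\pa_s\fe_1-\pa_s\fe_0\,(1+O(\da))-\pa_2 v^2=(\pa_s\fe_1-\pa_s\fe_0)+O(\da)$. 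So the genuine requirement on the profile is $\pa_s\fe_0=\pa_s\fe_1$, i.e. $\fe_0-\fe_1$ is independent of $s$; combined with the vanishing \eqref{initialdata0} at $x_1=1$ ($s=0$), this forces $\fe_0=\fe_1$ on the support. This is exactly where the hypothesis $(f_1,f_2)\in C_0^\infty$ and the construction ``generated by $f_1,f_2$'' enters: one sets $\fe_0=\fe_1$ (up to $O(\da)$ corrections determined iteratively), and the remaining lower-order freedom is parametrized by $f_1,f_2$. The same computation for $v^1$ using \eqref{equationvi}, $\pa_t v^1=-v^j\pa_j v^1+\ep_{1j}v^j-e^{\p}\pa_1\p=-v^j\pa_j v^1+v^2-e^{\p}\pa_1\p$, gives
\[
L_{\mathrm{f}}(v^1)=\eta\,\pa_1 v^1-e^{\p}\pa_1\p+v^2=\pa_s\fe_0-\pa_s\fe_1+O(\da),
\]
which is $O(\da)$ by the same cancellation. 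For the potential-vorticity combination, since $B(\zeta+e^{-\p}-1)=0$ by the computation preceding \eqref{covariant1}, one has $\pa_t(\zeta+e^{-\p}-1)=-v^i\pa_i(\zeta+e^{-\p}-1)$, hence $L_{\mathrm{f}}(\zeta+e^{-\p}-1)=(\eta\,\pa_1+\,\text{(the }\pa_2\text{ part}))(\zeta+e^{-\p}-1)$; since $e^{-\p}-1=-\da\fe_0+O(\da^2)$ and $\zeta=\omega e^{-\p}$ with $\omega=\pa_1 v^2-\pa_2 v^1=O(\da)$, all surviving terms carry at least one power of $\da$, so this is $O(\da)$ as well.

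The main obstacle is making precise the claim that the profiles can actually be \emph{chosen} from arbitrary $C_0^\infty$ data $(f_1,f_2)$ so that the exact (not just leading-order) cancellation holds, i.e. that $L_{\mathrm{f}}(\p,v^1)=O(\da)$ with the full nonlinearities retained rather than just $L_{\mathrm{f}}(\p,v^1)=O(1)$ naively. This requires solving the constraint $\pa_s\fe_0=\pa_s\fe_1 + (\text{lower-order in }\da)$ order by order in $\da$, or equivalently choosing $\fe_0,\fe_1$ to agree to the needed precision; one verifies that this can be done by a straightforward contraction/implicit-function argument in $\da$, which also fixes the threshold $\da'$ depending on $(f_1,f_2)$ through the $C^k$-norms of $f_1,f_2$ (finitely many derivatives suffice, matching the footnote's remark about $|\a|\le 22$). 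Once the profile is pinned down, the bookkeeping of $\da$-powers above is routine, and collecting the estimates yields $|L_{\mathrm{f}}(\p,v^1)|+|L_{\mathrm{f}}(\zeta+e^{-\p}-1)|\lesssim\da$.
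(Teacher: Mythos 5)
Your treatment of the wave variables $\p$ and $v^1$ is correct and is essentially the paper's computation: trading $\pa_t$ for spatial derivatives via \eqref{equationp}--\eqref{equationvi} gives $L_{\mathrm{f}}\p=\pa_s\fe_1-\eta\pa_s\fe_0-\da^2\pa_\theta\fe_2$ and $L_{\mathrm{f}}v^1=-\eta\pa_s\fe_1+e^{\da\fe_0}\pa_s\fe_0+\da^2\fe_2$, so the one constraint you identify, $\pa_s\fe_1-\pa_s\fe_0=O(\da)$, is exactly the paper's first condition.

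However, there is a genuine gap in your handling of the potential vorticity, and it is not a cosmetic one. You argue that because $\zeta+e^{-\p}-1=O(\da)$ pointwise, the quantity $L_{\mathrm{f}}(\zeta+e^{-\p}-1)=\eta\,\pa_1(\zeta+e^{-\p}-1)+\cdots$ is automatically $O(\da)$. This is false: every $\pa_1$ acting on a profile in the variable $s=(1-x_1)/\da$ costs a factor $\da^{-1}$, which is precisely the bookkeeping rule you state at the start of your own proof. Concretely, $\pa_1(e^{-\p}-1)=e^{-\p}\pa_s\fe_0=O(1)$ and $\pa_1\zeta=e^{-\p}\pa_1\omega+O(\da)$ with $\pa_1\omega=\pa_s^2\fe_2+\pa_s\pa_\theta\fe_1=O(1)$ (note $\omega=-\da(\pa_s\fe_2+\pa_\theta\fe_1)$ is $O(\da)$ but its $x_1$-derivative is not). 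Hence
\begin{align*}
L_{\mathrm{f}}(\zeta+e^{-\p}-1)=\eta e^{-\da\fe_0}\bigl(\pa_s\pa_\theta\fe_1+\pa_s\fe_0+\pa_s^2\fe_2\bigr)+O(\da),
\end{align*}
which is generically $O(1)$. The smallness of $L_{\mathrm{f}}(\zeta+e^{-\p}-1)$ is therefore a second, independent constraint on the profiles, namely $\pa_s\pa_\theta\fe_1+\pa_s\fe_0+\pa_s^2\fe_2=O(\da)$, and it is exactly to satisfy both constraints simultaneously that the paper introduces $\Phi=\pa_\theta\fe_1+\fe_1$ and solves the explicit linear ODE system $\pa_s\Phi+\pa_s^2\fe_2=\da f_2$, $\pa_\theta\fe_1+\fe_1=\Phi$, $\pa_s\fe_0-\pa_s\fe_1=\da f_1$ with vanishing data at $s=0$. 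Your construction sketch (set $\fe_0=\fe_1$ up to $O(\da)$ and parametrize the rest by $f_1,f_2$) leaves $\fe_2$ unconstrained relative to $\fe_1$, so the data it produces would in general violate the vorticity bound. Once this second constraint is added, the remaining construction is more elementary than your proposed contraction/implicit-function argument: it reduces to solving the linear ODEs above.
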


\begin{proof}
	Let $\hat{T}_{\text{f}}=-\pa_1$. Then, \eqref{equationp}-\eqref{equationvorticity} can be written as
	\begin{align*}
		&(L_{\text{f}}+\eta\hat{T}_{\text{f}})\p=-(\pa_1v^1+\pa_2v^2),\\
		&(L_{\text{f}}+\eta\hat{T}_{\text{f}})v^1=v^2-e^{\p}\pa_1\p,\\
		&(L_{\text{f}}+\eta\hat{T}_{\text{f}})v^2=-v^1-e^{\p}\pa_2\p,\\
		&(L_{\text{f}}+\eta\hat{T}_{\text{f}})\xi=0.
	\end{align*}
	Thus one has
	\begin{align*}
		L_{\text{f}}\p&=\pa_s\fe_1-\eta\pa_s\fe_0-\da^2\pa_{\theta}\fe_2,\\
		L_{\text{f}}v^1&=-\eta\pa_s\fe_1+e^{\da\fe_0}\pa_s\fe_0+\da^2\fe_2,\\
		L_{\text{f}}\xi&=\eta e^{-\da\fe_0}(\pa_s\pa_{\theta}\fe_1+\pa_s\fe_0+\pa_s^2\fe_2-\da\pa_s\fe_0(\pa_s\fe_2+\pa_{\theta}\fe_2)).
	\end{align*}
	Since $|\eta-1|\les\da$, it suffices to show
	\begin{align*}
		\pa_s\fe_1-\pa_s\fe_0&=O(\da),\\
		\pa_s\pa_{\theta}\fe_1+\pa_s\fe_0+\pa_s^2\fe_2&=O(\da).
	\end{align*}
	 Let $\Phi=\pa_{\theta}\fe_1+\fe_1$. $\Phi$ is obtained by solving the following ODE:
	\begin{equation}
		\left\{
		\begin{aligned}	&\pa_s\Phi+\pa_s^2\fe_2=\da f_2,\\
		&	\Phi(0,x_2)=0.
		\end{aligned}
		\right.
	\end{equation}
	With the aid of $\Phi$, one can obtain $(\fe_0,\fe_1)$ via solving the following problems
	\begin{equation}
		\left\{
	    \begin{aligned}
&\pa_{\theta}\fe_1+\fe_1=\Phi,\\
&		\fe_1(0,x_2)=0,\\
	    \end{aligned}
		\right.
		\quad and \quad 
		\left\{
		\begin{aligned}
		&	\pa_s\fe_0-\pa_s\fe_1=\da f_1,\\
		&	\fe_0(0,x_2)=0.
		\end{aligned}
		\right.
	\end{equation}
 This finishes the proof of the lemma.
\end{proof}
\subsection{Wave-transport system for the wave variables}\label{scetion23}
In this subsection, we first derive the covariant wave equations for the wave variables $\fai\in\{\p,v^i\}$. Then, we construct some important geometric objects.
\begin{lemma}\label{waveoperator}
	For any function $f$, it holds that
	\begin{align}
		\Box_gf=-\frac{1}{\eta^2}B^2f+\Delta f-\frac{1}{\eta^2}(\text{div}v)Bf+\frac{1}{2}\eta^{-2}g^{\a\be}\pa_{\a}f\pa_{\be}h+\eta^{-4}BfBh.
	\end{align}
\end{lemma}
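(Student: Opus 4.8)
The plan is to compute $\Box_g f = g^{\alpha\beta} D^2_{\alpha\beta} f$ directly by splitting the inverse metric into its timelike and $\Sigma_t$-tangential parts, $g^{-1} = -\eta^{-2} B\otimes B + \partial_i\otimes\partial_i$, and then handling the Christoffel correction terms. First I would write
\begin{align*}
\Box_g f = g^{\alpha\beta}\partial_\alpha\partial_\beta f - g^{\alpha\beta}\Gamma^\gamma_{\alpha\beta}\partial_\gamma f = -\eta^{-2}B(Bf) + \Delta_{\mathrm{e}} f - g^{\alpha\beta}\Gamma^\gamma_{\alpha\beta}\partial_\gamma f,
\end{align*}
where $\Delta_{\mathrm{e}}$ is the flat Laplacian in the $x_i$; the first two terms come from applying the decomposition of $g^{-1}$ to the plain second derivatives, with $-\eta^{-2}B(Bf)$ differing from $-\eta^{-2}B^2 f$ only by a first-order term $\eta^{-2}(Bv^i)\partial_i f$ that I will reabsorb at the end, and similarly $\Delta_{\mathrm{e}}f$ will be matched with the covariant $\Delta f$ on $\Sigma_t$ up to connection terms.

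The core computation is the contracted Christoffel symbol $g^{\alpha\beta}\Gamma^\gamma_{\alpha\beta}\partial_\gamma f$. I would use the standard identity $g^{\alpha\beta}\Gamma^\gamma_{\alpha\beta} = -\frac{1}{\sqrt{|\det g|}}\partial_\alpha(\sqrt{|\det g|}\, g^{\alpha\gamma})$, so that
\begin{align*}
g^{\alpha\beta}\Gamma^\gamma_{\alpha\beta}\partial_\gamma f = -\frac{1}{\sqrt{|\det g|}}\partial_\alpha\!\left(\sqrt{|\det g|}\, g^{\alpha\gamma}\right)\partial_\gamma f.
\end{align*}
Since $\det g = -\eta^2 = -h$ for this metric (the spatial block being the identity in the $dx^i - v^i dt$ frame, whose Jacobian to $dx^i$ has determinant $1$), the weight is $\sqrt{|\det g|} = \eta = h^{1/2}$, and the derivative of the weight produces exactly the terms $-\frac{1}{\eta^2}(\mathrm{div}\,v)Bf$ and the $g^{\alpha\beta}\partial_\alpha f\,\partial_\beta h$ piece once one expands $\partial_\alpha(\eta\, g^{\alpha\gamma})$ using $g^{0\gamma} = -\eta^{-2}B^\gamma$, $g^{i\gamma} = \partial_i$-components, and $B^0 = 1$, $B^i = v^i$. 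Keeping careful track, the $\partial_\alpha$ hitting $g^{\alpha\gamma}$ itself contributes the $\mathrm{div}\,v$ term and a further first-order term, while the $\partial_\alpha \eta = \frac{1}{2\eta}\partial_\alpha h$ factor contributes $\frac{1}{2}\eta^{-2}g^{\alpha\gamma}\partial_\gamma f\,\partial_\alpha h$ together with the $\eta^{-4}Bf\,Bh$ term after collecting the $\gamma = 0$ contribution.

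The main obstacle I anticipate is purely bookkeeping: correctly collecting all first-order-in-$f$ terms that arise from three separate sources — the commutator $B(Bf)$ versus $B^2 f$, the difference between the flat $\Delta_{\mathrm{e}}f$ and the intrinsic $\Delta f$ (which carries $\Sigma_t$-Christoffel terms built from $v^i$ and $h$), and the contracted spacetime Christoffel symbols — and verifying that they combine precisely into the asserted coefficients $-\frac{1}{\eta^2}(\mathrm{div}\,v)$, $\frac{1}{2}\eta^{-2}g^{\alpha\beta}\partial_\beta h$, and $\eta^{-4}Bh$ with no residual terms. A clean way to organize this is to test the formula against $f = t$, $f = x_1$, $f = x_2$ (for which $\Box_g f$ equals the known contracted Christoffel components) to fix and check the coefficients, and then invoke linearity of $\Box_g$ in its second-derivative action plus the Leibniz structure to conclude for general $f$. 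No delicate estimates are involved; the content is entirely the algebraic identity, so once the coefficients are pinned down the proof is complete.
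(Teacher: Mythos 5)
Your proposal is correct, and it takes the route that the paper itself only mentions in a footnote: instead of computing all Christoffel symbols $\Gamma_{\a\be\ga}$ of the acoustical metric explicitly and contracting (which is what the paper's proof does, via \eqref{christoffel}), you use the divergence form of the wave operator, i.e.\ the identity $g^{\a\be}\Gamma^{\ga}_{\a\be}=-\tfrac{1}{\sqrt{|\det g|}}\pa_{\a}(\sqrt{|\det g|}\,g^{\a\ga})$ together with the observation that $\det g=-\eta^2$, so the weight is just $\eta$. Both computations land on the same intermediate expression $g^{\a\be}\Gamma^{\ga}_{\a\be}\pa_{\ga}f=-\eta^{-3}B\eta\,Bf+\eta^{-2}(\mathrm{div}\,v)Bf+\eta^{-2}Bv^i\pa_if-\eta^{-1}\pa_i\eta\,\pa_if$, after which the $\eta^{-2}Bv^i\pa_if$ term cancels against the one produced by converting $B^{\a}B^{\be}\pa_{\a}\pa_{\be}f$ into $B^2f$, exactly as you anticipate. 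Your approach is a bit slicker for this lemma in isolation; the paper's explicit Christoffel computation pays off later because those symbols are reused in the curvature formulas \eqref{curvature1}--\eqref{curvature2}. Two minor remarks: the worry about the "difference between the flat $\Delta_{\mathrm e}f$ and the intrinsic $\Delta f$" evaporates, since $g_{ij}=\da_{ij}$ gives $\Gamma_{ijk}=0$ and the induced metric on $\Sigma_t$ is exactly Euclidean; and your phrasing distinguishing $B(Bf)$ from $B^2f$ is off (they are the same operator applied twice) — the actual correction term $\eta^{-2}(Bv^i)\pa_if$ arises from the difference between $B^{\a}B^{\be}\pa_{\a}\pa_{\be}f$ and $B(Bf)$, which is what you in fact track correctly.
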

\begin{proof}
	Note that\footnote{It can be showed also by $\Box_gf=\frac{1}{\sqrt{|\det g|}}\pa_{\a}(g^{\a\be}\sqrt{|\det g|}\pa_{\be}f)$.} \[\Box_gf=g^{\a\be}D^2_{\a\be}f=g^{\a\be}\pa_{\a}\pa_{\be}f-g^{\a\be}\Gamma_{\a\be}^{\ga}\pa_{\ga}f\] where $\Gamma_{\a\be}^{\ga}$ is the Christoffel symbol associated with $g$, i.e., 
 \[\Gamma_{\a\be}^{\ga}=g^{\ga\da}\Gamma_{\a\be\da}=\frac{1}{2}g^{\ga\da}(\pa_{\a}g_{\be\da}+\pa_{\be}g_{\a\da}-\pa_{\da}g_{\a\be}).\] One computes $\Gamma_{\a\be\ga}$ as
      \begin{align}
      \begin{split}\label{christoffel}
		\Gamma_{000}&=-\eta\pa_t\eta+v^i\pa_tv^i,\quad \Gamma_{0i0}=-\eta\pa_i\eta+v^j\pa_iv^j,\\
		\Gamma_{00i}&=\eta\pa_i\eta-\pa_tv^i-v^j\pa_iv^j,\quad \Gamma_{0ij}=\frac{1}{2}(\pa_jv^i-\pa_iv^j),\\
		\Gamma_{ij0}&=-\frac{1}{2}(\pa_iv^j+\pa_jv^i),\quad \Gamma_{ijk}=\frac{1}{2}(\pa_ig_{jk}+\pa_jg_{ik}-\pa_kg_{ij})=0.
  \end{split}
	\end{align}
	Then,
	\begin{align*}
		g^{\a\be}\Gamma^{\gamma}_{\a\be}\pa_{\ga}f&=-\frac{1}{\eta^3}B\eta\cdot Bf+\frac{1}{\eta^2}(\text{div}v)Bf-\frac{1}{\eta}\pa_i\eta\pa_if+\frac{1}{\eta^2}Bv^i\pa_if.
	\end{align*}
	Therefore,
	\begin{align*}
		\Box_gf= &-\frac{1}{\eta^2}B^2f+\frac{1}{\eta^2}Bv^i\pa_if+
		\Delta f-(-\frac{1}{\eta^3}B\eta Bf+\frac{1}{\eta^2}(\text{div}v)Bf-\frac{1}{\eta}\pa_i\eta\pa_if+\frac{1}{\eta^2}Bv^i\pa_if)\\
		=&-\frac{1}{\eta^2}B^2f+\Delta f-\frac{1}{\eta^2}(\text{div}v)Bf+\frac{1}{2\eta^2}g^{\a\be}\pa_{\a}h\pa_{\be}f+\frac{1}{\eta^4}BfBh.
	\end{align*}
 This finishes the proof of the lemma.
\end{proof}
\quad It follows from \eqref{equationh}-\eqref{equationvi1} that
\begin{align*}
	B^2\p&=-\text{div}(Bv)-[B,\text{div}]v=-\ep_{ij}\pa_iv^j+\pa_iv^j\pa_jv^i+e^{\p}\Delta\p+e^p(\pa_i\p)^2,\\
	B^2v^i&=\ep_{ij}Bv^j-\pa_iBh-[B,\pa_i]h=\ep_{ij}Bv^j+(\text{div}v)\pa_ih+h\pa_i(\pa_lv^l)+\pa_iv^l\pa_lh.
\end{align*}
\begin{definition}\label{nullform}
	The null forms relative to $g$ are defined as
	\begin{align}
		\mathcal{D}(f,g)&=g^{\a\be}\pa_{\a}f\pa_{\be}g,\quad
		and\quad \mathcal{D}_{\a\be}(f,g)=\pa_{\a}f\pa_{\be}g-\pa_{\be}f\pa_{\a}g.
	\end{align}
\end{definition}

The null forms are ``good" terms to handle since their decomposition does not include the term $\hat{T}f\cdot\hat{T}g$, which will blow up like $\frac{1}{\mu^2}$. 
\begin{remark}\label{nullforms}
	Indeed, the following decompositions hold due to Lemma \ref{frame}.
	\begin{align*}
		\mathcal{D}(f,g)&=\eta^{-2}LfLg-\mu^{-1}(LfTg+LgTf)+\sg^{-1}XfXg,\\
		\mathcal{D}_{ab}(f,g)
		&=\hat{T}^a\hat{X}^b(\hat{T}f\hat{X}g-\hat{T}g\hat{X}f)+\hat{X}^a\hat{T}^b(\hat{X}f\hat{T}g-\hat{T}f\hat{X}g)\\
		&=(\hat{T}^a\hat{X}^b-\hat{X}^a\hat{T}^b)(\hat{T}f\hat{X}g-\hat{T}g\hat{X}f).
	\end{align*}
\end{remark}
\begin{lemma}\label{covariant}
	$(v^i,\p,\zeta)$ satisfy the following covariant wave-transport equations
	\begin{align}
		\begin{split}\label{wavep}
			\Box_g\p
			&=2e^{-\p}\mathcal{D}_{12}(v^1,v^2)+\frac{1}{2}\mathcal{D}(\p,\p)+\zeta,\\
		\end{split}\\
		\begin{split}\label{wavevi}
			\Box_gv^i
			&=-\ep_{ia}e^{\p}\pa_a\xi+2\zeta(\ep_{ij}Bv^j+v^i)-\frac{1}{2}\mathcal{D}(v^i,\p)+e^{-\p}v^i+e^{-\p}\ep_{ij}v^jB\p,
		\end{split}\\
		B&(\zeta-\varrho):=B(\zeta+e^{-\p}-1)=0,\label{transportzeta}
	\end{align}
	where $\varrho=1-e^{-\p}$.
\end{lemma}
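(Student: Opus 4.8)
The plan is to establish the three equations separately. Equation \eqref{transportzeta} is immediate: since $\varrho=1-e^{-\p}$ we have $\zeta-\varrho=\zeta+e^{-\p}-1=\xi-1$, and it was already observed above that $B\xi=0$; hence $B(\zeta-\varrho)=0$. So all the work sits in the two covariant wave equations, and the strategy there is to feed the already-computed second-order identities for $B^2\p$ and $B^2v^i$ (obtained just above the statement from \eqref{equationh}--\eqref{equationvi1}) into the identity of Lemma \ref{waveoperator}, and then to reorganise the resulting lower-order quadratic terms into null forms plus genuine sources, using the equations of motion \eqref{equationp}--\eqref{equationvorticity} together with the structural relations $\eta^2=h=e^{\p}$, $\pa_ih=e^{\p}\pa_i\p$, $B\p=-\text{div}\,v$, $Bh=-h\,\text{div}\,v$.

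For $\Box_g\p$: set $f=\p$ in Lemma \ref{waveoperator}. After multiplying $B^2\p$ by $-\eta^{-2}=-e^{-\p}$, the $e^{\p}\Delta\p$ piece cancels the $\Delta\p$ term of Lemma \ref{waveoperator}, while the antisymmetric piece $-\ep_{ij}\pa_iv^j=-\omega$ produces $\omega e^{-\p}=\zeta$, the source in \eqref{wavep}. Everything that remains is quadratic in $\pa v,\pa\p$; using the pointwise identity $\pa_iv^j\pa_jv^i=(\text{div}\,v)^2-2\mathcal{D}_{12}(v^1,v^2)$ and the decomposition $g^{-1}=-\eta^{-2}B\otimes B+\pa_i\otimes\pa_i$ from Lemma \ref{frame} to recognise that $g^{\a\be}\pa_\a\p\,\pa_\be\p=\mathcal{D}(\p,\p)$, these terms collapse to the right-hand side of \eqref{wavep}.

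For $\Box_gv^i$: set $f=v^i$ in Lemma \ref{waveoperator}. The decisive structural point is at second order: after multiplying $B^2v^i$ by $-\eta^{-2}$ the piece $h\pa_i(\text{div}\,v)$ becomes $-\pa_i(\text{div}\,v)$, which combines with the $\Delta v^i$ of Lemma \ref{waveoperator} through the two-dimensional div--curl identity
\[
\Delta v^i-\pa_i(\text{div}\,v)=-\ep_{ia}\pa_a\omega.
\]
Following the philosophy of \cite{luk2018shock}, I then trade the vorticity for the more regular potential vorticity: from $\omega=\zeta e^{\p}$ and $\xi=\zeta+e^{-\p}$,
\[
-\ep_{ia}\pa_a\omega=-\ep_{ia}e^{\p}\pa_a\xi-\ep_{ia}\pa_a\p-\zeta\,\ep_{ia}e^{\p}\pa_a\p,
\]
so the top-order contribution is exactly the claimed $-\ep_{ia}e^{\p}\pa_a\xi$ and the rest is quadratic/Coriolis. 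The remaining terms — the two corrections just produced, the $-\eta^{-2}\ep_{ij}Bv^j$ coming from $-\eta^{-2}B^2v^i$, the $(\text{div}\,v)\pa_ih$ and $\pa_iv^l\pa_lh$ pieces, and the $-\eta^{-2}(\text{div}\,v)Bv^i$, $\tfrac{1}{2}\eta^{-2}g^{\a\be}\pa_\a v^i\,\pa_\be h$, $\eta^{-4}Bv^i\,Bh$ pieces of Lemma \ref{waveoperator} — are then rearranged using $e^{\p}\pa_i\p=\ep_{ij}v^j-Bv^i$ (from \eqref{equationvi}) and the contraction identity $\ep_{ia}\ep_{aj}=-\da_{ij}$; the non-null quadratic part reduces to $-\tfrac{1}{2}\mathcal{D}(v^i,\p)$, and the genuine source terms assemble into $2\zeta(\ep_{ij}Bv^j+v^i)+e^{-\p}v^i+e^{-\p}\ep_{ij}v^jB\p$, giving \eqref{wavevi}.

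I expect this last step for $v^i$ to be the one real obstacle. It is a bookkeeping problem with a fair number of index contractions, and the Coriolis vector $\ep_{ij}v^j$ enters in several places, so it must be tracked carefully for the coefficient $2$ in $2\zeta(\ep_{ij}Bv^j+v^i)$ to appear (half of it from converting $\pa\omega$ to $\pa\xi$, half from the $\pa h$-type terms). More important than the algebra itself is the conceptual check that, after all cancellations, the quadratic remainder is purely of null type — no $\hat{T}v^i\cdot\hat{T}\p$ term survives — and that the ``bad'' spatial derivative in the new top-order term $-\ep_{ia}e^{\p}\pa_a\xi$ falls on the more regular potential vorticity $\xi$ rather than on $\pa v^i$. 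This is precisely the structure that renders these terms harmless in the energy estimates of Sections \ref{section4}--\ref{section7}, and it is this null-form remainder that one records as the good terms $D_i$ of \eqref{intro2} (cf. Definition \ref{nullform}).
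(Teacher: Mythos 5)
Your proposal is correct and follows essentially the same route as the paper: substitute the material-derivative identities for $B^2\p$ and $B^2v^i$ into Lemma \ref{waveoperator}, use the two-dimensional div--curl identity $\Delta v^i-\pa_i(\mathrm{div}\,v)=-\ep_{ia}\pa_a\omega$, rewrite $\omega$ in terms of the potential vorticity $\xi$, and reorganise the quadratic remainders into null forms; your accounting of the coefficient $2$ in $2\zeta(\ep_{ij}Bv^j+v^i)$ (one copy from $\pa_a\omega\to\pa_a\xi$, one from the $\pa h$ terms) matches the paper's computation exactly.
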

\begin{proof}
	It follows from Lemma \ref{waveoperator} that 
	\begin{align*}
		\Box_g\p&=-\frac{1}{h}B^2\p+\Delta \p-\frac{1}{h}(\text{div}v)B\p+\frac{1}{2h}g^{\a\be}\pa_{\a}h\pa_{\be}\p+\frac{1}{h^2}BhB\p\\
		&=2e^{-\p}\mathcal{D}_{12}(v^1,v^2)+\zeta-\frac{1}{2}\mathcal{D}(\p,\p).
	\end{align*}
	Note that
	\begin{align*}
		h\pa_i(\text{div}v)&=h(\pa_a(\pa_iv^a-\pa_av^i))+h\Delta v^i=h\ep_{ia}\pa_a\omega+h\Delta v^i,\\
		\pa_iv^j\pa_jh+(\text{div}v)\pa_ih&=\pa_iv^j(\ep_{jk}v^k-Bv^j)-\frac{1}{h}Bh(\ep_{ij}v^j-Bv^i)\\
  &=-\ep_{ij}\omega Bv^j+\mathcal{D}(v^i,h)-\omega v^i-\frac{1}{h}\ep_{ij}v^jBh+\frac{2}{h}BhBv^i.
	\end{align*}
	Therefore, one has
	\begin{align*}
		\Box_gv^i&=-\frac{1}{h}\ep_{ij}Bv^j-\frac{1}{h}(\pa_iv^j\pa_jh+(\text{div}v)\pa_ih)-\pa_i(\text{div}v)+\Delta v^i+\frac{2}{h^2}BhBv^i+\frac{1}{2h}\mathcal{D}(h,v^i)\\
		&=-\ep_{ia}\pa_a\omega+\zeta(\ep_{ij}Bv^j+v^i)-\frac{1}{2h}\mathcal{D}(h,v^i)
		-\frac{1}{h}\ep_{ij}Bv^j+\frac{1}{h}\ep_{ij}v^jBh.
	\end{align*}
 This finishes the proof of the lemma.
\end{proof}
In view of Lemma \ref{frame}, one has the following decomposition for the wave operator.
\begin{align}
\Box_gf=-\mu^{-1}L\dl f-2\mu^{-1}\slashed{g}^{-1}\tau\cdot Xf+\slashed{\Delta}f-\frac{1}{2}\mu^{-1}\tr\underline{\chi}Lf-\frac{1}{2}\mu^{-1}\tr\chi\dl f.\label{decomposition}
\end{align}
\begin{definition} The second fundamental forms are defined as
\begin{align}
2\eta k=\bar{\mathcal{L}}_Bg,\quad 2\chi=\slashed{\mathcal{L}}_L\sg,\quad 2\theta=\slashed{\mathcal{L}}_{\hat{T}}\sg, \label{defi2ndff}
\end{align}
where $\bar{\mathcal{L}}$ and $\slashed{\mathcal{L}}$ are the Lie derivative $\mathcal{L}$ restricted on $\Sigma_t$ and $S_{t,u}$, respectively. They are related as $\chi=\eta(\slashed{k}-\theta)$ with $k_{ij}=\frac{1}{2}\eta^{-1}(\pa_iv^j+\pa_jv^i)$ and $\theta_{XX}=X^1\hat{X}(X^2)-X^2\hat{X}(X^1)$. We also define $\underline{\chi}=\frac{1}{2}\slashed{\mathcal{L}}_{\dl}\sg=\eta^{-2}\mu \chi+2\kappa\ta.$ 
Note $\chi$ is a $(0,2)$-$S_{t,u}$-tangential tensor where $S_{t,u}$ are $1$-dimensional manifolds. We denote $\chi=\chi(X,X)$ and $\underline{\chi}=\underline{\chi}(X,X)$ for later use.
\end{definition}
\begin{remark}
	For any symmetric $(0,2)$-$S_{t,u}$-tangential tensor $\gamma$, it follows that $\gamma=(\tr\gamma)\sg$ with $\tr\gamma=\sg^{-1}\cdot \gamma.$
\end{remark}
\quad Define three $1$-forms on $S_{t,u}$ as follows.
\begin{align}
\varep=k(X,\hat{T}),\quad \tau=g(D_XL,T),\quad \sigma=-g(D_XT,L).
\end{align}
Clearly, it holds that
\begin{align}
\tau=\mu\varep-\kappa X\eta,\quad and\quad \sigma=\tau+X\mu.\label{oneforms}
\end{align}
The connection coefficients are given as
\begin{align}
	\begin{split}\label{connection}
D_LL&=\mu^{-1}(L\mu)L,\quad D_TL=-\eta^{-1}L\kappa L+\sg^{-1}\sigma X,\quad D_LT=-\eta^{-1}L\kappa L-\sg^{-1}\tau X,\\
D_XL&=-\mu^{-1}\tau L+\tr\chi X=D_LX,\\ 
D_TT&=\eta^{-2}\kappa(T\eta+L\kappa)L+(\eta^{-1}L\kappa+\mu^{-1}T\mu)T-\sg^{-1}\kappa X\kappa X,\\
D_XT&=\eta^{-1}\kappa\varep L+\mu^{-1}\sigma T+\kappa \tr\theta X,\quad D_XX=\eta^{-1}\slashed{k} L+\mu^{-1}\chi T+\frac{1}{2}\sg^{-1}X\sg X.
\end{split}
\end{align}
\begin{definition}\label{commutationvf}
The commutation vector-fields sets are defined as follows:
\begin{align}
\mathcal{Z}:=\{L,T,Y\}\quad \text{and}\quad
\mathcal{P}:=\{L,Y\},
\end{align}
where $Y$ is the $g$-orthogonal projection of $\frac{\pa}{\pa x_2}$ to the tangent space of $S_{t,u}$, i.e., 
\[
Y=\pa_2-g(\pa_2,\hat{T})\hat{T}=\sg^{-1}\hat{X}^2\hat{X}.
\]
\end{definition}
In the following, we commute the wave equations with any $Z\in\mathcal{Z}$ and the vorticity equation with any $P\in\mathcal{P}$.
\begin{definition}
	We define the string of the commutation vectorfields $\mathcal{Z}^{N;\leq M}$ and $\mathcal{P}^{N;\leq M}$ with the non-negative integer $N,M$ as follows.
	\begin{itemize}
		\item $\mathcal{Z}^{N;\leq M}$ means an arbitrary $N$ commutation vectorfields $Z\in\mathcal{Z}$, where the number of $T$ is at least $M$;
		\item $\mathcal{P}^{N;\leq M}$ means an  arbitrary $N$ commutation vectorfields $P\in\mathcal{P}$, where the number of $L$  is at least $M$.
	\end{itemize}
	For simplicity, we also denote $Z^{\leq N}$ as a string of arbitrary commutation vectorfields where the number of $Z\in\mathcal{Z}$ is least $N$ and similar for $P^{\leq N}$.
\end{definition}
\begin{lemma}\label{partialderivatives}
	One can express the partial derivatives in the Cartesian coordinates in terms of the frame $\{L, T, X\}$ as follows.
	\begin{align*}
		\pa_0&=L-\eta^{-1}g_{\a 0}L^{\a}\hat{T}+g_{0k}\hat{X}^k\hat{X},
		\quad \pa_i=g_{ik}\hat{T}^k\hat{T}+g_{ik}\hat{X}^k\hat{X}.
	\end{align*}
\end{lemma}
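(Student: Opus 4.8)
The plan is to expand each Cartesian coordinate field $\pa_\mu$, $\mu\in\{0,1,2\}$, in the frame $\{L,\hat T,\hat X\}$. Since $\{L,T,X\}$ spans the tangent space at every point of $W_{\tilde\da}^{\ast}$ (Lemma \ref{frame}) and $\hat T,\hat X$ are positive rescalings of $T,X$, this frame is a basis, so a unique expansion $\pa_\mu=a_\mu L+b_\mu\hat T+c_\mu\hat X$ exists, and its coefficients are recovered by pairing with the frame vectors through $g$ and inverting the Gram matrix. Reading off the acoustical form of $g$ in Lemma \ref{frame} (with $X=\pa/\pa\vartheta$, $\hat T=\kappa^{-1}T$, $\hat X=\sg^{-1/2}X$, $\kappa=\eta^{-1}\mu$) one obtains $g(L,L)=0$, $g(L,\hat X)=0$, $g(\hat T,\hat X)=0$, $g(\hat T,\hat T)=g(\hat X,\hat X)=1$, and $g(L,\hat T)=\kappa^{-1}g(L,T)=-\mu/\kappa=-\eta$. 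Inverting the resulting $3\times 3$ matrix yields the contraction formulas
\[
b_\mu=-\eta^{-1}g(\pa_\mu,L),\qquad c_\mu=g(\pa_\mu,\hat X),\qquad a_\mu=-\eta^{-2}g(\pa_\mu,L)-\eta^{-1}g(\pa_\mu,\hat T).
\]

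Next I would evaluate these pairings in Cartesian components via $g(\pa_\mu,V)=g_{\mu\a}V^\a$, using the explicit metric entries $g_{00}=-\eta^2+|v|^2$, $g_{0i}=-v^i$, $g_{ij}=\da_{ij}$ coming from the definition of $g$, together with the structural identities $L^0=Lt=1$ (from $L=-\mu g^{\a\be}\pa_\a u\,\pa_\be$ and $\mu^{-1}=-g^{\a\be}\pa_\a t\,\pa_\be u$), $\hat T^0=\hat T t=0$, and $\hat T^i=\eta^{-1}(v^i-L^i)$ (from $B=L+\eta\hat T$ in Lemma \ref{frame} and $B=\pa_t+v^i\pa_i$). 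Because $\hat X$ is $\Sigma_t$-tangential ($\hat X^0=0$), one immediately gets $c_\mu=g_{\mu k}\hat X^k$, which is the $\hat X$-component in both asserted formulas; for $\mu=0$ the identity $b_0=-\eta^{-1}g_{\a 0}L^\a$ is exactly the displayed formula for $b_\mu$. It then remains to check that for $\mu=i$ the coefficient $b_i=-\eta^{-1}g_{i\a}L^\a$ equals $g_{ik}\hat T^k$ (which follows from $g_{ik}v^k=v^i=-g_{i0}$ and $L^0=1$), and that $a_0=1$, $a_i=0$; these last two are obtained by inserting $\hat T^i=\eta^{-1}(v^i-L^i)$ and observing the cancellation between the two terms defining $a_\mu$. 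Alternatively, the same decomposition can be read directly off $\pa_\mu=g_{\mu\a}g^{\a\be}\pa_\be$ after substituting the expression for $g^{\a\be}$ from Lemma \ref{frame} and rewriting $\dl=\eta^{-2}\mu L+2\kappa\hat T$.

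The whole argument is elementary bookkeeping. The only spot deserving attention is the verification that the $L$-coefficients reduce to $1$ for $\pa_0$ and to $0$ for $\pa_i$: this is precisely where one needs $L^0=1$ and the decomposition $\hat T^i=\eta^{-1}(v^i-L^i)$ of the unit normal of $S_{t,u}$ within $\Sigma_t$, and no genuine obstacle is expected beyond keeping careful track of indices.
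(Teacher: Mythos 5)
Your proposal is correct and follows essentially the same route as the paper: expand $\pa_\mu$ in the frame $\{L,\hat T,\hat X\}$, use the inner products $g(L,L)=g(L,\hat X)=g(\hat T,\hat X)=0$, $g(\hat T,\hat T)=g(\hat X,\hat X)=1$, $g(L,\hat T)=-\eta$ to set up the (triangular) linear system for the coefficients, and evaluate the pairings $g(\pa_\mu,\cdot)$ in Cartesian components. Your explicit verification that $a_0=1$ and $a_i=0$ via $L^0=1$ and $\hat T^i=\eta^{-1}(v^i-L^i)$ is exactly the bookkeeping the paper leaves implicit, and your signs are consistent with the lemma as stated.
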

\begin{proof}
Suppose $\pa_0=aL+b\hat{T}+c\hat{X}$ where $a,b$, and $c$ are functions to be determined. It follows from Lemma \ref{frame} that
\begin{align}
	\begin{split}
 -\eta b&=g(\pa_0,L)=g_{\a\be}\da^{\a}_0L^{\be}=g_{0\be}L^{\be},\\
 -\eta a+b&=g(\pa_0,\hat{T})=g_{\a\be}\da^{\a}_0\hat{T}^{\be}=g_{0i}\hat{T}^i,\\
 c&=g(\pa_0,\hat{X})=g_{\a\be}\da^{\a}_0\hat{X}^{\be}=g_{0i}\hat{X}^i.
 \end{split}
\end{align}
These equations show \[a=1,\quad b=\eta^{-1}g_{\a 0}L^{\a}, \quad c=g_{0i}\hat{X}^i.\] Similar proof leads to the expression of $\pa_i$.	 
\end{proof}

The Riemann curvature tensor $\mathcal{R}$ in arbitrary coordinates is defined as
\begin{align*}
  \mathcal{R}_{\a\be\ga\da}&=\mathcal{R}_{\a\be\ga\da}^{(2)}
  +\mathcal{R}_{\a\be\ga\da}^{(1)},
\end{align*}
where 
\begin{align}
	\begin{split}
		\mathcal{R}_{\a\be\ga\da}^{(2)}:&=\frac{1}{2}(\pa^2_{\a\da}g_{\be\ga}+\pa^2_{\be\ga}g_{\a\da}-\pa^2_{\a\ga}g_{\be\da}-\pa^2_{\be\da}g_{\a\ga}),\\
		\mathcal{R}_{\a\be\ga\da}^{(1)}:&=g^{\kappa\lambda}(\Gamma_{\a\da\kappa}\Gamma_{\be\ga\lambda}-\Gamma_{\a\ga\kappa}\Gamma_{\be\da\lambda}).
	\end{split}
\end{align}
In particular, one has
\begin{align}
\begin{split}\label{curvature1}
  \mathcal{R}_{0i0j}^{(2)}
  &=
  \frac{1}{2}(\pa^2_{ij}h-\pa_0(\pa_iv^j+\pa_jv^i)-\pa_{ij}^2\sum(v^k)^2),\\
  \mathcal{R}_{0ijk}^{(2)}
  &=\frac{1}{2}(\pa_{ik}v^j-\pa_{ij}v^k),\quad
  \mathcal{R}_{ijkl}^{(2)}=0.
  \end{split}
\end{align}
Due to Lemma \ref{frame}, the possible singular term in $\mathcal{R}_{\a\be\ga\da}^{(1)}$ is contained in 
\[
\mathcal{R}_{\a\be\ga\da}^{(1),[s]}:=-\frac{1}{4}\mu^{-1}
(Lg_{\a\da}Tg_{\be\ga}-Lg_{\a\ga}Tg_{\be\da})-\frac{1}{4}\mu^{-1}(Tg_{\a\da}Lg_{\be\ga}-Tg_{\a\ga}L_{\be\da}).
\]
One has
\begin{equation}\label{curvature2}
\begin{aligned}
  \mathcal{R}_{0i0j}^{(1),[s]}
  &=-\frac{1}{4}\mu^{-1}(Lv^jTv^i+Tv^jLv^i),\\
  \mathcal{R}_{0ijk}^{(1),[s]}&=0,\quad 
  \mathcal{R}_{ijkl}^{(0),[s]}=0.
\end{aligned}
\end{equation}
\subsection{Some structure equations}\label{section24}
In this subsection, the structure equations of acoustical geometry are obtained. These equations will be used in later analysis.
\begin{lemma}(\cite[Proposition 2.2]{CZD1}) 
$\mu$ and $\chi$ satisfy the following transport equations.
\begin{align}
\begin{split}\label{equationmu}
L\mu&=m+\mu e,
\end{split}
\end{align}
\begin{align}
\begin{split}\label{equationchi}
L\chi&=(\mu^{-1}L\mu)\chi-\a+\chi\cdot\chi=e\chi-\a'+\chi\cdot\chi.
\end{split}
\end{align}
where 
\begin{align}
m&=\frac{3}{2}\eta\hat{T}^i Tv^i=-\frac{3}{2}Th,\quad e=\frac{1}{2}\eta^{-1}\hat{T}^iLv^i+\frac{1}{2}\eta^{-2}Lh-\ep_{ij}\eta^{-1}\hat{T}^iv^j,\\
	\a&=\mathcal{R}(L,X,L,X),\quad \a'=\a-\frac{3}{2}\mu^{-1}\chi\eta\hat{T}^iTv^i.
\end{align}
\end{lemma}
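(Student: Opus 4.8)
The two identities are the standard transport equations of the acoustical geometry—one for the inverse foliation density $\mu$, one for the null second fundamental form $\chi$ of the cross-sections $S_{t,u}$—and they are recorded in \cite[Proposition 2.2]{CZD1} (compare \cite{christodoulou2014compressible}). The plan is to reproduce the derivation in the present notation, using Lemma \ref{frame}, the connection coefficients \eqref{connection}, the Christoffel symbols \eqref{christoffel}, and the fluid equations \eqref{equationh}--\eqref{equationvi1}.

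\emph{The equation for $\mu$.} I would start from the defining relation $\mu^{-1}=-g^{\a\be}\pa_{\a}t\,\pa_{\be}u=\hat{L}t$, where $\hat{L}^{\a}=-g^{\a\be}\pa_{\be}u=\mu^{-1}L^{\a}$, and differentiate along $L=\mu\hat{L}$. Since $t$ is a coordinate function, $\pa_{\a}t$ has constant components and $L(\pa_{\a}t)=0$, so only derivatives of $g^{\a\be}$ and of $\hat{L}$ contribute. The second derivatives of $u$ that appear are removed with the eikonal equation $g^{\a\be}\pa_{\a}u\,\pa_{\be}u=0$ together with its first coordinate derivative, which turns $\hat{L}^{\be}\pa_{\be}\pa_{\a}u$ into $\tfrac12(\pa_{\a}g^{\kappa\lambda})\pa_{\kappa}u\,\pa_{\lambda}u$. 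The outcome is an expression for $L\mu$ that is linear in the frame ($L$- and $T$-) derivatives of the metric components $g_{\a\be}$, hence of $(v^i,h)$, which one reads off through \eqref{christoffel} and the frame decomposition of $\pa_{\a}$ in Lemma \ref{partialderivatives}. Among the resulting terms, those carrying an explicit factor $\mu^{-1}$ cancel except for a single one; since $g_{00}=-\eta^{2}=-h$, this surviving term is exactly $m=-\tfrac32Th=\tfrac32\eta\hat{T}^{i}Tv^{i}$, the numerical factor $\tfrac32$ being dictated by the equation of state (sound speed $\eta=\sqrt h$). All remaining terms carry an overall factor $\mu$, and after simplification with \eqref{equationh}--\eqref{equationvi1} their coefficient is precisely the function $e$ displayed in the statement; this gives \eqref{equationmu}.

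\emph{The equation for $\chi$.} Since $\dim S_{t,u}=1$, $\chi$ is determined by the scalar $\chi(X,X)=g(D_XL,X)$, so it suffices to compute $L\big(g(D_XL,X)\big)$. From \eqref{connection} one has $D_LX=D_XL$, hence $[L,X]=0$; combining this with the Ricci identity $D_LD_XL=D_XD_LL+R(L,X)L$ and with $D_LL=(\mu^{-1}L\mu)L$ gives
\[
L\big(g(D_XL,X)\big)=g(D_XD_LL,X)+g(R(L,X)L,X)+g(D_XL,D_LX).
\]
Because $g(L,X)=g(L,L)=0$, the first term on the right equals $(\mu^{-1}L\mu)\chi$; the last term equals $g(D_XL,D_XL)=\chi\cdot\chi$ by the explicit form of $D_XL$ in \eqref{connection}; and the curvature term equals $g(R(L,X)L,X)=-\mathcal{R}(L,X,L,X)=-\a$ with the sign conventions of \eqref{curvature1}--\eqref{curvature2}. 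This is the first equality in \eqref{equationchi}. The second equality is then purely algebraic: substitute $\mu^{-1}L\mu=e+\mu^{-1}m$ from \eqref{equationmu} and use the definition $\a'=\a-\tfrac32\mu^{-1}\chi\,\eta\hat{T}^{i}Tv^{i}=\a-\mu^{-1}m\chi$.

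The main obstacle is the bookkeeping in the $\mu$-computation: after invoking the eikonal equation one must verify that exactly one $\mu^{-1}$-weighted term survives all cancellations and identify it correctly as $-\tfrac32Th$, equivalently $\tfrac32\eta\hat{T}^{i}Tv^{i}$ (the passage between the two forms uses the momentum equations \eqref{equationvi1}), while reorganizing the remainder into $\mu e$ with precisely the stated $e$; this requires careful tracking of the Christoffel symbols \eqref{christoffel}, the frame expansions of Lemma \ref{partialderivatives}, and of all signs. By contrast, once the connection coefficients \eqref{connection} and the curvature conventions \eqref{curvature1}--\eqref{curvature2} are in hand, the propagation equation for $\chi$ is comparatively routine.
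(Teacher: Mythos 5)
The paper itself offers no proof of this lemma --- it is quoted verbatim from \cite[Proposition 2.2]{CZD1} --- and your reconstruction follows the standard Christodoulou--Miao derivation that the cited reference uses, so in substance you are on the same route. Your treatment of \eqref{equationchi} is correct and complete: $[L,X]=0$ in acoustical coordinates, the Ricci identity together with $D_LL=(\mu^{-1}L\mu)L$ and $D_XL=-\mu^{-1}\tau L+\tr\chi X$ yields the three terms, and the second equality is indeed just the substitution $\mu^{-1}L\mu=e+\mu^{-1}m$ combined with $\a'=\a-\mu^{-1}m\chi$. One caveat on the $\mu$-equation: your description of the source of $m$ is too coarse. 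After eliminating $\hat L^{\be}\pa_{\be}\pa_{\ga}u$ via the differentiated eikonal equation and converting $\pa_{\ga}u$ into frame covectors, the $T$-derivative (i.e.\ non-$\mu$-weighted) part of $L\mu$ comes from \emph{two} contributions --- one proportional to $Bh$ (via $B(\eta^{-2})$, since $\eta^2=h$) giving $-\tfrac12 Th$, and one proportional to $\hat T^iBv^i$ giving $\eta\hat T^iTv^i$ --- and these are combined into $-\tfrac32 Th$ (equivalently $\tfrac32\eta\hat T^iTv^i$) only after invoking the momentum equation $\pa_ih=\ep_{ij}v^j-Bv^i$; correspondingly the two displayed forms of $m$ agree only up to $O(\mu)$ terms that are reabsorbed into $\mu e$. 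So the constant $\tfrac32$ is $\tfrac12+1$ from these two sources, not a single surviving $g_{00}$-term; this is worth getting right since it is precisely the constant driving $\mu\to0$, but it does not affect the validity of your overall argument.
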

Thus,
\begin{align}
    L\tr\chi&=e\tr\chi-\tr\a'+(\tr\chi)^2.
\end{align}
It follows from \eqref{curvature1} and \eqref{curvature2} that 
$\a$ can be computed as follows.
\begin{align}
\begin{split}\label{curvaturea}
\a&=\a^{[P]}+\a^{[N]}, \quad \a^{[N]}=f(X^i,L^{\a},\hat{T}^i,\fai)( L\fai X\fai+(L\fai)^2+(X\fai)^2),\\
\a^{[P]}&=\frac{1}{2}D_{XX}^2h-\frac{1}{2}D_{XX}^2\sum(v^k)^2+
L^jD_{XX}^2v^j-X^jD_{LX}^2v^j-\frac{1}{2}\mu^{-1}X^iX^jLv^iTv^j,
\end{split}
\end{align}
where $f$ is some smooth function. It follows from \eqref{connection} that the only singular term in $\a^{[P]}$ is $\frac{3}{2}\mu^{-1}\chi\eta\hat{T}^iTv^i$. Hence, $\a'$ is regular as $\mu\to 0$.
\begin{lemma}
	The following structure equations involving $\tr\chi$ and $\hat{T}^i$ will be used.
\begin{align}
\begin{split}\label{equationtchi1}
T\tr\chi&=\sg^{-1}X\sigma+\mu^{-1}\tau\cdot\sg^{-1}\cdot\sigma-\eta^{-1}L(\eta^{-1}\mu)\tr\chi-\eta^{-1}\mu \tr\theta\cdot \tr\chi-\mathcal{R}_{L\hat{X}T\hat{X}},
\end{split}
\end{align}
\begin{align}\label{LTi}
L\hat{T}^i=(-\hat{T}^i\hat{X}v^i+\hat{X}\eta)\hat{X}^i,\quad T\hat{T}^i=-\hat{X}\kappa\hat{X}^i,\quad X\hat{T}^i=\tr\theta X^i.
\end{align}
Note that due to \eqref{curvature1} and \eqref{curvature2}, there is no singular term in $\mathcal{R}_{L\hat{X}T\hat{X}}$ and 
\begin{align}
	\mathcal{R}^{(2)}_{L\hat{X}T\hat{X}}=L^0\hat{X}^iT^j\hat{X}^k\mathcal{R}_{0ijk}^{(2)}=-\frac{1}{2}\hat{X}^i\hat{X}Tv^i+\frac{1}{2}T^i\hat{X}^2v^i+\text{l.o.ts}.
\end{align} Hence, in view of  \eqref{oneforms}, one rewrites \eqref{equationtchi1} as
\begin{align}
	T\tr\chi &-\slashed{\Delta}\mu=\tfrac{1}{2}\hat{X}^1\hat{X}Tv^1+\tfrac{1}{2}\hat{X}^2\hat{X}Tv^2+\mu \hat{X}^2\fai+\text{l.o.ts}.\label{equationtchi}
\end{align}
\end{lemma}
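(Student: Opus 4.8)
The plan is to establish the three groups of identities by direct computation from the connection coefficients \eqref{connection}, the Christoffel symbols \eqref{christoffel}, and the one-form relations \eqref{oneforms}.

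I would begin with \eqref{LTi}, the easiest. Using $\hat{T}^i=\ep_{ij}\hat{X}^j$ and $\hat{X}^j=\sg^{-\frac12}X^j$ from Lemma \ref{frame}, differentiate: for instance $L\hat{T}^i=\ep_{ij}L\hat{X}^j$ with $L\hat{X}^j=(L\sg^{-\frac12})X^j+\sg^{-\frac12}LX^j$. Here $L\sg^{-\frac12}=-\sg^{-\frac12}\tr\chi$ because $L\sg=2\sg\tr\chi$ (which itself follows from $2\chi(X,X)=(\slashed{\mathcal L}_L\sg)(X,X)$ together with $[L,X]=0$), and $LX^j=(D_LX)^j-\Gamma^j_{\be\ga}L^\be X^\ga$ with $D_LX=D_XL=-\mu^{-1}\tau L+\tr\chi X$ from \eqref{connection}. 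The $\tr\chi$ contributions cancel, and since $\tau=\mu(\varep-\eta^{-1}X\eta)$ by \eqref{oneforms} the coefficient $\mu^{-1}\tau$ is regular; evaluating the Christoffel contraction via \eqref{christoffel} then yields $L\hat T^i=(-\hat T^k\hat X v^k+\hat X\eta)\hat X^i$. The identities for $T\hat T^i$ and $X\hat T^i$ are obtained in the same manner (and $X\hat T^i=\tr\theta X^i$ is essentially the definition of $\theta$ in \eqref{defi2ndff}).

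For \eqref{equationtchi1} I would use a Codazzi-type identity on the one-dimensional leaves $S_{t,u}$. Starting from $\sg\,\tr\chi=g(D_XL,X)$ and expanding the curvature component $\mathcal R(X,T,L,X)=g\big(D_XD_TL-D_TD_XL-D_{[X,T]}L,\,X\big)$, I insert $D_TL=-\eta^{-1}(L\kappa)L+\sg^{-1}\sigma X$ and $D_XL=-\mu^{-1}\tau L+\tr\chi X$ from \eqref{connection}, use that $[X,T]$ is a multiple of $X$ and hence $S_{t,u}$-tangential, and read off the coefficient of $X$. The term $D_XD_TL$ produces $\sg^{-1}X\sigma$ and $\sg^{-1}\sigma\,\slashed{D}_XX$; the term $D_TD_XL$ produces $-T\tr\chi$, $\tr\chi\,\slashed{D}_TX$, and first-order pieces built from $\eta^{-1}L\kappa$ and $\mu^{-1}\tau$; collecting everything with the formulas for $D_XX$ and $D_TX$ in \eqref{connection} and with $\dl=\eta^{-2}\mu L+2T$ gives \eqref{equationtchi1}. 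For the accompanying curvature remark, the purely spatial curvature vanishes and $L^i\hat X^i$ is lower order, so $\mathcal R^{(2)}_{L\hat XT\hat X}=L^0\hat X^iT^j\hat X^k\mathcal R^{(2)}_{0ijk}$; inserting \eqref{curvature1} with $L^0=1$ gives $-\tfrac12\hat X^i\hat X Tv^i+\tfrac12 T^i\hat X^2v^i$ up to lower order, while $\mathcal R^{(1),[s]}_{L\hat XT\hat X}=0$ by \eqref{curvature2}, so $\mathcal R_{L\hat XT\hat X}$ carries no $\mu^{-1}$ singularity.

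Finally, \eqref{equationtchi} is a rewriting of \eqref{equationtchi1}. By \eqref{oneforms}, $\sigma=\tau+X\mu=\mu(\varep-\eta^{-1}X\eta)+X\mu$, so $\sg^{-1}X\sigma$ differs from $\slashed{\Delta}\mu=\sg^{-1}\big(X^2\mu-\tfrac12\sg^{-1}(X\sg)X\mu\big)$ only by $\sg^{-1}\mu\,X\varep$, $\sg^{-1}\mu\,X(\eta^{-1}X\eta)$, and terms equal to $X\mu$ times a bounded factor; since $\varep$ and $\eta$ involve first derivatives of $(v^i,\p)$, the two $\mu$-weighted terms have the schematic form $\mu\hat X^2\fai$ plus lower order. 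Substituting the curvature expression above (so that $-\mathcal R^{(2)}_{L\hat XT\hat X}$ yields $+\tfrac12\hat X^1\hat X Tv^1+\tfrac12\hat X^2\hat X Tv^2$, and $T^i=\kappa\hat T^i$ with $\kappa=\eta^{-1}\mu$ contributes another $\mu\hat X^2\fai$), and noting that $\mu^{-1}\tau\cdot\sg^{-1}\sigma$, $\eta^{-1}L(\eta^{-1}\mu)\tr\chi$ and $\eta^{-1}\mu\tr\theta\,\tr\chi$ are bounded (hence lower order) under the bootstrap assumptions, produces \eqref{equationtchi}. I expect the Codazzi computation for $T\tr\chi$ to be the main obstacle: one must track every connection coefficient in \eqref{connection}, verify that the genuine $\mu^{-1}$ contributions assemble precisely as in \eqref{equationtchi1} (and, in the curvature term, cancel), and carefully sort the output into the displayed terms versus those that are lower order or of the schematic shape $\mu\hat X^2\fai$.
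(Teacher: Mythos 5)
Your proposal is correct and follows essentially the same route as the paper: \eqref{equationtchi1} is obtained from the same curvature--commutation identity $g(D_TD_XL,X)=g(D_XD_TL+D_{[T,X]}L-\mathcal{R}(T,X)L,X)$ combined with the connection coefficients \eqref{connection} and the relation between $T\tr\chi$ and $\slashed{\mathcal{L}}_T\chi$, while \eqref{equationtchi} follows from \eqref{oneforms} and \eqref{curvature1}--\eqref{curvature2} exactly as you describe. The only cosmetic difference is in \eqref{LTi}, where the paper writes $L\hat{T}=a\hat{T}+b\hat{X}$ and determines $a,b$ by taking $g$-inner products with $\hat{T}$ and $\hat{X}$ using the all-lower-index Christoffel symbols \eqref{christoffel}, rather than differentiating $\ep_{ij}\hat{X}^j$ directly; both reduce to the same data.
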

\begin{proof}
	It follows from the definition of Levi-Civita connection and \eqref{defi2ndff} that 
	\begin{align}
		\begin{split}
		T\chi
  &=g(D_XD_TL+D_{[T,X]}L-\mathcal{R}(T,X)L,X)+g(D_XL,D_XT+[T,X])\\
		&=g(D_XD_TL,X)+g(D_XL,D_XT)-\mathcal{R}_{LXTX}+\chi([T,X],X)+\chi(X,[T,X]).
		\end{split}
	\end{align}
	Hence, in view of \eqref{connection}, one has
	\begin{align}
		\begin{split}\label{liechi}
			(\slashed{\mathcal{L}}_T\chi)(X,X)&=T\chi-\chi([T,X],X)-\chi(X,[T,X])\\
			&=X\sigma-\eta^{-1}(L\kappa)X+\mu^{-1}\tau\cdot \sigma+\sg\kappa \tr\theta\cdot \tr\chi-\mathcal{R}_{LXTX}.
		\end{split}
	\end{align}
	Then, \eqref{equationtchi1} follows from \eqref{liechi} and $T\tr\chi=\sg^{-1}\slashed{\mathcal{L}}_T\chi(X,X)-2\kappa \tr\chi\cdot \tr\theta$.
 
 Next, we turn to the proof for \eqref{LTi}. We give the details for $LT^i$ and deriving $T\hat{T}$ and $X\hat{T}$ is similar. Since $T^0=0$, we suppose $L\hat{T}=a\hat{T}+b\hat{X}$ where $a$ and $b$ are functions to be determined. It follows from \eqref{connection} and \eqref{christoffel} that
	\begin{align*}
		a&=g(L\hat{T},\hat{T})=g(D_L\hat{T},\hat{T})-L^{\a}\hat{T}^{\be}\hat{T}^{\ga}\Gamma_{\a\be\ga}=\frac{1}{2}Lg(\hat{T},\hat{T})-\hat{T}^i\hat{T}^j\Gamma_{0ij}=0,\\
		b&=g(L\hat{T},\hat{X})=\kappa^{-1}g(D_LT,\hat{X})-\hat{T}^i\hat{X}^j\Gamma_{0ij}
   =-\hat{T}^i\hat{X}v^i+\hat{X}\eta.
	\end{align*}
This finishes the proof of the lemma.	
\end{proof}
\subsection{Deformation tensor and volume forms}\label{section25}
The deformation tensor associated with a vector filed $Z$ with respect to $g$ is defined as: $\zpi_{\a\be}=D_{\a}Z_{\be}+D_{\be}Z_{\be}$. 
\begin{align}
  [L,T]=\tpi_L^{\sharp},\quad [L,Y]=\ypi_{L}^{\sharp},\quad [T,Y]=\ypi_{T}^{\sharp}.\label{commutator1}
\end{align}
Here $\zpi_{L}^{\sharp}$ denotes the $\sg$-dual of the tensor $\zpi_L$. The deformation tensors of $Z\in\mathcal{Z}$ are given as follows.
\begin{table}[htbp]
\setlength{\tabcolsep}{12pt}
\setlength{\belowcaptionskip}{0.2cm}
  \centering
  \caption{The deformation tensors of the commutation vector fields}\label{table1}
  \begin{tabular}{c|ccc}
  \hline
  $\zpi$ & $\lpi$ &$\tpi$ &$\ypi$\\
  \hline
  $\zpi_{LL}$ & 0 & 0 &$0$\\
  $\zpi_{LT}$ &$-L\mu$ &$-T\mu$ &$-Y\mu$\\
  $\zpi_{L\dl}$ &$-2L\mu$ &$-2T\mu$ &$-2Y\mu$\\
  $\zpi_{TT}$ &$2\kappa L\kappa$ &$2\kappa T\kappa$ &$2\kappa Y\kappa$\\
  $\zpi_{\dl\dl}$ & $4\mu L(\eta^{-2}\mu)$ & $4\mu T(\eta^{-2}\mu)$ &$4\mu Y(\eta^{-2}\mu)$\\
  $\zpi_{LX}$ &0 &$-(\tau+\sigma)$ &$\sg^{\frac{1}{2}}L\hat{T}^1-X^2\tr\chi$\\
  $\zpi_{TX}$ &$\sigma+\tau$ &0 &$\sg^{\frac{1}{2}}T\hat{T}^1-\kappa X^2 \tr\theta$\\
  $\zpi_{\dl X}$ & $2(\sigma+\tau)$ &$-\eta^{-2}\mu(\tau+\sigma)$ & $\sg^{\frac{1}{2}}\dl\hat{T}^1-2\kappa X^2\tr\slashed{k}$\\
  $\zpi_{XX}$ &$2\chi$ &$2\kappa\theta$ & $2\sg^{\frac{1}{2}}X\hat{T}^1$\\
  \hline
  \end{tabular}
\end{table}\\
Note that the area form of $S_{t,u}$ is given as $\sqrt{\sg}d\vartheta$. Then, we define the following area and volume forms.
\begin{definition} 
\begin{align*}
	\int_{S_{t,u}}f&=\int_{\vartheta'\in \mathbb{S}^1}\sqrt{\sg}f\ d\vartheta',\quad\quad\quad\quad \int_{\Sigma_t^u}f=\int_0^t
	\int_{\vartheta'\in \mathbb{S}^1}\sqrt{\sg}f\ d\vartheta'dt',\\
	\int_{C_u^t}f&=\int_0^u\int_{\vartheta'\in \mathbb{S}^1}\sqrt{\sg}f\ d\vartheta'du',\quad \int_{W_t^u}f=
	\int_0^t\int_0^u\int_{\vartheta'\in \mathbb{S}^1}\sqrt{\sg}f\ d\vartheta' du' dt'.
\end{align*}
\end{definition}

The following two elementary lemmas of calculus on the manifold hold, which can be verified directly.
\begin{lemma}\label{change}
	Let $f$ and $g$ be arbitrary functions defined on $S_{t,u}$ and $X$ be an $S_{t,u}$ tangential vector field. It holds that
	\begin{align}
		\int_{S_{t,u}}f(Xg)=-\int_{S_{t,u}}\left\{g(Xf)+\dfrac{1}{2}(\tr\prescript{X}{}{\slashed{\pi}})fg\right\}.
	\end{align}
\end{lemma}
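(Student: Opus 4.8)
The plan is to recognize the identity as integration by parts on the closed one‑dimensional manifold $S_{t,u}$: I would rewrite the entire integrand as an intrinsic divergence on $S_{t,u}$, whose integral then vanishes because $S_{t,u}$ has no boundary. Two ingredients are needed. First, since $S_{t,u}$ is diffeomorphic to $\mathbb{S}^1$ and carries the area form $\sqrt{\sg}\,d\vartheta'$, the coordinate expression $\slashed{\text{div}}W=\tfrac{1}{\sqrt{\sg}}\pa_{\vartheta'}(\sqrt{\sg}\,W^{\vartheta'})$ gives, for every $S_{t,u}$-tangential vector field $W$, the identity $\int_{S_{t,u}}\slashed{\text{div}}W=\int_{\vartheta'\in\mathbb{S}^1}\pa_{\vartheta'}(\sqrt{\sg}\,W^{\vartheta'})\,d\vartheta'=0$ by periodicity in $\vartheta'$. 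Second, because the projected deformation tensor of $X$ is $\prescript{X}{}{\slashed{\pi}}=\slashed{\mathcal{L}}_X\sg$, contracting with $\sg^{-1}$ and using the Levi‑Civita connection $\slashed{D}$ yields $\tr\prescript{X}{}{\slashed{\pi}}=2\,\slashed{\text{div}}X$; here I use that an $S_{t,u}$-tangential $X$ is $X=X^{\vartheta}\pa_{\vartheta}$ in the acoustical coordinates $\{t,u,\vartheta\}$, so every operation stays internal to $S_{t,u}$.

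Given these, the computation is immediate: by the Leibniz rule for $X$ together with $\slashed{\text{div}}(\phi W)=W\phi+\phi\,\slashed{\text{div}}W$ applied with $\phi=fg$ and $W=X$,
\begin{align*}
f(Xg)+g(Xf)+\tfrac12\big(\tr\prescript{X}{}{\slashed{\pi}}\big)fg=X(fg)+(\slashed{\text{div}}X)(fg)=\slashed{\text{div}}\big((fg)X\big).
\end{align*}
Integrating over $S_{t,u}$ and applying the first ingredient with $W=(fg)X$ gives $\int_{S_{t,u}}\{f(Xg)+g(Xf)+\tfrac12(\tr\prescript{X}{}{\slashed{\pi}})fg\}=0$, which is exactly the asserted formula after transposing the term $\int_{S_{t,u}}f(Xg)$.

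I do not expect a genuine obstacle here; the only points that deserve a careful line are the identification $\tfrac12\tr\prescript{X}{}{\slashed{\pi}}=\slashed{\text{div}}X$ (equivalently, that $\tfrac12\tr\prescript{X}{}{\slashed{\pi}}$ is precisely the factor produced when $X$ differentiates the density $\sqrt{\sg}$), and the fact that $S_{t,u}$ is compact without boundary, so that no boundary term survives. Everything else reduces to the one‑dimensional fundamental theorem of calculus on the circle $\mathbb{S}^1$.
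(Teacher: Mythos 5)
Your proof is correct and is exactly the direct verification the paper has in mind (the paper states the lemma without proof, remarking only that it "can be verified directly"): the identity $\tfrac12\tr\prescript{X}{}{\slashed{\pi}}=\slashed{\text{div}}X$ for a tangential $X$, the Leibniz rule, and the vanishing of $\int_{S_{t,u}}\slashed{\text{div}}\bigl((fg)X\bigr)$ on the closed curve $S_{t,u}\cong\mathbb{S}^1$ together give the statement. No gaps.
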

\begin{lemma}\label{integralbypartsl}
	For any function $f$ on $S_{t,u}$, it holds that
	\begin{equation}\label{patintegralbyparts}
	\frac{\pa}{\pa t}\int_{S_{t,u}}f=\int_{S_{t,u}}(L+\tr\chi)f \quad \text{and}\quad 
	\frac{\pa}{\pa u}\int_{S_{t,u}}f=\int_{S_{t,u}}(T+\kappa \tr\theta)f.
	\end{equation}
\end{lemma}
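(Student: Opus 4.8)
The plan is to differentiate the explicit expression $\int_{S_{t,u}}f=\int_{\mathbb{S}^1}\sqrt{\sg}\,f\,d\vartheta'$ under the integral sign, working in the acoustical coordinates $(t,u,\vartheta)$ in which $L$ and $\tfrac{\pa}{\pa u}$ act simply. Recall from the construction of the coordinate system that $Lt=1$, $Lu=0$, $L\vartheta=0$, so $L=\tfrac{\pa}{\pa t}$, and from Lemma \ref{frame} that $T=\tfrac{\pa}{\pa u}-\Xi$ with $\Xi=\Xi^{X}X$ an $S_{t,u}$-tangential vector field. The preliminary computation I would do first is to evaluate the derivatives of the area density $\sqrt{\sg}$. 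Since $S_{t,u}$ is one-dimensional one has $\tr\chi=\sg^{-1}\chi$, and combining $2\chi=\slashed{\mathcal{L}}_L\sg$ with $[L,X]=0$ gives $L\sg=2\chi$, hence $L\sqrt{\sg}=\sqrt{\sg}\,\tr\chi$. Similarly, writing $\tfrac{\pa}{\pa u}=T+\Xi$ and using $[\tfrac{\pa}{\pa u},X]=0$ gives $\tfrac{\pa}{\pa u}\sg=(\slashed{\mathcal{L}}_T\sg)(X,X)+(\slashed{\mathcal{L}}_\Xi\sg)(X,X)$; the definitions in \eqref{defi2ndff} (with $T=\kappa\hat T$ and $\iota_{\hat T}\sg=0$) give $\slashed{\mathcal{L}}_T\sg=2\kappa\theta$, while $\slashed{\mathcal{L}}_\Xi\sg=\prescript{\Xi}{}{\slashed{\pi}}$ has trace $2\slashed{\text{div}}\,\Xi$, so that $\tfrac{\pa}{\pa u}\sqrt{\sg}=\sqrt{\sg}\,(\kappa\tr\theta+\slashed{\text{div}}\,\Xi)$.

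Granting these, the first identity is immediate:
\[
\frac{\pa}{\pa t}\int_{S_{t,u}}f=\int_{\mathbb{S}^1}\Big((L\sqrt{\sg})f+\sqrt{\sg}\,Lf\Big)\,d\vartheta'=\int_{\mathbb{S}^1}\sqrt{\sg}\,\big((\tr\chi)f+Lf\big)\,d\vartheta'=\int_{S_{t,u}}(L+\tr\chi)f.
\]
For the second identity, differentiating under the integral and inserting $\tfrac{\pa}{\pa u}=T+\Xi$ together with the density computation yields
\[
\frac{\pa}{\pa u}\int_{S_{t,u}}f=\int_{S_{t,u}}(T+\kappa\tr\theta)f+\int_{S_{t,u}}\big[(\slashed{\text{div}}\,\Xi)f+\Xi f\big],
\]
so it remains only to check that the last integral vanishes. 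This is the single substantive point of the proof, and it is nothing but Lemma \ref{change} applied to the $S_{t,u}$-tangential field $\Xi$ with the two functions chosen to be $1$ and $f$: it yields $\int_{S_{t,u}}\Xi f=-\int_{S_{t,u}}\tfrac12\big(\tr\prescript{\Xi}{}{\slashed{\pi}}\big)f=-\int_{S_{t,u}}(\slashed{\text{div}}\,\Xi)f$ (equivalently, $\Xi f+(\slashed{\text{div}}\,\Xi)f=\slashed{\text{div}}\,(f\Xi)$ integrates to zero over the closed curve $S_{t,u}$). The two terms cancel and the formula follows.

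I do not expect a real obstacle here; the only things needing care are bookkeeping. One must be sure $\Xi$ is genuinely tangent to each $S_{t,u}$ — a consequence of $T\vartheta=0$ and $L\vartheta=0$, which force $\Xi$ to be a multiple of $X=\tfrac{\pa}{\pa\vartheta}$ — so that the restricted and ambient Lie derivatives agree on tangential pairs and the bracket identities $[L,X]=[\tfrac{\pa}{\pa u},X]=0$ are legitimate; and one must correctly match the second-fundamental-form conventions of \eqref{defi2ndff} with the area-density derivatives $L\sqrt{\sg}$ and $\tfrac{\pa}{\pa u}\sqrt{\sg}$ (using $\underline{\chi}=\eta^{-2}\mu\chi+2\kappa\theta$ and $\dl=\eta^{-2}\mu L+2T$ to extract $\slashed{\mathcal{L}}_T\sg=2\kappa\theta$ if one prefers not to invoke $\iota_{\hat T}\sg=0$ directly). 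Everything else is elementary calculus on the one-dimensional fibres $S_{t,u}$.
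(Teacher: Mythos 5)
Your proof is correct and is precisely the direct verification the paper alludes to (it states the lemma "can be verified directly" and gives no proof). The computation $L\sqrt{\sg}=\sqrt{\sg}\,\tr\chi$, the decomposition $\tfrac{\pa}{\pa u}=T+\Xi$ with $\slashed{\mathcal{L}}_T\sg=2\kappa\theta$ (consistent with the paper's identity $\underline{\chi}=\eta^{-2}\mu\chi+2\kappa\theta$), and the cancellation of the $\Xi$-terms via Lemma \ref{change} are all sound.
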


\section{Bootstrap assumptions and the main estimates of $\mu$}\label{section3}
In this section, we first state the bootstrap assumptions in this paper. Based on these assumptions, we then derive some preliminary estimates.
\subsection{Preliminary estimates}
\quad We assume that the following bootstrap assumptions hold for all $t\in[0,t^{\ast})$: for $0\leq|\a|\leq N_{\infty}:=[\dfrac{N_{top}}{2}]+3$,
\begin{equation}\label{bs}
\da^m\|Y^nT^mL\fai\|_{\supda}+\da^{(p-1)_+}\|Y^lL^p(\zeta-\varrho)\|_{\supda}\les\da M,
\end{equation}
for all $n+m+1=l+p=|\a|$ with $m\leq 3$ and $p\leq 3$. In this paper, we take $N_{top}=20$ so that $N_{\infty}=13$.
\begin{lemma}\label{preliminary1}
Under the bootstrap assumptions, the following estimates hold for sufficiently small $\delta$:
\begin{align}
\begin{split}
\|\eta-1\|_{\supu}+\da^{-1}\|m\|_{\supu}+\|e\|_{\supu}\les\da M,\\
\|\mu\|_{\supu}+\|L\mu\|_{\supu}+\da^{-1}\|T\mu\|_{\supu}+\|X\mu\|_{\supu}\les M.
\end{split}
\end{align}
\end{lemma}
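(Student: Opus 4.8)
The plan is to reduce every quantity in the statement to a transport equation along the integral curves of $L$ and then integrate it in $t$, starting from the explicit initial data of Sections~\ref{section21}--\ref{scetion23}. Since $L=\pa/\pa t$ in the acoustical coordinates, an integral curve of $L$ connects $\Sigma_0$ to $\Sigma_t$, and since $t\leq t^{\ast}\leq1$ on $W_{\tilde{\da}}^{\ast}$ the interval of integration has length at most $1$; in each step the source terms are controlled either by the bootstrap assumptions \eqref{bs} or by the bounds obtained earlier in the list, and one closes with Gronwall's inequality. Throughout one uses $\|\cdot\|_{\supu}\leq\|\cdot\|_{\supda}$ to import \eqref{bs}.

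I begin with the zeroth-order quantities. Although \eqref{bs} is stated only for $|\a|\geq1$, the data \eqref{initialdata0}--\eqref{initialdata1} gives $\|\fai\|_{L^{\infty}(\Sigma_0^{u})}\les\da$ for $\fai\in\{\p,v^i\}$ and $\|\zeta-\varrho\|_{L^{\infty}(\Sigma_0^{u})}\les\da$; writing $\fai(t,u,\vartheta)=\fai(0,u,\vartheta)+\int_0^t(L\fai)(s,u,\vartheta)\,ds$ and using $\|L\fai\|_{\supda}\les\da M$ (the case $|\a|=1$ of \eqref{bs}) gives $\|\fai\|_{\supu}\les\da M$, and likewise $\|\zeta-\varrho\|_{\supu}\les\da M$. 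Hence $\|\eta-1\|_{\supu}=\|e^{\p/2}-1\|_{\supu}\les\|\p\|_{\supu}\les\da M$, and $\eta^{\pm1}$, $h^{\pm1}$, $\kappa=\eta^{-1}\mu$ together with the frame components $\hat{T}^i,\hat{X}^i$ are bounded on $W_{\tilde{\da}}^{\ast}$ by an absolute constant (the latter because they solve \eqref{LTi} along $L$ with $O(\da M)$ sources and $O(1)$ data). Substituting these into $e=\frac{1}{2}\eta^{-1}\hat{T}^iLv^i+\frac{1}{2}\eta^{-2}Lh-\ep_{ij}\eta^{-1}\hat{T}^iv^j$, with $Lh=e^{\p}L\p$ and $\|Lv^i\|_{\supda},\|v^j\|_{\supu}\les\da M$, yields $\|e\|_{\supu}\les\da M$.

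For $m=\frac{3}{2}\eta\hat{T}^iTv^i=-\frac{3}{2}Th$ the bare $T$-derivative is not on the list \eqref{bs}, so I would derive a transport equation for $Tv^i$ by commuting: with $[L,T]=-\sg^{-1}(\tau+\sigma)X$ (from \eqref{connection}), $LTv^i=TLv^i+[L,T]v^i=TLv^i-\sg^{-1}(\tau+\sigma)Xv^i$, where $TLv^i$ is the $|\a|=2$ quantity controlled by \eqref{bs} and the last term is lower order; since $Tv^i|_{\Sigma_0}$ is bounded (it is of $\pa_s\fe_1$-type on $\Sigma_0$ by \eqref{initialdata1}), integrating in $t$ gives the stated bound for $\|Tv^i\|_{\supu}$, hence for $m$. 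For $\mu$ itself one computes directly from the eikonal equation and $u|_{\Sigma_0}=1-x_1$ that $\mu|_{t=0}=\eta|_{t=0}$, so $\|\mu|_{t=0}\|_{L^{\infty}(\Sigma_0^{u})}\les1$; integrating \eqref{equationmu}, $L\mu=m+\mu e$, in $t$ and applying Gronwall with the bound on $m$ and $\|e\|_{\supu}\les\da M$ gives $\|\mu\|_{\supu}\les M$, whence $\|L\mu\|_{\supu}\leq\|m\|_{\supu}+\|\mu\|_{\supu}\|e\|_{\supu}\les M$. For the frame derivatives, $[L,X]=0$ (by \eqref{connection}, $D_LX=D_XL$), so $L(X\mu)=X(L\mu)=Xm+(Xe)\mu+eX\mu$ is integrated in the same way (the $X$'s here are angular, hence ``good'', derivatives); for $T\mu$ one integrates $L(T\mu)=T(L\mu)+[L,T]\mu=Tm+(Te)\mu+eT\mu-\sg^{-1}(\tau+\sigma)X\mu$ from the computable datum $T\mu|_{t=0}=T\eta|_{t=0}$, after controlling $Tm$ and $Te$; here one trades the residual $T$'s for $B$'s via Lemma~\ref{frame} and uses the wave--transport equations \eqref{wavep}--\eqref{transportzeta} together with the structure equations of Section~\ref{section24}, so that only $L$- and angular derivatives of the wave variables --- which \emph{are} on the list \eqref{bs} --- appear as sources.

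The main obstacle is exactly this last point. The quantities $m$, $Tm$, $Te$, $[L,T]\mu$ involve $T$-derivatives of the wave variables and of the connection one-forms $\tau,\sigma$ (and of $\kappa$), which lie just outside the bootstrap list \eqref{bs} --- the latter controlling $T$-derivatives only of the $L\fai$, never of $\fai$ itself. The mechanism for recovering them is to commute the equations so that an $L$ is moved into the interior of the dangerous $T$-string, producing quantities of the form $Y^nT^mL\fai$ and $Y^lL^p(\zeta-\varrho)$ that \emph{are} controlled, and to exploit the null/singular structure of \eqref{wavep}--\eqref{transportzeta} and of the structure equations so that no derivative is lost in the process; the mild circularity between $\|\mu\|_{\supu}$ and the one-forms $\tau,\sigma$ is handled either by a preliminary bootstrap or simply by the order in which the estimates are carried out.
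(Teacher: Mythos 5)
Your strategy coincides with the paper's: integrate $L\mu=m+\mu e$ along the integral curves of $L$ with an integrating factor in $e$, then commute $X$ and $T$ through \eqref{equationmu} and integrate again, using $[L,X]=0$, $[L,T]=\tpi_L^{\sharp}$, the datum $\mu|_{t=0}=\eta|_{t=0}$ and $t\leq t^{\ast}\leq 1$ exactly as you do. The one genuine divergence is the treatment of the bare $T$-derivatives. The paper reads the bounds on $m=\tfrac32\eta\hat T^iTv^i$, $Tm$, $Te$ directly off the bootstrap assumptions: although \eqref{bs} as displayed only lists strings ending in $L$, the bounds $|T\fai|\les M$, $|T^2\fai|\les\da^{-1}M$ are treated as part of the bootstrap and are recovered only later (Lemma \ref{accuratemu}, Proposition \ref{keymu}, Remark \ref{recovertfai}) via the transport equation for $\dl\fai$ obtained from \eqref{decomposition}. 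You instead derive $Tv^i$ on the spot from $LTv^i=TLv^i+\tpi_L^{\sharp}v^i$. That is workable, but the circularity you flag at the end is not as mild as you suggest: $\tpi_{LX}=-(\tau+\sigma)$ contains $\mu$, $X\mu$ and, through $\varep=k(X,\hat T)$, the quantity $Tv^j$ itself, so carried out literally your derivation of $m$ presupposes the bounds on $\mu$ and $X\mu$ (and on $Tv^j$) that you establish afterwards. It can be closed by an auxiliary continuity argument, but the paper's ordering --- place $|T^k\fai|$ in the bootstrap, use it here, and recover it from the wave equation where the $\mu$-dependent terms carry an extra $\da$ --- avoids the issue and is the cleaner route.

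A caution on the bounds themselves: your commutation argument yields $\|m\|_{\supu}\les M$ and $\|T\mu\|_{\supu}\les\da^{-1}M$, consistent with $Tv^1|_{t=0}$ and $T\mu|_{t=0}=T\eta|_{t=0}$ being $O(1)$ and with the paper's own proof, which concludes $T\mu\les\da^{-1}M$. These are not the bounds $\|m\|\les\da^{2}M$ and $\|T\mu\|\les\da M$ that the displayed statement literally asserts; the factors $\da^{-1}$ there should be read as $\da$. You should not claim your integration ``gives the stated bound'' without noting this discrepancy --- indeed an $O(1)$ lower bound on $m$ is exactly what drives the shock in Remark \ref{shockformation}, so the literal statement cannot be what is intended.
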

\begin{proof}
The estimates for $\eta,m$, and $e$ follow directly from the definition of $\eta$, \eqref{equationmu} and the bootstrap assumptions \eqref{bs}. Hence, integrating \eqref{equationmu} along the integral curves of $L$ yields
\begin{align*}
\mu(t,u,\vartheta)&=e^{\int_0^{t}e(\tau)d\tau}(\mu(0,u,\vartheta)+\int_0^te^{\int_0^{\tau}-e(s)ds}m\ d\tau)\les M.
\end{align*}
Commuting $X$ with \eqref{equationmu} and noting that $\da^{-1}|Xm|+|Xe|\les\da M$ lead to
\begin{align*}
X\mu(t,u,\vartheta)&\les X\mu(0,u,\vartheta)+\int_0^tXm+\mu Xe\ d\tau\les M.
\end{align*}
To prove the estimates of $T\mu$, commuting $T$ with \eqref{equationmu} shows 
\begin{align}
LT\mu=Tm+eT\mu+\mu Te+\tpi_L^{\sharp}\mu.\label{equationLTmu}
\end{align}
 Therefore, it follows from $|X\mu|+|\tpi_L^{\sharp}|\les M$ and $\da^{-1}|Tm|+|Te|\les M$ that
 \[
 T\mu(t,u,\vartheta)\les T\mu(0,\mu,\vartheta)+\int_0^t\da^{-1} M\les \da^{-1}M.
 \]
 This finishes the proof of the lemma.
\end{proof}
\begin{lemma}\label{2ndff}
Under the bootstrap assumptions, it holds that for sufficiently small $\da$,
\begin{align} 
\|\slashed{k}\|_{\supu}+\|\chi\|_{\supu}+\|\theta\|_{\supu}\les \da M.
  \end{align}
\end{lemma}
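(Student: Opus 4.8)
The plan is to estimate each of the three second fundamental forms by combining the definitions, the identities collected in Section~\ref{section2}, and the bootstrap assumptions \eqref{bs} together with the $\mu$-estimates from Lemma~\ref{preliminary1}. For $\slashed k$, I would start from the formula $k_{ij}=\tfrac12\eta^{-1}(\pa_iv^j+\pa_jv^i)$ given after \eqref{defi2ndff}, restrict it to $S_{t,u}$ (i.e. contract with $\hat X$), and use Lemma~\ref{partialderivatives} to write the Cartesian derivatives $\pa_i v^j$ in terms of the frame derivatives $\{L,\hat T,\hat X\}$ acting on $v^j$. The only potentially dangerous contribution is the $\hat T\hat T$-component, which carries a factor $\mu^{-1}$ through $\hat T=\eta^{-1}\mu^{-1}\cdot(\text{something})$; but $\slashed k$ is by definition the projection onto $S_{t,u}$, so that bad component drops out, and one is left with $\hat X v^j$, $L v^j$ terms, each of size $O(\da M)$ by \eqref{bs} (the first-order bound with $n=1$, $m=0$ gives $\|Y v^j\|\les\da M$ and $\|Lv^j\|\les\da M$), together with bounded geometric coefficients by Lemma~\ref{preliminary1}. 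This yields $\|\slashed k\|_{\supu}\les\da M$.

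For $\theta$, I would use the explicit expression $\theta_{XX}=X^1\hat X(X^2)-X^2\hat X(X^1)$ recorded after \eqref{defi2ndff}. This requires estimating $\hat X(X^i)=\sg^{-\frac12}X(X^i)$, i.e. the angular variation of the components $X^i=X(x^i)$ of the frame vectorfield. These are geometric quantities: one controls them via the structure equation for $\hat T^i$ in \eqref{LTi} (since $X^i$ and $\hat T^i=\ep_{ij}\hat X^j$ are algebraically related through Lemma~\ref{frame}) and the fact that initially, on $\Sigma_0$, the data is trivial for $x_1\ge 1$ and the frame coincides with the flat one, so $\theta$ is initially $O(\da)$; then $L$-transport (using $X\hat T^i=\tr\theta\,X^i$ and the bootstrap bounds) propagates the smallness, giving $\|\theta\|_{\supu}\les\da M$. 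Alternatively, and perhaps more cleanly, one uses the relation $\chi=\eta(\slashed k-\theta)$ to deduce the $\theta$-bound once $\slashed k$ and $\chi$ are controlled — so it suffices to bound two of the three.

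For $\chi$, I would integrate the transport equation \eqref{equationchi}, $L\tr\chi=e\tr\chi-\tr\a'+(\tr\chi)^2$, along the integral curves of $L$. The initial value $\tr\chi|_{t=0}$ is $O(\da)$ (again because the data is trivial outside $x_1=1$ and short-pulse-scaled inside, so the initial geometry is an $O(\da)$ perturbation of the flat null cone). The coefficient $e$ is $O(\da M)$ by Lemma~\ref{preliminary1}. The source $\tr\a'$ is controlled using \eqref{curvaturea}: after subtracting the genuinely singular piece $\tfrac32\mu^{-1}\chi\eta\hat T^iTv^i$ (which is precisely what makes $\a'$ regular), $\a^{[P]}$ consists of second frame-derivatives of $h$ and $v^j$ of the form $D^2_{XX}$ and $D^2_{LX}$ — these are covered by the bootstrap assumptions \eqref{bs} at order $2$ (with $m=0$, so no $\mu$-weight), giving $O(\da M)$ — and $\a^{[N]}$ is quadratic in first frame-derivatives of $\fai$, hence $O(\da^2M^2)$. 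One subtlety: $\a'$ still contains the factor $\mu^{-1}\chi$ times $Tv^i$, but $Tv^i=O(\da)$ by \eqref{bs} with $m=1$ (the $\da^m$ weight there is exactly $\da^1$), and this term is linear in $\chi$, so it can be absorbed into a Gronwall argument rather than causing blow-up. Running Gronwall on $\|\tr\chi\|_{\supu}$ over $t\in[0,t^\ast)\subset[0,1]$ then gives $\|\chi\|_{\supu}\les\da M$. The main obstacle is bookkeeping the $\mu^{-1}$-weighted but $\da$-small terms in $\a'$ carefully enough that the Gronwall constant stays $O(1)$ and does not depend on $\mu_m^{-1}$; the key point making this work is that every such term is either linear in $\chi$ (absorbable) or carries an extra power of $\da$ from the bootstrap bound on $T$-derivatives, so the final estimate is genuinely $O(\da M)$ and not merely $O(M)$.
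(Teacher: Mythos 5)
Your overall route coincides with the paper's: $\slashed{k}$ is bounded directly by writing $\slashed{k}=\eta^{-1}\hat{X}^i\hat{X}v^i$ and invoking the bootstrap bound on angular derivatives; $\chi$ is bounded by integrating the transport equation \eqref{equationchi} along $L$ (the paper packages the Gronwall step inside a continuity argument, since the bounds on $e$, $\a'$ and the quadratic term $(\tr\chi)^2$ themselves presuppose $|\chi|\les\da M$); and $\theta$ then follows from $\chi=\eta(\slashed{k}-\theta)$.

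There is, however, one genuine error in your discussion of the source term $\tr\a'$. You write that ``$\a'$ still contains the factor $\mu^{-1}\chi$ times $Tv^i$, but $Tv^i=O(\da)$ by \eqref{bs} with $m=1$,'' and that this term can therefore be absorbed into Gronwall. Both claims are false. First, $Tv^i$ is \emph{not} $O(\da)$: the $m=1$ instance of \eqref{bs} is $\da\|TL\fai\|\les\da M$, i.e.\ $\|TL\fai\|\les M$, and the corresponding zeroth-order assumption (cf.\ Remark \ref{recovertfai}) is $|T\fai|\les M=O(1)$; indeed $Tv^1\approx-\pa_s\fe_1$ is precisely the $O(1)$ quantity whose size drives $\mu\to0$. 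Second, even granting linearity in $\chi$, a term of the form $\mu^{-1}\eta\hat{T}^iTv^i\,\chi$ with $Tv^i=O(1)$ cannot be absorbed by Gronwall: the resulting factor would be $\exp\bigl(C\int_0^t\mu^{-1}\bigr)$, which is not uniformly bounded since $\mu$ vanishes linearly near the shock. The argument closes only because this term is \emph{identically cancelled} in the definition $\a'=\a-\tfrac32\mu^{-1}\chi\eta\hat{T}^iTv^i$, the subtracted piece being exactly the singular part of $(\mu^{-1}L\mu)\chi=\mu^{-1}m\chi+e\chi$ generated by rewriting \eqref{equationchi}; after this cancellation $\a'$ consists of regular second frame-derivatives of $\fai$ (each $O(\da M)$ by the bootstrap) plus quadratic lower-order terms, as you correctly state one sentence earlier. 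The fix is to delete the ``absorption'' fallback entirely and rely on the structural cancellation, which is the whole point of introducing $\a'$.
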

\begin{proof}
It follows from the bootstrap assumptions that
\[
|\slashed{k}|=|\eta^{-1}||\hat{X}^iXv^i|\les\da M.
\]
It remains to show the estimate of $\chi$. Let $P(t)$ be the set of $t'$ such that 
\[
\|\chi\|_{L^{\infty}(\Sigma_{t'}^{\tilde{\da}})}\leq C_0\da M
\]
holds for all $t'\in[0,t]$ with sufficiently small $\da$, where $C_0$ is a constant to be determined later. On $\Sigma_{0}$, $|\chi|=|\pa_2v^2|\les\da,$ which implies $0\in P(t)$. Let $t_0$ be the upper bound of $P(t)$. Hence, it follows from Lemma \ref{preliminary1}, \eqref{curvaturea} and the bootstrap assumptions that 
\[|\chi|+|e|\leq C_1\da M\quad \mathrm{and}\quad |\a'|\leq C_2\da M\]
for some constants $C_1,C_2$. This implies
\begin{align*}
  L\chi\leq C_1\da M\chi+C_2\da M.
\end{align*}
Apply the Gronwall inequality yields 
\[|\chi|\leq C_3\da M+C_2C_3\da M<C_0\da M\]
by choosing $C_0>2C_3(1+C_2)$ and for sufficiently small $\da$. The continuous argument shows $[0,t^{\ast})\subset P(t)$.
\end{proof}

As a consequence, it holds that $\sg=1+O(\da M)$ due to the fact that $L\sg=2\chi$ and $|\sg|=1$ on $\Sigma_0$.
\begin{lemma}\label{LiTiXi}(\textbf{Estimates on the difference between two coordinates})
  Under the bootstrap assumptions, the following estimates hold for sufficiently small $\delta$:
  \begin{align}
  \|(L^1,\hat{T}^1,\hat{X}^2)\|_{\supu}=1+O(\da M),\quad \|(L^2,\hat{T}^2,\hat{X}^1)\|_{\supu}\les\da M.
  \end{align}
\end{lemma}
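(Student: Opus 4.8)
The plan is to derive evolution equations for the Cartesian components $L^i$, $\hat{T}^i$, $\hat{X}^i$ along the integral curves of $L$, integrate them starting from $\Sigma_0$ where all components are explicitly known, and apply a continuity/bootstrap argument together with Gronwall's inequality. The starting point is the observation that on $\Sigma_0$ the initial data is nearly trivial: since $u = 1 - x_1$ and the data is compactly supported in $x_1 \in [1-\da, 1]$, on $\Sigma_0$ we have $L^1 = 1 + O(\da M)$, $L^2 = O(\da M)$, $\hat{T}^1 = -1 + O(\da M)$ (up to sign convention, matching $\hat{T}_{\mathrm{f}} = -\pa_1$ from Lemma \ref{initialdata}), $\hat{T}^2 = O(\da M)$, and $\hat{X} = \pa_2$ so $\hat{X}^2 = 1 + O(\da M)$, $\hat{X}^1 = O(\da M)$. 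These follow from the construction in Section \ref{section21} and the smallness of $(\p, v^i)$ on $\Sigma_0$.

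Next I would use the transport equations \eqref{LTi}, namely $L\hat{T}^i = (-\hat{T}^i\hat{X}v^i + \hat{X}\eta)\hat{X}^i$, together with the relation $\hat{T}^i = \ep_{ij}\hat{X}^j$ from Lemma \ref{frame} (which immediately transfers estimates for $\hat{T}^i$ to $\hat{X}^i$), and the equation for $L^i$. For the latter: since $L = \pa_t + L^i \pa_i$ in Cartesian coordinates (because $Lt = 1$ and $L$ is tangent to $\Sigma_t$ up to the $\pa_t$ part) and $D_L L = \mu^{-1}(L\mu) L$ from \eqref{connection}, one computes $L(L^i) = L^\a \pa_\a L^i = (\mu^{-1}L\mu) L^i - \Gamma^i_{\a\be}L^\a L^\be$; using the Christoffel symbols \eqref{christoffel} and $B = L + \eta\hat{T}$, the right-hand side is schematically $(\mu^{-1}L\mu)L^i$ plus terms quadratic in the frame components multiplied by first Cartesian derivatives of $(\p, v)$. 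By the bootstrap assumptions \eqref{bs} and Lemma \ref{preliminary1}, all these source terms are $O(\da M)$ in $L^\infty$, and the coefficient $\mu^{-1}L\mu$ is bounded (using $\mu \sim 1$, which holds on the bootstrap region — or one works with $L^i$ directly avoiding the division). Integrating from $\Sigma_0$ along $L$ over a time interval of length $\le 1$ then gives, for each component, its initial value plus an $O(\da M)$ correction.

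The continuity argument proceeds exactly as in the proof of Lemma \ref{2ndff}: let $P(t)$ be the set of times on which $\|(L^1, \hat{T}^1, \hat{X}^2) - 1\|_{\supu} + \|(L^2, \hat{T}^2, \hat{X}^1)\|_{\supu} \le C_0 \da M$; this holds at $t = 0$, the bound is improved to $\le C_0 \da M / 2$ by the Gronwall estimate above for $\da$ small, and openness/closedness gives $[0, t^\ast) \subset P(t)$. The main obstacle, and the step requiring the most care, is verifying that every source term appearing in the evolution equations for $L^i$ and $\hat{T}^i$ — in particular those involving $\hat{X}\eta$, $\hat{X}v^i$, and the Christoffel combinations — is genuinely $O(\da M)$ rather than $O(M)$; this uses that $\hat{X} = Y$ modulo the already-controlled error (Definition \ref{commutationvf}), so that $\hat{X}$ derivatives of $\fai$ count as good $\mathcal{P}$-derivatives and pick up the extra factor of $\da$ from \eqref{bs}. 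Once the bookkeeping of these $\da$-weights is done correctly, the estimate closes, and as a byproduct one re-derives $\sg = 1 + O(\da M)$ consistently.
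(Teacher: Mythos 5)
Your treatment of $\hat{T}^i$ and $\hat{X}^i$ is exactly the paper's: integrate the transport equation \eqref{LTi} along $L$, note that the source involves only $\hat{X}$-derivatives of $\fai$ (hence is $O(\da M)$ by \eqref{bs}), and pass to $\hat{X}^i$ via $\hat{T}^i=\ep_{ij}\hat{X}^j$. That part is fine.

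The $L^i$ part, however, has a genuine gap. You propose the geodesic equation $L(L^i)=(\mu^{-1}L\mu)L^i-\Gamma^i_{\a\be}L^\a L^\be$ and assert that the coefficient $\mu^{-1}L\mu$ is bounded ``using $\mu\sim1$'' and that the Christoffel contractions are $O(\da M)$. Neither claim holds on the whole bootstrap region: $\mu\to0$ at the shock (that is the entire point of the paper), and since $L\mu=m+\mu e$ with $m=-\tfrac32 Th=O(M)$, the coefficient $\mu^{-1}L\mu$ is genuinely of size $\mu^{-1}M$. Likewise the Christoffel symbols \eqref{christoffel} contain Cartesian derivatives such as $\pa_1 v^j$, which by Lemma \ref{partialderivatives} carry a $\hat{T}\fai=\eta\mu^{-1}T\fai=O(\mu^{-1}M)$ contribution; these are \emph{not} individually $O(\da M)$. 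The two singular pieces on the right-hand side must cancel (they do, since the left-hand side $L(L^i)$ is regular), but your argument never exhibits this cancellation, and without it neither a direct integration nor a Gronwall argument closes — e.g.\ absorbing the coefficient via $L(\mu^{-1}L^i)$ leaves a source $\mu^{-1}\cdot O(\da M)$ whose time integral is only $O(\da M|\log\mu_m|)$. The paper avoids all of this with the one-line algebraic identity from Lemma \ref{frame}: $B=L+\eta\hat{T}$, so $L^i=v^i-\eta\hat{T}^i$, and the bound for $L^i$ follows immediately from the already-established bound for $\hat{T}^i$ together with $|v^i|\les\da M$ and $\eta=1+O(\da M)$. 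You should replace your transport argument for $L^i$ by this identity (or else carry out the cancellation of the $\mu^{-1}$-singular terms explicitly).
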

\begin{proof}
It follows from \eqref{LTi} and the bootstrap assumptions that $|\hat{T}^1|\les 1+\da M$ and $|\hat{T}^2|\les \da M$. The estimates of $L^i$ follow directly from $L^i=v^i-\eta \hat{T}^i$ and the bootstrap assumptions. Hence the proof of the lemma is completed.
\end{proof}
The following table shows the estimates for the deformation tensors which follows from table \ref{table1}, the bootstrap assumptions, Lemma \ref{preliminary1} and \ref{2ndff}.
\begin{table}[htbp]
\setlength{\tabcolsep}{12pt}
\setlength{\belowcaptionskip}{0.2cm}
  \centering
  \caption{The deformation tensors' estimates}\label{table2}
  \begin{tabular}{c|cccc}
  \hline
  $\zpi$ & bounds &$\lpi$ &$\tpi$ &$\ypi$\\
  \hline
  $\zpi_{L\dl}$ &$\leq$&$M$ &$\da^{-1}M$ &$M$\\
  $\mu^{-1}\zpi_{\dl\dl}$ &$\leq$& $M$ & $\da^{-1}M$ &$M$\\
  $\zpi_{LX}$ &$\leq$&/ &$M$ &$\da M$\\
  $\zpi_{\dl X}$ &$\leq$& $M$ &$M$ & $\da M$\\
  $\zpi_{XX}$ &$\leq$&$\da M$ &$\da M$ & $\da M$\\
  \hline
  \end{tabular}
\end{table}

\subsection{Accurate estimates of $\mu$}\label{section32}
In this subsection, we obtain the main estimates of $\mu$. This plays a key role in the energy estimates.
\begin{lemma}\label{accuratemu}
It holds that for sufficiently small $\da$:
\begin{align}
|L\mu(t,u,\vartheta)-L\mu(0,u,\vartheta)|&\les\da M^2,\label{accuratemu1}\\
|\mu(t,u,\vartheta)-1-tL\mu(0,u,\vartheta)|&\les \da M^2.\label{accuratemu2}
\end{align}
\end{lemma}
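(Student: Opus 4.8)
The plan is to integrate the transport equation \eqref{equationmu} for $\mu$ along the integral curves of $L$ and to control the error made by freezing the coefficients at their initial values. Recall that along an integral curve of $L$ the coordinates $(u,\vartheta)$ are constant (since $Lu = 0$ and $L\vartheta = 0$), and that $L$ acting on a function of $t$ alone is just $\frac{d}{dt}$; thus \eqref{equationmu} reads $\frac{d}{dt}\mu = m + \mu e$ along each such curve. First I would establish the sharper bounds $\|m\|_{\supu} \les \da M^2$ and $\|e\|_{\supu} \les \da M^2$ — more precisely I would revisit the definitions $m = -\tfrac{3}{2}Th$ and $e = \tfrac12\eta^{-1}\hat T^i L v^i + \tfrac12 \eta^{-2} Lh - \ep_{ij}\eta^{-1}\hat T^i v^j$, and use the bootstrap assumption \eqref{bs} (the $m=0$ case, i.e. $\|Y^nT^mL\fai\|_{\supda}$ with one factor of $L$ and the pure $T$-bound with $m=1$) together with Lemma \ref{LiTiXi} and $\|\eta - 1\|_{\supu}\les\da M$ to see that each summand is $O(\da M)$, so that in fact $\|m\|_{\supu}+\|e\|_{\supu}\les \da M$ — the $M^2$ in the statement simply being a convenient upper bound for $M\cdot M$ or for absorbing the constants, consistent with what Lemma \ref{preliminary1} already gives.

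For \eqref{accuratemu1}: write $L\mu(t,u,\vartheta) - L\mu(0,u,\vartheta) = \int_0^t L(L\mu)(\tau,u,\vartheta)\,d\tau$, and from $L\mu = m + \mu e$ compute $L(L\mu) = Lm + (L\mu)e + \mu (Le)$. Using $\|L\mu\|_{\supu}\les M$ and $\|\mu\|_{\supu}\les M$ from Lemma \ref{preliminary1}, together with $\|e\|_{\supu}\les \da M$ and the bounds $\|Lm\|_{\supu}, \|Le\|_{\supu}\les \da M$ (again a consequence of \eqref{bs}, now allowing one extra $L$-derivative, which is within the range $N_\infty$), the integrand is $O(\da M^2)$ on $[0,t^\ast)\subset[0,1)$, so integrating over $t\le 1$ gives \eqref{accuratemu1}.

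For \eqref{accuratemu2}: by the fundamental theorem of calculus and $\mu(0,u,\vartheta)=1$ (which holds since the data is trivial near $C_0$ and $\mu \equiv 1$ on $\Sigma_0$ for this foliation — I would cite the construction in Section \ref{section21}),
\[
\mu(t,u,\vartheta) - 1 - tL\mu(0,u,\vartheta) = \int_0^t \bigl( L\mu(\tau,u,\vartheta) - L\mu(0,u,\vartheta)\bigr)\,d\tau,
\]
and the integrand is $O(\da M^2)$ by \eqref{accuratemu1}; since $t \le 1$ on $W_{\tilde\da}^\ast$, \eqref{accuratemu2} follows immediately. The main obstacle — such as it is — lies entirely in the first step: verifying that $m$, $e$, and their $L$-derivatives genuinely carry a factor of $\da$, which requires unwinding the precise structure of $m$ and $e$ (in particular that the dangerous $\hat T^i v^j$ term in $e$ has $v^j = O(\da)$, and that the top-order-in-$L$ pieces are still controlled by the $L^\infty$ bootstrap hypothesis rather than only by the $L^2$ energies). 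Everything after that is routine integration along $L$ with $t$ bounded by $1$.
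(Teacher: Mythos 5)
Your proposal has a genuine gap, and it sits exactly where you yourself flag the ``main obstacle'': the claimed smallness of $m$ and of $Lm$ is false, and cannot be repaired using the bootstrap assumptions alone.

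First, $m=-\tfrac{3}{2}Th=\tfrac32\eta\hat T^iTv^i$ is \emph{not} $O(\da M^2)$, nor even $O(\da M)$: by Remark \ref{shockformation}, $m=-\tfrac32\pa_s\fe_1+O(\da M^2)$, and $\max\pa_s\fe_1\geq1$ is precisely the hypothesis of Theorem \ref{main}. The quantity $T\fai$ is the large, shock-driving derivative (the bootstrap/recovered bound is only $|T\fai|\les M$, with no factor of $\da$ — the $\da^m$ weights in \eqref{bs} exactly cancel the gain for pure $T$ or $T\cdots L$ strings). If $\|m\|_{\supu}\les\da M^2$ held, then $L\mu=m+\mu e=O(\da M^2)$ and $\mu$ could never vanish, contradicting the whole shock-formation mechanism. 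Second, and more fatally for \eqref{accuratemu1}, your estimate of $L(L\mu)=Lm+(L\mu)e+\mu Le$ requires $\|Lm\|_{\supu}\les\da M^2$, i.e.\ $\|LTh\|_{\supu}\les\da M^2$. Writing $LTh=TLh+\tpi_L^{\sharp}h$, the bootstrap assumption with $m=1$ gives only $\da\|TL\fai\|_{\supda}\les\da M$, i.e.\ $\|TLh\|\les M$. Integrating then yields $|L\mu(t)-L\mu(0)|\les M$, which is useless. There is no $L^\infty$ bootstrap hypothesis that makes $LT\fai$ small.

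The missing idea is that the smallness of $LT\fai$ (equivalently of $L\dl\fai$) is a \emph{consequence of the wave equations}, not of the a priori bounds. The paper decomposes $\Box_g\fai=F_\fai$ via \eqref{decomposition} and the null-form structure of Remark \ref{nullforms} to obtain the transport equations
\begin{align*}
L\dl\fai+\tfrac12(\tr\chi+L\p)\dl\fai=O(\da M^2),
\end{align*}
whose integration along $L$ gives $|\dl\fai(t,u,\vartheta)-\dl\fai(0,u,\vartheta)|\les\da M^2$, hence $|T\fai(t)-T\fai(0)|\les\da M^2$ and $|m(t)-m(0)|\les\da M^2$. Combined with $|\mu e|\les\da M^2$ (your estimate of $e$ is correct), this yields \eqref{accuratemu1}; your derivation of \eqref{accuratemu2} from \eqref{accuratemu1} by integration is then the same as the paper's and is fine. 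In short: the lemma is an approximate-conservation statement for $\dl\fai$ along $L$ coming from the PDE, not a crude Gronwall estimate on the coefficients $m,e$.
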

\begin{proof}
In view of \eqref{decomposition} and \eqref{nullforms}, one can rewrite \eqref{wavep} and \eqref{wavevi} as
\begin{align*}
  L\dl\p+\frac{1}{2}(\tr\chi+L\p)\dl\p&=O(\da M^2),\\
  L\dl v^i+\frac{1}{2}(\tr\chi+L\p)\dl v^i&=O(\da M^2),
\end{align*}
respectively. This implies
\begin{align}
|\dl\fai(t,u,\vartheta)-\dl\fai(0,u,\vartheta)|&\les\da M^2,\label{tfait0}\\
|\fai(t,u,\vartheta)-\fai(0,u,\vartheta)|&\les \da M^2.
\end{align}
Therefore, due to Lemma \ref{preliminary1} and \eqref{equationmu}, one obtains
\begin{align*}
L\mu(t,u,\vartheta)-L\mu(0,u,\vartheta)&=m(t,u,\vartheta)-m(0,u,\vartheta)+O(\da M^2)\\
&=\frac{3}{2}(Tv^i(t,u,\vartheta)-Tv^i(0,u,\vartheta))+O(\da M^2)=O(\da M^2).
\end{align*}
Hence, \eqref{accuratemu2} follows from \eqref{accuratemu1} via the following straightforward computations.
\begin{align*}
  \mu(t,u,\vartheta)&=\mu(0,u,\vartheta)+\int_0^tL\mu(\tau,u,\vartheta)\ d\tau=
  1+tL\mu(0,u,\vartheta)+O(\da M^2).
\end{align*}
This finishes the proof of the lemma.
\end{proof}

\begin{remark}\label{Tv2}
	When proving Lemma \ref{accuratemu}, we can obtain $|T(\p-v^1)(t,u,\vartheta)-T(\p-v^1)(0,u,\vartheta)|+|Tv^2(t,u,\vartheta)-Tv^2(0,u,\vartheta)|\les \da M^2$. Hence, it follows from Lemma \ref{initialdata} that $|T(\p-v^1)(t)|\les \da M^2$. This means although $T\p$ and $Tv^1$ are large, their difference is small even up to the shock.
\end{remark}

Define the shock region as 
\begin{align}
    W_{s}:=\{(t,u,\vartheta)|\ \mu(t,u,\vartheta)\leq \tfrac{1}{10}\}\quad \mathrm{and} \quad t_0:=\inf\{[0,t^{\ast}) |\ \mu_m<\tfrac{1}{10}\}.
\end{align}
The following proposition states the key behaviors of $\mu$ in the shock region.
\begin{prop}\label{keymu}
  For all $(t,u,\vartheta)\in W_{s}$ and sufficient small $\da$, it holds that
  \begin{align}
  L\mu(t,u,\vartheta)&\leq-\frac{1}{4t}\les -1,\label{keymu1}\\
  (\mu^{-1}T\mu)_+&\les\frac{\da^{-1}}{\sqrt{s^{\ast}-t}}.\label{keymu2}
  \end{align}
\end{prop}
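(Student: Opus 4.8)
The proof rests on the accurate asymptotics for $\mu$ established in Lemma~\ref{accuratemu}, namely $\mu(t,u,\vartheta) = 1 + tL\mu(0,u,\vartheta) + O(\da M^2)$ and $L\mu(t,u,\vartheta) = L\mu(0,u,\vartheta) + O(\da M^2)$. The key point is that in the shock region $W_s$ we have $\mu \leq \tfrac{1}{10}$, which forces $1 + tL\mu(0,u,\vartheta)$ to be small, hence $L\mu(0,u,\vartheta) \leq -\tfrac{9}{10t} + O(\da M^2/t)$. In particular $L\mu(0,u,\vartheta)$ is negative and bounded away from zero, and $tL\mu(0,u,\vartheta) \approx -1$. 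Feeding this back into the expression for $L\mu(t,u,\vartheta)$ gives \eqref{keymu1}: $L\mu(t,u,\vartheta) = L\mu(0,u,\vartheta) + O(\da M^2) \leq -\tfrac{9}{10 t} + O(\da M^2) \leq -\tfrac{1}{4t}$ once $\da$ is small and $t$ is bounded (note $t < t^\ast \leq 1$), and since $t \leq 1$ this is $\les -1$.

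For \eqref{keymu2}, I would differentiate the transport equation \eqref{equationmu} for $\mu$ by $T$ (which gives \eqref{equationLTmu}), but now tracking the coefficients more carefully than in Lemma~\ref{preliminary1}. Write $LT\mu = Tm + e\,T\mu + \mu\, Te + \tpi_L^\sharp \mu$; using Remark~\ref{Tv2} and $m = -\tfrac{3}{2}Th$, the inhomogeneity $Tm$ and the lower-order pieces are $O(\da^{-1})$ while the genuinely dangerous term is $e\,T\mu$ with $e = O(\da M)$. This is a linear ODE along $L$-integral curves for $T\mu$, so $T\mu(t) = T\mu(0)\cdot(\text{bounded factor}) + \int_0^t(\text{bounded})\,(\text{source of size }\da^{-1})$, which only gives $|T\mu| \les \da^{-1}$ — the bound already in Lemma~\ref{preliminary1}. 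To upgrade to $(\mu^{-1}T\mu)_+ \les \da^{-1}(s^\ast - t)^{-1/2}$ one must exploit that in $W_s$, $\mu$ itself is becoming small at a controlled rate: from \eqref{accuratemu2} and \eqref{keymu1}, on an $L$-curve reaching the shock, $\mu(t) \gtrsim (s^\ast - t)$ near $s^\ast$, more precisely $\mu(t,u,\vartheta) \geq c\,(s^\ast - t) - O(\da M^2)$ wherever $\mu \leq \tfrac{1}{10}$. Then $\mu^{-1}T\mu \les \da^{-1}(s^\ast - t)^{-1}$, which is worse than claimed. The correct route, following Christodoulou–Miao and \cite{Ontheformationofshocks}, is to estimate $T\mu$ itself as going to zero as one approaches the shock: since $T\mu(0,u,\vartheta) = O(\da)$ by Lemma~\ref{initialdata} (as $T$ is essentially $-\pa_1$ and the data oscillation in $x_1$ is $O(1)$ while... wait, actually $\pa_1$ of the $O(\da)$-amplitude, $\da$-width profile is $O(1)$), I would instead integrate the $LT\mu$ equation and use that the growth of $T\mu$ is slow, combined with the fact that $s^\ast - t$ controls $\mu$ from below via the preceding analysis.

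The cleanest argument I would actually write: on an integral curve of $L$ inside $W_s$, \eqref{accuratemu2} and \eqref{accuratemu1} give $\mu(t) = \mu(t_1) + (t - t_1)L\mu(0,u,\vartheta) + O(\da M^2(t - t_1))$, and since $\mu_m \to 0$ as $t \to s^\ast$ we get $\mu(t,u,\vartheta) \geq (s^\ast - t)\cdot|L\mu(0,u,\vartheta)|/2 \gtrsim s^\ast - t$ — no wait, we need a matching \emph{lower} bound. Here is the point: take any $(t,u,\vartheta) \in W_s$. If $\mu^{-1}T\mu \leq 0$ there is nothing to prove, so assume $T\mu > 0$ there. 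From $|T\mu| \les \da^{-1}M$ (Lemma~\ref{preliminary1}) we have $\mu^{-1}T\mu \les \da^{-1}\mu^{-1}$, so it suffices to show $\mu \gtrsim \sqrt{s^\ast - t}$ on the portion of $W_s$ where $T\mu > 0$. Hmm — actually the square root must come from a Cauchy–Schwarz/quadratic argument on the $\mu$ equation itself. I expect this refined lower bound for $\mu$, i.e. $\mu(t,u,\vartheta) \gtrsim \sqrt{s^\ast-t}$ wherever $T\mu>0$, to be the main obstacle: it requires combining the monotonicity $L\mu < 0$ in $W_s$ (\eqref{keymu1}) with a careful comparison of $\mu$ along $L$-curves to its value at the future-most point, using that $s^\ast - t$ measures "time to shock" uniformly. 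So:

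\medskip
\noindent\textbf{Summary of the plan.}
\emph{Step 1.} From Lemma~\ref{accuratemu} and $\mu \leq \tfrac{1}{10}$ in $W_s$, deduce $L\mu(0,u,\vartheta) \leq -\tfrac{9}{10t}+O(\da M^2)$, hence $L\mu(t,u,\vartheta) \leq -\tfrac{1}{4t} \les -1$ for $\da$ small; this is \eqref{keymu1}.
\emph{Step 2.} Using \eqref{keymu1} (negativity and quantitative size of $L\mu$ in $W_s$) together with $\mu_m^{\tilde\da}(t) \to 0$ as $t \to s^\ast$ and the definition of $s^\ast$, derive a lower bound $\mu(t,u,\vartheta) \gtrsim \sqrt{s^\ast - t}$ on the subset of $W_s$ where $T\mu > 0$, by comparing $\mu$ along $L$-integral curves and exploiting that $L\mu$ changes by at most $O(\da M^2)$ from $t=0$.
\emph{Step 3.} Combine with the crude bound $|T\mu| \les \da^{-1}M$ from Lemma~\ref{preliminary1} to conclude $(\mu^{-1}T\mu)_+ \les \da^{-1}(s^\ast - t)^{-1/2}$, which is \eqref{keymu2}. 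The delicate point throughout is the sharp square-root lower bound for $\mu$ in Step~2, which is where the structure of the Burgers-type blow-up enters.
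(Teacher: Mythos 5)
Your Step 1 is correct and is essentially the paper's argument for \eqref{keymu1}: Lemma \ref{accuratemu} gives $1-\mu(t,u,\vartheta)+tL\mu(t,u,\vartheta)=O(\da M^2)$, and $\mu\leq\tfrac{1}{10}$ in $W_s$ then forces $tL\mu\leq-\tfrac{9}{10}+O(\da M^2)\leq-\tfrac14$.

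For \eqref{keymu2} there is a genuine gap. The lemma you propose in Step 2 --- $\mu\gtrsim\sqrt{s^{\ast}-t}$ wherever $T\mu>0$ --- is false, and Step 3 built on it cannot close. Near the shock $\mu$ vanishes \emph{linearly} in $s^{\ast}-t$ (your own computation, and the paper's \eqref{mubehavior} together with $L\mu\sim-1$, give $\mu\sim s^{\ast}-t$ along the relevant $L$-curves), which contradicts $\mu\gtrsim\sqrt{s^{\ast}-t}$ as $t\to s^{\ast}$; the sign of $T\mu$ does not help, since in the model $\mu(t,u)=1-t\bigl(\lambda_0-c(u-u_0)^2\bigr)$ the supremum of $\mu^{-1}T\mu$ over $u$ is attained at a point where $T\mu>0$, $\mu\sim s^{\ast}-t$ and $T\mu\sim\sqrt{s^{\ast}-t}$. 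The square root in \eqref{keymu2} therefore comes from $T\mu$ being \emph{small} where $\mu$ is small --- a cancellation that your Step 3, which inserts the crude bound $|T\mu|\les\da^{-1}$, discards entirely. You gesture at ``a Cauchy--Schwarz/quadratic argument on the $\mu$ equation'' but never identify it.

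The mechanism the paper uses is a Glaeser-type interpolation between $\mu$ and $T^2\mu$, and it needs two ingredients absent from your proposal. First, a second-order bound $|T^2\mu|\les\da^{-2}$: one commutes $T^2$ with \eqref{equationmu} (after first establishing $|XT\mu|+|TX\mu|\les\da^{-1}M$) to get \eqref{equationLT2mu}, and repeats the argument of Lemma \ref{accuratemu} to conclude $|T^2\mu(t)-T^2\mu(0)|\les\da^{-1}M^2$, hence $|T^2\mu|\les\da^{-2}$. Second, at fixed $t$ one chooses coordinates on $\Sigma_t$ so that $T=\tfrac{\pa}{\pa u}$ and considers $f(u)=T\ln\mu$ on $[0,\tilde{\da}]$; since $\mu\equiv1$ and $f\equiv0$ on $C_0$, at a positive maximum $u^{\ast}$ of $f$ one has $\tfrac{df}{du}(u^{\ast})\geq0$, i.e. $(\mu^{-1}T\mu)^2\leq\mu^{-1}T^2\mu$ there, whence $(\mu^{-1}T\mu)_+\leq\sqrt{T^2\mu/\inf\mu}$. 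Combining these with the \emph{linear} lower bound $\mu\gtrsim s^{\ast}-t$ (which you do derive, and which suffices here) yields $(\mu^{-1}T\mu)_+\les\sqrt{\da^{-2}/(s^{\ast}-t)}=\da^{-1}(s^{\ast}-t)^{-1/2}$.
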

\begin{remark}
Due to Lemma \ref{preliminary1}, in the shock region, one obtains $|Tv^i|,\ |Th|\geq c>0$ for some constant $c$. Since $|Tv^2|+|\hat{T}^2|\leq\da M$, this implies
\begin{align}
|\hat{T}^1\pa_1v^1|,\ |\hat{T}^1\pa_1h|\geq\mu^{-1}c.
\end{align}
More precisely, in view of \eqref{keymu1} and \eqref{shocktime}, in the shock region $|Tv^1(t,u,\vartheta)|\geq \frac{1}{6t}-O(\da M^2).$ Thus one has,
\begin{align}
	|\pa_1v^1|,\ |\pa_1h|\geq\frac{\max\pa_s\fe_1}{8}\mu^{-1}.
\end{align}
Moreover, it follows from the proof of Proposition \ref{keymu} that $|\pa_1v^1|,\ |\pa_1h|\geq C\frac{\max\pa_s\fe_1}{8}\frac{1}{s^{\ast}-t}$ as $t\to s^{\ast}$. Then, as a shock forms, $\pa_{\a}v^1,\pa_{\a}h$ blow up for $\a\in\{0,1\}$. Furthermore, it follows from \eqref{transportzeta} that $T\zeta=Th+O(\da M^2)$ and as a result, $\pa_{\a}\zeta$ blows up as a shock forms for $\a\in\{0,1\}$. The blow-up of $\pa_{\a}\zeta$ is caused by the discontinuity of the height $h$ near shock. 
\end{remark}

\begin{remark}\label{shockformation}
It follows from the proof of Lemma \ref{accuratemu} that $$m(t,u,\vartheta)=m(0,u,\vartheta)+O(\da M^2)=-\frac{3}{2}\pa_s\fe_1+O(\da M^2)=-\frac{3}{2}\pa_s\fe_0+O(\da M^2).$$ This implies that $$L\mu(t,u,\vartheta)=m(t,u,\vartheta)+O(\da M^2)=\frac{3}{2}\pa_s\fe_1+O(\da M). $$
Hence, one has
\begin{align}
  \mu(t,u,\vartheta)&=\mu(0,u,\vartheta)+\int_0^t L\mu(\tau,u,\vartheta)\ d\tau=1-\frac{3}{2}t\pa_s\fe_1+O(\da M^2).\label{shocktime}
\end{align}
Therefore, if $\max \pa_s\fe_1\geq 1$ or equivalently $\min\pa_{1}\p(0,x)\leq-1$, then $\mu$ becomes $0$ at $t=T_{\ast}=\frac{2}{3}+O(\da).$ That is, a shock forms at $t=T_{\ast}$. Moreover, if one assumes that $\min\pa_1\p\leq-\da^{\a}$ for $-1<\a<1$ (one may take $\p=\da^{1+\a}\fe_0(\frac{1-x_1}{\da},x_2)$), then similar results in this paper hold with $\da$ replaced by $\da^{1+\a}$, i.e., $|L\fai|+|Y\fai|+\da|T\fai|\les\da^{1+\a}$, and \eqref{shocktime} becomes\footnote{In this case, one assumes $s^{\ast}=\frac{2}{\da^{\a}}$}
\begin{align}
	\mu(t,u,\vartheta)&\geq1-\frac{3}{2}\da^{\a}t+\int_0^tO(\da^{1+\a})=1-\frac{3}{2}\da^{\a}t+O(\da),
\end{align}
which implies the blow-up time $T_{\ast}=\frac{2}{3\da^{\a}}+O(\da)$ for $-1<\a<1$.
\end{remark}
\begin{proof}[Proof of Proposition \ref{keymu}]
It follows from Lemma \ref{accuratemu} that
\begin{align*}
1-\mu(t,u,\vartheta)+tL\mu(t,u,\vartheta)\les\da M^2.
\end{align*}
This implies that for $(t,u,\vartheta)\in W_{s}$,
\begin{align}
  L\mu(t,u,\vartheta)&\leq-\frac{1}{4t}\les -1.
\end{align}
To prove \eqref{keymu2}, we first choose a new coordinate $\tilde{\vartheta}$ on $\Sigma_t$ such that $T=\frac{\pa}{\pa u}|_{t,u,\tilde{\vartheta}}$. Along the integral curves of $T$, let the function $f(u)=T\ln\mu(u)$ for $u\in[0,\tilde{\da}]$. Then $f$ attains its maximum at $u=u^{\ast}$. With loss of generality, one may assume $f(u^{\ast})>0$. Otherwise $(\mu^{-1}T\mu)_+=0$. Since on $C_0^t$, $\mu\equiv 1$ and $f(u)\equiv 0$, one has $\frac{df}{du}\geq0$, i.e., $(\mu^{-1}T\mu)_+\leq\sqrt{\frac{T^2\mu}{\inf\mu}}.$ To estimate $T^2\mu$, one first commutes $X$ with \eqref{equationLTmu} and use the same procedure in proving Lemma \ref{preliminary1} to show $|XT\mu|+|TX\mu|\les\da^{-1}M$. Then one commutes $T^2$ with \eqref{equationmu} to obtain
\begin{align}
  LT^2\mu&=T^2m+eT^2\mu+2T\mu\cdot Te+\mu T^2e+(T\tpi_L^{\sharp}+\tpi_L^{\sharp}T)\mu.\label{equationLT2mu}
\end{align}
Applying the same argument in proving Lemma \ref{accuratemu} yields 
\begin{align}
|T^2\mu(t,u,\vartheta)-T^2\mu(0,u,\vartheta)|\les\da^{-1}M^2.  
\end{align}
Then, it follows from Lemma \ref{preliminary1} and the bootstrap assumptions that $|T^2\mu|\les \da^{-2}$ by integrating \eqref{equationLT2mu}. Using \eqref{keymu1} yields
\[
L\mu(0,u,\vartheta)\les L(t,u,\vartheta)+O(\da M^2)\les -1.
\]
Thus, for $t<s^{\ast}$, 
\begin{align}
\mu(t,u,\vartheta)&\geq -\int_t^{s^{\ast}}L\mu(\tau,u,\vartheta)\ d\tau=-\int_t^{s^{\ast}}L\mu(0,u,\vartheta)+O(\da M^2)\ d\tau\gtrsim
|s^{\ast}-t|.\label{mubehavior}
\end{align}
Therefore, one has \eqref{keymu2} and finished the proof of the Proposition.
\end{proof}
\begin{remark}\label{recovertfai}
	Indeed, in proving Lemma \ref{accuratemu}, we have shown that 
 \[
|T\fai(t,u,\vartheta)|\les|T\fai(0,u,\vartheta)|+O(\da M^2)\les 1.
\]
This recovers the assumption $|T\fai|\les M$ by choosing $M$ suitably large. In the proof of  Proposition \ref{keymu}, we obtain
\begin{align*}
|T^2\fai(t,u,\vartheta)|&\les|T^2\fai(0,u,\vartheta)|+O(M^2)\les \da^{-1},\\
|T^3\fai(t,u,\vartheta)|&\les|T^3\fai(0,u,\vartheta)|+O(\da^{-1}M^2)\les \da^{-2}.
\end{align*}
This recovers the assumption $|T^2\fai|+\da|T^{3}\fai|\les \da^{-1} M.$
\end{remark}
\begin{lemma}($C^{\frac{1}{3}}$ blow-up for wave variables)\label{C13}
	Let $\p(0,x)$ be defined in \eqref{selfsimilar2}. Then 
	\begin{align}
		\p,h,v^1,\zeta\in L^{\infty}([0,T_{\ast}),C^{\frac{1}{3}}).
	\end{align}
\end{lemma}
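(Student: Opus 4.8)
The plan is to prove a uniform-in-$t$ $C^{\frac13}$ bound over $[0,T_\ast)$ for each of $\p,h,v^1,\zeta$ in the Cartesian spatial variables, by exploiting that in the acoustical coordinates $(t,u,\vartheta)$ the solution stays smooth with bounds uniform in $t$ up to the shock, so that the only loss of regularity comes from the degeneration of the fixed-time coordinate map $\Phi_t\colon(u,\vartheta)\mapsto(x_1,x_2)$, for which $\det D\Phi_t=\eta^{-1}\mu\,\sg^{\frac12}$; the special feature of the self-similar Burgers datum \eqref{selfsimilar2} is precisely that this degeneration is \emph{exactly cubic}, uniformly in $t$, so that $\Phi_t^{-1}$ is uniformly $C^{\frac13}$.

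First I would reduce everything to $\p$. By Lemma~\ref{accuratemu}, $\fai(t,u,\vartheta)=\fai(0,u,\vartheta)+O(\da M^2)$ for $\fai\in\{\p,v^i\}$, and the initial profile is a smooth function of $(u,\vartheta)$ (since $\bar{U}$, hence $\bar{W}$, is smooth, the cusp being only asymptotic); together with Remark~\ref{recovertfai} this shows that at each fixed $t$ the functions $\p$ and $v^1$ are uniformly Lipschitz in $(u,\vartheta)$, with $\p$ bounded. Hence $h=e^{\p}$, being a smooth function of the bounded quantity $\p$, lies in $C^{\frac13}$ once $\p$ does; $v^1$ is handled by exactly the argument for $\p$ below, having the same acoustical regularity; and writing $\zeta=\xi+1-e^{-\p}$ with $\xi=\zeta+e^{-\p}-1$, the relation $B\xi=0$ gives $L\xi=-\eta^{2}\mu^{-1}T\xi$, so that $\partial_1\xi\sim-\hat{T}^1\eta^{-1}L\xi$ stays bounded because $|L\xi|$ stays small (Remark~\ref{main2}, \eqref{intro1}); thus $\xi$ is uniformly Lipschitz in $x$ and $\zeta\in C^{\frac13}$ follows from $\p\in C^{\frac13}$.

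It then remains to prove $\|\p(t,\cdot)\|_{C^{\frac13}(\Sigma_t)}\les 1$ uniformly in $t\in[0,T_\ast)$. Since $\p$ is uniformly Lipschitz in $(u,\vartheta)$, this reduces to the uniform cubic non-degeneracy of $\Phi_t$, which after the (routine) reduction using $\det D\Phi_t=\eta^{-1}\mu\,\sg^{\frac12}$, Lemmas~\ref{frame} and \ref{LiTiXi}, and $\vartheta=x_2+O(\da)$, amounts to the pointwise lower bound
\begin{align}
\mu(t,u,\vartheta)\;\gtrsim\;(T_\ast-t)+(u-u_\ast)^2+(\vartheta-\vartheta_\ast)^2\quad\text{near the blow-up point,}\notag
\end{align}
together with $\mu\gtrsim c>0$ away from it. Indeed, along $\{x_2=\mathrm{const}\}$ one has $\frac{\partial x_1}{\partial u}=\mu/(\eta\hat{T}^1)\approx\mu$ (multiplicatively, by the Jacobian formula), so the displayed bound yields $|x_1(t,u,\vartheta)-x_1(t,u',\vartheta)|\gtrsim\int_{u'}^{u}[(T_\ast-t)+(u''-u_\ast)^2]\,du''\gtrsim|u-u'|^{3}$ (the inequality $\int_{u'}^{u}[(T_\ast-t)+(u''-u_\ast)^2]\,du''\gtrsim|u-u'|^{3}$ being elementary), hence $|u-u'|\les|x_1-x_1'|^{\frac13}$, and composing with the Lipschitz bound in $(u,\vartheta)$ finishes the claim. (Conceptually the factors $(T_\ast-t)$ cancel because the leading-order profile near the shock is the self-similar Burgers solution $(T_\ast-t)^{1/2}\bar{U}\big(\,\cdot\,(T_\ast-t)^{-3/2}\big)$ and $\bar{U}\in C^{\frac13}(\mathbb{R})$ globally, since $\bar{U}'=-(1+3\bar{U}^{2})^{-1}\in[-1,0)$ and $|\bar{U}(y)|\le\min(|y|,|y|^{1/3})$.) To obtain the lower bound on $\mu$ I would use Lemma~\ref{accuratemu} and \eqref{shocktime}, which give $\mu(t,u,\vartheta)=\tfrac12\big(2-3t\,\Psi(u,\vartheta)\big)+O(\da M^2)$ with $\Psi$ equal to $\partial_s\fe_1$ evaluated at the point corresponding to $(u,\vartheta)$ and $\tfrac32 T_\ast\max\Psi=1$; for the datum \eqref{selfsimilar2}, computing $\Psi$ from $\bar{W}=\langle x_2\rangle\bar{U}(\langle x_2\rangle^3x_1)$ and the relation $y=-\bar{U}-\bar{U}^{3}$, and using that $\bar{U}'$ attains its minimum $-1$ at $y=0$ with $\bar{U}''(0)=0$ and $\bar{U}'''(0)=6>0$, one finds that $\Psi$ has a non-degenerate maximum (negative definite Hessian) at a single interior point $(u_\ast,\vartheta_\ast)$; absorbing the $O(\da M^2)$ error (a smooth perturbation, which preserves the Morse structure) and Taylor expanding about $(T_\ast,u_\ast,\vartheta_\ast)$, using $\tfrac23-t\Psi_\ast=\Psi_\ast(T_\ast-t)$, gives the displayed bound (away from the shock time it is immediate since $\mu\gtrsim c>0$ and $|u-u'|\le\tilde{\da}\le1$).

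The main obstacle is twofold. First, the explicit verification that the self-similar datum forces the \emph{exactly quadratic} (Morse) degeneration of $\mu$: that no accidental higher-order vanishing occurs in the $u$-direction, and that the $x_2$-rescaling $\langle x_2\rangle$ entering $\bar{W}$ only modulates the position and height of $\max\Psi$ smoothly rather than producing a degenerate critical direction, which requires the blow-up point to sit in the interior of the $x_2$-support, away from the seam of $\mathbb{S}^1$; one must also propagate the $O(\da M^2)$ error of Lemma~\ref{accuratemu} through two $(u,\vartheta)$-derivatives so that it is a genuine smooth perturbation. Second, the routine but slightly delicate coordinate bookkeeping converting this lower bound on $\mu$ into the uniform $C^{\frac13}$ bound on $\Phi_t^{-1}$ in all directions, not just the $x_1$-direction. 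Granting these, the composition argument closes; the $C^{\frac1{2n+1}}$ variants mentioned after Remark~\ref{main3} follow identically by instead perturbing the global self-similar Burgers profile given by $y=-\bar{U}-\bar{U}^{2n+1}$, for which $\max\Psi$ degenerates to order $2n$.
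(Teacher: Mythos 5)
Your proposal is sound in its overall architecture but follows a genuinely different route from the paper. The paper never isolates a quantitative lower bound on $\mu$; instead it uses Lemma \ref{accuratemu} to transport $T\p$ and $X\p$ back to $t=0$, so that the difference quotient of $\p(t,\cdot)$ is controlled by that of the initial profile along characteristics, and the $C^{\frac13}$ modulus is then inherited directly from the global bound $|\bar U(y)-\bar U(y')|\le 4^{1/3}|y-y'|^{1/3}$ encoded in the cubic relation \eqref{burgesself} (your parenthetical ``conceptual'' remark about the tilted self-similar graph is essentially the paper's actual mechanism). Your version makes explicit what the paper leaves implicit: the relation $|u-u'|\les|x_1-x_1'|^{1/3}$ coming from $\pa x_1/\pa u\approx\mu$ together with a quadratic-in-$u$ lower bound on $\mu$, plus uniform Lipschitz regularity in $(u,\vartheta)$. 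This buys a cleaner separation between ``regularity in acoustical coordinates'' and ``degeneration of the coordinate map,'' and it is the version that generalizes transparently to the $C^{\frac{1}{2n+1}}$ profiles; the paper's version avoids any Morse analysis of $\mu$ but is correspondingly terser about why the Hölder quotient closes for $|x_1-x_1'|$ small.

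One concrete inaccuracy you should repair: the claim that $\Psi=\pa_s\fe_1$ has a non-degenerate maximum with negative-definite Hessian at a single interior point $(u_\ast,\vartheta_\ast)$ is not true for the datum \eqref{selfsimilar2}. Restricting to the $u$-critical set $y=0$, one has $-\pa_{x_1}\bar W(0,x_2)=\langle x_2\rangle^4$, which is strictly monotone in $|x_2|$, so there is no interior critical point in the $x_2$-direction (on the periodized domain the maximum sits at the seam, where $\langle x_2\rangle$ is not even periodic without modification). Consequently the displayed bound $\mu\gtrsim(T_\ast-t)+(u-u_\ast)^2+(\vartheta-\vartheta_\ast)^2$ is not available as stated. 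Fortunately your argument only needs the fixed-$\vartheta$ version $\mu(t,\cdot,\vartheta)\gtrsim(T_\ast-t)+ (\Psi_{\max}-\Psi(\cdot,\vartheta))\gtrsim (T_\ast-t)+c\,(u-u_\ast(\vartheta))^2$, which does hold uniformly in $\vartheta$ because the non-degeneracy in the $u$-direction comes from $\bar U'+1=\tfrac{3\bar U^2}{1+3\bar U^2}\sim 3y^2$ alone, and the transverse map $\vartheta\mapsto x_2$ never degenerates. With that correction (and the second-derivative transport estimates you already flag, needed so the $O(\da)$ errors do not swamp the vanishing lower bound near $(T_\ast,u_\ast)$), your argument closes.
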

\begin{proof}
 It follows from \eqref{burgesself}-\eqref{selfsimilar2} that
  \begin{align*}
 		\min\pa_1\p
   \leq -(\da\ga'\bar{W}_0+\da\ga\pa_1\bar{W}_0)|_{x_1=1-\frac{\da}{2},x_2=0}=\ga(1-\frac{\da}{2})\pa_1\bar{U}(0)=-1.
 	\end{align*}
  Hence, a shock forms before $t=T_{\ast}$. In view of Lemma \ref{partialderivatives},
	\begin{align}
		\begin{split}
		\int_{x_1'}^{x_1}\pa_1\p\  dx_1&=\int_{u'}^u\hat{T}^1T^1\hat{T}\p\ du+\int_{\vartheta'}^{\vartheta}\hat{X}^1X^1\hat{X}\p\ d\vartheta\\
		&=\p(t,u,\vartheta)-\p(t,u',\vartheta)+O(\da).
		\end{split}
	\end{align}
	\begin{align}
		\begin{split}
			\pa_1\p&=\hat{T}^1T^1\hat{T}\p+\hat{X}^1X^1\hat{X}\p=(\hat{T}^1)^2T\p+(\hat{X}^1)^2X\p.
		\end{split}
	\end{align}
	Due to Lemma \ref{accuratemu}, one has 
 \[
 \p(t,u,\vartheta)=\p(0,u,\vartheta)+\int_0^tO(\da M^2).
\]
Hence, it holds that
\begin{align}
		T\p(t,u,\vartheta)-T\p(0,u,\vartheta)=O(\da),\quad 
		X\p(t,u,\vartheta)-X\p(0,u,\vartheta)=O(\da).
	\end{align}
	This implies $\pa_1\p=\pa_1\p(0,x_1,x_2)+O(\da)$. Therefore, one has
	\begin{align}
		\sup_{x_1\neq x_1'}\frac{|\p(t,x_1,x_2)-\p(t,x_1',x_2)|}{|x_1-x_1'|^{\frac{1}{3}}}\leq\sup_{x_1\neq x_1'}
			\frac{\int_{x_1'}^{x_1}\pa_1\p\ dx_1}{|x_1-x_1'|^{\frac{1}{3}}}\leq C+\frac{C\da t}{|x_1-x_1'|}.
	\end{align}
	Similarly,
	\begin{align}
		\sup_{x_2\neq x_2'}\frac{|\p(t,x_1,x_2)-\p(t,x_1,x_2')|}{|x_2-x_2'|^{\frac{1}{3}}}\leq \da.
	\end{align}
	Hence, for sufficiently small $\da$, it holds that
	\begin{align}
		\begin{split}
			&\sup_{(x_1,x_2)\neq(x_1',x_2')}\frac{|\p(t,x_1,x_2)-\p(t,x_1',x_2')|}{(|x_1-x_1'|^2+|x_2-x_2'|^2)^{\frac{1}{6}}}\\
   \leq &
			\sup_{x_1\neq x_1'}\frac{|\p(t,x_1,x_2)-\p(t,x_1',x_2)|}{|x_1-x_1'|^{\frac{1}{3}}}
   +\sup_{x_2\neq x_2'}\frac{|\p(t,x_1',x_2)-\p(t,x_1',x_2')|}{|x_2-x_2'|^{\frac{1}{3}}}\\
			\leq & C+\frac{C\da t}{|x_1-x_1'|^{\frac{1}{3}}}\leq C.
		\end{split}
	\end{align}
	 Indeed, one can use similar argument to show that $\p\in C^{\frac{1}{2n+1}}$ for any $n\in\mathbb{N}^{\ast}$ by choosing $\bar{U}$ as 
 \begin{align}
     -\frac{1}{2n}\bar{U}+\left(\frac{2n+1}{2n}+\bar{U}\right)\bar{U}_y=0.
 \end{align}
 This finishes the proof of the lemma.
\end{proof}

\begin{remark}(The non-zero potential vorticity near shock)
	Consider the transport equation \eqref{equationvorticity}. Since the integral curves of $\mu B$ is $g$-orthogonal to $\Sigma_t$, then one integrates \eqref{equationvorticity} along integral curves of $\mu B$ backwards and there are following two possibilities:
	\begin{itemize}
		\item either the integral curves eventually intersect $\Sigma_0$, then $\xi(t)=\xi(0)=1+O(\da)$;
		\item or some integral curves intersect $C_0$ at some $\tilde{t}>0$, then $\xi(t)=\xi(\tilde{t}).$ Since $C_0$ is the standard null cone, one integrates \eqref{equationvorticity} along the integral curves of $L$ backwards to obtain 
		\begin{align*}
			(\zeta+e^{\p})(\tilde{t})=(\zeta+e^{\p})(0)+\int_0^{\tilde{t}}-\mu^{-1}T(\zeta+e^{\p})(s)\ ds\\
             =(\zeta+e^{\p})(0)+\pa_1\int_0^{\tilde{t}}(\zeta+e^{\p})(s)\ ds=(\zeta+e^{\p})(0)\neq 0.
		\end{align*}
	\end{itemize}
	Therefore, for the initial data given in Lemma \ref{initialdata}, then the potential vorticity is non-zero on $W_{t^{\ast}}^u$.
	\begin{center}
		\tikzset{global scale/.style={
				scale=#1,
				every node/.append style={scale=#1}
			}
		}
		\begin{tikzpicture}[global scale = 0.8]
		\draw (-2,0)--node[below] {$\Sigma_{0}$}(2,0);
		\draw (0,6)--node[above] {$\Sigma_{t}$}(4,6);
		\node (1) at (1,6) {};
		\draw[->, thick] (1,6)..controls (1.1,4) and (0.9,2)..(1,0);
		\draw[->] (4,3)--(4,4) node[right] {$B$};
		\draw[->, thick] (3,6)..controls (3.1,5) and (2.9,4)..(3,3);
		\path[blue] (-1,0) edge [out=60, in=255] (1,6);
		\path[blue] (0,0)edge [out=65, in=255] (2,6);
		\path[blue] (1,0)edge [out=63, in=260] (3,6);
		\draw[blue] (-2,0) edge [out=50, in=260]node[above left] {$C_{u}$}(0,6);
		\draw[blue] (2,0)--node[ right] {$C_{0}$}(4,6);
		\end{tikzpicture}
	\end{center}
\end{remark}
\quad The following lemma indicates that $\mu$ behaves like a polynomial in the shock region. This plays a key role in controlling the top order energy estimates.
\begin{lemma}\label{crucial}
	For sufficiently small $\da$, all $t\in[t_0,t^{\ast})$ and $b>1$, it holds that
	\begin{align}
		\int_0^t\mu_m^{-(b+1)}(L\mu)_-\ d\tau\leq C\frac{1}{b}\mu_m^{-b}(t)
	\end{align}
	for some constant $C=1+O(\da)$. Moreover, $\int_0^t\mu_m^{-\frac{3}{4}}(\tau)\ d\tau\leq C$ for some constant $C>0$.
\end{lemma}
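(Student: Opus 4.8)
The plan is to reduce both assertions to properties of the scalar function $\mu_m(t)$. First I would note that $\mu_m$ is Lipschitz on $[0,t^\ast)$: it is the minimum over the compact set $[0,\tilde\da]\times\mathbb{S}^1$ (capped at $1$) of the family $t\mapsto\mu(t,u,\vartheta)$, which is $C^1$ with $\pa_t=L$ in the acoustical coordinates and has time-derivative bounded uniformly in $(u,\vartheta)$ by Lemma \ref{preliminary1}. By the standard envelope (Danskin-type) argument used in \cite{christodoulou2014compressible,Ontheformationofshocks}, $\mu_m$ is differentiable for a.e.\ $t$ and $\tfrac{d}{dt}\mu_m(t)=(L\mu)(t,u_\star,\vartheta_\star)$ at any minimizing point $(u_\star,\vartheta_\star)$. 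For $t\in[t_0,t^\ast)$ one has $\mu_m(t)<\tfrac1{10}$, so the minimizing point lies in $W_s$; by Proposition \ref{keymu}, $\tfrac{d}{dt}\mu_m(t)\le-\tfrac1{4t}<0$, so $\mu_m$ is strictly decreasing on $[t_0,t^\ast)$ and there $(L\mu)_-$ (the relevant quantity being $-\tfrac{d}{dt}\mu_m$, i.e.\ $(L\mu)_-$ at the minimizing point) coincides with $-\tfrac{d}{dt}\mu_m$.

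The first estimate is then the fundamental theorem of calculus. Using the identity $\mu_m^{-(b+1)}\big(-\tfrac{d}{d\tau}\mu_m\big)=\tfrac1b\,\tfrac{d}{d\tau}\big(\mu_m^{-b}\big)$, integration over $[t_0,t]$ already gives $\int_{t_0}^t\mu_m^{-(b+1)}(L\mu)_-\,d\tau=\tfrac1b\big(\mu_m^{-b}(t)-\mu_m^{-b}(t_0)\big)\le\tfrac1b\mu_m^{-b}(t)$. To extend the bound to the full integral over $[0,t]$ with the sharp constant, I would invoke the accurate estimates of $\mu$: by \eqref{shocktime}, $\mu_m(\tau)=1-c_0\tau+O(\da M^2)$ with $c_0:=\tfrac32\max_{s,\ta}\pa_s\fe_1(s,\ta)\ge\tfrac32$ (using the hypothesis $\max\pa_s\fe_1\ge1$), and by Remark \ref{shockformation} the value of $L\mu$ at a minimizer is $-c_0+O(\da M^2)$ throughout $[0,t^\ast)$. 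Hence $-\tfrac{d}{d\tau}\mu_m=c_0+O(\da M^2)>0$ on all of $[0,t^\ast)$ for $\da$ small, so $\mu_m$ is decreasing there, $(L\mu)_-=-\tfrac{d}{d\tau}\mu_m$ up to relative error $O(\da M^2)$, and carrying the same computation over $[0,t]$ yields
\begin{equation*}
\int_0^t\mu_m^{-(b+1)}(L\mu)_-\,d\tau=\tfrac1b\big(\mu_m^{-b}(t)-1\big)+O(\da M^2)\int_0^t\mu_m^{-(b+1)}\,d\tau.
\end{equation*}
Bounding $1\le c_0^{-1}\big(-\tfrac{d}{d\tau}\mu_m\big)(1+O(\da))$ and applying the same identity shows $\int_0^t\mu_m^{-(b+1)}\,d\tau\les\tfrac1b\mu_m^{-b}(t)$, which absorbs the error term and gives the bound with $C=1+O(\da)$.

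For the ``moreover'' assertion I would use only a lower bound on $\mu_m$: on $[0,t_0]$, \eqref{shocktime} gives $\mu_m\ge\tfrac1{10}-O(\da M^2)\ge c>0$, while on $[t_0,t^\ast)$ the bound \eqref{mubehavior} from the proof of Proposition \ref{keymu} gives $\mu_m(\tau)\gtrsim s^\ast-\tau$; since $\tfrac34<1$, $\int_{t_0}^t(s^\ast-\tau)^{-3/4}\,d\tau\le 4(s^\ast)^{1/4}$, which together with $\int_0^{t_0}\mu_m^{-3/4}\,d\tau\les t_0$ yields a bound on $\int_0^t\mu_m^{-3/4}\,d\tau$ independent of $t\in[t_0,t^\ast)$. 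The step demanding the most care is the first one: making rigorous the a.e.\ differentiability of $\mu_m$ with the identification $\tfrac{d}{dt}\mu_m=L\mu$ at a minimizing point (including non-uniqueness of minimizers), and then tracking the constant precisely enough to land on $C=1+O(\da)$ rather than an unspecified $O(1)$. The former is standard in this framework and can be cited; the latter relies entirely on the sharp description $\mu_m(\tau)=1-c_0\tau+O(\da M^2)$ and $L\mu=-c_0+O(\da M^2)$ in the shock region established in Lemma \ref{accuratemu} and Remark \ref{shockformation}, which is precisely why those estimates were proved beforehand.
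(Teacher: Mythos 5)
Your proposal is correct and rests on the same two ingredients as the paper's proof: the near-affine description $\mu_m(\tau)=1-\tau\lam_m+O(\da M^2)$ coming from Lemma \ref{accuratemu}, and the identification, up to $O(\da M^2)$, of $\sup_{(u,\vartheta)}(L\mu)_-$ with $-L\mu$ evaluated at a minimizer of $\mu$ (Proposition \ref{keymu} and Remark \ref{shockformation}). The only difference is presentational: you evaluate the integral via the exact antiderivative identity $\mu_m^{-(b+1)}(-\tfrac{d}{d\tau}\mu_m)=\tfrac{1}{b}\tfrac{d}{d\tau}(\mu_m^{-b})$ after a Danskin-type differentiation of the minimum, whereas the paper substitutes the explicit affine profile $1-\tau\lam_m(s)+O(\da M^2)$ for both $\mu_m$ and $(L\mu)_-$ and integrates that directly; both routes deliver the sharp constant $C=1+O(\da)$ and the integrability of $\mu_m^{-3/4}$.
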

\begin{remark}
	The same argument leads to 
 \[\int_0^t\mu^{-(b+1)}_m(\tau)\ d\tau\leq \tfrac{C}{b}\mu^{-b}_m(t).\]
\end{remark}
\begin{remark}\label{crucial2}
	The similar argument in proving Proposition \ref{keymu} indicates that $\mu$ is decreasing along the integral curves of $L$ after some $t_1:=\inf\{t\in[0,t^{\ast})|\ \mu_m(t)\leq\tfrac{1}{b}\}$ for some $b\geq 2$. Hence, one is able to show the following argument: for sufficiently small $\da$ and fixed $b\geq 2$, there is a constant $C_0$ depending on $\da$ and $b$ such that for all $\tau\in[0,t]$,
	\begin{align}
		\mu_m^{b}(t)\leq C_0\mu^b_m(\tau).
	\end{align}
\end{remark}
\begin{proof}
  In view of Proposition \ref{keymu}, define 
  \begin{align}
      \lam_m(t)=\sup_{(u,\vartheta)}(L\mu)_-(t,u,\vartheta)=-L\mu(t,u_m,\vartheta_m)>0.
  \end{align} Let $\mu_m(t)=\mu(t,u_t,\vartheta_t)$ and fix $s\in[t_0,t]$. Then, it follows from Lemma \ref{keymu} that 
	\begin{align}
 \begin{split}
		& L\mu(t,u_s,\vartheta_s)=L\mu(s,u_s,\vartheta_s)+O(\da M^2)\\
           =&-\lam_m(s)+L\mu(s,\mu_s,\vartheta_s)-L\mu(s,u_m,\vartheta_m)+O(\da M^2)=
		-\lam_m(s)+O(\da M^2),
  \end{split}\label{Lmuustas}
	\end{align}
	where the last inequality comes from $\mu(s,u,\vartheta)=1+sL\mu(s,u,\vartheta)+O(\da M^2)$ and $\mu(s,u_m,\vartheta_m)\geq \mu(s,u_s,\vartheta_s)$. It follows from Lemma \ref{accuratemu} that
			\begin{align*}
				\mu(s,u_m,\vartheta_m)&=1+sL\mu(s,u_m,\vartheta_m)+O(\da M^2),\\
				\mu(s,u_s,\vartheta_s)&=1+sL\mu(s,u_s,\vartheta_s)+O(\da M^2).
			\end{align*}
			This implies
   \[
   0\leq L\mu(s,u_s,\vartheta_s)-L\mu(s,u_m,\vartheta_m)\les\da M^2,
   \]
   and 
   \[
   0\leq \mu(s,u_m,\vartheta_m)-\mu_m(s)\les\da M^2.
   \]
   It follows from \eqref{Lmuustas} that
			\begin{align*}
				\mu_m(t)&=\mu(s,u_t,\vartheta_t)+\int_s^tL\mu(\tau,u_t,\vartheta_t)\geq\mu_m(s)-(t-s)\lam_m(s)+O(\da M^2)\\
				&\geq \mu(s,u_m,\vartheta_m)-sL\mu(s,u_m,\vartheta_m)-t\lam_m(s)+O(\da M^2)\geq1-t\lam_m(s)+O(\da M^2).
			\end{align*}
			On the other hand,
			\begin{align*}
				\mu_m(t)&\leq\mu(t,u_s,\vartheta_s)=\mu_m(s)+\int_s^tL\mu(\tau,u_s,\vartheta_s)=\mu_m(s)-(t-s)\lam_m(s)+O(\da M^2)\\
				&\leq \mu(s,u_m,\vartheta_m)-sL\mu(s,u_m,\vartheta_m)-t\lam_m(s)+O(\da M^2)\leq
				1-t\lam_m(s)+O(\da M^2).
			\end{align*}
		Therefore, $\mu_m(t)=1-t\lam_m(s)+O(\da M^2).$ 
Hence, it follows from Proposition \ref{keymu} that 
			\begin{align*}
				\int_0^t\mu_m^{-(b+1)}(L\mu)_-\ d\tau&\leq \int_0^t(1-\tau\lam_m(s)+O(\da M^2))^{-b-1}\ d\tau\\
                &\leq\frac{1}{b\lam_m(s)}(\mu_m(t)+O(\da M^2))^{-b}\leq C\frac{1}{b}\mu_m^{-b}(t),
			\end{align*} where $C=1+O(\da M^2)$. In particular,
			\begin{align*}
				\int_0^t\mu_m^{-\frac{3}{4}}(\tau)&\leq\int_0^t(1-\tau\lam_m(s)-\tfrac{1}{10})^{-\frac{3}{4}}
				=\frac{4}{\lam_m(s)}((\frac{9}{10})^{\frac{1}{4}}-(1-t\lam_m(s)-\frac{1}{10})^{\frac{1}{4}})\leq C.
			\end{align*}
  This finishes the proof of Lemma \ref{crucial}.
\end{proof}
\begin{remark}
For the initial data as given in Remark \ref{main2} and under the same assumption as in Lemma \ref{crucial}, the similar proof yields
\begin{align*}
	\int_0^t\mu_m^{-(b+1)}(L\mu)_-\ dt'\leq \frac{C}{b}\da^{-\a}\mu_m^{-b}(t).
\end{align*}
\end{remark}
\section{Fundamental energy-flux estimates}\label{section4}
In this section, we derive the fundamental energy estimates for $\Psi\in\{\p,v^i,\zeta-\varrho\}$. This relies on the multiplier method. The energy-momentum tensor associated with $\fai$ is defined as $T_{\a\be}=\pa_{\a}\fai\pa_{\be}\fai-\frac{1}{2}g_{\a\be}g^{\mu\nu}\pa_{\mu}\fai\pa_{\nu}\fai.$ In the null frames, $T$ can be computed as follows: 
\begin{align*}
  T_{LL}&=(L\fai)^2,\quad T_{\dl\dl}=(\dl\fai)^2,\quad T_{L\dl}=\mu(\hat{X}\fai)^2,\quad T_{LX}=L\fai\cdot X\fai,\\
  T_{\dl X}&=\dl\fai\cdot X\fai,\quad T_{XX}=\frac{1}{2}(X\fai)^2+\frac{1}{2}\mu^{-1}\sg L\fai\cdot\dl\fai.
\end{align*}

\begin{lemma}\label{divergence}(\textbf{Divergence theorem})
\ For any space-time vector field $J=J^t\frac{\pa}{\pa t}+J^u\frac{\pa }{\pa u}+\slashed{J}\frac{\pa }{\pa \vartheta}$, it holds that
\begin{align*}
  \int_{W_t^u}\mu\text{div}J&=\int_{\Sigma_t^u}\mu J^t+\int_{C_u^t}\mu J^u-\int_{\Sigma_0^u}\mu J^t-\int_{C_0^t}\mu J^u.
\end{align*}
\end{lemma}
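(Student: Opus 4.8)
The plan is to reduce the identity to the fundamental theorem of calculus in the acoustical coordinates $(t,u,\vartheta)$, using the explicit form of the spacetime volume element. First I would record, from the last line of Lemma \ref{frame}, that in the coordinates $(t,u,\vartheta)$ the metric $g$ satisfies $|\det g|=\mu^2\sg$, so that the spacetime volume form is $\sqrt{|\det g|}\,dt\,du\,d\vartheta=\mu\sqrt{\sg}\,dt\,du\,d\vartheta$; equivalently this follows from the Jacobian relation $\frac{\pa(t,x_1,x_2)}{\pa(t,u,\vartheta)}=\eta^{-1}\mu\sg^{1/2}$ in Lemma \ref{frame} together with $\sqrt{|\det g|}=\eta$ in the Cartesian coordinates. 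Then I recall the standard formula for the covariant divergence,
\begin{align*}
\text{div}\,J=\frac{1}{\sqrt{|\det g|}}\pa_{\a}(\sqrt{|\det g|}\,J^{\a}),
\end{align*}
where now $\a$ runs over the coordinate labels $t,u,\vartheta$ and $J^{\vartheta}=\slashed{J}$. Multiplying through by $\mu\sqrt{\sg}$ gives the algebraic identity $\mu\sqrt{\sg}\,\text{div}\,J=\pa_t(\mu\sqrt{\sg}\,J^t)+\pa_u(\mu\sqrt{\sg}\,J^u)+\pa_{\vartheta}(\mu\sqrt{\sg}\,\slashed{J})$.

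With this in hand, I would expand $\int_{W_t^u}\mu\,\text{div}\,J$ according to the definition of the volume form on $W_t^u$ given just above the lemma, obtaining
\begin{align*}
\int_{W_t^u}\mu\,\text{div}\,J=\int_0^t\int_0^u\int_{\mathbb{S}^1}\left[\pa_{t'}(\mu\sqrt{\sg}\,J^t)+\pa_{u'}(\mu\sqrt{\sg}\,J^u)+\pa_{\vartheta'}(\mu\sqrt{\sg}\,\slashed{J})\right]\,d\vartheta'\,du'\,dt'.
\end{align*}
By Fubini's theorem I handle the three total-derivative terms separately. Integrating the $\pa_{t'}$-term first in $t'$ produces $\int_0^u\int_{\mathbb{S}^1}\sqrt{\sg}\,\mu J^t\,d\vartheta'\,du'$ evaluated at $t'=t$ minus the same quantity at $t'=0$, which by definition equals $\int_{\Sigma_t^u}\mu J^t-\int_{\Sigma_0^u}\mu J^t$; integrating the $\pa_{u'}$-term first in $u'$ produces $\int_{C_u^t}\mu J^u-\int_{C_0^t}\mu J^u$; and the $\pa_{\vartheta'}$-term integrates to zero since $\vartheta'$ ranges over $\mathbb{S}^1$, which is closed, and the integrand is smooth and periodic. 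Summing the three contributions gives precisely the claimed formula.

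I do not expect a genuine difficulty here; the only points needing care are bookkeeping ones. One must confirm that, in the coordinate description of $W_t^u$ as the box $[0,t]\times[0,u]\times\mathbb{S}^1$, the boundary consists exactly of the four faces $\{t'=t\}=\Sigma_t^u$, $\{t'=0\}=\Sigma_0^u$, $\{u'=u\}=C_u^t$ and $\{u'=0\}=C_0^t$ (there is no boundary in the $\vartheta'$-direction), with the orientations producing plus signs on the $t'=t$ and $u'=u$ faces and minus signs on the $t'=0$ and $u'=0$ faces. One must also check that the area elements $\sqrt{\sg}\,d\vartheta'\,du'$ on $\Sigma_t^u$ and $\sqrt{\sg}\,d\vartheta'\,dt'$ on $C_u^t$ coincide with those used to define $\int_{\Sigma_t^u}$ and $\int_{C_u^t}$; both are immediate from the volume-form computation in the first step. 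Finally, all manipulations are valid because $\mu>0$ throughout $W_{\tilde{\da}}^{\ast}$ and the relevant quantities are smooth there, so no regularity subtlety arises.
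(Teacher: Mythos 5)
Your proof is correct, and it is exactly the standard argument one would expect here: the paper states Lemma \ref{divergence} without proof, treating it as elementary, and your derivation via $\sqrt{|\det g|}=\mu\sqrt{\sg}$ in the acoustical coordinates, the coordinate formula for the covariant divergence, and the fundamental theorem of calculus on the box $[0,t]\times[0,u]\times\mathbb{S}^1$ (with the $\vartheta$-term vanishing by periodicity) supplies precisely the omitted verification. The only bookkeeping caveat is that the paper's displayed definitions of $\int_{\Sigma_t^u}$ and $\int_{C_u^t}$ have their integration variables transposed (an evident typo); you have used the intended conventions, under which the signs and the four boundary faces match the statement.
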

In this paper, we choose the multipliers as $K_0=\dl$, $K_1=(1+\eta^{-2}\mu)L$ and define $J_0^{\a}=-T_{\be}^{\a}K_0^{\be},\ J_1^{\a}=-T_{\be}^{\a}K_1^{\be}$, $J^{\zeta}=(\zeta-\varrho)^2B$ where $J^{\zeta}$ is used to produce the energy and flux for the vorticity. Note that 
\begin{align}
	\begin{split}\label{DaJa}
	\mu D_{\a}J_0^{\a}&=\mu Q_0:=-\mu(K_0\fai\Box_g\fai+\frac{1}{2}T^{\a\be}\pi_{0,\a\be}),\\
	\mu D_{\a}J_1^{\a}&=\mu Q_1:=-\mu(K_1\fai\Box_g\fai+\frac{1}{2}T^{\a\be}\pi_{1,\a\be}),\\
	\mu D_{\a}J^{\zeta,\a}&=\mu Q^{\zeta}:=(L\mu+2\eta T\eta)(\zeta-\varrho)^2+2(\zeta-\varrho)\cdot\mu B(\zeta-\varrho)+\eta\mu(\zeta-\varrho)^2 \tr\slashed{k},\\
	\end{split}
\end{align}
where $\pi_0$ and $\pi_1$ are the deformation tensor associated with $K_0$ and $K_1$, respectively. In the null frame, they are given in the following table.
\begin{table}[H]
	\setlength{\tabcolsep}{12pt}
	\setlength{\belowcaptionskip}{0.2cm}
	\centering
	\caption{The deformation tensors of the multipliers}\label{table3}
	\begin{tabular}{c|cc}
		\hline
		$\pi$ & $\pi_0$ &$\pi_1$ \\
		\hline
		$\pi_{LL}$ & 0 & 0 \\
		$\pi_{L\dl}$ &$-2\dl\mu-2\mu L(\eta^{-2}\mu)$ &$-2(1+\eta^{-2}\mu)L\mu-2\mu L(\eta^{-2}\mu)$ \\
		$\pi_{\dl\dl}$ & 0& $-4\mu \dl(\eta^{-2}\mu)+4\mu(1+\eta^{-2}\mu)L(\eta^{-2}\mu)$ \\
		$\pi_{LX}$ &$-2(\sigma+\tau)$ &0 \\
		$\pi_{\dl X}$ & $-2\mu X(\eta^{-2}\mu)$ &$2(1+\eta^{-2}\mu)(\sigma+\tau)$ \\
		$\pi_{XX}$ &$2\underline{\chi}$ & $2(1+\eta^{-2}\mu)\chi$ \\
		\hline
	\end{tabular}
\end{table}
\begin{definition}\label{energy}
	Define the energies and fluxes to the wave variables $\fai\in\{\p,v^1,v^2\}$ and $\zeta$ as follows.
	\begin{align*}
	&E_0(\fai)=\int_{\Sigma_t^u}(\dl\fai)^2+\eta^{-2}\mu^2(\hat{X}\fai)^2,\quad F_0(\fai)=\int_{C_u^t}\mu(\hat{X}\fai)^2,\\
	&E_1(\fai)=\int_{\Sigma_t^u}(1+\eta^{-2}\mu)(\mu(\hat{X}\fai)^2+\eta^{-2}\mu (L\fai)^2),\quad F_1(\fai)=\int_{C_u^t}
	(1+\eta^{-2}\mu)(L\fai)^2,\\
	&E^{\zeta}(\zeta-\varrho)=\int_{\Sigma_t^u}\mu(\zeta-\varrho)^2,\quad F^{\zeta}(\zeta-\varrho)=\int_{C_u^t}\eta^2(\zeta-\varrho)^2.
	\end{align*}
\end{definition}

Applying Lemma \ref{divergence} to \eqref{DaJa} yields
\begin{align}
	E_{\a}(t,u)+F_{\a}(t,u)&=E_{\a}(0,u)+\int_{W_t^u}\mu Q_{\a},\quad \a=0,1,\label{energywave}\\
	E^{\zeta}(t,u)+F^{\zeta}(t,u)&=E^{\zeta}(0,u)+\int_{W_t^u}\mu Q^{\zeta}.\label{energyzeta}
\end{align}
\begin{remark}
	If one takes $J^{\p}=\p^2 B$ and $J^{v^i}=(v^i)^2B$, then 
	\begin{align}
		\mu D_{\a}J^{\p,\a}&=\mu Q^{\p}=(L\mu+2\eta T\eta)\p^2+2\p\mu B\p+\eta\mu\p^2 \tr\slashed{k},\\
		\mu D_{\a}J^{v^i,\a}&=\mu Q^{v^i}=(L\mu+2\eta T\eta)(v^i)^2+2\p\mu Bv^i+\eta\mu (v^i)^2 \tr\slashed{k}.
	\end{align}
	Since $\mu B\p=-\mu \text{div}v$ and $Bv^i=\ep_{ij}-e^{\p}\pa_i\p$, the energy-flux estimates lose one derivative through this approach.
\end{remark}
For $Q_{\a}$ ($\a=0,1$), one has the following decomposition with $Q_{\a}=\sum_{i=0}^6Q_{\a,i}$.
\begin{table}[H]
	\setlength{\tabcolsep}{12pt}
	\setlength{\belowcaptionskip}{0.2cm}
	\centering
 \caption{The decomposition of $Q_{\a}$}\label{table4}
	\begin{tabular}{c|cc}
		\hline
		$Q_{\a,i}$ & $Q_0$ &$Q_1$ \\
		\hline
		$Q_{\a,0}$ & $\mu^{-1}\mathfrak{F}_0^0\dl\p+\mu^{-1}\mathfrak{F}_0^i\dl v^i$ & $\mu^{-1}\mathfrak{F}_0^0(1+\eta^{-2}\mu)L\p+\mu^{-1}\mathfrak{F}_0^i(1+\eta^{-2}\mu)L v^i$ \\
		$Q_{\a,1}$ & 0 & 0 \\
		$Q_{\a,2}$ &$0$ &$\frac{1}{2}[\dl(\eta^{-2}\mu)-(1+\eta^{-2}\mu)L(\eta^{-2}\mu)](L\fai)^2$ \\
		$Q_{\a,3}$ & $\frac{1}{2}[\boxed{\mu^{-1}\dl\mu}+L(\eta^{-2}\mu)](\hat{X}\fai)^2$ & $ \frac{1}{2}[\boxed{\mu^{-1}(1+\eta^{-2}\mu)L\mu}+L(\eta^{-2}\mu)](\hat{X}\fai)^2 $  \\
		$Q_{\a,4}$ &$\mu^{-1}\sg^{-\frac{1}{2}}(\tau+\sigma)\dl\fai\hat{X}\fai$ &0 \\
		$Q_{\a,5}$ & $\hat{X}(\eta^{-2}\mu)L\fai \hat{X}\fai$ &$\mu^{-1}\sg^{-\frac{1}{2}}(1+\eta^{-2}\mu)L\fai\hat{X}\fai$ \\
		$Q_{\a,6}$ &$-[\mu^{-1}L\fai\dl\fai+(\hat{X}\fai)^2]\tr\underline{\chi}$ &$-[\mu^{-1}L\fai\dl\fai+(\hat{X}\fai)^2](1+\eta^{-2}\mu)\tr\chi$ \\
		\hline
	\end{tabular}
 
\end{table}

Note that in Table \ref{table4}, $\mathfrak{F}_0^0=\mu\Box_g\p$ and $\mathfrak{F}_0^i=\mu\Box_g v^i$. The boxed terms in Table \ref{table4} are the most dangerous since they involve $\mu^{-1}(\hat{X}\fai)^2$ which may be singular in the shock region. In view of Remark \ref{nullforms}, one writes $\mathfrak{F}_0^0$ and $\mathfrak{F}_0^i$ as
\begin{align*}
	\mathfrak{F}_0^0=&2\eta e^{-\p}(Tv^1\hat{X}v^2-Tv^2\hat{X}v^1)-\frac{1}{2}L\p\dl\p+\frac{1}{2}\mu(\hat{X}\p)^2+\zeta,\\
	\mathfrak{F}_0^i= &-\ep_{ia}e^{\p}(\hat{T}^a\kappa L(\zeta-\varrho)+\mu\hat{X}^a\hat{X}(\zeta-\varrho))+\mu v^i(2\zeta+e^{-\p})+\ep_{ij}\zeta
	(\mu L+\eta^2 T)v^j\\
	&+\frac{1}{4}(Lv^i\dl\p+\dl v^i L\p)-\frac{1}{2}\mu\hat{X}v^i\hat{X}\p+e^{-\p}\ep_{ij}v^j(\mu L+\eta^2 T)\p.
\end{align*}

\subsection{Energy-flux estimates for the wave variables}\label{fundamentalenergy}
 To derive the energy estimates, one needs the following $L^2$-estimates for $\fai$.
\begin{lemma}(\cite[Lemma 5.5]{CZD1})\label{faiL2}
	For any smooth function $f$ which vanishes on $C_0$, the following estimate holds.
	\begin{align}
		\int_{S_{t,u}}f^2\les \tilde{\da}\int_{\Sigma_t^u}\mu^2 (Lf)^2+(\dl f)^2.
	\end{align}
\end{lemma}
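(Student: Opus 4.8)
The plan is to reduce the inequality to a one–dimensional Poincar\'e estimate along the integral curves of $T$ inside a single slice $\Sigma_t$.

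First I would fix $t$ and, exactly as in the proof of Proposition \ref{keymu}, introduce a coordinate $\tilde{\vartheta}$ on $\Sigma_t$ for which $T=\frac{\pa}{\pa u}\big|_{t,u,\tilde{\vartheta}}$; this is legitimate because $T$ is tangent to $\Sigma_t$, satisfies $Tu=1$, and is $g$-orthogonal to each $S_{t,u}$. In this chart $S_{t,0}=C_0\cap\Sigma_t=\{u=0\}$, so the hypothesis that $f$ vanishes on $C_0$ gives, by the fundamental theorem of calculus,
\begin{align*}
f(t,u,\tilde{\vartheta})=\int_0^u (Tf)(t,u',\tilde{\vartheta})\,du',
\end{align*}
and hence, by Cauchy--Schwarz and $u\le\tilde{\da}$, the pointwise bound $f(t,u,\tilde{\vartheta})^2\le\tilde{\da}\int_0^u (Tf)^2(t,u',\tilde{\vartheta})\,du'$. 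It is essential to run this step with $T$ rather than with $\frac{\pa}{\pa u}=T+\Xi$, since the $\Xi$-part would produce an $Xf$-term that cannot be absorbed into the right-hand side of the lemma.

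Next I would integrate over $S_{t,u}$. Denoting by $a(t,u,\tilde{\vartheta})$ the area density of $S_{t,u}$ in the coordinate $\tilde{\vartheta}$, Lemma \ref{integralbypartsl} applied in the $(u,\tilde{\vartheta})$-chart yields $\pa_u a=(\kappa\tr\theta)\,a$, so $a(t,u,\tilde{\vartheta})=a(t,0,\tilde{\vartheta})\exp\!\left(\int_0^u\kappa\tr\theta\,du''\right)$; by Lemmas \ref{preliminary1} and \ref{2ndff} one has $|\kappa\tr\theta|\les\da M$, whence $\int_0^u|\kappa\tr\theta|\,du''\les\da M\tilde{\da}\les 1$ and $a(t,u,\cdot)$ is comparable to $a(t,u',\cdot)$ for all $0\le u'\le u\le\tilde{\da}$. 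Using this to move the density inside the $u'$-integral,
\begin{align*}
\int_{S_{t,u}}f^2=\int_{\mathbb{S}^1}a(t,u,\tilde{\vartheta})\,f(t,u,\tilde{\vartheta})^2\,d\tilde{\vartheta}
&\les\tilde{\da}\int_{\mathbb{S}^1}\int_0^u a(t,u',\tilde{\vartheta})\,(Tf)^2(t,u',\tilde{\vartheta})\,du'\,d\tilde{\vartheta}\\
&=\tilde{\da}\int_{\Sigma_t^u}(Tf)^2,
\end{align*}
the last equality because $a(t,u',\tilde{\vartheta})\,d\tilde{\vartheta}$ and $\sqrt{\sg}\,d\vartheta$ are the same area measure on $S_{t,u'}$, so the iterated integral is exactly $\int_{\Sigma_t^u}$.

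To conclude, I would use the decomposition $T=\tfrac12(\dl-\eta^{-2}\mu L)$ from Lemma \ref{frame}, which gives $(Tf)^2\les(\dl f)^2+\mu^2(Lf)^2$ with a constant that is harmless since $|\eta-1|\les\da M$; plugging this in yields the asserted estimate. The only genuinely delicate point is the area-density bookkeeping in the third step, together with the observation in the second step that one is forced to work with $T$ (not with $\frac{\pa}{\pa u}$); this is also where the restriction $u\le\tilde{\da}$ is invoked, both through the explicit $\tilde{\da}$ factor and through $\da M\tilde{\da}\les 1$.
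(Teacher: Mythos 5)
Your argument is correct, and it is the standard proof of this type of Poincar\'e-style estimate: the paper itself does not reprove the lemma (it is quoted from \cite{CZD1}), but the expected route is exactly yours — integrate $Tf$ along the integral curves of $T$ from $C_0$, where $f$ vanishes, apply Cauchy--Schwarz over the interval of length $\le\tilde{\da}$, compare the area densities of the leaves $S_{t,u'}$ via $\pa_u a=(\kappa\tr\theta)a$, and finally substitute $T=\tfrac12(\dl-\eta^{-2}\mu L)$. Your remarks about using $T$ rather than $\frac{\pa}{\pa u}=T+\Xi$ and about where the factor $\tilde{\da}$ enters are the right points of care, so nothing further is needed.
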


 We first deal with integrals involving the boxed terms. In view of Lemma \ref{preliminary1}, it holds that 
\begin{align}
	\int_{W_t^u}Q_{1,3}&\les\int_{W_t^u}\mu^{-1}L\mu\cdot\mu(\hat{X}\fai)^2+\int_0^uF_0(\fai).\label{Q131}
\end{align}
To deal with any integral involving $\mu^{-1}L\mu$ or $\mu^{-1}\dl\mu$, one can split it into shock part (in $W_{s}$) and non-shock part. The difficult shock part can be bounded by utilizing Proposition \ref{keymu}. Let $W_{n-s}=W_{t,u}\backslash W_{s}$. It follows from Lemma \ref{preliminary1} and Proposition \ref{keymu} that 
\begin{equation}\label{Q132}
\begin{aligned}
	\int_{W_t^u}(\mu^{-1}L\mu)\mu(\hat{X}\fai)^2&=\left(\int_{W_s}+\int_{W_{n-s}}\right)(\mu^{-1}L\mu)\mu(\hat{X}\fai)^2\\
	&\les-\int_{W_s}(\hat{X}\fai)^2+\int_0^uF_0(\fai).
\end{aligned}
\end{equation}
Let $K(t,u)=\int_{W_s}(\hat{X}\fai)^2$. This is a non-negative integral to control $(\hat{X}\fai)^2$ in the shock region. Combing \eqref{Q131} and \eqref{Q132} leads to
\begin{align}
	\int_{W_t^u}Q_{1,3}&\les -K(t,u)+\int_0^uF_0(\fai).
\end{align}
It follows from \eqref{preliminary1} that 
\begin{align}
	\int_{W_t^u}Q_{0,3}&\les\int_{W_t^u}(\mu^{-1}T\mu)\mu(\hat{X}\fai)^2+\int_0^uF_0(\fai).
\end{align}
In view of Proposition \ref{keymu}, one obtains 
\begin{align}
	\int_{W_t^u}(\mu^{-1}T\mu)\mu(\hat{X}\fai)^2&\les \da^{-1}\int_0^t\frac{E_1(\fai)}{\sqrt{s^{\ast}-t'}}+E_1(\fai)\ dt'.
\end{align}
Therefore, it holds that
\begin{align}
	\int_{W_t^u}Q_{0,3}&\les\da^{-1}\int_0^t\frac{E_1(\fai)}{\sqrt{s^{\ast}-t'}}+E_1(\fai)\ dt'+\int_0^uF_0(\fai).
\end{align}
Next we treat $Q_{0,4},Q_{1,5},Q_{0,0}$, and $Q_{1,0}$. It follows from the definition of $K(t,u)$ and Table \ref{table2} that 
\begin{align}
	\int_{W_t^u}Q_{0,4}&\les\int_{W_s}\dl\fai\hat{X}\fai+\int_0^tE_0(\fai)\les K(t,u)+\int_0^tE_0(\fai),\\
		\int_{W_t^u}Q_{1,5}
&\les
			\da K(t,u)+\int_0^tE_1(\fai)+\da^{-1}\int_0^uF_1(\fai).
\end{align}
Lemma \ref{faiL2}, together with the bootstrap assumptions yields
\begin{align}
	\begin{split}
	&\int_{W_t^u}\mathfrak{F}_0^0\cdot \dl\p\les\int_{W_t^u}\da (Tv^i+T\p+\hat{X}\p)\dl\p+\mu(\zeta-\varrho+\varrho)\dl\p\\
   \les
	&(1+\da)\int_0^tE_0(\fai)+\da E_1(\fai)+\int_0^tE^{\zeta}(\zeta-\varrho),\\
	\end{split}\\
	&\int_{W_t^u}\mathfrak{F}_0^i\cdot (1+\eta^{-2}\mu)L\p\les (1+\da)\int_0^tE_1(\fai)+\da\int_0^tE_0(\fai)+\int_0^tE^{\zeta}(\zeta-\varrho).
\end{align}
Similarly, one has
\begin{equation*}
	\begin{aligned}
	\int_{W_t^u}\mathfrak{F}_0^i\cdot(\dl v^i+(1+\eta^{-2}\mu) Lv^i)\les &\int_{W_t^u}\mu(L(\zeta-\varrho)+Y(\zeta-\varrho))\dl v^i+\da\int_0^tE_0(\fai)+E_1(\fai)\\
	\les &\int_0^t(1+\da)E_0(\fai)+\da E_1(\fai)+\int_0^tE^{\zeta}(P(\zeta-\varrho)).
	\end{aligned}
 \end{equation*}
The estimates for the remaining terms are trivial due to Lemma \ref{preliminary1} and the bootstrap assumptions. It holds that 
\begin{align}
	\int_{W_t^u}Q_{0,5}+Q_{0,6}+Q_{1,2}+Q_{1,6}&\les \int_0^t(1+\da)E_0(\fai)+E_1(\fai)+\int_0^uF_1(\fai).
\end{align}
Collecting the above estimates yields 
\begin{align}
\begin{split}
	&(E_0+F_0)(\fai)(t,u)+\da^{-1}(E_1+F_1)(\fai)(t,u)\\
 \les & (E_0+E_1)(0,u)+\int_0^tE^{\zeta}(P^{\leq 1}(\zeta-\p))-\da^{-1}K(t,u)
	\\
	&+\da^{-1}\int_0^t\frac{E_1}{\sqrt{s^{\ast}-t'}}\ dt'+\int_0^tE_0+\da^{-1}E_1\ dt'+\int_0^uF_0+\da^{-1}F_1\ du'.
 \end{split}
\end{align}
Applying the Gronwall inequality and noticing that $\frac{1}{\sqrt{s^{\ast}-t}}$ is integrable in time lead to 
\begin{align}
	(E_0+F_0)(\fai)(t,u)+\da^{-1}(E_1+F_1+K)(\fai)(t,u)\les \mathscr{D}_0+\int_0^tE^{\zeta}(P^{\leq 1}(\zeta-\varrho)),\label{fai0}
\end{align}
where $\mathscr{D}_0:=(E_0+\da^{-1}E_1)(0,u)$.
\subsection{Energy-flux estimates for the vorticity}\label{section42} It follows from \eqref{energyzeta}, Lemma \ref{preliminary1} and \ref{2ndff} that 
\begin{equation}\label{zeta01}
	\begin{aligned}	
		(E^{\zeta}+F^{\zeta})(\zeta-\varrho)(t,u)
  \les  E^{\zeta}(\zeta-\varrho)(0,u)
  +\int_0^uF^{\zeta}(\zeta-\varrho)\ du'+\da\int_0^tE^{\zeta}(\zeta-\varrho) dt'.
	\end{aligned}
\end{equation}
Applying the Gronwall inequality to \eqref{zeta01} yields 
  \begin{align}
  	(E^{\zeta}+F^{\zeta})(\zeta-\varrho)\les  E^{\zeta}(\zeta-\varrho)(0,u).\label{zeta0}
  \end{align}
To obtain the energy-flux estimates for the first derivatives of $\zeta$, one notes that for $P\in\mathcal{P}$, 
\begin{equation}
\begin{aligned}
	\mu BP(\zeta-\varrho)&=-(P\mu)L(\zeta-\varrho)+P(\eta^2)\mu L(\zeta-\varrho)+(\mu\ppi_L^{\sharp}+\eta^2\ppi_T^{\sharp})(\zeta-\varrho)\\
 &:=\mathfrak{F}_1^{\zeta}.
\end{aligned}
\end{equation}
\begin{itemize}
	\item For $P=L$, it follows from Lemma \ref{preliminary1} and table \ref{table2} that 
	\begin{align}
		\int_{W_t^u}2L(\zeta-\varrho)\cdot \mathfrak{F}_1^{\zeta}&\les \da\int_0^t E^{\zeta}(L(\zeta-\varrho))
		+\int_0^u F^{\zeta}(P(\zeta-\varrho)).\label{Lzeta1}
	\end{align}
	\item For $P=Y$, one has
	\begin{align}
		\begin{split}\label{Yzeta1}
		\int_{W_t^u}2Y(\zeta-\varrho)\cdot \mathfrak{F}_1^{\zeta}&\les\int_{W_t^u}(1+\da)\mu Y(\zeta-\varrho)L(\zeta-\varrho)+(1+\da)(Y(\zeta-\varrho))^2\\
		&\les\da\int_0^t E^{\zeta}(P(\zeta-\varrho))+\int_0^uF^{\zeta}(P(\zeta-\varrho)).\\
		\end{split}
	\end{align}
\end{itemize}
The estimates \eqref{energyzeta}, \eqref{Lzeta1}, \eqref{Yzeta1}, together with Lemma \ref{2ndff} yield
\begin{equation}
\begin{aligned}
	&(E^{\zeta}+F^{\zeta})(P(\zeta-\varrho))(t,u)\\
 \leq & E^{\zeta}(P(\zeta-\varrho))(0,u)+C\da\int_0^tE^{\zeta}(P(\zeta-\varrho))+
	C\int_0^uF^{\zeta}(P(\zeta-\varrho)).
\end{aligned}  
\end{equation}
Applying the Gronwall inequality gives
\begin{align}
	(E^{\zeta}+F^{\zeta})(P(\zeta-\varrho))(t,u) &\leq CE^{\zeta}(P^{\leq 1}(\zeta-\varrho))(0,u):=C\mathscr{D}_{1}^{\zeta}.\label{zeta1}
\end{align}
\begin{remark}
	Note that in this subsection, the energy-flux estimates for $\zeta$ can be derived directly without relying on the energy-flux estimates of the wave variables. However, when higher-order derivatives are involved, the commutation between $P^{\alpha}$ and $\mu B$ generates complex terms containing higher-order derivatives of $\varphi \in \{\p, v^i\}$, $\mu$ and $\chi$. This makes estimates for $E^{\zeta}$ and $F^{\zeta}$ couple with those for $E_{\alpha}$ and $F_{\alpha}$ with $\alpha = 0, 1$, so that the analysis is more complicated.
\end{remark}
In conclusion, collecting \eqref{fai0}, \eqref{zeta0} and \eqref{zeta1} yields the following fundamental energy-flux estimates for the wave variables and the vorticity
\begin{align}
	(E_0+\da^{-1}E_1)(t,u)+(F_0+\da^{-1}F_1)(t,u)+\da^{-1}K(t,u)&\leq C(\mathscr{D}_0+\mathscr{D}_1^{\zeta}),\\
	E^{\zeta}(P^{\leq1}(\zeta-\varrho))+F^{\zeta}(P^{\leq1}(\zeta-\varrho))&\leq C\mathscr{D}_1^{\zeta}.
\end{align}
\begin{definition}\label{highorderenergy}
	For $0\leq N\leq N_{top}=20$, the high order energies and fluxes are defined as follows.
	\begin{align}
		\mathbb{W}_N&=\sup_{(t',u')\in[0,t]\times[0,u]}\left\{
		\sum_{Z^{\a}\in\mathcal{Z}^{N;\leq 1}}\da^{2m}(E_0(Z^{\a}\fai)+E_1(Z^{\a}\fai))\right\},\\
		\mathbb{U}_N&=\sup_{(t',u')\in[0,t]\times[0,u]}\left\{
		\sum_{Z^{\a}\in\mathcal{Z}^{N;\leq 1}}\da^{2m}(E_1(Z^{\a}\fai))\right\},\\
		\mathbb{Q}_N&=\sup_{(t',u')\in[0,t]\times[0,u]}\left\{\sum_{Z^{\a}\in\mathcal{Z}^{N;\leq 1}}\da^{2m}(F_0(Z^{\a}\fai)+F_1(Z^{\a}\fai))\right\},\\
		\mathbb{V}_{N+1}&=\sup_{(t',u')\in[0,t]\times[0,u]}\left\{
		\sum_{P^{\a}\in\mathcal{P}^{N+1;\leq 3}}\da^{2(p-1)_+}(E^{\zeta}(P^{\a}(\zeta-\varrho))+F^{\zeta}(P^{\a}(\zeta-\varrho)))\right\},
	\end{align}
	where $m$ is the number of $T$ in $Z^{\a}$ and $p$ is the number of $L$ in $P^{\a}$. Denote $\mathbb{W}_{\leq N}=\sum_{N'\leq N}\mathbb{W}_{N'}$ and similarly for $\mathbb{U}_{\leq N},$ $\mathbb{Q}_{\leq N}$ and $ \mathbb{V}_{\leq N+1}$.
\end{definition}
\subsection{Top order acoustical terms}\label{toporderterms}
In this subsection, we obtain the main difficult terms to be handled in the energy estimates after commutation. 
\begin{lemma}
For any spacetime vector-fields $Z$, it holds that
	\begin{align}
		\begin{split}
			\Box_g(Z\fai)&=Z(\Box_g\fai)+\frac{1}{2}\tr\zpi\cdot\Box_g\fai+\text{div}(\prescript{Z}{}{J}),\quad
			\prescript{Z}{}{J}^{\mu}=(\zpi^{\mu\nu}-\frac{1}{2}\tr\zpi g^{\mu\nu})\pa_{\nu}\fai.
		\end{split}
	\end{align}
\end{lemma}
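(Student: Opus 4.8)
The statement is a purely differential-geometric commutation identity, valid for an arbitrary Lorentzian metric $g$ and vector field $Z$, so no feature of the shallow water system enters. The plan is to prove it using the densitized form of the wave operator together with the elementary calculus of the Lie derivative $\mathcal{L}_Z$.

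First I would rewrite the wave operator as $\Box_g\fai = |\det g|^{-1/2}\pa_\mu\!\left(|\det g|^{1/2}g^{\mu\nu}\pa_\nu\fai\right)$, equivalently $\sqrt{|\det g|}\,\Box_g\fai = \pa_\mu\!\left(\sqrt{|\det g|}\,g^{\mu\nu}\pa_\nu\fai\right)$, and apply $\mathcal{L}_Z$ to both sides of this scalar-density identity. The ingredients needed are: for a scalar $\fai$, $\mathcal{L}_Z\fai = Z\fai$ and $\mathcal{L}_Z(\pa_\nu\fai) = \pa_\nu(Z\fai)$ (the exterior derivative commutes with $\mathcal{L}_Z$); and, writing $\zpi = \mathcal{L}_Z g$ (which coincides with the deformation tensor $D_\mu Z_\nu + D_\nu Z_\mu$), the identities $\mathcal{L}_Z g^{\mu\nu} = -\zpi^{\mu\nu}$, obtained by differentiating $g^{\mu\alpha}g_{\alpha\nu}=\delta^\mu_\nu$, and $\mathcal{L}_Z\sqrt{|\det g|} = \tfrac12\sqrt{|\det g|}\,g^{\mu\nu}\zpi_{\mu\nu} = \tfrac12\sqrt{|\det g|}\,\tr\zpi$, obtained from Jacobi's formula for the derivative of a determinant.

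On the left-hand side, since $\Box_g\fai$ is a scalar and $\sqrt{|\det g|}$ is a scalar density of weight one, the Leibniz rule gives $\mathcal{L}_Z\!\left(\sqrt{|\det g|}\,\Box_g\fai\right) = \sqrt{|\det g|}\big(Z(\Box_g\fai) + \tfrac12\tr\zpi\,\Box_g\fai\big)$. For the right-hand side, $V^\mu := \sqrt{|\det g|}\,g^{\mu\nu}\pa_\nu\fai$ is a vector density of weight one, and for such objects $\mathcal{L}_Z$ commutes with the coordinate divergence, $\mathcal{L}_Z(\pa_\mu V^\mu) = \pa_\mu(\mathcal{L}_Z V^\mu)$. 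Expanding $\mathcal{L}_Z V^\mu$ with the Leibniz rule and the identities above yields $\mathcal{L}_Z V^\mu = \sqrt{|\det g|}\,g^{\mu\nu}\pa_\nu(Z\fai) - \sqrt{|\det g|}\,\prescript{Z}{}{J}^\mu$ with $\prescript{Z}{}{J}^\mu = \big(\zpi^{\mu\nu} - \tfrac12\tr\zpi\,g^{\mu\nu}\big)\pa_\nu\fai$. Taking $\pa_\mu$ and using $\pa_\mu\!\left(\sqrt{|\det g|}\,W^\mu\right) = \sqrt{|\det g|}\,\text{div}\,W$ turns the right-hand side into $\sqrt{|\det g|}\big(\Box_g(Z\fai) - \text{div}(\prescript{Z}{}{J})\big)$. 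Equating the two expressions and cancelling $\sqrt{|\det g|}$ gives exactly $\Box_g(Z\fai) = Z(\Box_g\fai) + \tfrac12\tr\zpi\,\Box_g\fai + \text{div}(\prescript{Z}{}{J})$.

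The computation is routine; the only point deserving care is the bookkeeping of density weights, in particular verifying $\mathcal{L}_Z(\pa_\mu V^\mu) = \pa_\mu(\mathcal{L}_Z V^\mu)$ for a weight-one vector density, which is a short coordinate check once one writes $\mathcal{L}_Z V^\mu = Z^\alpha\pa_\alpha V^\mu - V^\alpha\pa_\alpha Z^\mu + (\pa_\alpha Z^\alpha)V^\mu$. A coordinate-heavy alternative avoids densities entirely: start from $\Box_g\fai = g^{\mu\nu}(\pa_\mu\pa_\nu\fai - \Gamma^\lambda_{\mu\nu}\pa_\lambda\fai)$, commute $Z^\alpha\pa_\alpha$ through, and reorganize using $\mathcal{L}_Z\Gamma^\lambda_{\mu\nu} = \tfrac12 g^{\lambda\rho}(D_\mu\zpi_{\nu\rho} + D_\nu\zpi_{\mu\rho} - D_\rho\zpi_{\mu\nu})$; this is longer but self-contained.
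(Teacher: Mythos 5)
Your proposal is correct, and it takes a genuinely different route from the paper. The paper works entirely covariantly: it expands $\Box_g(Z\fai)=g^{\a\be}D_{\a}D_{\be}(Z^{\ga}D_{\ga}\fai)$ with the Levi--Civita connection, splits $D_{\be}Z_{\ga}$ into its antisymmetric part plus $\tfrac12\zpi_{\be\ga}$, commutes covariant derivatives, and checks that the two Ricci-tensor contributions $\pm\mathscr{R}^{\ga}_{\a}Z^{\a}D_{\ga}\fai$ cancel before reassembling the divergence of $\prescript{Z}{}{J}$. You instead Lie-differentiate the densitized identity $\sqrt{|\det g|}\,\Box_g\fai=\pa_{\mu}(\sqrt{|\det g|}\,g^{\mu\nu}\pa_{\nu}\fai)$, using $\mathcal{L}_Zg^{\mu\nu}=-\zpi^{\mu\nu}$, $\mathcal{L}_Z\sqrt{|\det g|}=\tfrac12\tr\zpi\sqrt{|\det g|}$, and the commutation of $\mathcal{L}_Z$ with the coordinate divergence of a weight-one vector density; all the steps you list check out, including the short coordinate verification of $\mathcal{L}_Z(\pa_{\mu}V^{\mu})=\pa_{\mu}(\mathcal{L}_ZV^{\mu})$. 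What your approach buys is that no curvature ever appears, so there is no cancellation to verify, and the structure of $\prescript{Z}{}{J}^{\mu}=(\zpi^{\mu\nu}-\tfrac12\tr\zpi\,g^{\mu\nu})\pa_{\nu}\fai$ falls out directly from $\mathcal{L}_Z(\sqrt{|\det g|}\,g^{\mu\nu})$; the cost is the density-weight bookkeeping, which you correctly flag as the only delicate point. The paper's covariant computation, by contrast, stays coordinate-free and makes explicit that the commutator is curvature-free only because the Ricci terms cancel. Both proofs are complete and standard; yours is essentially the derivation in Christodoulou's original framework.
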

\begin{proof}
	It follows from the definition of $\Box_g$ that 
	\begin{align*}
		\Box_g(Z\fai)&=g^{\a\be}D_{\a}(D_{\be}Z^{\ga}D_{\ga}\fai+Z^{\ga}D_{\be}D_{\ga}\fai)\\
		&=g^{\a\be}D_{\a}D_{\be}Z^{\ga}D_{\ga}\fai+2D^{\a}Z^{\ga}D_{\a}D_{\ga}\fai+g^{\a\be}Z^{\ga}D_{\a}D_{\be}D_{\ga}\fai.
	\end{align*}
	In view of the fact $D_{\be}Z_{\ga}=\frac{1}{2}(D_{\be}Z_{\ga}-D_{\ga}Z_{\be})+\frac{1}{2}\zpi_{\be\ga}$, one has
 \begin{equation*}
	\begin{aligned}	2D^{\a}Z^{\ga}D_{\a}D_{\ga}\fai=
		(D^{\be}Z^{\ga}-D^{\ga}Z^{\be})D_{\be}D_{\ga}\fai+\zpi^{\a\be}D_{\a}D_{\ga}\fai= \zpi^{\a\be}D_{\a}D_{\ga}\fai,
  \end{aligned}
  \end{equation*}
  and 
  \begin{equation*}   
  \begin{aligned}
		g^{\a\be}D_{\a}D_{\be}Z^{\ga}D_{\ga}\fai= &D_{\a}(\zpi^{\a\ga}-2D^{\ga}Z^{\a})D_{\ga}\fai
  =D_{\a}\zpi^{\a\ga}D_{\ga}\fai-D^{\ga}D_{\a}Z^{\a}D_{\ga}\fai+\mathscr{R}_a^{\ga}Z^{\a}D_{\a}\fai,
	\end{aligned}
  \end{equation*}
	where $\mathscr{R}_{\a\be}=g^{\ga\da}\mathcal{R}_{\a\ga\be\da}$ is the Ricci tensor. Note also that
 \begin{equation*}
	\begin{aligned}	g^{\a\be}Z^{\ga}D_{\a}D_{\be}D_{\ga}\fai= &g^{\a\be}Z^{\ga}D_{\a}D_{\ga}D_{\be}\fai
  =-\mathscr{R}_{\ga}^{\be}Z^{\ga}D_{\be}\fai+Z(\Box_g\fai).
	\end{aligned}
 \end{equation*}
	Collecting the above results yields 
	\begin{align*}
 \Box_g(Z\fai)&=Z(\Box_g\fai)+\zpi^{\a\ga}D_{\a}D_{\ga}\fai+D_{\a}\zpi^{\a\ga}D_{\ga}\fai-\frac{1}{2}D^{\ga}\tr\zpi D_{\ga}\fai\\
		&=Z(\Box_g\fai)+\frac{1}{2}\tr\zpi\Box_g\fai+D_{\a}(\zpi^{\a\ga}D_{\ga}\fai-\frac{1}{2}g^{\a\ga}\tr\zpi D_{\ga}\fai).
	\end{align*}
 This finishes the proof of the lemma.
\end{proof}
 Let $\mu\Box_g\fai=\mathfrak{F}_0$ for $\fai\in\{\p,v^1,v^2\}$ and $\mathfrak{F}_n=Z\mathfrak{F}_{n-1}$. Hence, the above lemma implies
\begin{align}
\mu\Box_g(Z\fai)&=Z(\mathfrak{F}_0)+(\frac{1}{2}\tr\zpi-\mu^{-1}Z\mu)\mathfrak{F}_0+\mu\text{div}(\prescript{Z}{}{J}).
\end{align}
This implies
\begin{align}
\mathfrak{F}_n&=Z\mathfrak{F}_{n-1}+(\frac{1}{2}\tr\zpi-\mu^{-1}Z\mu)\mathfrak{F}_{n-1}+\mu\text{div}(\prescript{Z}{}{J}_{n-1})
:=(Z+\prescript{Z}{}{\da})\mathfrak{F}_{n-1}+\prescript{Z}{}{\sigma}_{n-1}.
\end{align}
Therefore, it holds that
\begin{equation}
\begin{aligned}
\mathfrak{F}_n= &(Z_{n-1}+\prescript{Z_{n-1}}{}{\da})\cdots(Z_1+\prescript{Z_1}{}{\da})\mathfrak{F}_0\\
&+
\sum_{k=0}^{n-2}(Z_{n-1}+\prescript{Z_{n-1}}{}{\da})\cdots (Z_{n-k}+\prescript{Z_{n-k}}{}{\da})\prescript{Z_{n-k-1}}{}{\sigma}_{n-k-1},
\end{aligned}
\end{equation}
where
\begin{equation}
\begin{aligned}
&\qquad\quad\quad \mathfrak{F}_0^0=2e^{-\p}\mu\mathcal{D}_{12}(v^1,v^2)+\frac{1}{2}\mu\mathcal{D}(\p,\p)+\mu\zeta,\\
\mathfrak{F}_0^i=&-\ep_{ia}\mu e^{\p}\pa_a\xi+(\mu v^i+\mu \ep_{ij}Bv^j)\zeta-\frac{1}{2}\mu\mathcal{D}(v^i,\p)+e^{-\p}\mu v^i+e^{-\p}\ep_{ij}v^j \mu B\p.
\end{aligned}
\end{equation}
 Note that from Table \ref{table2}, $\prescript{Z}{}{\delta}=\frac{1}{2}\tr\prescript{Z}{}{\slashed{\pi}}\les\da.$
\begin{lemma}\label{divdecom}
	For any space time vector field $J$, 
	it holds that
	\begin{align}
	\mu \text{div} J 
	&=-\frac{1}{2}L({J_{\dl}})-\frac{1}{2}\dl(J_{L})+\slashed{\text{div}}(\mu\slashed{J})
	-\frac{1}{2}L(\eta^{-2}\mu) J_{L}-\frac{1}{2}\tr\chi J_{\dl}-\frac{1}{2}\tr\underline{\chi} J_L.
	\end{align}
\end{lemma}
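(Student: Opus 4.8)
The plan is to carry out the computation in the null frame $\{L,\dl,\hat{X}\}$, trading the covariant divergence for frame derivatives. Using $g^{\a\be}=-\tfrac{1}{2\mu}(L^{\a}\dl^{\be}+L^{\be}\dl^{\a})+\hat{X}^{\a}\hat{X}^{\be}$ from Lemma \ref{frame} (equivalently, contracting $\text{div}J=g^{ab}g(D_{e_a}J,e_b)$ with the inverse Gram matrix of $\{L,\dl,\hat{X}\}$, for which $g(L,L)=g(\dl,\dl)=0$, $g(L,\dl)=-2\mu$, $g(\hat{X},\hat{X})=1$), one gets
\begin{align*}
\mu\,\text{div}J=-\tfrac{1}{2}\,g(D_LJ,\dl)-\tfrac{1}{2}\,g(D_{\dl}J,L)+\mu\,g(D_{\hat{X}}J,\hat{X}).
\end{align*}
First I would apply the Leibniz rule $g(D_WJ,V)=W(g(J,V))-g(J,D_WV)$ in each term, which isolates the three pure-derivative pieces $-\tfrac12 L(J_{\dl})$, $-\tfrac12\dl(J_L)$, $\mu\,\hat{X}(J_{\hat{X}})$ together with the connection corrections $\tfrac12 g(J,D_L\dl)$, $\tfrac12 g(J,D_{\dl}L)$, $-\mu\,g(J,D_{\hat{X}}\hat{X})$. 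For the $\hat{X}$ piece I would write $\hat{X}=\sg^{-1/2}X$, use $J_X=g(J,X)=\sg\,\slashed{J}^{\vartheta}$, and the coordinate expression $\slashed{\text{div}}Y=\sg^{-1/2}\partial_{\vartheta}(\sg^{1/2}Y^{\vartheta})$; a short computation then gives $\mu\,\hat{X}(J_{\hat{X}})=\slashed{\text{div}}(\mu\slashed{J})-\sg^{-1}(X\mu)J_X$.

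Next I would insert the connection coefficients \eqref{connection}. Since $\dl=\eta^{-2}\mu L+2T$, using $D_LL=\mu^{-1}(L\mu)L$, $D_LT=-\eta^{-1}(L\kappa)L-\sg^{-1}\tau X$ and $D_TL=-\eta^{-1}(L\kappa)L+\sg^{-1}\sigma X$ one computes $D_L\dl$ and $D_{\dl}L$, and from $D_XX=\eta^{-1}\slashed{k}\,L+\mu^{-1}\chi T+\tfrac12\sg^{-1}(X\sg)X$ one finds $D_{\hat{X}}\hat{X}=\sg^{-1}\eta^{-1}\slashed{k}\,L+\sg^{-1}\mu^{-1}\chi\,T$ (the $X$-component cancels). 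Collecting the $J_X$-terms produces $\sg^{-1}(\sigma-\tau-X\mu)J_X$, which vanishes by $\sigma=\tau+X\mu$ from \eqref{oneforms}; this is precisely why no $\tau,\sigma$ terms survive. The $J_L$-terms from the $L,\dl$ pieces combine, using $\kappa=\eta^{-1}\mu$ (so that $\eta^{-2}L\mu-2\eta^{-1}L\kappa=-L(\eta^{-2}\mu)$), into $-\tfrac12 L(\eta^{-2}\mu)J_L$. Finally, splitting $J_T=\tfrac12(J_{\dl}-\eta^{-2}\mu J_L)$ in the $\mu^{-1}\chi T$-term of $D_{\hat{X}}\hat{X}$ and using $\slashed{k}(X,X)=\eta^{-1}\chi(X,X)+\theta_{XX}$ together with $\underline{\chi}=\eta^{-2}\mu\chi+2\kappa\theta$ (hence $\tr\underline{\chi}=\eta^{-2}\mu\tr\chi+2\eta^{-1}\mu\tr\theta$) turns the remaining $\hat{X}$-contribution into exactly $-\tfrac12\tr\chi\,J_{\dl}-\tfrac12\tr\underline{\chi}\,J_L$, which yields the stated identity.

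I expect no conceptual obstacle: the whole argument is a single frame computation, and the only delicate point is the bookkeeping of the $\tau$, $\sigma$, $X\sg$ and $J_T$ contributions, which must be tracked so that each of them either cancels outright (the $J_X$-terms) or reassembles correctly into $\tr\chi$ and $\tr\underline{\chi}$ (the $J_{\dl}$- and $J_L$-terms). An alternative route would be to expand $\text{div}J=|\det g|^{-1/2}\partial_{\a}(|\det g|^{1/2}J^{\a})$ directly in the acoustical coordinates, where $|\det g|^{1/2}=\mu\sg^{1/2}$ and $\partial_t=L$, $\partial_{\vartheta}=X$; but this forces one to eliminate the shift $\Xi$ in $\partial_u=T+\Xi$, so the frame computation above seems cleanest.
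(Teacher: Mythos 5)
Your computation is correct: I checked the connection terms $D_L\dl=-2\sg^{-1}\tau X$, $D_{\dl}L=-L(\eta^{-2}\mu)L+2\sg^{-1}\sigma X$, $D_{\hat X}\hat X=\sg^{-1}\eta^{-1}\slashed{k}L+\sg^{-1}\mu^{-1}\chi T$, the cancellation of the $J_X$-terms via $\sigma=\tau+X\mu$, and the reassembly of the $J_T$- and $\slashed{k}$-contributions into $-\tfrac12\tr\chi J_{\dl}-\tfrac12\tr\underline{\chi}J_L$ using $\slashed{k}=\eta^{-1}\chi+\theta$ and $\underline{\chi}=\eta^{-2}\mu\chi+2\kappa\theta$, and everything matches the stated identity. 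The paper states Lemma \ref{divdecom} without proof, and your null-frame computation is exactly the standard verification it implicitly relies on.
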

\quad The top order acoustical terms in $\mathfrak{F}_n$ is contained in $Z_{n-1}\cdots Z_2\prescript{Z_1}{}{\sigma}_1$. Let $Z_{n-1}\cdots Z_1=Y^{\a'}T^mL^p$ with $a'+m+p=N_{top}=20$ and $p\leq 1, \ m\leq 1$. It follows from Lemma \ref{divdecom} that $\prescript{Z}{}{\sigma}_{n-1}$ can be divided into \[\prescript{Z}{}{\sigma}_{n-1}=\prescript{Z}{}{}{\sigma}_{n-1,1}+\prescript{Z}{}{}{\sigma}_{n-1,2}+\prescript{Z}{}{}{\sigma}_{n-1,3}\] where 
\begin{align*}
	\prescript{Z}{}{\sigma}_{n-1,1}&=\tfrac{1}{2}\tr\prescript{Z}{}{\slashed{\pi}}(L\dl\fai_{n-1}+\tfrac{1}{2}\tr\chi\dl\fai_{n-1})
	+\tfrac{1}{4}\mu^{-1}\zpi_{\dl\dl}L^2\fai_{n-1}-\frac{1}{2}\zpi_{\dl\hat{X}}L\hat{X}\fai_{n-1}\\
 &-\tfrac{1}{2}\zpi_{L\hat{X}}\dl\hat{X}\fai_{n-1}-\tfrac{1}{2}\zpi_{L\hat{X}}\hat{X}\dl\fai_{n-1}-\tfrac{1}{2}\zpi_{\dl\hat{X}}\hat{X}L\fai_{n-1}+\tfrac{1}{2}(\zpi_{L\dl}+\mu \tr\prescript{Z}{}{\slashed{\pi}})\hat{X}^2\fai,\\
	\prescript{Z}{}{\sigma}_{n-1,2}&=\tfrac{1}{4}(\dl \tr\prescript{Z}{}{\slashed{\pi}}+L(\mu^{-1}\zpi_{\dl\dl})-2\hat{X}\zpi_{\dl\hat{X}})L\fai_{n-1}\\
 &-\tfrac{1}{2}(L\zpi_{\dl\hat{X}}+\dl\zpi_{L\hat{X}}-\hat{X}\zpi_{L\dl}-\mu\zpi_{\hat{X}\hat{X}})\hat{X}\fai_{n-1}+\tfrac{1}{4}(L\tr\prescript{Z}{}{\slashed{\pi}}-2\hat{X}\zpi_{L\hat{X}})\dl\fai_{n-1},\\
	\prescript{Z}{}{\sigma}_{n-1,3}&=\tfrac{1}{4}(\tr\underline{\chi}\tr\prescript{Z}{}{\slashed{\pi}}+\mu^{-1}\zpi_{\dl\dl}\tr\chi)L\fai_{n-1}-\tfrac{1}{2}(\tr\prescript{Z}{}{\slashed{\pi}}\tpi_{L\hat{X}}+(L(\eta^{-2}\mu)+\tr\underline{\chi})\zpi_{L\hat{X}}\\
 &+\tr\chi\zpi_{\dl\hat{X}})\hat{X}\fai_{n-1}.
\end{align*}
Denote $[F]_{P.A.}$ to be the principle acoustical part of an object $F$, i.e., the part involves highest order derivatives acting on $\mu$ or $\chi$. Using Lemma \ref{divdecom} yields
\begin{align*}
	[\prescript{Z}{}{\sigma}_1]_{P.A.}= &[\tfrac{1}{2}(T\tr\prescript{Z}{}{\slashed{\pi}}-\hat{X}\zpi_{\dl\hat{X}})L\fai\\
	&+\tfrac{1}{2}(\hat{X}\zpi_{L\dl}-2T\zpi_{L\hat{X}}+\hat{X}(\mu \zpi_{\hat{X}\hat{X}}))\hat{X}\fai-\hat{X}\zpi_{L\hat{X}}T\fai]_{P.A.}.
\end{align*}
For $Z=L,T$, or $Y$, it follows from Table \ref{table1} and \eqref{equationtchi} that 
\begin{align*}
	[\prescript{L}{}{\sigma}_1]_{P.A.}&=[(T\tr\chi-\slashed{\Delta}\mu)L\fai+\mu\hat{X}\tr\chi\hat{X}\fai]_{P.A.}=\mu\hat{X}\tr\chi\hat{X}\fai,\\
	[\prescript{T}{}{\sigma}_1]_{P.A.}&=[-\eta^{-2}\mu (T\tr\chi-\slashed{\Delta}\mu)L\fai+\slashed{\Delta}\mu T\fai-\eta^{-2}\mu^2
	\hat{X}\tr\chi\hat{X}\fai]_{P.A.}=\slashed{\Delta}\mu T\fai-\eta^{-2}\mu^2
	\hat{X}\tr\chi\hat{X}\fai,\\
	[\prescript{Y}{}{\sigma}_1]_{P.A.}&=\eta^{-2}\mu\hat{X}^2\hat{X}\tr\chi L\fai+\hat{X}^2\hat{X}\tr\chi T\fai-\eta^{-1}\mu\hat{X}^1\hat{X}\tr\chi\hat{X}\fai.
\end{align*}
Therefore, the top order acoustical terms in $\mathfrak{F}_{n}$ are 
\begin{align}
   Y^{\a-1}\hat{X}\tr\chi\cdot T\fai\ \text{and}\ Y^{\a-1}\slashed{\Delta}\mu\cdot T\fai\label{topordertermswave}
\end{align} with $|\a|=N_{top}=20.$ 
For the higher-order derivatives of $\zeta$, one commutes $P^{\a+1}$ where $|\a|=N_{top}$ with \eqref{equationvorticity} to obtain
\begin{align}
\begin{split}\label{commutationvorticity}
\mu BP_i^{\a+1}(\zeta-\varrho)= \mathfrak{F}_{|\a|+1}^{\zeta}:= &\sum_{|\be_1|+|\be_2|=|\a|}
P_i^{\be_1}\ppi_{\mu B}^{\sharp}P_i^{\be_2}(\zeta-\varrho)\\
&-([(P\mu)L,P_i^{\a}]+[P(\eta^2)T,P_i^{\a}])(\zeta-\varrho),\\
[(P\mu)L,P_i^{\a}](\zeta-\varrho)= &-(P\mu)\sum_{|\be_1|+|\be_2|=|\a|-1}P_i^{\be_1}\ppi_L^{\sharp}P_i^{\be_2}(\zeta-\varrho)\\
&-\sum_{\substack{|\be_1|+|\be_2|=|\a|, \\|\be_1|>0}}P_i^{\be_1}(P\mu)P_i^{\be_2}L(\zeta-\varrho),\\
[P(\eta^2)T,P_i^{\a}](\zeta-\varrho)=&-(P\eta^2)\sum_{|\be_1|+|\be_2|=|\a|-1}P_i^{\be_1}\ppi_T^{\sharp}P_i^{\be_2}(\zeta-\varrho)\\
&-\sum_{\substack{|\be_1|+|\be_2|=|\a|,\\ |\be_1|>0}}P_i^{\be_1}(P\eta^2)P_i^{\be_2}T(\zeta-\varrho).
\end{split}
\end{align}
The top order acoustical terms in $\mathfrak{F}_{|\a|+1}^{\zeta}$ are contained in $Y^{\a}\ppi_{\mu B}^{\sharp}(\zeta-\varrho)$ and $Y^{\a+1}\mu L(\zeta-\varrho)$. In view of Table \ref{table1}, one has
\begin{align}
      [Y^{\a}\lpi_{\mu B}^{\sharp}]_{P.A.}&=[Y^{\a}(\eta^2\lpi_{T\hat{X}}\hat{X})]_{P.A.}=\frac{\eta^2}{\hat{X}^2}Y^{\a+1}\mu\hat{X},\label{toporderzeta1}\\
      [Y^{\a}\ypi_{\mu B}^{\sharp}]_{P.A.}&=[Y^{\a}(\mu\ypi_{L\hat{X}}\hat{X}+\eta^{2}\ypi_{T\hat{X}}\hat{X})]_{P.A.}
      =-\frac{\eta \hat{X}^1}{\hat{X}^2}Y^{\a+1}\mu\hat{X}\label{toporderzeta2}.
\end{align}
Therefore, it follows from \eqref{equationtchi} that the top order acoustical terms in $\mathfrak{F}_{|\a|+1}^{\zeta}$ is $Y^{\a-1}T\tr\chi P(\zeta-\varrho)$ with $|\a|=N_{top}=20$.
\subsection{Estimates on the lower order terms}\label{section44}
In this subsection, we derive both $L^2$ and $L^{\infty}$ estimates for the lower order terms.
\begin{lemma}\label{lotlinfty}
	For all $\a\leq N_{\infty}-2=11$ and sufficiently small $\da$, it holds that
	\begin{align}
		Y^{\a}\tr\chi,\ \da Y^{\a-1}T\tr\chi,\ \da Y^{\a+1}\mu\in O_{1}^{\leq \a+1}.
	\end{align}
\end{lemma}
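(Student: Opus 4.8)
The plan is to prove the three bounds simultaneously by induction on $\a$, running $\a=0,1,\dots,N_{\infty}-2=11$, and at each level establishing them in the fixed order: $\da Y^{\a+1}\mu$ first, then $Y^{\a}\tr\chi$, then $\da Y^{\a-1}T\tr\chi$ (the last only for $\a\ge1$). The analytic inputs are the transport equations \eqref{equationmu} for $\mu$ and \eqref{equationchi} for $\tr\chi$, integrated along the integral curves of $L$ from $\Sigma_{0}$; the algebraic--elliptic identity \eqref{equationtchi} for $T\tr\chi$; the commutator relations \eqref{commutator1} together with Table \ref{table1} and the bounds of Table \ref{table2}; the curvature expressions \eqref{curvaturea}, \eqref{curvature1}--\eqref{curvature2} and the frame identities \eqref{LTi}; the bootstrap assumptions \eqref{bs}; the already-recovered bounds of Lemma \ref{preliminary1}, Lemma \ref{2ndff}, Lemma \ref{LiTiXi} and Remark \ref{recovertfai}; and the explicit form of the data in Lemma \ref{initialdata}. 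The decisive structural feature is that every differentiation performed here is by the $S_{t,u}$-tangential field $Y$, together with at most one $T$, so no factor $\mu^{-1}$ and no $\mu$-weight is ever generated: the estimates are of the same nature on all of $W_{\tilde{\da}}^{\ast}$, the shock region included.

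For $\da Y^{\a+1}\mu$, commuting $Y^{\a+1}$ through \eqref{equationmu} gives
\begin{align*}
L\bigl(Y^{\a+1}\mu\bigr)=Y^{\a+1}m+Y^{\a+1}(\mu e)+[L,Y^{\a+1}]\mu ,
\end{align*}
where $[L,Y^{\a+1}]$ is expanded, via $[L,Y]=\ypi_{L}^{\sharp}$ and the Leibniz rule, into terms carrying strictly fewer $Y$'s on $\mu$ and otherwise only $Y$-derivatives of $\ypi_{L}^{\sharp}$; since $\ypi_{LX}$ is built from $\tr\chi$ (Table \ref{table1}), these are furnished by the $\tr\chi$-part of the induction at level $\le\a$. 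The sources $m=-\tfrac32 Th$ and $e$ have the size dictated by Lemma \ref{preliminary1} and Remark \ref{recovertfai}, and on $\Sigma_{0}$ the field $Y$ equals $\pa_{2}$, the benign direction of the short pulse, so $Y^{\a+1}\mu|_{\Sigma_{0}}$ is explicitly an $O_{1}^{\leq\a+1}$ quantity by Lemma \ref{initialdata}. Integrating along $L$ and applying Gronwall's inequality (all coefficients being $O(1)$ or $O(\da)$) closes the bound. The estimate for $Y^{\a}\tr\chi$ is entirely analogous, now starting from \eqref{equationchi}: the only term needing attention is $Y^{\a}\tr\a'$, whose principal acoustical part is, by \eqref{equationtchi} and \eqref{curvaturea}, essentially $Y^{\a}\slashed{\Delta}\mu$ plus terms of lower differential order, hence an $O_{1}^{\leq\a+1}$ quantity by the $\mu$-estimate just proved, while $\a'$ is regular as $\mu\to0$ (remark after \eqref{curvaturea}), so no singular factor appears; the remaining terms are products of quantities already estimated, and $Y^{\a}\tr\chi|_{\Sigma_{0}}=Y^{\a}\pa_{2}v^{2}$ is small.

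For $\da Y^{\a-1}T\tr\chi$ one uses \eqref{equationtchi} directly: modulo lower-order terms,
\begin{align*}
\da Y^{\a-1}T\tr\chi=\da Y^{\a-1}\slashed{\Delta}\mu+\tfrac12\da Y^{\a-1}\bigl(\hat{X}^{1}\hat{X}Tv^{1}+\hat{X}^{2}\hat{X}Tv^{2}\bigr)+\da Y^{\a-1}\bigl(\mu\hat{X}^{2}\fai\bigr)+\text{l.o.ts}.
\end{align*}
The first term is a string of at most $\a+1$ tangential derivatives of $\mu$ (interchanging $X$ and $Y$ costs only $O(1)$ coefficients, by Lemma \ref{LiTiXi}), so it is $O_{1}^{\leq\a+1}$ by the $\mu$-estimate. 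In the middle term one moves all $Y$'s past the single $T$ using $YT=TY+\ypi_{T}^{\sharp}$ and \eqref{LTi}, and then uses that the ``bad'' frame components $\hat{X}^{1}$ and $\hat{T}^{2}$ are themselves $O(\da)$ (Lemma \ref{LiTiXi}): every resulting term is an $O(\da)$ coefficient times a string of $\le\a+1$ derivatives of $\fai$ carrying at most one $T$, which the bootstrap assumptions \eqref{bs} control. The last term and the l.o.ts are products of quantities already estimated. Collecting these yields $\da Y^{\a-1}T\tr\chi\in O_{1}^{\leq\a+1}$, completing the inductive step.

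The main obstacle is the coupling hard-wired into \eqref{equationtchi}: the $\tr\chi$-equation feeds on $\slashed{\Delta}\mu$ — i.e.\ on $\mu$ at one differentiation order above $\tr\chi$ — and the quantity $T\tr\chi$ is itself expressed through $\slashed{\Delta}\mu$, so the three estimates cannot be decoupled. The remedy is precisely to freeze the internal order of the induction ($\mu$, then $\tr\chi$, then $T\tr\chi$) so that at each stage one invokes only bounds already secured, and then to verify that the commutator expansions coming from \eqref{commutator1} never promote the single admissible $T$-derivative into two $T$'s — which is where one uses that the dangerous off-diagonal deformation-tensor components $\zpi_{LX},\zpi_{TX},\zpi_{\dl X}$ of Table \ref{table1} are $S_{t,u}$-tangential and that \eqref{LTi} keeps $L\hat{T}^{i},T\hat{T}^{i},X\hat{T}^{i}$ in the expected class. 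Beyond this, the proof is a lengthy but routine verification, via Table \ref{table2}, that every term produced remains within the $O_{1}$ bound.
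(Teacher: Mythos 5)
Your overall strategy --- induction on $\a$, integrating the transport equations \eqref{equationmu} and \eqref{equationchi} along the integral curves of $L$, and reading off the $\da Y^{\a-1}T\tr\chi$ bound from \eqref{equationtchi} --- is the paper's, but two steps do not hold up as written. The first is that the internal order of your induction is circular. Commuting $Y^{\a+1}$ through \eqref{equationmu} produces $[L,Y^{\a+1}]\mu=\sum_{\be_1+\be_2=\a}Y^{\be_1}\ypi_{L}^{\sharp}Y^{\be_2}\mu$, whose extreme term ($\be_1=\a$, $\be_2=0$) is $Y^{\a}\ypi_{L\hat{X}}\cdot\hat{X}\mu$; by Table \ref{table1} this carries $Y^{\a}\tr\chi$ multiplied by the $O(1)$ factor $\hat{X}\mu$ (Lemma \ref{preliminary1}), so it cannot be absorbed as a small perturbation. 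In your ordering ($\mu$ first, then $\tr\chi$, at the same level $\a$) that $Y^{\a}\tr\chi$ is not yet available: ``furnished by the $\tr\chi$-part of the induction at level $\le\a$'' does not cover it, since at level $\a$ you only prove the $\tr\chi$ bound \emph{after} the $\mu$ bound. The paper avoids this by proving $Y^{\a}\tr\chi$ first: commuting only $Y^{\a}$ through \eqref{equationchi} yields commutators $Y^{\be_1}\ypi_{L\hat{X}}$ with $\be_1\le\a-1$, genuinely supplied by the previous level, and the $\tr\chi$ estimate uses no bound on $\mu$ at all. Your claim that ``the three estimates cannot be decoupled'' is therefore the opposite of the truth; the fix is simply to swap the order.

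The second gap is your treatment of $Y^{\a}\tr\a'$. You assert its principal part is ``essentially $Y^{\a}\slashed{\Delta}\mu$'', but $\a'$ is not built from $\mu$: by \eqref{curvaturea} it consists of second derivatives of the wave variables (with only the singular piece $\tfrac{3}{2}\mu^{-1}\chi\eta\hat{T}^iTv^i$ subtracted), so $Y^{\a}\tr\a'$ is an order-$(\a+2)$ derivative of $\fai$, i.e. $O(\fai)_1^{\le\a+2}$, and is controlled directly by the bootstrap assumptions \eqref{bs} because $\a+2\le N_{\infty}$ --- this is exactly where the restriction $\a\le N_{\infty}-2$ enters, and it is what the paper does. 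Even granting your reading, $Y^{\a}\slashed{\Delta}\mu$ would be $\a+2$ tangential derivatives of $\mu$, one order beyond the $Y^{\a+1}\mu$ bound you would have just proved, so the justification would not close on its own terms. The identity \eqref{equationtchi} states that $T\tr\chi-\slashed{\Delta}\mu$ equals derivatives of $\fai$ plus lower-order terms; it does not identify $\tr\a'$ with $\slashed{\Delta}\mu$. Your argument for the $\da Y^{\a-1}T\tr\chi$ bound itself is sound once the $\mu$ and $\tr\chi$ bounds are correctly in place.
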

\begin{proof}
	We prove this lemma by using an induction argument. Commuting $Y^{\a}$ with \eqref{equationchi} yields
	\begin{align}
		\begin{split}\label{Lyatrchi1}
		LY^{\a}\tr\chi&=Y^{\a}(e\tr\chi-\tr\a'+(\tr\chi)^2)+\sum_{\be_1+\be_2=\a-1}Y^{\be_1}\ypi_{L\hat{X}}\hat{X}Y^{\be_2}\tr\chi\\
		&=(e+2\tr\chi+O(\da))Y^{\a}\tr\chi+O(\fai)_{1}^{\leq \a+2},\\
		\end{split}
	\end{align}
	by an induction process, \eqref{curvaturea}, Lemma \ref{2ndff}, Table \ref{table1} and the bootstrap assumptions. Therefore, integrating \eqref{Lyatrchi1} along the integral curves of $L$ gives
 \[
 |Y^{\a}\tr\chi(t,u,\vartheta)|\les |Y^{\a}\tr\chi(0,u,\vartheta)|+O(\da)\les \da.
 \]
 In view of \eqref{equationtchi}, it suffices to prove $Y^{\a+1}\mu\in O_{0}^{\leq \a+1}$. To this end, commuting $Y^{\a+1}$ with \eqref{equationmu} and noticing $Y^{\a}\tr\chi\in O_{1}^{\leq \a+1}$ yield
	\begin{align}
		\begin{split}
			LY^{\a+1}\mu&=eY^{\a+1}\mu+\frac{3}{2}\hat{T}^iY^{\a+1}Tv^i+O(\fai)_{1}^{\leq \a+1}.\label{lyamu}
		\end{split}
	\end{align}
	This implies $|Y^{\a+1}\mu|\les 1$.
\end{proof}
\begin{remark}
	We write \eqref{Lyatrchi1} and \eqref{lyamu} more accurately for later use as follows. In view of \eqref{curvaturea} and Lemma \ref{LiTiXi}, the highest order terms in $Y^{\a}\tr\a'$ are 
	\begin{align}
 \begin{split}
		&Y^{\a}\sg^{-1}(\frac{1}{2}D_{XX}^2h-\frac{1}{2}D^2_{XX}\sum(v^k)^2+L^jD_{XX}^2v^j-X^jD_{LX}^2v^j)\\=
  &\frac{1}{2}Y^{\a}\hat{X}^2h+L^1Y^{\a}\hat{X}^2v^1+\hat{X}^2Y^{\a}L\hat{X}v^2+\text{l.o.ts}
  \end{split}
	\end{align}
 Hence,
	\begin{align}
		LY^{\a}\tr\chi&=(e+2\tr\chi+O(\da))Y^{\a}\tr\chi+\frac{3}{2}Y^{\a}\hat{X}^2\fai+O(\da)\cdot O(\fai)_{1}^{\leq\a+2}.\label{Lyatrchi}
	\end{align}
	Similarly, one has
	\begin{align}
		LY^{\a+1}\mu&=\frac{3}{2}\hat{T}^1Y^{\a+1}Tv^1+\frac{3}{2}\mu Y^{\a+1}L\fai+(e+O(\da))Y^{\a+1}\mu+O(\da)\cdot O(\fai)_{1}^{\leq\a+2}.\label{Lyamu}
	\end{align}
\end{remark}

As a consequence, it follows from Table \ref{table1} and an induction argument that $Y^{\a}Z\hat{T}^1\in O(\fai)_{1}^{\leq \a+1}$ for $\a\leq 11$. 
\begin{lemma}\label{L2commutator}
	For any $Z\in\mathcal{Z}$ and $\a\leq N_{top}=20$, the following bounds hold
	\begin{align}
 \begin{split}
		|[Z,Y^{\a}]O(\fai)_{1}^{\leq 1}|&=|\sum_{\be_1+\be_2=\a-1}Y^{\be_1}\ypi_{Z\hat{X}}\hat{X}Y^{\be_2}O(\fai)_{1}^{\leq 1}|\\
  &\les \da(\mathbf{1}_{\be_1>[\tfrac{\a}{2}]}Y^{\be_1}\ypi_{Z\hat{X}}+\mathbf{1}_{\be_2>[\tfrac{\a}{2}]}\hat{X}Y^{\be_2}O(\fai)_{1}^{\leq 1}),\label{commutator}
  \end{split}
	\end{align}
	where $\mathbf{1}_A$ denotes the characteristic function of a set $A$.
\end{lemma}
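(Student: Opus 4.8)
The plan is to first derive the commutator identity displayed in the statement and then estimate each resulting summand by a standard high--low decomposition. For the case $Z=Y$ there is nothing to prove since $[Y,Y^{\a}]=0$, so assume $Z\in\{L,T\}$. Because $Y$ is tangent to the one-dimensional manifold $S_{t,u}$, the $\sg$-dual $\ypi_{Z}^{\sharp}$ of $\ypi_{Z}$ is parallel to $\hat{X}$, namely $\ypi_{Z}^{\sharp}=\ypi_{Z\hat{X}}\hat{X}$; combining this with \eqref{commutator1} gives $[Z,Y]g=\ypi_{Z\hat{X}}\,\hat{X}g$ for every function $g$. A one-line induction on $\a$ using the commutator Leibniz rule $[Z,Y^{\a}]=[Z,Y]Y^{\a-1}+Y[Z,Y^{\a-1}]$ then yields
\begin{align*}
	[Z,Y^{\a}]g=\sum_{j=0}^{\a-1}Y^{j}\!\left([Z,Y]Y^{\a-1-j}g\right)=\sum_{\be_1+\be_2=\a-1}Y^{\be_1}\!\left(\ypi_{Z\hat{X}}\,\hat{X}Y^{\be_2}g\right),
\end{align*}
which is the first equality in Lemma \ref{L2commutator}, applied with $g=O(\fai)_{1}^{\leq1}$.

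Next I would expand each summand by the Leibniz rule. Writing $Y^{\be_1}(\ypi_{Z\hat{X}}\cdot\hat{X}Y^{\be_2}g)$ as a finite sum of products $(Y^{\ga_1}\ypi_{Z\hat{X}})\,(Y^{\ga_2}\hat{X}Y^{\be_2}g)$ with $\ga_1+\ga_2=\be_1$, and commuting the $\hat{X}$ past the $Y^{\ga_2}$ (which on the one-dimensional spheres $S_{t,u}$ costs only terms carrying an extra factor $\chi=O(\da M)$, by Lemma \ref{2ndff} and $L\sg=2\chi$), every term takes the form $(Y^{a}\ypi_{Z\hat{X}})\,(\hat{X}Y^{b}g)$ with $a+b=\a-1$, up to such lower-order $\chi$-terms. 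By Table \ref{table1}, for $Z\in\{L,T\}$ the component $\ypi_{Z\hat{X}}$ is a fixed linear combination of $Z\hat{T}^{1}$, $X^{2}\tr\chi$, $\kappa X^{2}\tr\theta$ and $X\hat{T}^{1}$, hence $\ypi_{Z\hat{X}}\in O(\fai)_{1}^{\leq1}$, and by Lemmas \ref{2ndff}, \ref{LiTiXi}, \ref{lotlinfty} together with the consequence recorded after Lemma \ref{lotlinfty}, one has $Y^{k}\ypi_{Z\hat{X}}\in O(\fai)_{1}^{\leq k+1}$ for all $k\leq N_{\infty}-2=11$.

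The high--low split is now elementary. For a term $(Y^{a}\ypi_{Z\hat{X}})(\hat{X}Y^{b}g)$ with $a+b=\a-1$, at least one of $a,b$ is $\leq[\tfrac{\a}{2}]$, and since $[\tfrac{\a}{2}]\leq10$ for $\a\leq N_{top}=20$ while $[\tfrac{\a}{2}]+1\leq11<N_{\infty}$, the low-index factor is bounded in $L^{\infty}$ by $\les\da M$: either by the previous paragraph if it is $Y^{a}\ypi_{Z\hat{X}}$ with $a\leq11$, or by the bootstrap assumptions \eqref{bs} if it is $\hat{X}Y^{b}g=O(\fai)_{1}^{\leq b+1}$ with $b+1\leq N_{\infty}$. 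Hence the term is $\les\da M$ times whichever factor carries more than $[\tfrac{\a}{2}]$ derivatives, producing exactly $\da\,\mathbf{1}_{\be_1>[\tfrac{\a}{2}]}Y^{\be_1}\ypi_{Z\hat{X}}$ or $\da\,\mathbf{1}_{\be_2>[\tfrac{\a}{2}]}\hat{X}Y^{\be_2}O(\fai)_{1}^{\leq1}$ after relabeling; in the remaining case $a=b=[\tfrac{\a}{2}]$ (possible only for odd $\a$) both factors are $O(\da M)$ and the contribution is $O(\da^{2}M^{2})$, which is dominated by the claimed bound. Summing over the finitely many summands completes the proof.

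I expect the only real difficulty to be organizational bookkeeping: making sure that the Leibniz expansion, the reordering of $\hat{X}$ and $Y$, and the $Y$-derivatives landing on the $\sg$-weights inside $\hat{X}$ never generate a term that escapes the two declared shapes on the right-hand side or that fails to carry the gain of one power of $\da$. All of this is controlled by the single numerical input $[\tfrac{\a}{2}]\leq10<N_{\infty}-2<N_{\infty}$ valid for $\a\leq N_{top}=20$, which guarantees that whichever factor is forced into the low bin lies within the range of the $L^{\infty}$ bootstrap estimates and of Lemma \ref{lotlinfty}, together with the fact that each such low-bin factor — $\ypi_{Z\hat{X}}$, $\chi$, or $O(\fai)_{1}^{\leq1}$ — is itself of size $O(\da)$.
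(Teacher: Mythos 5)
Your proposal is correct and follows essentially the same route as the paper: the first equality comes from \eqref{commutator1} (iterated via Leibniz), and the bound is the same high--low split, using the bootstrap assumptions \eqref{bs} when $\be_2\leq[\tfrac{\a}{2}]$ and Table \ref{table1} together with Lemma \ref{lotlinfty} when $\be_1\leq[\tfrac{\a}{2}]\leq N_{\infty}-2$. The extra bookkeeping you supply (the induction for the commutator identity, the reordering of $\hat{X}$ and $Y$, and the balanced case $\be_1=\be_2=[\tfrac{\a}{2}]$) is consistent with what the paper leaves implicit.
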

\begin{proof}
	The first equality in \eqref{commutator} comes directly from \eqref{commutator1}. 
	\begin{itemize}
		\item If $\be_1\geq[\tfrac{\a}{2}]+1$, then $\be_2\leq[\tfrac{\a}{2}]$. This implies $|\hat{X}Y^{\be_2}O(\fai)_{1}^{\leq 1}|\les\da$ due to the bootstrap assumptions.
	     \item If $\be_2\geq[\tfrac{\a}{2}]+1$, then $\be_1\leq[\tfrac{\a}{2}]\leq N_{\infty}-2$. It follows from Table \ref{table1} and Lemma \ref{lotlinfty} that $|Y^{\be_1}\ypi_{Z\hat{X}}|\les \da$.
	\end{itemize}
 This finishes the proof of  Lemma \ref{L2commutator}.
\end{proof}
\begin{lemma}\label{lotchiul2}
	For all $\a\leq N_{top}-1=19$ and sufficiently small $\da$, 
	\begin{align}
		\begin{split}
			\|Y^{\a}\tr\chi\|_{\ltwou}\leq C\|Y^{\a}\tr\chi\|_{L^2(\Sigma_0^u)}+\int_0^t(\tfrac{3}{2}+C\da)\mu_m^{-\frac{1}{2}}\sqrt{\mathbb{U}_{\leq\a+1}},
		\end{split}\label{yatrchi}\\
		\begin{split}
			\|Y^{\a+1}\mu\|_{\ltwou}\leq C\|Y^{\a+1}\mu\|_{L^2(\Sigma_0^u)}+\int_0^tC\sqrt{\mathbb{W}_{\leq\a+1}}+(\frac{3}{2}+C\da)\mu_m^{-\frac{1}{2}}\sqrt{\mathbb{U}_{\leq\a+1}}.
		\end{split}
	\end{align}
\end{lemma}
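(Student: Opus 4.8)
## Proof Proposal for Lemma \ref{lotchiul2}

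The plan is to derive the $L^2$ estimates by integrating the transport equations \eqref{Lyatrchi} and \eqref{Lyamu} along the integral curves of $L$, using the integral-by-parts formula from Lemma \ref{integralbypartsl} to convert the time derivative of the spatial $L^2$-norm into an integral of $L$ applied to the integrand. First I would start with $\|Y^\a\tr\chi\|_{\ltwou}^2 = \int_{\Sigma_t^u}(Y^\a\tr\chi)^2$ and compute its $t$-derivative via Lemma \ref{integralbypartsl}, which produces $\int_{\Sigma_t^u}(L+\tr\chi)(Y^\a\tr\chi)^2 = \int_{\Sigma_t^u}2(Y^\a\tr\chi)\cdot LY^\a\tr\chi + \tr\chi(Y^\a\tr\chi)^2$. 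Substituting the accurate transport equation \eqref{Lyatrchi}, the dominant contribution is $\int_{\Sigma_t^u}3(Y^\a\tr\chi)\cdot Y^\a\hat{X}^2\fai$, while the terms $(e+2\tr\chi+O(\da))(Y^\a\tr\chi)^2$ and the error $O(\da)\cdot O(\fai)_1^{\le\a+2}$ contribute lower-order or $O(\da)$-small amounts.

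The key step is controlling $\int_{\Sigma_t^u}3(Y^\a\tr\chi)\cdot Y^\a\hat{X}^2\fai$. Here I would use Cauchy--Schwarz to bound it by $3\|Y^\a\tr\chi\|_{\ltwou}\cdot\|Y^\a\hat{X}^2\fai\|_{\ltwou}$. The second factor involves the angular derivative $\hat X = \sg^{-1/2}X$ acting $\a+2$ times (in the relevant leading sense $Y^\a\hat X^2$ on the wave variable), and since $\hat X$ is comparable to $Y$, one relates $\|Y^\a\hat{X}^2\fai\|_{\ltwou}$ to the $\mathbb{U}$-energy: by Definition \ref{highorderenergy}, $E_1(Z^\a\fai) \gtrsim \int_{\Sigma_t^u}\mu(\hat X Z^\a\fai)^2$, so $\|Y^{\a+1}\fai\|_{\ltwou}^2 \lesssim \mu_m^{-1}\mathbb{U}_{\le\a+1}$, giving the factor $\mu_m^{-1/2}\sqrt{\mathbb{U}_{\le\a+1}}$. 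Tracking the constant $3$ carefully through the manipulation (the factor $\tfrac12$ from differentiating a square cancels against part of it, leaving the coefficient $\tfrac32$ announced in the statement), and absorbing all remaining contributions into $C\da\mu_m^{-1/2}\sqrt{\mathbb{U}_{\le\a+1}}$ plus the initial data term, I would then divide by $\|Y^\a\tr\chi\|_{\ltwou}$ and integrate in $t$ from $0$ to obtain \eqref{yatrchi}. For the $\mu$-estimate, the same scheme applied to \eqref{Lyamu} produces two leading terms: $\tfrac32\hat T^1 Y^{\a+1}Tv^1$, which again converts via Cauchy--Schwarz and Definition \ref{highorderenergy} to $(\tfrac32+C\da)\mu_m^{-1/2}\sqrt{\mathbb{U}_{\le\a+1}}$ (noting $|\hat T^1|=1+O(\da M)$ from Lemma \ref{LiTiXi}), and $\tfrac32\mu Y^{\a+1}L\fai$, which since it carries an extra $\mu$ and only a single $L$-derivative on $\fai$ is bounded by $C\sqrt{\mathbb{W}_{\le\a+1}}$ without any negative power of $\mu_m$.

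The main obstacle will be the bookkeeping of the lower-order error terms, ensuring they genuinely carry an $O(\da)$ smallness or lie strictly below top order so that they close: the term $O(\da)\cdot O(\fai)_1^{\le\a+2}$ in \eqref{Lyatrchi} looks like it has $\a+2$ derivatives, the same order as the principal term, but the extra $\da$ factor means its $L^2$-norm contributes only $C\da\mu_m^{-1/2}\sqrt{\mathbb{U}_{\le\a+1}}$, which is absorbable. One must also be careful that the commutators $[L,Y^\a]$ generated when deriving the accurate transport equations (controlled by Lemma \ref{L2commutator}) do not spoil the counting; by that lemma they too carry $O(\da)$ and split favorably by the $\be_1,\be_2$ dichotomy. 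Finally, since $\a\le N_{top}-1=19$, all the $L^\infty$ inputs needed (from Lemma \ref{lotlinfty}, valid for order $\le N_\infty-1$, and the bootstrap assumptions at order $\le N_\infty$) are available for the ``low'' half of any Leibniz split, so the Grönwall-type integration in $t$ closes cleanly with $C=1+O(\da)$-type constants.
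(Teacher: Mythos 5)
Your proposal is correct and follows essentially the same route as the paper's (very terse) proof: differentiate the $L^2$-norms in $t$ via Lemma \ref{integralbypartsl}, insert the refined transport equations \eqref{Lyatrchi} and \eqref{Lyamu}, Cauchy--Schwarz the principal term $\tfrac{3}{2}Y^{\a}\hat{X}^2\fai$ against $E_1$ (which weights angular derivatives only by $\mu$, hence the $\mu_m^{-1/2}$ and the constant $\tfrac{3}{2}+C\da$), control commutators by Lemma \ref{L2commutator}, and close with Gronwall. The one attributional slip is that $Y^{\a+1}Tv^1$ decomposes through $\dl=\eta^{-2}\mu L+2T$ into a $\dl Y^{\a+1}v^1$ piece controlled only by $E_0$, so it feeds the $C\sqrt{\mathbb{W}_{\leq\a+1}}$ term rather than the $\mu_m^{-1/2}\sqrt{\mathbb{U}_{\leq\a+1}}$ term as you wrote; since both appear on the lemma's right-hand side, the conclusion is unaffected.
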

\begin{proof}
	We prove this lemma by an induction argument. It follows from Lemma \ref{L2commutator}, \eqref{Lyatrchi} and an induction process that 
 \begin{equation}\label{l2yachi}
	\begin{aligned}
		&\|Y^{\a}\tr\chi\|_{\ltwou}\\
  \leq & C\|Y^{\a}\tr\chi\|_{L^2(\Sigma_0^u)}+\int_0^tC\da\|Y^{\a}\tr\chi\|_{L^2(\Sigma_{t'}^u)}+ \left(\tfrac{3}{2}+C\da\right)\mu^{-\tfrac{1}{2}}_m\sqrt{\mathbb{U}_{\leq \a+1}(t',u)},
	\end{aligned}
 \end{equation}
	where the highest order energy comes from $Y^{\a}$ acting on $\tr\a'$. The desired estimate \eqref{yatrchi} is a direct consequence of integrating \eqref{l2yachi}. Similarly,
	\begin{align*}
		\|Y^{\a+1}\mu\|_{\ltwou}
		\leq & C\|Y^{\a+1}\mu\|_{L^2(\Sigma_0^u)}+\int_0^tC\da \|Y^{\a+1}\mu\|_{L^2(\Sigma_{t'}^u)}+C\sqrt{\mathbb{W}_{\leq\a+1}(t',u)}\\
  &+(\tfrac{3}{2}+C\da)\mu_m^{-\frac{1}{2}}\sqrt{\mathbb{U}_{\leq\a+1}(t',u)} \ dt'.
	\end{align*}
	This implies 
 \[
 \|Y^{\a+1}\mu\|_{\ltwou}\leq C\|Y^{\a+1}\mu\|_{L^2(\Sigma_0^u)}+\int_0^tC\sqrt{\mathbb{W}_{\leq\a+1}(t',u)}+(\tfrac{3}{2}+C\da)\mu_m^{-\frac{1}{2}}\sqrt{\mathbb{W}_{\leq \a+1}(t',u)}.
 \]
 This finishes the proof of the lemma.
\end{proof}
\section{Estimates on the top order acoustical terms}\label{section5}
In this section, the top order estimates involving $\mu$ and tr$\chi$ are obtained.
 To this end, let $b_1=\cdots=b_{14}=0,$ $b_{15}=\tfrac{3}{4}$, $b_{N+1}=b_N+1$ for $15\leq N\leq 19$ and $b_{21}=b_{20}+\tfrac{1}{2}$. 
 Define
\begin{align}
	\begin{split}\label{modifiedenergy}
	\bar{\mathbb{W}}_{N}&=\sup_{t'\in[0,t]}\{\mu_m^{2b_N}(t')\mathbb{W}_{N}\},\qquad \bar{\mathbb{U}}_{N}=\sup_{t'\in[0,t]}\{\mu_m^{2b_N}(t')\mathbb{U}_{N}\},\\
	\bar{\mathbb{Q}}_{N}&=\sup_{t'\in[0,t]}\{\mu_m^{2b_N}(t')\mathbb{Q}_{N}\},\qquad
	\bar{\mathbb{V}}_{N}=\sup_{t'\in[0,t]}\{\mu_m^{2b_N}(t')\mathbb{V}_{N}\}.
	\end{split}
\end{align}
We also set $\bar{\mathbb{W}}_{\leq N}=\sum_{N'\leq N}\bar{\mathbb{W}}_{N'}$ and similar for $\bar{\mathbb{Q}}_{\leq N},\
\bar{\mathbb{U}}_{\leq N},\ \bar{\mathbb{V}}_{\leq N+1}.$

\subsection{Top order estimates for angular derivatives of $\chi$} Note that \eqref{equationchi} can be written as
\begin{align}
	L(\mu \tr\chi)=2(L\mu)\tr\chi-\mu \tr\a-\mu(\tr\chi)^2.\label{equationmuchi}
\end{align}
In view of \eqref{curvaturea}, the second order derivative terms in $\mu \tr\a$ is 
\begin{align*}
	&\frac{1}{2}\mu\slashed{\Delta}h-\frac{1}{2}\mu\sum_i\slashed{\Delta}(v^i)^2+\mu L^i\slashed{\Delta}v^i-\mu\hat{X}^iL\hat{X}v^i\\
 =&\frac{1}{2}\mu\slashed{\Delta}h-\eta\hat{T}^i\mu\slashed{\Delta}v^i-\mu\hat{X}^iL\hat{X}v^i-\eta\hat{T}^i\mu(\hat{X}v^i)^2.
\end{align*}
It follows from \eqref{decomposition} that 
\begin{align*}
	&\frac{1}{2}\mu\slashed{\Delta}h-\eta\hat{T}^i\mu\slashed{\Delta}v^i-\mu\hat{X}^iLX^iv^i-\eta\hat{T}^i\mu(\hat{X}v^i)^2\\
 =&\frac{1}{2}L\dl h-\eta\hat{T}^iL\dl v^i-\mu\hat{X}^iL\hat{X}v^i+\tilde{\mathfrak{F}}_0^0 +\tilde{\mathfrak{F}}_0^i,
\end{align*}
where precisely, 
\begin{align}
	\begin{split}\label{f00f0i}
	\tilde{\mathfrak{F}}_0^0&=\sg^{-1}\tau Xh+\tfrac{1}{4}(\tr\chi\dl h+\tr\underline{\chi}Lh)+\tfrac{1}{2}\mathfrak{F}_0^0=O(\fai)_1^{\leq 1}+O(\psi)_1^{\leq 1},\\
	\tilde{\mathfrak{F}}_0^i&=2\sg^{-1}\eta\hat{T}^iXv^i+\tfrac{1}{2}(\tr\chi\dl v^i+\tr\underline{\chi}L v^i)-\eta\hat{T}^i\mu(\hat{X}v^i)^2+\mathfrak{F}_0^i=O(\fai)_1^{\leq 1}+O(\psi)_1^{\leq 1}.
	\end{split}
\end{align}
Therefore, \eqref{equationmuchi} can be regularized as
\begin{align}
	&L(\mu \tr\chi+\check{f})=2(L\mu)\tr\chi-\mu(\tr\chi)^2+\check{g},\label{equationmuchif}\\
	\begin{split}
	&\check{f}=\tfrac{1}{2}\dl h-\eta\hat{T}^i\dl v^i-\mu\hat{X}^i\hat{X}v^i,\\
    &\check{g}=-\mu \tr\a^{[N]}-L(\eta\hat{T}^i)\dl v^i-L(\mu\hat{X}^i)\hat{X}v^i-\tilde{\mathfrak{F}}_0^0-\tilde{\mathfrak{F}}_0^i=O(\fai)_1^{\leq 1}+O(\psi)_1^{\leq 1}.\\
    \end{split}
\end{align}
Define $\mathscr{F}_{\a}=\mu Y^{\a}\hat{X}\tr\chi+Y^{\a}\hat{X}\check{f}$ for $0\leq\a\leq  19$. To derive the equation of $\mathscr{F}_0$, one commutes $\hat{X}$ with \eqref{equationmuchif} to obtain
\begin{align*}
	L\mathscr{F}_0= &[L,\hat{X}](\mu \tr\chi+\check{f})-L(\hat{X}\mu\cdot \tr\chi)+\hat{X}(2(L\mu)\tr\chi-\mu(\tr\chi)^2+\check{g})\\
	=&(-3\tr\chi+2\mu^{-1}L\mu)\mathscr{F}_0+(\tfrac{3}{2}\tr\chi-2\mu^{-1}L\mu)\hat{X}\check{f}+\tr\chi\hat{X}(L\mu+\tfrac{1}{2}\check{f})\\
 &-
	\hat{X}\mu(L\tr\chi+(\tr\chi)^2)+\hat{X}\check{g}.
\end{align*}
Hence, one has
\begin{align}
	&L\mathscr{F}_0+(3\tr\chi-2\mu^{-1}L\mu)\mathscr{F}_0=(\frac{3}{2}\tr\chi-2\mu^{-1}L\mu)\hat{X}\check{f}+\check{g}_0,\label{equationf0}\\
	&\check{g}_0=\tr\chi\hat{X}(L\mu+\tfrac{1}{2}\check{f})-	\hat{X}\mu(L\tr\chi+(\tr\chi)^2)+\hat{X}\check{g}.
\end{align}
To compute the equation of $\mathscr{F}_{\a}$, we write the equation of $\mathscr{F}_0$ as
\begin{align}
	L\mathscr{F}_0=2L\mu\hat{X}\tr\chi-3\tr\chi\mathscr{F}_0+\frac{3}{2}\tr\chi\hat{X}\check{f}+\check{g}_0.\label{equationf01}
\end{align}
Commuting $Y^{\a}$ with \eqref{equationf01} and noting that $\mathscr{F}_{\a}=Y^{\a}\mathscr{F}_0-\sum_{\be_1+\be_2=\a,\be_1>0}Y^{\be_1}\mu Y^{\be_2}\hat{X}\tr\chi$ yield
\begin{align}
 L\mathscr{F}_{\a}+(3\tr\chi-2\mu^{-1}L\mu)\mathscr{F}_{\a}&=(\frac{3}{2}\tr\chi-2\mu^{-1}L\mu)Y^{\a}\hat{X}\check{f}+\check{g}_{\a},\label{LFa}
\end{align}
where 
\begin{align*}
	\check{g}_{\a}=&Y^{\a}\check{g}_0+
	\sum_{\be_1+\be_2=\a-1,\be_1>0}Y^{\be_1}\ypi_{L\hat{X}}\hat{X}Y^{\be_2}\mathscr{F}_0-3\tr\chi\sum_{\be_1+\be_2=\a,\be_1>0}Y^{\be_1}\mu Y^{\be_2}\hat{X}\tr\chi\\
&-3\sum_{\be_1+\be_2=\a,\be_1>0}Y^{\be_1}\tr\chi (Y^{\be_2}\mathscr{F}_0-\tfrac{1}{2}Y^{\be_2}\hat{X}\check{f})-L(\sum_{\be_1+\be_2=\a,\be_1>0}Y^{\be_1}\mu Y^{\be_2}\hat{X}\tr\chi).
\end{align*}
The key observation is that due to Proposition \ref{keymu}, in the shock region, $\mu^{-1}L\mu<0$. Therefore, in deriving the estimates of $\mathscr{F}_{\a}$ over the shock region, one can ignore the contribution from the term $(-\mu^{-1}L\mu)\mathscr{F}_{\a}$. Hence, it follows from \eqref{LFa} that
\begin{align*}
	\|\mathscr{F}_{\a}\|_{\ltwou}&\leq \|\mathscr{F}_{\a}\|_{L^2(\Sigma_0^u)}+\int_0^t\|(\tfrac{3}{2}|\tr\chi|+2\mu^{-1}|L\mu|)Y^{\a}\hat{X}\check{f}\|_{L^2(\Sigma_{t'}^u)}+\|\check{g}_{\a}\|_{L^2(\Sigma_{t'}^u)}\ dt'.
\end{align*}
\begin{lemma}\label{ltwofaibound}
Up to the commutation terms, it holds that 
\begin{align*}
	\mu_m^{\frac{1}{2}}\|XY^{\a}\hat{X}\fai\|_{\ltwou}+\|TY^{\a}\hat{X}\fai\|_{\ltwou}\leq \sqrt{\mathbb{W}_{\leq \a+1}(t,u)}+\sqrt{\mathbb{U}_{\leq\a+1}},\\
	\|LY^{\a}\hat{X}\fai\|_{L^2(C_u^t)}\leq \sqrt{\mathbb{Q}_{\leq \a+1}(t,u)}.
\end{align*}
\end{lemma}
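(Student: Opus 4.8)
The plan is to rewrite each of the three quantities, modulo commutation terms, as a single frame derivative ($\hat{X}$, $T$, or $L$) acting on a string $Z^{\be}\fai$ with $Z^{\be}\in\mathcal{Z}^{|\be|;\le 1}$ and $|\be|=\a+1$, and then to read off the bound directly from Definition \ref{energy} and Definition \ref{highorderenergy}. First, since $S_{t,u}$ is one--dimensional, the vectorfields $X$, $\hat{X}$, $Y$ are scalar multiples of one another: $X=\sg^{\frac12}\hat{X}$ and $\hat{X}=\sg(\hat{X}^{2})^{-1}Y$, and by Lemma \ref{2ndff}, Lemma \ref{LiTiXi} and Lemma \ref{lotlinfty} these coefficients are $1+O(\da M)$, with all their $Y$--derivatives either $L^{\infty}$--bounded at low order or, at high order, $L^{2}$--bounded (via Lemma \ref{lotchiul2}) and always paired with an $L^{\infty}$--bounded factor, i.e.\ handled by the high--low splitting of Lemma \ref{L2commutator}. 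Distributing $Y^{\a}$ by the Leibniz rule and using that $[L,Y]=\ypi_{L}^{\sharp}$, $[T,Y]=\ypi_{T}^{\sharp}$ and $[\hat{X},Y]$ are all $S_{t,u}$--tangential, so that each commutation genuinely lowers the differential order falling on $\fai$, one obtains
\begin{align*}
XY^{\a}\hat{X}\fai&=c_{1}\,\hat{X}Y^{\a+1}\fai+(\text{comm.}),\\
TY^{\a}\hat{X}\fai&=c_{2}\,TY^{\a+1}\fai+(\text{comm.}),\\
LY^{\a}\hat{X}\fai&=c_{3}\,LY^{\a+1}\fai+(\text{comm.}),
\end{align*}
with $c_{i}=1+O(\da M)$, where the terms $(\text{comm.})$ carry at most $\a+1$ derivatives on $\fai$ (the remaining derivatives landing on the coefficients or on the $\ypi$'s); these are precisely the commutation terms excluded from the statement, and their $L^{2}(\Sigma_{t}^{u})$ norms are bounded by $\sqrt{\mathbb{W}_{\le\a+1}}$, their $L^{2}(C_{u}^{t})$ norms by $\sqrt{\mathbb{Q}_{\le\a+1}}$.

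It then remains to estimate the three principal terms, and here the energy definitions do all the work. Since the string $Y^{\a+1}$ contains no factor of $T$, the weight $\da^{2m}$ of Definition \ref{highorderenergy} equals $1$. From the $\mu(\hat{X}\psi)^{2}$ piece of $E_{1}$ in Definition \ref{energy} and the bound $\mu\ge\mu_{m}$ on $\Sigma_{t}^{u}$, we get $\mu_{m}\|\hat{X}Y^{\a+1}\fai\|_{\ltwou}^{2}\le\int_{\Sigma_{t}^{u}}\mu|\hat{X}Y^{\a+1}\fai|^{2}\le E_{1}(Y^{\a+1}\fai)\le\mathbb{U}_{\le\a+1}$. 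Using $\dl=\eta^{-2}\mu L+2T$ we write $TY^{\a+1}\fai=\tfrac12\dl Y^{\a+1}\fai-\tfrac12\eta^{-2}\mu LY^{\a+1}\fai$, so that $|TY^{\a+1}\fai|^{2}\les|\dl Y^{\a+1}\fai|^{2}+\mu^{2}|LY^{\a+1}\fai|^{2}$; integrating over $\Sigma_{t}^{u}$ and using $\mu\les1$, the first term is bounded by $E_{0}(Y^{\a+1}\fai)$ and the second by $\int_{\Sigma_{t}^{u}}\mu|LY^{\a+1}\fai|^{2}\les E_{1}(Y^{\a+1}\fai)$, hence $\|TY^{\a+1}\fai\|_{\ltwou}^{2}\les\mathbb{W}_{\le\a+1}$. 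Finally, the $(L\psi)^{2}$ piece of $F_{1}$ gives $\|LY^{\a+1}\fai\|_{L^{2}(C_{u}^{t})}^{2}\le F_{1}(Y^{\a+1}\fai)\le\mathbb{Q}_{\le\a+1}$. Combining these with the bounds on the commutation terms yields the two asserted inequalities.

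The main obstacle is the bookkeeping rather than any analytic difficulty: one has to verify that each passage between $X$, $\hat{X}$, $Y$ and each commutator $[L,Y]$, $[T,Y]$, $[\hat{X},Y]$ produces only lower order contributions with coefficients under $L^{\infty}$ or high--low control, and one has to keep precise track of the $\mu$--weights. The decisive point is the asymmetry between $E_{0}$ and $E_{1}$: $E_{1}$ controls only the degenerate quantity $\mu(\hat{X}\psi)^{2}$, which forces the $\mu_{m}^{1/2}$ loss on the angular derivative $XY^{\a}\hat{X}\fai$, whereas $E_{0}$ controls $(\dl\psi)^{2}$ at full strength, so that once the $\eta^{-2}\mu L$ part of $\dl$ is absorbed into $E_{1}$ no $\mu$--weight is lost for $TY^{\a}\hat{X}\fai$.
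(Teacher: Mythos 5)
The paper states this lemma without proof, and your argument supplies exactly the intended one: convert the innermost $\hat{X}$ into a multiple of $Y$ (coefficient $1+O(\da M)$), decompose the outer $T$ as $\tfrac{1}{2}(\dl-\eta^{-2}\mu L)$, and read the three bounds off $E_1$ (which controls only the degenerate quantity $\mu(\hat{X}\cdot)^2$, whence the $\mu_m^{1/2}$ weight), $E_0+E_1$, and $F_1$ respectively, the weight $\da^{2m}$ being $1$ since the string $Y^{\a+1}$ contains no $T$. The only imprecision is your side remark that the discarded commutation terms are themselves bounded by $\sqrt{\mathbb{W}_{\le\a+1}}$ in $L^2(\Sigma_t^u)$: at top order they carry $Y^{\a}$-derivatives of $\sg$ and $\hat{X}^i$, i.e.\ of $\chi$ and $\mu$, whose $L^2$ control comes from Lemma \ref{lotchiul2} and involves $\int_0^t\mu_m^{-1/2}\sqrt{\mathbb{U}_{\le\a+1}}$ rather than $\sqrt{\mathbb{W}_{\le\a+1}}$ alone; since the statement explicitly excludes the commutation terms, this does not affect the result as stated.
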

 We first estimates $Y^{\a}\check{g}_0$. Note that 
 \[
 L\mu+\tfrac{1}{2}\check{f}=\mu e+O(\da)O(\fai)_{1}^{\leq 1}
 \]
 and 
 \[
 L\tr\chi+(\tr\chi)^2=e\tr\chi+O(\da)O(\fai)_1^{\leq 1}. 
 \]
 Then, in view of \eqref{f00f0i} and Lemma \ref{ltwofaibound}, one has
\begin{align}
	\int_0^t\|Y^{\a}\check{g}_0\|_{L^2(\Sigma_{t'}^u)}\les \da\int_0^t\int_0^{t'}(1+\mu_m^{-\frac{1}{2}})\sqrt{\mathbb{W}_{\leq\a+1}}+\sqrt{\int_0^u\mathbb{Q}_{\leq \a+1}}+\int_0^t\sqrt{\mathbb{V}_{\leq\a+2}},
\end{align}
where the flux term comes from $Y^{\a}\hat{X}\check{g}$ and 
\begin{align*}
&\int_0^t\|LY^{\a}\fai\|_{L^2(\Sigma_{t'}^u)}\ dt'\leq \int_0^t\sqrt{\int_0^u\int_{\vartheta'\in\mathbb{T}}(LY^{\a}\fai)^2\ d\vartheta'du'}dt'\\
\leq&\sqrt{\int_0^u\int_0^t\int_{\vartheta'\in\mathbb{T}}(LY^{\a}\fai)^2\ d\vartheta'dt'du'}
\leq \sqrt{\int_0^u\mathbb{Q}_{\leq \a+1}}.
\end{align*}
For the remaining terms in $\check{g}_{\a}$, it follows from Lemmas \ref{L2commutator}, \ref{lotchiul2}, and \ref{ltwofaibound} that 
\begin{equation}
\begin{aligned}
&\left\|\sum_{\be_1+\be_2=\a,|\be_1|>0}Y^{\be_1}\ypi_{L\hat{X}}\hat{X}Y^{\be_2}\mathscr{F}_0\right\|_{\ltwou}\\
\les & \da(\mathbf{1}_{\be_1\geq[\tfrac{\a}{2}]}\|Y^{\be_1}L\fai\|_{\ltwou}+\mathbf{1}_{\be_2\geq[\tfrac{\a}{2}]}\|Y^{\be_2}(\mu\hat{X}\tr\chi+X\check{f})\|_{\ltwou})\\
	\les & \da\sqrt{\mathbb{W}_{\leq\a+1}}+\da\mu_m^{-\frac{1}{2}}\sqrt{\mathbb{U}_{\leq\a+1}},\\
  \end{aligned}
 \end{equation}
 \begin{equation}
\begin{aligned}	
\left\|\tr\chi\sum_{\be_1+\be_2=\a,|\be_1|>0}Y^{\be_1}\mu Y^{\be_2}\hat{X}\tr\chi\right\|_{\ltwou}&\les \da\sqrt{\mathbb{W}_{\leq\a+1}}+\da\mu_m^{-\frac{1}{2}}\sqrt{\mathbb{U}_{\leq\a+1}},
  \end{aligned}
 \end{equation}
\begin{equation}
\begin{aligned}	 	\left\|\sum_{\be_1+\be_2=\a,|\be_1|>0}Y^{\be_1}\tr\chi Y^{\be_2}\hat{X}\check{f}\right\|_{\ltwou}
	\les & \da\sqrt{\mathbb{W}_{\leq\a+1}}+(1+\da)\mu_m^{-\frac{1}{2}}\sqrt{\mathbb{U}_{\leq\a+1}}.
 \end{aligned}
 \end{equation}
To estimate $||LY^{\be}\hat{X}\tr\chi||_{\ltwou}$ and $||LY^{\be+1}\mu||_{\ltwou}$ with $\be\leq\a-1\leq 18$, one utilizes the equations \eqref{Lyatrchi1} and \eqref{lyamu} to show that 
\begin{align}
	LY^{\be}\hat{X}\tr\chi&=(O(\fai)_1^{\leq 1}+\tr\chi)Y^{\be+1}\tr\chi+O(\fai)_1^{\leq\be+3},\\
	LY^{\be+1}\mu&=O(\fai)_1^{\leq 1}Y^{\be+1}\mu+O(\fai)_0^{\be+2}+O(\fai)_1^{\be+1}.
\end{align}
Therefore, it follows from Lemma \ref{lotchiul2} that 
\begin{align*}
\left\|L\left(\sum_{\be_1+\be_2=\a,\be_1>0}Y^{\be_1}\mu Y^{\be_2}\tr\chi\right)\right\|_{\ltwou}\les\int_0^t\mu_m^{-\frac{1}{2}}\sqrt{\mathbb{U}_{\leq\a+1}}+\da\sqrt{
\mathbb{W}_{\leq \a+1}}.
\end{align*}
Collecting above results yields 
\begin{align}
	\begin{split}\label{I2}
	\int_0^t\|\check{g}_{\a}\|_{L^2(\Sigma_{t'}^u)}\les & \int_0^t\int_0^{t'}\mu_m^{-\frac{1}{2}}\sqrt{\mathbb{U}_{\leq\a+1}}+\da\int_0^t\sqrt{\mathbb{W}_{\leq\a+1}}+\mu_m^{-\frac{1}{2}}\sqrt{\mathbb{U}_{\leq \a+1}}\\
	&+\sqrt{\int_0^u\mathbb{Q}_{\leq \a+1}}+\int_0^t\sqrt{\mathbb{V}_{\leq\a+2}}.
	\end{split}
\end{align}
To estimate the contribution from $\mu^{-1}|L\mu|Y^{\a}\hat{X}\check{f}$, we divide the integral into $W_s$ and $W_{n-s}$ as follows:
\begin{align*}
	&\int_0^t\|2\mu^{-1}|L\mu|)Y^{\a}\hat{X}\check{f}\|_{L^2(\Sigma_{t'}^u)}\\
 \leq &\int_0^t\|2\mu^{-1}|L\mu|Y^{\a}\hat{X}\check{f}\|_{L^2(\Sigma_{t'}^u\cap W_s)}+\int_0^t\|2\mu^{-1}|L\mu|Y^{\a}\hat{X}\check{f}\|_{L^2(\Sigma_{t'}^u\cap W_{n-s})}\\
	\leq &\int_0^t\|2\mu^{-1}|L\mu|Y^{\a}\hat{X}\check{f}\|_{L^2(\Sigma_{t'}^u\cap W_s)}+C\int_0^t\sqrt{\mathbb{U}_{\leq\a+1}}.
\end{align*}
To deal with the difficult ``shock region" part, one obtains from Lemma \ref{crucial} that
\begin{align}
\begin{split}
	\int_0^t\|2\mu^{-1}|L\mu|Y^{\a}\hat{X}\check{f}\|_{L^2(\Sigma_{t'}^u\cap W_s)}&\leq\int_0^t\mu_m^{-b_{\a+1}-1}\left(\sqrt{\bar{\mathbb{W}}_{\leq\a+1}}+\da\sqrt{\bar{\mathbb{U}}_{\leq\a+1}}\right)\ dt'\\
 &\leq \frac{2}{b_{\a+1}}\mu_m^{-b_{\a+1}}\left(\sqrt{\bar{\mathbb{W}}_{\leq\a+1}}+\da\mu_m^{\frac{1}{2}}\sqrt{\bar{\mathbb{U}}_{\leq\a+1}}\right).
 \end{split}\label{I1}
\end{align}
The point is that the constant in front of $\sqrt{\bar{\mathbb{W}}_{\leq\a+1}}$ cannot be large by choosing the top order derivatives suitably. Hence, combing the estimates \eqref{I2} and \eqref{I1} yields the following top order estimates for $\mu Y^{\a}\hat{X}\tr\chi$:
\begin{align}
	\begin{split}\label{toporderchi}
		&\|\mu Y^{\a}\hat{X}\tr\chi\|_{\ltwou}\\
  \leq & \|\mathscr{F}_{\a}\|_{L^2(\Sigma_0^u)}+C(
\|Y^{\a}\tr\chi\|_{L^2(\Sigma_0^u)}+\da\|Y^{\a+1}\mu\|_{L^2(\Sigma_0^u)})\\
 & +\frac{2}{b_{\a+1}}\mu_m^{-b_{\a+1}}(\sqrt{\bar{\mathbb{W}}_{\leq\a+1}}+\da\mu_m^{\frac{1}{2}}\sqrt{\bar{\mathbb{U}}_{\leq\a+1}})+C\int_0^t\int_0^{t'}\mu_m^{-\frac{1}{2}}\sqrt{\mathbb{U}_{\leq\a+1}}\\		
  &+C\da\int_0^t\sqrt{\mathbb{W}_{\leq\a+1}}+\mu_m^{-\frac{1}{2}}\sqrt{\mathbb{U}_{\leq \a+1}}+C\sqrt{\int_0^u\mathbb{Q}_{\leq \a+1}}+C\int_0^t\sqrt{\mathbb{V}_{\leq\a+2}}.
	\end{split}
\end{align}
\subsection{Top order estimates for spatial derivatives of $\chi$}
Let $\mathscr{H}_{\a}=\mu Y^{\a}T\tr\chi+Y^{\a}T\check{f}$ for $0\leq \a\leq 19$. To obtain the equation for $\mathscr{H}_0$, one commutes $T$ with \eqref{equationmuchif}
\begin{align*}
	L\mathscr{H}_0= &[L,T](\mu \tr\chi+\check{f})+TL(\mu \tr\chi+\check{f})-[(LT\mu)\tr\chi+(T\mu)L\tr\chi]\\
	=&(2\mu^{-1}L\mu-2\tr\chi)\mu T\tr\chi+\tpi_{L\hat{X}}(\mu\hat{X}\tr\chi+\hat{X}f)+(TL\mu)\tr\chi-(T\mu)[L\tr\chi+(\tr\chi)^2]+T\check{g}.
\end{align*}
Therefore,
\begin{align}
	L\mathscr{H}_0+(2\tr\chi-2\mu^{-1}L\mu)\mathscr{H}_0=(2\tr\chi-2\mu^{-1}L\mu)T\check{f}+\tpi_{L\hat{X}}\mathscr{F}_0+\check{l}_0,\label{equationh0}
\end{align}
where 
\[\check{l}_0=T\check{g}+(TL\mu)\tr\chi-(T\mu)[L\tr\chi+(\tr\chi)^2].\]
We rewrite \eqref{equationh0} as follows in order to derive the equation of $\mathscr{H}_{\a}$:
\begin{align}
	L\mathscr{H}_0=(2L\mu-2\mu \tr\chi)T\tr\chi+\tpi_{L\hat{X}}\mathscr{F}_0+\check{l}_0.
 \label{equationh01}
\end{align} 
Commuting $Y^{\a}$ with \eqref{equationh01} and noticing that $\mathscr{H}_{\a}=Y^{\a}\mathscr{H}_0-\sum_{\be_1+\be_2=\a,\ \be_1>0}Y^{\be_1}\mu Y^{\be_2}T\tr\chi$ yield
\begin{align}
	L\mathscr{H}_{\a}+(2\tr\chi-2\mu^{-1}L\mu)\mathscr{H}_{\a}&=
	(2\tr\chi-2\mu^{-1}L\mu)Y^{\a}T\check{f}+\tpi_{L\hat{X}}\mathscr{F}_{\a}+\check{l}_{\a},\label{LHa}
\end{align}
where

\begin{align*}
	\begin{split}
		&\check{l}_{\a}= [L,Y^{\a}]\mathscr{H}_0+Y^{\a}\check{l}_0+\sum_{\be_1+\be_2=\a,\be_1>0}Y^{\be_1}(2L\mu-2\mu \tr\chi)Y^{\be_2}T\tr\chi\\
		+&\sum_{\be_1+\be_2=\a,\be_1>0}(\tpi_{L\hat{X}}Y^{\be_1}\mu Y^{\be_2}\hat{X}\tr\chi+Y^{\be_1}\tpi_{L\hat{X}}Y^{\be_2}\mathscr{F}_0)+L(\sum_{\be_1+\be_2=\a,\be_1>0}Y^{\be_1}\mu Y^{\be_2}T\tr\chi).
	\end{split}
\end{align*}
It follows from \eqref{LHa} that 
\begin{equation*}
\begin{aligned}
	&\|\mathscr{H}_{\a}\|_{\ltwou}\\
 \leq & \|\mathscr{H}_{\a}\|_{L^2(\Sigma_0^u)}
 +\int_0^t||(\tfrac{3}{2}|\tr\chi|+2\mu^{-1}|L\mu|)Y^{\a}T\check{f}||_{L^2(\Sigma_{t'}^u)}	+||\mathscr{F}_{\a}||_{L^2(\Sigma_{t'}^u)}+||\check{l}_{\a}||_{L^2(\Sigma_{t'}^u)}\ dt'.
\end{aligned}
\end{equation*}

It follows from \eqref{equationvorticity} that 
\begin{align*}
	TL(\zeta-\p)&=LT(\zeta-\p)+[T,L](\zeta-\p)=-L(\eta^{-2}\mu L(\zeta-\p))-\tpi_{L\hat{X}}\hat{X}(\zeta-\p),\\
	TY(\zeta-\p)&=-Y(\eta^{-2}\mu L(\zeta-\p))+\ypi_{T\hat{X}}\hat{X}(\zeta-\p).
\end{align*}
Thus one has
\begin{align*}	\int_0^t\da\|Y^{\a}T\check{g}\|_{L^2(\Sigma_{t'}^u)}\les\int_0^t\sqrt{\mathbb{V}_{\leq\a+2}}+\sqrt{\mathbb{W}_{\leq\a+1}}\ dt'+\sqrt{\int_0^u\mathbb{Q}_{\leq \a+1}}.
\end{align*}
Since $TL\mu=T^2\fai+TL\fai+\text{l.o.ts}$, it holds that
\begin{equation*}
\begin{aligned}
    \da\|Y^{\a}TL\mu\|_{\ltwou}\leq  &\da\|TY^{\a}T\fai\|_{\ltwou}+\da\|TY^{\a}L\fai\|_{\ltwou}+\text{l.o.ts}\\
    \les &(1+\da) \sqrt{\mathbb{W}_{\leq\a+1}}+\da\mu_m^{-\frac{1}{2}}\sqrt{\mathbb{U}_{\leq\a+1}}.
    \end{aligned}
\end{equation*}
Therefore,
\begin{align}
\int_0^t\da\|Y^{\a}\check{l}_0\|_{L^2(\Sigma_{t'}^u)}\les \int_0^t\sqrt{\mathbb{V}_{\leq\a+2}}+\sqrt{\mathbb{W}_{\leq\a+1}}\ dt'+\sqrt{\int_0^u\mathbb{Q}_{\leq \a+1}}.
\end{align}
Note that in view of \eqref{equationtchi} and Lemma \ref{lotchiul2}, for $\be\leq 18$, one has
\begin{align*}
	\|Y^{\be}T\tr\chi\|_{\ltwou}&\leq\|Y^{\be+2}\mu\|_{\ltwou}+\mu_m^{-\frac{1}{2}}\sqrt{\mathbb{W}_{\leq\be+1}}\les
\mu_m^{-\frac{1}{2}}\sqrt{\mathbb{W}_{\leq\be+1}}+\int_0^t\mu_m^{-\frac{1}{2}}\sqrt{\mathbb{W}_{\leq\be+1}},\\
	LY^{\be}T\tr\chi&=(\tr\chi+O(\fai)_1^{\leq 1})Y^{\be}T\tr\chi+TY^{\be}O(\fai)_1^{\leq 2}+\text{l.o.ts}.
\end{align*}
Therefore,
\begin{align*}
	&\da\left\|\sum_{\be_1+\be_2=\a,\be_1>0}Y^{\be_1}(2L\mu-2\mu \tr\chi)Y^{\be_2}T\tr\chi\right\|_{\ltwou}\les \da\mu_m^{-\frac{1}{2}}\sqrt{\mathbb{W}_{\leq\a+1}}+\int_0^t\mu_m^{-\frac{1}{2}}\sqrt{\mathbb{W}_{\leq\a+1}},\\
	&\da\left\|L\left(\sum_{\be_1+\be_2=\a,\be_1>0}Y^{\be_1}\mu Y^{\be_2}T\tr\chi\right)\right\|_{\ltwou}\leq(C+\da\mu_m^{-\frac{1}{2}})\sqrt{\mathbb{W}_{\leq\a+1}}+\int_0^t\da\mu_m^{-\frac{1}{2}}\sqrt{\mathbb{W}_{\leq \a+1}}.
\end{align*}
Collecting the above results yields
\begin{equation*}
\begin{aligned}
\int_0^t\|\check{l}_{\a}\|_{L^2(\Sigma_{t'}^u)}\les & \da +\int_0^t\sqrt{\mathbb{V}_{\leq\a+2}}+(1+\mu_m^{-\frac{1}{2}})\sqrt{\mathbb{W}_{\leq\a+1}}\\
 &+\int_0^t\int_0^{t'}\mu_m^{-\frac{1}{2}}\sqrt{\mathbb{W}_{\leq \a+1}}+\sqrt{\int_0^u\mathbb{Q}_{\leq \a+1}}.
\end{aligned}
\end{equation*}
To estimated the contribution from $\mu^{-1}|L\mu|Y^{\a}T\check{f}$, we do similar as before and obtain
\begin{align*}
	\int_0^t\da\|2\mu^{-1}|L\mu|Y^{\a}T\check{f}\|_{L^2(\Sigma_{t'}^u)}
	&\leq\int_0^t\da\|2\mu^{-1}|L\mu|Y^{\a}T\check{f}\|_{L^2(\Sigma_{t'}^u\cap W_s)}+C\int_0^t\sqrt{\mathbb{U}_{\leq\a+1}}.
\end{align*}
Thus, it follows from Lemma \ref{crucial} that 
\begin{align}
	\int_0^t\da\|2\mu^{-1}|L\mu|Y^{\a}T\check{f}\|_{L^2(\Sigma_{t'}^u\cap W_s)}\leq\frac{2}{b_{\a+1}} \mu_m^{-b_{\a+1}}\left(\sqrt{\bar{\mathbb{W}}_{\leq\a+1}}+\da\mu_m^{\frac{1}{2}}\sqrt{\mathbb{U}_{\leq\a+1}}\right).
\end{align}
Collecting the above results and noticing \eqref{toporderchi} yield the top order estimate for $\mu Y^{\a}T\tr\chi$: 
\begin{align}
	\begin{split}\label{topordermu}
		\da\|\mu Y^{\a}T\tr\chi\|_{\ltwou}\leq &\da\|\mathscr{H}_{\a}\|_{L^2(\Sigma_0^u)}+C\da+\frac{4}{b_{\a+1}}\mu_m^{-b_{\a+1}}\left(\sqrt{\bar{\mathbb{W}}_{\leq\a+1}}+\da\mu_m^{\frac{1}{2}}\sqrt{\mathbb{U}_{\leq\a+1}}\right)\\
  &+C\int_0^t\int_0^{t'}(1+\da^{\frac{1}{2}}\mu_m^{-\frac{1}{2}})\sqrt{\mathbb{W}_{\leq\a+1}}+C\int_0^t(1+\da^{\frac{1}{2}}\mu_m^{-\frac{1}{2}})\sqrt{\mathbb{W}_{\leq \a+1}}\\
  &+C\sqrt{\int_0^u\mathbb{Q}_{\leq \a+1}}+C\int_0^t\sqrt{\mathbb{V}_{\leq\a+2}}.
	\end{split}
\end{align}
Hence, the estimate for $\mu Y^{\a}\slashed{\Delta}\mu$ follows from \eqref{topordermu} and \eqref{equationtchi}.
\subsection{The inhomogeneous terms' estimates after commutation}\label{section53} Note that in Section \ref{toporderterms}, we have computed the top order acoustical terms in $\mathfrak{F}_n$. In this subsection, we investigate the complicated terms in $\mathfrak{F}_n$ and one can see why we define the high order energies-fluxes in Definition \ref{highorderenergy}. 
\begin{lemma}\label{inhomo1}
	 Let $\a=\a'+2\leq20$ and $Z^{\a}=Z_{n-1}\cdots Z_1=Y^{\a'}TL$. Then we have
	 	\begin{equation}\label{zaf00} 	
	 		\begin{aligned}	&\int_0^t\|Z^{\a}\mathfrak{F}_0^0\|_{L^2(\Sigma_{t'}^u)}\\
    \leq & C\da\int_0^t\|TZ^{\a}\fai\|_{L^2(\Sigma_{t'}^u)}+C\da\int_0^t\|\dl Z^{\a}\fai\|_{L^2(\Sigma_{t'}^u)}+C\da\int_0^t\|\mu\hat{X}Z^{\a}\fai\|_{L^2(\Sigma_{t'}^u)}\\	 		&+C\sqrt{\int_0^u\|LZ^{\a}\fai\|_{L^2(C_{u'}^t)}}+\int_0^t\sqrt{\mathbb{V}_{\leq\a+1}}+C\int_0^u\da^{-1}\sqrt{\mathbb{V}_{\leq\a+1}}+C\int_0^t\mu_m^{-\frac{1}{2}}\sqrt{\mathbb{U}_{\leq\a}},
	 	\end{aligned}
   \end{equation}
   \begin{equation}\label{zaf01}
	 	\begin{aligned} 	\int_0^t\|Z^{\a}\mathfrak{F}_0^1\|_{L^2(\Sigma_{t'}^u)}
   \leq  &\int_0^t\da^{-1}\sqrt{\mathbb{V}_{\leq\a+1}}+C\da\int_0^t\|\dl Z^{\a}\fai\|_{L^2(\Sigma_{t'}^u)}+\|\mu\hat{X}Z^{\a}\fai\|_{L^2(\Sigma_{t'}^u)}\ dt'\\
	 		&+(C+\da)\sqrt{\int_0^u\|LZ^{\a}\fai\|_{L^2(C_{u'}^t)}}+C\int_0^t\mu_m^{-\frac{1}{2}}\sqrt{\mathbb{U}_{\leq\a}},
	 	\end{aligned}
   \end{equation}
   \begin{equation}\label{zaf02}
	 	\begin{aligned}
	 		\int_0^t\|Z^{\a}\mathfrak{F}_0^2
    \|_{L^2(\Sigma_{t'}^u)}\leq & C\int_0^t\sqrt{\mathbb{V}_{\leq\a+1}}+C\da\int_0^t\|\dl Z^{\a}\fai\|_{L^2(\Sigma_{t'}^u)}+\|\mu\hat{X}Z^{\a}\fai\|_{L^2(\Sigma_{t'}^u)}\ dt'\\
&+C\sqrt{\int_0^u\|LZ^{\a}v^2\|_{L^2(C_{u'}^t)}}+\da\sqrt{\int_0^u
\|LZ^{\a}\p\|_{L^2(C_{u'}^t)}}\\
    &+C\int_0^t\mu_m^{-\frac{1}{2}}\sqrt{\mathbb{U}_{\leq\a}}+\int_0^u\da^{-2}\sqrt{\mathbb{V}_{\leq\a+1}}.
	 	\end{aligned}
	 	\end{equation}
\end{lemma}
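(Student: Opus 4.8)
The plan is to expand each $\mathfrak{F}_0^a$ ($a=0,1,2$) into products of frame derivatives, apply $Z^{\a}=Y^{\a'}TL$ by the Leibniz rule, and classify the resulting terms according to which factor carries the top-order derivatives. The preparatory step is to rewrite the null forms in $\mathfrak{F}_0^a$ via Remark \ref{nullforms} so that no factor $\hat{T}f\,\hat{T}g$ (which would be of size $\mu^{-2}$) ever occurs, and to remove the hidden $\mu^{-1}$ in $\mathcal{D}(f,g)$ through the identity $\mu\hat{T}f=\eta Tf$ of Lemma \ref{frame}: then every summand of $\mu\mathcal{D}(f,g)$ and $\mu\mathcal{D}_{12}(v^1,v^2)$ is a product of two factors, each of frame-derivative form $Lf$, $Tf$ or $\mu\hat{X}f$, times bounded coefficients built from $\eta,\hat{T}^i,\hat{X}^i$. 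In parallel, using $B(\zeta-\varrho)=0$ I would substitute $\hat{T}(\zeta-\varrho)=-\eta^{-1}L(\zeta-\varrho)$, i.e. $T(\zeta-\varrho)=-\eta^{-2}\mu L(\zeta-\varrho)$, and using \eqref{equationp} replace $\mu B\p$ by $-\mu\,\mathrm{div}\,v=-g_{kl}\hat{T}^l\eta Tv^k-g_{kl}\hat{X}^l\mu\hat{X}v^k$; likewise $\pa_a\xi$ would be re-expressed via Lemma \ref{partialderivatives} purely in terms of $L$- and $\hat{X}$-derivatives of $\zeta-\varrho$ and of $\p$, noting that the coefficient of $\hat{T}^1$ is $O(1)$ while those of $\hat{T}^2,\hat{X}^1$ are $O(\da)$.

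Applying $Z^{\a}=Y^{\a'}TL$ and distributing, I would split the output into three groups. \emph{(i) Top-order wave terms}, where some $\fai\in\{\p,v^1,v^2\}$ carries $|\a|+1$ derivatives: after commuting $Z^{\a}$ past the frame vectors — using \eqref{commutator1} and Lemma \ref{L2commutator} for the $[Z,Y^{\a'}]$ pieces, which each cost a factor $\da$ — these are bounded in $L^2(\Sigma_{t'}^u)$ by the norms $\|TZ^{\a}\fai\|$, $\|\dl Z^{\a}\fai\|$, $\|\mu\hat{X}Z^{\a}\fai\|$ (rewriting $T=\tfrac12(\dl-\eta^{-2}\mu L)$ when convenient), or in $L^2(C_{u'}^t)$ by $\|LZ^{\a}\fai\|$; in every such product the remaining factor is of order $\le N_\infty$, hence of size $\da$ by \eqref{bs} — and of size $\da^2$ when it is $v^2$ or a low-order derivative of $v^2$ — which produces the overall $C\da$ and the $\da$- and $\da^2$-weighted flux terms for $\mathfrak{F}_0^1,\mathfrak{F}_0^2$. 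Whenever a bare $Z^{\a}\fai$ with no frame derivative attached survives — e.g. in $e^{-\p}\mu v^i$ or the $\ep_{jk}v^k$ part of $Bv^j$ — I would invoke Lemma \ref{faiL2}, using that $Z^{\a}\fai$ vanishes on $C_0$, to turn it into $\|\mu LZ^{\a}\fai\|$ and $\|\dl Z^{\a}\fai\|$. \emph{(ii) Lower-order terms}: all remaining wave contributions have at least two derivatives to spare, so both factors sit at order $\le N_\infty$; an angular ($\hat{X}$) frame derivative in the shock region costs at most $\mu_m^{-1/2}$, and these terms are absorbed into $\int_0^t\mu_m^{-1/2}\sqrt{\mathbb{U}_{\le\a}}$. \emph{(iii) Top-order vorticity terms}, coming from $\mu\zeta=\mu(\zeta-\varrho)+\mu\varrho$, from $\ep_{ij}\zeta\,\mu Bv^j$ and $\ep_{ij}v^j\mu B\p$, and above all from $-\ep_{ia}\mu e^{\p}\pa_a\xi$.

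For group (iii) the mechanism is the following: after the substitution above, the top-order $\zeta$-contribution is always a product of bounded coefficients (possibly including powers of $\mu$) with $Y^{\be}L^{p}(\zeta-\varrho)$, $\be+p\le|\a|+1$, where $p$ is the number of $L$-derivatives generated by converting the single $T$ of $Z^{\a}$; each iterate of this conversion, carried out with $[L,T]=\tpi_L^{\sharp}$ and the commutators of Table \ref{table1}, increases $p$ by one. For $\mu(\zeta-\varrho)$ and for the $\pa_2\xi$-part of $\mathfrak{F}_0^1$ (leading coefficient $\hat{X}^2=O(1)$) one ends at $p=2$, so the weight $\da^{2(p-1)_+}=\da^2$ built into $\mathbb{V}$ in Definition \ref{highorderenergy} yields the $\da^{-1}\sqrt{\mathbb{V}_{\le\a+1}}$ bound; for the $\pa_1\xi$-part of $\mathfrak{F}_0^2$ the leading coefficient is $\hat{T}^1=O(1)$, which forces $p=3$ and a loss of $\da^{-2}$ — the origin of the $\int_0^u\da^{-2}\sqrt{\mathbb{V}_{\le\a+1}}$ term. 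The unweighted $\sqrt{\mathbb{V}_{\le\a+1}}$ appearing in all three estimates comes from the lower-order $\zeta$-contributions, where at most one $L$-conversion occurs. The $E^\zeta$-part of $\mathbb{V}$ supplies the $\Sigma_{t'}^u$-bounds and the $F^\zeta$-part, integrated in $u$, the $\sqrt{\int_0^u(\cdot)}$-bounds; the wave-flux terms $\sqrt{\int_0^u\|LZ^{\a}\fai\|_{L^2(C_{u'}^t)}}$ arise identically, via $\int_0^t\|LZ^{\a}\fai\|_{L^2(\Sigma_{t'}^u)}\,dt'\le\big(\int_0^u\|LZ^{\a}\fai\|_{L^2(C_{u'}^t)}^2\big)^{1/2}$, exactly as in Section \ref{fundamentalenergy}.

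I expect the main obstacle to lie in the weight bookkeeping for group (iii): one must verify that, after the repeated use of $B(\zeta-\varrho)=0$ and the structure equations, the number $p$ of $L$-derivatives landing on $\zeta-\varrho$ is exactly what is compensated by the weight $\da^{2(p-1)_+}$ in the definition of $\mathbb{V}$ — the worst case being the $\pa_1\xi$ term of $\mathfrak{F}_0^2$, whose $O(1)$ coefficient $\hat{T}^1$ makes the weaker bound $\da^{-2}$ genuinely unavoidable — while on the wave side (groups (i)–(ii)) one must confirm that the only negative power of $\mu$ ever produced is the harmless $\mu_m^{-1/2}$ from angular derivatives, controlled by $\mathbb{U}_{\le\a}$, with no residual $\mu^{-1}$ or $\mu^{-2}$ surviving, thanks to the null-form decomposition of Remark \ref{nullforms} and the identity $\mu\hat{T}f=\eta Tf$.
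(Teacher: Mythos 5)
Your proposal follows essentially the same route as the paper: a Leibniz expansion of $Z^{\a}\mathfrak{F}_0^a$, bounding top-order wave factors by energies when the companion factor is $O(\da)$ and by fluxes when it is $O(1)$, converting $T$- and $\hat{T}$-derivatives of $\zeta-\varrho$ into $\mu L$-derivatives via $B(\zeta-\varrho)=0$, and matching the resulting number $p$ of $L$'s against the weight $\da^{2(p-1)_+}$ in $\mathbb{V}$ — including the key observation that $\hat{T}^1=O(1)$ versus $\hat{T}^2,\hat{X}^1=O(\da)$ is what forces the $\da^{-2}$ loss in \eqref{zaf02} but only $\da^{-1}$ in \eqref{zaf01}. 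This matches the paper's argument, so no further comment is needed.
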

\begin{remark}
	Similar results hold for $Z^{\a}=Y^{\a'}TL$ replaced by $Z^{\a}=Y^{\a}$ and $Z^{\a}=Y^{\a'+1}T$.
\end{remark}
\begin{proof}
	It suffices to consider contribution from the top order terms. The major point is that when $Z^{\a}$ acts on the product terms in $\mathfrak{F}_0^0$ or $\mathfrak{F}_0^i$, by Leibniz rules and the bootstrap assumptions, we handle the resulting terms as follows: if the lower order terms are bounded by $C$, one uses the fluxes to control the highest order terms; while if the lower order terms are small, one uses the energies to bounded the highest order terms. For $Z^{\a}\mathfrak{F}_0^0$, it holds that
	\begin{align}
 \begin{split}
		Z^{\a}\mathfrak{F}_0^0= &O(\fai)_1^{1}(TY^{\a}TL\fai+\mu\hat{X}Y^{\a'}TL\fai)+O(\fai)_0^{1}(LY^{\a'}T\hat{X}\fai+LY^{\a'}TL\fai)\\
  &+O(1)\mu Y^{\a'}L^2(\zeta-\varrho)+\text{l.o.ts},
  \end{split}\label{zaf001}
	\end{align}
	where the vorticity term comes from \[Y^{\a'}TL\zeta=-\eta^{-2}\mu Y^{\a'}L^2(\zeta-\varrho)+Y^{\a'}TL\p+\text{l.o.ts}.\] The energy $\mu_m^{-\frac{1}{2}}\sqrt{\mathbb{U}_{\leq\a}}$ comes from the commutators
	 \begin{align*}
	 	\begin{split}
	&O(\da)\|[Y^{\a'}TL,T]\fai\|_{\ltwou}+\|[Y^{\a'}TL,L]\fai\|_{\ltwou}\\
	\leq & C\da\|\hat{X}Z^{\a-1}\fai\|_{\ltwou}+C\da\|\hat{X}Y^{\a'+1}T\fai\|_{\ltwou}+C\|\hat{X}Y^{\a'+1}L\fai
 \|_{\ltwou}\leq C\mu_m^{-\frac{1}{2}}\sqrt{\mathbb{U}_{\leq\a}}.
	\end{split}
	\end{align*} Then, \eqref{zaf00} comes directly from by integrating \eqref{zaf001} over $W_u^t$. For the $\pa(\zeta-\varrho)$ in $\mathfrak{F}_0^i$, in view of the homogeneous equation \eqref{equationvorticity}, one has
	\begin{align*}
		Z^{\a}(-\ep_{ia}\pa_a(\zeta-\varrho))&=-\ep_{ia}\hat{T}^a\eta^{-3}\mu^2Y^{\a'}L^3(\zeta-\varrho)-\ep_{ia}\eta^{-2}\mu^2\frac{\hat{X}^a}{\hat{X}^2}Y^{\a'+1}L^2(\zeta-\varrho)+\text{l.o.ts}.
	\end{align*}
	Therefore, it holds that
	\begin{align}
		\begin{split}
		Z^{\a}\mathfrak{F}_0^1&=O(\da)\mu^2 Y^{\a'}L^3(\zeta-\varrho)+O(1)\mu Y^{\a'+1}L^2(\zeta-\varrho)\\
  &+O(\fai)_1^{\leq1}(\dl Z^{\a}\fai+\mu\hat{X}Z^{\a}\fai)+O(\fai)_0^{\leq1}LZ^{\a}\fai+\text{l.o.ts},
		\end{split}\label{zaf0i1}\\
		\begin{split}
			Z^{\a}\mathfrak{F}_0^2&=O(1)\mu^2  Y^{\a'}L^3(\zeta-\varrho)+O(\da)\mu Y^{\a'+1}L^2(\zeta-\varrho)\\
   &+O(\fai)_1^{\leq1}(\dl Z^{\a}\fai+\mu\hat{X}Z^{\a}\fai)+O(1)LZ^{\a}v^2+O(\da)LZ^{\a}\p+\text{l.o.ts}.
		\end{split}\label{zaf0i2}
	\end{align}
	Similarly, the energy $\mu_m^{-\frac{1}{2}}\sqrt{\mathbb{U}_{\leq\a}}$ comes from the commutators. Then, \eqref{zaf01} and \eqref{zaf02} are the conseequence of \eqref{zaf0i1}, \eqref{zaf0i2} and Definition \ref{highorderenergy}.
\end{proof}

Next, we turn to the summation terms 
\[
\mathfrak{G}_n=\sum_{k=0}^{n-2}(Z_{n-1}+\prescript{Z_{n-1}}{}{\da})\cdots (Z_{n-k}+\prescript{Z_{n-k}}{}{\da})\prescript{Z_{n-k-1}}{}{\sigma}_{n-k-1,1},
\]
where the top order terms are contained in $Z_{n-1}\cdots Z_2\prescript{Z_1}{}{\sigma}_{1,2}$. To this end, define
\begin{align}
	\mathbb{K}_N&=\sup_{(t',u')\in[0,t]\times[0,u]}\left\{
	\sum_{Z^{\a}\in\mathcal{Z}^{N;\leq 1}}\da^{2m}K(Z^{\a}\fai)\right\},\quad \bar{\mathbb{K}}_{N}=\sup_{t'\in[0,t]}\{\mu_m^{2b_N}(t')\mathbb{K}_{N}\}
\end{align}
and $\mathbb{K}_{\leq N}=\sum_{N'\leq N}\mathbb{K}_{N'},$ $\bar{\mathbb{K}}_{\leq N}=\sum_{N'\leq N}\bar{\mathbb{K}}_{N'}.$
\begin{lemma}\label{inhomo2}
	Let $|\a|=20$ and $Z^{\a}=Z_{n-1}\cdots Z_1=Y^{\a'}T^mL^p$ with $m\leq1,\ p\leq 1$. Then,
	\begin{align*}
		\begin{split}
				\int_{W_t^u}\da^{2m}\mathfrak{G}_n\cdot K_0(Z^{\a}\fai)\leq & (C+\da^{\frac{1}{2}})\int_0^t\mathbb{W}_{\leq\a+1}(t',u)\ dt'+\frac{1}{5}\mathbb{K}_{\leq\a+1}
    \\
   & +C\da^{\frac{1}{2}}\int_0^t\mu_m^{-1}\mathbb{U}_{\leq\a+1} +C\da^{-1}\int_0^u\mathbb{Q}_{\leq\a+1}(t,u'),
		\end{split}\\
		\begin{split}
		\int_{W_t^u}\da^{2m}\mathfrak{G}_n\cdot K_1(Z^{\a}\fai)\leq & \da^{\frac{1}{2}}\int_0^t\mathbb{W}_{\leq\a+1}(t',u)\ dt'+C\int_0^t\mathbb{U}_{\leq\a+1}(t',u)\ dt'\\
  &+\da^{\frac{1}{2}}\mathbb{K}_{\leq\a+1}+C\da^{-\frac{1}{2}}\int_0^u\mathbb{Q}_{\leq\a+1}(t,u')\ du',
		\end{split}
	\end{align*}
	where $m\leq1$ is the number of $T$ in $Z^{\a}$.
\end{lemma}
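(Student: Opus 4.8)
\textbf{Proof strategy for Lemma \ref{inhomo2}.}
The plan is to estimate the spacetime integrals $\int_{W_t^u}\da^{2m}\mathfrak{G}_n\cdot K_a(Z^{\a}\fai)$ ($a=0,1$) by decomposing $\mathfrak{G}_n$ according to the structure already isolated in Section \ref{toporderterms}, and then pairing each piece against the multiplier vectorfield $K_a$ applied to $Z^{\a}\fai$. Recall $\mathfrak{G}_n=\sum_{k=0}^{n-2}(Z_{n-1}+\prescript{Z_{n-1}}{}{\da})\cdots(Z_{n-k}+\prescript{Z_{n-k}}{}{\da})\prescript{Z_{n-k-1}}{}{\sigma}_{n-k-1,1}$, so the top-order contribution is $Z_{n-1}\cdots Z_2\,\prescript{Z_1}{}{\sigma}_{1,2}$, whose principal acoustical part was computed in \eqref{topordertermswave} to be $Y^{\a-1}\hat{X}\tr\chi\cdot T\fai$ and $Y^{\a-1}\slashed{\Delta}\mu\cdot T\fai$. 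The key inputs are the top order estimates \eqref{toporderchi} for $\|\mu Y^{\a}\hat{X}\tr\chi\|_{\ltwou}$ and \eqref{topordermu} for $\da\|\mu Y^{\a}T\tr\chi\|_{\ltwou}$ (together with \eqref{equationtchi} relating $Y^{\a}\slashed{\Delta}\mu$ to $Y^{\a}T\tr\chi$), the bound $\prescript{Z}{}{\da}\les\da$ from Table \ref{table2}, and the commutator bound of Lemma \ref{L2commutator}. For the lower-order terms generated when $Z_{n-1}\cdots Z_2$ hits $\prescript{Z_1}{}{\sigma}_{1,1}$, $\prescript{Z_1}{}{\sigma}_{1,3}$, or the lower-order part of $\prescript{Z_1}{}{\sigma}_{1,2}$, one invokes the $L^\infty$ bounds of Lemma \ref{lotlinfty}, the $L^2$ bounds of Lemma \ref{lotchiul2}, and the bootstrap assumptions \eqref{bs}, so that these terms are controlled by $\int_0^t\mathbb{W}_{\leq\a+1}+\int_0^t\mathbb{U}_{\leq\a+1}$ type quantities with manageable constants.

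The core step is: write $\int_{W_t^u}\da^{2m}\mathfrak{G}_n\cdot K_a(Z^{\a}\fai)$, apply Cauchy--Schwarz on $\Sigma_{t'}^u$ slice by slice, and bound $\|\mathfrak{G}_n\|_{L^2(\Sigma_{t'}^u)}$ by a sum of the top-order acoustical $L^2$-norms $\|\mu Y^{\a-1}\hat{X}\tr\chi\|$, $\|\mu Y^{\a-1}T\tr\chi\|$ (for the $K_1$-pairing these get a factor $\da$ from the $\da^{2m}$ with $m\le1$ combined with the structure of $K_1=(1+\eta^{-2}\mu)L$, which multiplies against the $\da$-weighted estimate \eqref{topordermu}), plus lower-order terms. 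Because $K_0(Z^{\a}\fai)=\dl(Z^{\a}\fai)$ controls $\sqrt{E_0}$ and $K_1(Z^{\a}\fai)$ involves $L(Z^{\a}\fai)$ controlling $\sqrt{F_1}$ via the flux, the time integration of the $T\fai$-factor in the principal term is bounded using $\|TY^{\a'}\cdots\fai\|\les\mu_m^{-1/2}\sqrt{\mathbb{U}_{\leq\a+1}}$ from Lemma \ref{ltwofaibound}. One then feeds \eqref{toporderchi}--\eqref{topordermu} in: the crucial feature is that the coefficient $\tfrac{2}{b_{\a+1}}$ in front of $\sqrt{\bar{\mathbb{W}}_{\leq\a+1}}$ there is small (since $b_{\a+1}$ is large at top order by the choice $b_{15}=\tfrac34,\ b_{N+1}=b_N+1$), which after absorbing the $\mu_m$-weights into the modified energies $\bar{\mathbb{W}},\bar{\mathbb{U}}$ and using Lemma \ref{crucial} to integrate $\mu_m^{-(b+1)}(L\mu)_-$, produces the stated coefficients $C+\da^{\frac12}$ (resp.\ $\da^{\frac12}$) in front of $\int_0^t\mathbb{W}_{\leq\a+1}$ and the small constants $\tfrac15$ (resp.\ $\da^{\frac12}$) in front of $\mathbb{K}_{\leq\a+1}$. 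The flux terms $\int_0^u\mathbb{Q}_{\leq\a+1}$ arise from the $Y^{\a}\hat{X}\check{g}$-type contributions inside $\check{g}_\a,\check{l}_\a$ (see \eqref{toporderchi}, \eqref{topordermu}), with the $\da^{-1}$ or $\da^{-1/2}$ weights coming from the $\da^{2m}$ normalization with $m=1$.

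The shock-region part is where the real care is needed: on $W_s$ the factor $\mu^{-1}L\mu$ is negative (Proposition \ref{keymu}), so in the transport estimates for $\mathscr{F}_\a,\mathscr{H}_\a$ the bad term can be dropped, but when we pair $\mathfrak{G}_n$ against $K_a(Z^{\a}\fai)$ directly we instead encounter $\mu^{-1}|L\mu|$ multiplying top-order factors; these are handled by splitting $W_t^u=W_s\cup W_{n-s}$ exactly as in \eqref{Q132}--\eqref{I1}, using Lemma \ref{crucial} on the $W_s$ piece to convert $\int_0^t\mu_m^{-b_{\a+1}-1}(L\mu)_-$ into $\tfrac{C}{b_{\a+1}}\mu_m^{-b_{\a+1}}$, and bounding the $W_{n-s}$ piece trivially since $\mu\gtrsim 1$ there. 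The key bookkeeping is tracking which top-order factor carries which power of $\mu_m$: $\hat{X}Z^{\a}\fai$ comes with a $\mu_m^{1/2}$ loss (hence appears weighted through $\sqrt{\mathbb{U}}$ and $K(t,u)$), while $\dl Z^{\a}\fai$ and $TZ^{\a}\fai$ do not.

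\textbf{Main obstacle.} The principal difficulty is ensuring the constant in front of $\int_0^t\mathbb{W}_{\leq\a+1}$ on the right-hand side stays $O(1)$ (rather than growing with $M$ or $\da^{-1}$) and, more delicately, that the coefficient of $\mathbb{K}_{\leq\a+1}$ is a genuine small fraction like $\tfrac15$ so it can later be absorbed into the left side of the energy inequality. This forces one to track very carefully how the ``good'' $\da$ powers from $\prescript{Z}{}{\da}\les\da$, the $\da$ in \eqref{f00f0i} and \eqref{bs}, and the $\tfrac{2}{b_{\a+1}}$ from \eqref{toporderchi}--\eqref{topordermu} combine — in particular one cannot afford to lose a $\mu_m^{-1/2}$ without simultaneously gaining a compensating $\da^{1/2}$ or $\tfrac1{b_{\a+1}}$, and the interplay between the $\mu_m$-weights $b_N$ and the index shift $\a\mapsto\a+1$ must be compatible with Lemma \ref{crucial} so that the $\mu_m$-powers telescope correctly rather than accumulating.
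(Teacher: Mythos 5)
Your proposal misidentifies which terms Lemma \ref{inhomo2} actually covers, and as a consequence reaches for the wrong tools. The quantity $\mathfrak{G}_n$ is built from the pieces $\prescript{Z}{}{\sigma}_{\cdot,1}$, whose top-order contributions (written out explicitly in the paper's proof for $Z\in\{L,T,Y\}$) are \emph{deformation-tensor coefficients times $(\a+1)$-st order derivatives of $\fai$}, e.g.\ $\tr\chi\,\dl Y^{\a'}L^2\fai$, $(T\mu)LY^{\a'+1}L\fai$, and the dangerous products $(-T\mu+\mu\kappa\tr\chi)\hat{X}Y^{\a}\fai$ and $(-Y\mu+\mu\tr\chi\hat{X}^1)\hat{X}Y^{\a}\fai$. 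The top-order acoustical terms $Y^{\a-1}\hat{X}\tr\chi\cdot T\fai$ and $Y^{\a-1}\slashed{\Delta}\mu\cdot T\fai$ that you take as the principal part belong to the \emph{other} half of $\mathfrak{F}_n$ and are treated separately in Section \ref{section6}, see \eqref{K0top1}--\eqref{K1top2final}. If you bound $\|\mathfrak{G}_n\|_{L^2}$ by $\|\mu Y^{\a-1}\hat{X}\tr\chi\|$ and $\da\|\mu Y^{\a-1}T\tr\chi\|$ and feed in \eqref{toporderchi} and \eqref{topordermu}, the right-hand side necessarily inherits $\|\mathscr{F}_{\a}\|_{L^2(\Sigma_0^u)}$, $\|\mathscr{H}_{\a}\|_{L^2(\Sigma_0^u)}$, $\int_0^t\sqrt{\mathbb{V}_{\leq\a+2}}$ and the $\mu_m^{-b_{\a+1}}$-weighted modified energies, none of which appear in the conclusion of Lemma \ref{inhomo2}, whose right-hand side involves only $\mathbb{W}_{\leq\a+1},\mathbb{U}_{\leq\a+1},\mathbb{Q}_{\leq\a+1},\mathbb{K}_{\leq\a+1}$. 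So your route proves a differently shaped inequality, not the stated one.

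The part of your argument that is on target --- splitting $W_t^u$ into $W_s\cup W_{n-s}$ and absorbing the $\hat{X}Y^{\a}\fai$ factor into $\mathbb{K}_{\leq\a+1}$ on the shock region and into $\int_0^t\mathbb{W}_{\leq\a+1}$ off it, where $\mu\gtrsim1$ --- is exactly the mechanism the paper uses, but it must be applied to the products $(T\mu)\hat{X}Y^{\a}\fai\cdot\dl Y^{\a'+1}T\fai$ and $(Y\mu)\hat{X}Y^{\a}\fai\cdot\dl Y^{\a}\fai$ (and their $K_1$-analogues paired with $LY^{\a}\fai$, which produce the flux term $C\da^{-1/2}\int_0^u\mathbb{Q}_{\leq\a+1}$), after the $\da^{2m}$-weighting tames $T\mu=O(\da^{-1})$. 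The small constants $\tfrac15$ and $\da^{1/2}$ in front of $\mathbb{K}_{\leq\a+1}$ come from this direct Cauchy--Schwarz (each boxed term contributing $\tfrac1{10}\mathbb{K}_{\leq\a+1}$ or $C\da^{1/2}\mathbb{K}_{\leq\a+1}$), not from the factor $\tfrac{2}{b_{\a+1}}$ in the transport estimates for $\mathscr{F}_{\a},\mathscr{H}_{\a}$, which plays no role in this lemma. To repair the proof: drop the appeal to \eqref{toporderchi}--\eqref{topordermu}, expand $Z_{n-1}\cdots Z_2\prescript{Z}{}{\sigma}_{1,1}$ term by term for each of the three choices of $Z^{\a}$, bound the coefficients in $L^{\infty}$ via Table \ref{table2} and Lemma \ref{preliminary1}, pair the non-singular terms directly with the energies and fluxes of Definition \ref{highorderenergy}, and reserve the $\mathbb{K}$-absorption for the $\hat{X}Y^{\a}\fai$ factors alone.
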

\begin{proof}
It suffices to consider the contribution from $Z_{n-1}\cdots Z_2\prescript{L}{}{\sigma}_{1,1}$ and the remaining terms are lower order terms.
\begin{itemize}
	\item For $Z^{\a}=Z_{n-1}\cdots Z_1=Y^{\a'}TL$, it holds that
\begin{align}
	\begin{split}\label{ytlsigma}
	Z_{n-1}\cdots Z_2\prescript{L}{}{\sigma}_{1,1}&=\tr\chi\dl Y^{\a'}L^2\fai+O(\fai)_0^{\leq1}LZ^{\a}\fai+O(\fai)_0^{\leq 1}LY^{\a'+1}T\fai\\
	&+\boxed{(-L\mu+\mu \tr\chi)\hat{X}Y^{\a'+1}T\fai}+\text{l.o.ts}.
	\end{split}
\end{align}
\item For $Z^{\a}=Z_{n-1}\cdots Z_1=Y^{\a'+1}T$, it holds that
\begin{align}
	\begin{split}\label{ytsigma}
	Z_{n-1}\cdots Z_2\prescript{T}{}{\sigma}_{1,1}&=\kappa \tr\theta\dl Y^{\a'+1}L\fai+T\mu LY^{\a'+1}L\fai+\mu(\tau+\sigma)LY^{\a}\fai\\
 &+(\tau+\sigma)\dl Y^{\a}\fai+\boxed{(-T\mu+\mu\kappa \tr\chi)\hat{X}Y^{\a}\fai}+\text{l.o.ts}.
	\end{split}
\end{align}
\item For $Z^{\a}=Z_{n-1}\cdots Z_1=Y^{\a}$, it holds that
\begin{align}
	\begin{split}\label{ysigma}
	Z_{n-1}\cdots Z_2\prescript{Y}{}{\sigma}_{1,1}&=\tr\chi \hat{X}^1\dl Y^{\a'+1}L\fai+Y\mu LY^{\a'+1}L\fai+O(\fai)_1^{\leq1}Y^{\a}L\fai\\
 &+O(\fai)^{\leq1}_1\dl Y^{\a}\fai+\boxed{(-Y\mu+\mu \tr\chi\hat{X}^1)\hat{X}Y^{\a}\fai}+\text{l.o.ts}.
	\end{split}
\end{align}
\end{itemize}
For the boxed terms in \eqref{ytlsigma}-\eqref{ysigma} which might be possible singular, one fully uses the structure of $K(Z^{\a}\fai)$ as follows:
\begin{align*}
	\begin{split}	&\int_{W_t^u}\da^2|T\mu|\hat{X}Y^{\a}\fai\cdot\dl Y^{\a'+1}T\fai\\
 \leq &
 \left(\int_{W_s}(\hat{X}Y^{\a}\fai)^2+\int_{W_{n-s}}(\hat{X}Y^{\a}\fai)^2\right)^{\frac{1}{2}}\left(\int_0^t\da^2\|\dl Y^{\a'+1}T\fai\|^2_{L^2(\Sigma_{t'}^u)}\right)^{\frac{1}{2}}\\
	\leq&\frac{1}{10}\mathbb{K}_{\leq\a+1}+C\int_0^t\|\mu\hat{X}Y^{\a}\fai\|^2_{L^2(\Sigma_{t'}^u)}+\da^2\|\dl Y^{\a'+1}T\fai\|_{L^2(\Sigma_{t'}^u)}^2\leq\frac{1}{10}\mathbb{K}_{\leq\a+1}+C\int_0^t\mathbb{W}_{\a+1},
	\end{split}
 \end{align*}
 \begin{align*}
	\begin{split}
	\int_{W_t^u}|Y\mu|\hat{X}Y^{\a}\fai\cdot\dl Y^{\a}\fai&\leq\left(\int_{W_s}(\hat{X}Y^{\a}\fai)^2+\int_{W_{n-s}}(\hat{X}Y^{\a}\fai)^2\right)^{\frac{1}{2}}\left(\int_0^t\da^2||\dl Y^{\a'+1}T\fai||^2_{L^2(\Sigma_{t'}^u)}\right)^{\frac{1}{2}}\\
	&\leq \frac{1}{10}\mathbb{K}_{\leq\a+1}+C\int_0^t\mathbb{W}_{\a+1}.
	\end{split}
\end{align*}
For estimates associated with $K_1$, one uses flux to bound as follows:
\begin{align*}
	\begin{split}
		&\int_{W_t^u}|Y\mu|\hat{X}Y^{\a}\fai\cdot (1+\eta^{-2}\mu)LY^{\a}\fai\\
  \leq & C\left(\int_{W_s}(\hat{X}Y^{\a}\fai)^2+\int_{W_{n-s}}(\hat{X}Y^{\a}\fai)^2\right)^{\frac{1}{2}}\left(\int_0^u||LY^{\a}\fai||_{L^2(C_{u'}^t)}\right)^{\frac{1}{2}}
		+\int_{W_t^u}\mu \hat{X}Y^{\a}\fai\cdot LY^{\a}\fai\\
  \leq & C\da^{\frac{1}{2}}\mathbb{K}_{\leq\a+1}+C\int_0^t\mathbb{U}_{\leq\a+1}+C\da^{-\frac{1}{2}}\int_0^u\mathbb{Q}_{\leq\a+1}.
	\end{split}
\end{align*}
Therefore, for commutators $Z^{\a}$ listed above, one obtains the following estimates:
\begin{align*}
	&\int_{W_t^u}\da^{2m}|Z_{n-1}\cdots Z_2\prescript{Z}{}{\sigma}_{1,1}\cdot K_0(Z^{\a}\fai)|\\
	\leq &(C+\da)\int_0^t\da^{2m}\|\dl Z^{\a}\fai\|^2_{L^2(\Sigma_{t'}^u)}\\
 &+\da^{-1}\int_0^u\da^{2m}\|LZ^{\a}\fai\|^2_{L^2(C_{u'}^t)}+\frac{1}{5}\mathbb{K}_{\leq\a+1}+\da\int_0^t\da^{2m}\|\mu\hat{X}Z^{\a}\fai\|^2_{L^2(\Sigma_{t'}^u)}\\
 &+\da^{\frac{1}{2}}\int_0^t\mu_m^{-1}\|\mu^{\frac{1}{2}}\hat{X}Y^{\a}\fai\|_{L^2(\Sigma_{t'}^u)}^2+\da^{\frac{1}{2}}\int_0^t\|\dl Y^{\a}\fai\|_{L^2(\Sigma_{t'}^u)}^2+\text{l.o.ts},
\end{align*}
where $m\leq1$ is the number of $T$ in $Z^{\a}$. Similarly,
\begin{align*}
	&\int_{W_t^u}\da^{2m}|Z_{n-1}\cdots Z_{2}\prescript{Z}{}{\sigma}_{1,1}\cdot K_1(Z^{\a}\fai)|\\
	\leq & (C+\da^{\frac{1}{2}})\int_0^t\da^{2m}\mu\|LZ^{\a}\fai\|_{L^2(\Sigma_{t'}^u)}^2\\
& +(\da^{-\frac{1}{2}}+\da^{\frac{1}{2}})\int_0^u\da^{2m}\|LZ^{\a}\fai\|_{L^2(C_{u'}^t)}+\da\int_0^t\da^{2m}\|\dl Z^{\a}\fai\|_{L^2(\Sigma_{t'}^u)}^2\\
 &+\da^{\frac{1}{2}}\mathbb{K}_{\leq \a+1}+(C+\da^{\frac{1}{2}})\int_0^t\|\mu\hat{X}Y^{\a}\fai\|_{L^2(\Sigma_{t'}^u)}^2+\text{l.o.ts},
\end{align*}
where $m\leq1$ is the number of $T$ in $Z^{\a}$. Thus the Lemma follows from the above estimates and Definition \ref{highorderenergy}.
\end{proof}
\begin{lemma}\label{commutatorvorticity}(\textbf{Commutators' estimates for the vorticity equation})
	Let $|\a|=20$ and $P_i^{\a+1}=Y^{\a'}L^p$ with $\a'+p=21$ and $p\leq3$. Denote \[\mathfrak{G}^{\zeta}_{|\a|+1}=[(P\mu)L,P_i^{\a}](\zeta-\varrho)+[(P\eta^2)T,P_i^{\a}](\zeta-\varrho)\] as given in \eqref{commutationvorticity}. Then,
	\begin{align}
		\begin{split}
		&\int_{W_t^u}\da^{2(p-1)_{+}}P_i^{\a+1}(\zeta-\varrho) \cdot\mathfrak{G}_{|\a|+1}^{\zeta}\\
  \leq& \da\int_0^t\mathbb{W}_{\leq\a+1}+\da^2\int_0^t\int_0^s\mu_m^{-1}\mathbb{W}_{\leq\a}+\da\int_0^u\mathbb{Q}_{\leq\a}
		+\da\int_0^t\mu_m^{-1}\mathbb{V}_{\leq\a+1}(t',u)\ dt'\\
  &+\da^{-\frac{1}{2}}\int_0^u\mathbb{V}_{\leq\a+1}(t,u')\ du'+\da^{\frac{1}{2}}\int_0^t\mu_m^{-2}\|\mu Y^{\a-1}\slashed{\Delta}\mu\|^2_{L^2(\Sigma_{t'}^u)}.
		\end{split}\label{Pa1zeta}
	\end{align}
\end{lemma}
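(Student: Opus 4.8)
\textbf{Proof proposal for Lemma \ref{commutatorvorticity}.}

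The plan is to expand $\mathfrak{G}^{\zeta}_{|\a|+1}$ into its constituent pieces using the commutator identities displayed in \eqref{commutationvorticity}, and then estimate the spacetime integral $\int_{W_t^u}\da^{2(p-1)_+}P_i^{\a+1}(\zeta-\varrho)\cdot\mathfrak{G}^{\zeta}_{|\a|+1}$ by a Cauchy--Schwarz splitting that always places the top-order factor (the one carrying $|\a|+1$ derivatives) into an energy or flux, and the remaining factor (a coefficient built from $\mu$, $\eta^2$, $\chi$, $\theta$) into an $L^\infty$-bound coming from Lemmas \ref{preliminary1}, \ref{2ndff}, \ref{LiTiXi}, Table \ref{table2}, and the bootstrap assumptions \eqref{bs}. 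The key structural input is the one emphasized in the introduction: because of the explicit short-pulse data, the low-order energies of the vorticity are an $O(\da)$-factor smaller than those of $\varphi$, which is encoded in the $\da^{2(p-1)_+}$ weights in Definition \ref{highorderenergy} and in the statement; this is what produces the overall $\da$ (rather than $O(1)$) prefactors in \eqref{Pa1zeta}.

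First I would treat $[(P\mu)L,P_i^\a](\zeta-\varrho)$. Its first piece $-(P\mu)\sum_{|\be_1|+|\be_2|=|\a|-1}P_i^{\be_1}\lpi_L^{\sharp}P_i^{\be_2}(\zeta-\varrho)$ is genuinely lower order: $P\mu$ and $\lpi_L^\sharp$ are $O(M)$ (Table \ref{table2}) and every factor has $\le|\a|-1$ derivatives, so after Cauchy--Schwarz and Lemma \ref{L2commutator}-type bookkeeping this contributes $\da\int_0^t\mathbb{V}_{\le\a}$-type terms, absorbed into the right side. The delicate piece is $-\sum_{|\be_1|>0}P_i^{\be_1}(P\mu)P_i^{\be_2}L(\zeta-\varrho)$: when $|\be_1|=|\a|$ we get $Y^{\a'}T\mu$ (equivalently, via \eqref{equationtchi}, $Y^{\a-1}\slashed\Delta\mu$ up to lower order) multiplied by $L(\zeta-\varrho)$ and by $P_i^{\a+1}(\zeta-\varrho)$; here I would Cauchy--Schwarz so that $\mu Y^{\a-1}\slashed\Delta\mu$ lands in the $\da^{1/2}\int_0^t\mu_m^{-2}\|\mu Y^{\a-1}\slashed\Delta\mu\|^2_{L^2(\Sigma_{t'}^u)}$ term of \eqref{Pa1zeta} (this is exactly where that term in the bound comes from, and it will later be controlled via \eqref{topordermu}), the $L(\zeta-\varrho)$ factor is $O(\da)$ by Remark \ref{recovertfai}/the bootstrap, and $P_i^{\a+1}(\zeta-\varrho)$ goes into $\mathbb{V}_{\le\a+1}$, picking up the $\mu_m^{-1}$ when we pass to $\|\mu^{1/2}\,\cdot\|$; intermediate-$|\be_1|$ cases are handled by Lemma \ref{L2commutator} which forces either the $Y^{\be_1}(P\mu)$ factor or the $P_i^{\be_2}L(\zeta-\varrho)$ factor to be pointwise $O(\da)$. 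The term $[(P\eta^2)T,P_i^\a](\zeta-\varrho)$ is handled identically, using $T(\eta^2)=O(\da)$ and the estimate for $T\mu$-type quantities; note $P$ ranges only over $\mathcal P=\{L,Y\}$, so no genuine $T$-derivatives are ever produced on the vorticity, which keeps all factors inside the already-defined energies $\mathbb V$, $\mathbb W$, $\mathbb Q$.

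For the flux bookkeeping: whenever a factor is $LZ^{\a}(\zeta-\varrho)$ or $LP_i^{\a}(\zeta-\varrho)$ and its companion coefficient is merely $O(1)$ (not $O(\da)$), I would integrate over $W_t^u$ using $\int_{W_t^u}=\int_0^t\int_{\Sigma_{t'}^u}$ and Cauchy--Schwarz in the $u'$-then-$(t',\vartheta')$ order to convert $\int_{W_t^u}(L\cdot)^2$ into $\int_0^u\|L\cdot\|^2_{L^2(C_{u'}^t)}\le\int_0^u\mathbb{Q}_{\le\a}$ or $\int_0^u\mathbb{V}_{\le\a+1}(t,u')\,du'$, which is where the $\da\int_0^u\mathbb{Q}_{\le\a}$ and $\da^{-1/2}\int_0^u\mathbb{V}_{\le\a+1}(t,u')\,du'$ terms come from (the $\da^{-1/2}$ reflecting the Young's-inequality constant needed to balance against the $\da^{2(p-1)_+}$ weight when $p$ is near $3$). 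The nested time integral $\da^2\int_0^t\int_0^s\mu_m^{-1}\mathbb{W}_{\le\a}$ arises from the sub-top pieces where one commutes $L$ past $Y^{\a'}$ and must integrate \eqref{equationmu} or \eqref{equationtchi} once more in time to bound $\|LY^{\be}T\mu\|$-type quantities, each such integration converting an $L^2$-norm of a coefficient into $\int_0^{t'}$ of an energy, exactly as in the proof of Lemma \ref{lotchiul2}. The main obstacle I anticipate is keeping the constants in front of $\int_0^t\mathbb{W}_{\le\a+1}$ and $\mathbb{K}_{\le\a+1}$ from blowing up: the coefficient of $Y^{\a'}T\mu$ in the dangerous term is tied to $3/2$ (via $m=-\tfrac32 Th$ in \eqref{equationmu}), and one must check, using the $\da$-smallness of the vorticity energies relative to $\mathbb{W}$, that the product of that coefficient with the vorticity factors genuinely carries an extra power of $\da$ — otherwise the Gronwall argument in Section \ref{section6} would not close. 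Verifying this $\da$-gain factor, rather than any single estimate, is the crux.
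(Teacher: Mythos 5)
Your overall strategy is the paper's: expand the commutators from \eqref{commutationvorticity}, isolate the single genuinely top-order acoustical coefficient $Y^{\a+1}\mu\sim Y^{\a-1}\slashed{\Delta}\mu$ (your ``$Y^{\a'}T\mu$'' is a slip --- $P\in\{L,Y\}$ never produces $T\mu$; the dangerous term is $P_i^{\be_1}(Y\mu)$ with $|\be_1|=|\a|$), pair it by Cauchy--Schwarz with the $\da^{\frac12}\int_0^t\mu_m^{-2}\|\mu Y^{\a-1}\slashed{\Delta}\mu\|^2$ slot using the $O(\da)$ smallness of $L(\zeta-\varrho)$, and dispatch the intermediate distributions of derivatives by the Lemma \ref{L2commutator}/Lemma \ref{lotchiul2} bookkeeping. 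That is the correct skeleton, and you correctly identify the $\da$-gain from the vorticity factors as the crux.

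Two concrete gaps remain. First, your assertion that ``no genuine $T$-derivatives are ever produced on the vorticity'' is false: the commutator $[P(\eta^2)T,P_i^{\a}]$ manifestly contains $P_i^{\be_2}T(\zeta-\varrho)$, and since $T\notin\mathcal{P}$ these factors do not sit inside $\mathbb{V}$ as written. The argument only closes after converting them via the homogeneous transport equation, $T(\zeta-\varrho)=-\eta^{-2}\mu L(\zeta-\varrho)$ (and its commuted versions $TL(\zeta-\varrho)$, $TY(\zeta-\varrho)$ computed in Section \ref{section53}), which both restores membership in $\mathbb{V}_{\leq\a+1}$ and supplies an extra factor of $\mu$. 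Second, you do not separate the cases $p=0$ and $p\geq1$. For $p\geq1$ one takes the innermost vector field to be $L$, so the extreme term is $P_i^{\a}L\mu$ rather than $Y^{\a+1}\mu$; by \eqref{equationmu} this reduces to $TP_i^{\a}\fai+\mu LP_i^{\a}\fai+O(\fai)_1^{\leq1}P_i^{\a}\mu+\text{l.o.ts}$, i.e.\ to wave-variable derivatives controlled by $\mathbb{W}_{\leq\a+1}$ plus a \emph{lower-order} $\mu$-derivative bounded by Lemma \ref{lotchiul2} --- this is precisely the source of the terms $\da\int_0^t\mathbb{W}_{\leq\a+1}$, $\da^2\int_0^t\int_0^s\mu_m^{-1}\mathbb{W}_{\leq\a}$ and $\da\int_0^u\mathbb{Q}_{\leq\a}$ in \eqref{Pa1zeta}, and it is why no second copy of the singular $\slashed{\Delta}\mu$ term appears for $p\geq1$. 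Your attribution of the nested double integral to commuting $L$ past $Y^{\a'}$ is not where it comes from. Neither gap is fatal to the approach, but both must be filled for the estimate to close.
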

\begin{proof}
	We first consider the case with $p=0$. It follows from \eqref{commutationvorticity} that the top order terms are
	\begin{align}
		Y^{\a+1}\mu\cdot L(\zeta-\varrho),\quad
		 Y^{\a+1}(\eta^2)\cdot T(\zeta-\varrho), \label{commutationyatop}
	\end{align}
	and the highest order terms involving vorticity are
	\begin{align}
		[(Y\mu)\prescript{Y}{}{\pi}_L^{\sharp}+(Y\eta^2)\prescript{Y}{}{\pi}_T^{\sharp}]Y^{\a-1}(\zeta-\varrho),\ Y^2\mu Y^{\a-1}L(\zeta-\varrho),\ Y^2(\eta^2) Y^{\a-1}T(\zeta-\varrho). \label{commutationyanext}
	\end{align}
	Hence, it follows from the bootstrap assumption \eqref{bs} that for the contribution from \eqref{commutationyatop} with $P_i^{\a+1}=Y^{\a+1}$, one obtains
	\begin{align*}
		&\int_{W_t^u}Y^{\a+1}(\zeta-\varrho) \cdot [Y^{\a+1}\mu\cdot L(\zeta-\varrho)+Y^{\a+1}(\eta^2)\cdot T(\zeta-\varrho)]\\
  \leq\da&\int_{W_t^u}
		Y^{\a+1}(\zeta-\varrho) \cdot ( Y^{\a-1}\slashed{\Delta}\mu+\mu\hat{X}Y^{\a}\fai)\\
		\leq \da&\int_0^t\mu_m^{-1}\mathbb{V}_{\leq\a+1}(t',u)\ dt'+\da\int_0^t\mu_m^{-1}\mathbb{W}_{\leq\a}+\da^{-\frac{1}{2}}\int_0^u\mathbb{V}_{\leq\a}\cdot \da^{\frac{1}{2}}\int_0^t\mu_m^{-2}\| \mu Y^{\a-1}\slashed{\Delta}\mu\|_{L^2(\Sigma_{t'}^u)}^2.
	\end{align*}
	For terms in \eqref{commutationyanext}, in view of Lemma \ref{preliminary1} and Table \ref{table2}, it suffices to consider the contribution from $Y^2\mu Y^{\a-1}L(\zeta-\varrho)$. Thus, it holds that
	\begin{align}
	&\int_{W_t^u}Y^{\a+1}(\zeta-\varrho) \cdot [Y^2\mu Y^{\a-1}L(\zeta-\varrho)]\leq \da^{-\frac{1}{2}}\int_0^u\mathbb{V}_{\leq\a}+\da^{\frac{1}{2}}\int_0^u\mathbb{V}_{\leq\a+1}.
	\end{align}
	Therefore, for the case $P_i^{\a+1}=Y^{\a+1}$, \eqref{Pa1zeta} holds directly from above analysis. For the case $P_i^{\a+1}=Y^{\a'}L^p$ where $1\leq p\leq3$, one can always act $L$ on $\mu$ first since the commutator $[L,P_i^{\a}]$ is a lower order term compared with $P_i^{\a}L$. Hence, it suffices to consider the contribution from 
	\begin{align}
			P_i^{\a}L\mu\cdot L(\zeta-\varrho),\quad P_i^{\a}L(\eta^2)\cdot T(\zeta-\varrho),\quad P^2\mu P_i^{\a-1}L(\zeta-\varrho).
	\end{align} 
	In view of \eqref{equationmu}, one writes \[P_i^{\a}L\mu=TP_i^{\a}\fai+\mu LP_i^{\a}\fai+O(\fai)_{1}^{\leq1}P_i^{\a}\mu+\text{l.o.ts}.\] Note also that the number of $L$ in $P_i^{\a}$ is at least $p-1$. Then, it follows from Lemma \ref{lotchiul2} that 
	\begin{align*}
		&\int_{W_t^u}\da^{2(p-1)_+}P_i^{\a+1}(\zeta-\varrho)\cdot P_i^{\a}L\mu\cdot L(\zeta-\varrho)\\
  \leq& \da\int_0^t\mathbb{W}_{\leq\a}+\da^2\int_0^t\int_0^s\mu_m^{-1}\mathbb{W}_{\leq\a}+\da\int_0^u\mathbb{Q}_{\leq\a}+\mathbb{V}_{\leq\a+1},
		\end{align*}
  \begin{align*}
		\int_{W_t^u}\da^{2(p-1)_+}P_i^{\a+1}(\zeta-\varrho)\cdot P^2\mu P_i^{\a-1}L(\zeta-\varrho)&\leq C\int_0^u\mathbb{V}_{\leq\a+1},
  \end{align*}
  and
		\begin{align*}
		\int_{W_t^u}\da^{2(p-1)_+}P_i^{\a+1}(\zeta-\varrho)\cdot P_i^{\a}L(\eta^2)\cdot T(\zeta-\varrho)\leq &\da\int_{W_t^u}\mu P_i^{\a}L\fai\cdot P_i^{\a+1}(\zeta-\varrho)\\
  \leq & \da\int_0^t\mathbb{W}_{\leq\a}+\mathbb{V}_{\leq\a+1}\ dt'.
	\end{align*}
	This completes the proof for this Lemma.
\end{proof}
\section{Top order energy estimates}\label{section6}
In this section, we first give the top order energy estimates for the vorticity and then for the wave variables. This is based on the estimates derived in Section \ref{section5}.
\subsection{Top order energy estimates for the vorticity}
 Let $|\a|=20$ and $P_i^{\a+1}\in\mathcal{P}^{21;\leq3}$. It follows from \eqref{energyzeta} that 
\begin{align}
	E^{\zeta}(P_i^{\a+1}(\zeta-\varrho))+F^{\zeta}(P_i^{\a+1}(\zeta-\varrho))&=E^{\zeta}(P_i^{\a+1}(\zeta-\varrho))(0,u)+\int_{W_t^u}\mu Q_{|\a|+1}^{\zeta},\label{toporderenergyzeta}
\end{align}
where 
\begin{align}
	\mu Q_{|\a|+1}^{\zeta}=(L\mu)(P_i^{\a+1}(\zeta-\varrho))^2+2P_i^{\a+1}(\zeta-\varrho)\mathfrak{F}_{|\a|+1}^{\zeta}+\eta\mu \tr\slashed{k}(P_i^{\a+1}(\zeta-\varrho))^2.
\end{align}
Note that Lemma \ref{preliminary1} gives
\begin{align*}
	\int_{W_t^u}\da^{2(p-1)_+}	[(L\mu)(P_i^{\a+1}(\zeta-\varrho))^2+\eta\mu \tr\slashed{k}(P_i^{\a+1}(\zeta-\varrho))^2]\leq C\int_0^u\mathbb{V}_{\leq 21}+\da\int_0^t\mathbb{V}_{\leq 21}.
\end{align*}
In view of Lemma \ref{commutatorvorticity}, it suffices to consider the contribution from the top order terms in $\sum_{|\be_1|+|\be_2|=|\a|}
P_i^{\be_1}\ppi_{\mu B}^{\sharp}P_i^{\be_2}(\zeta-\varrho)$.  Due to \eqref{commutatorvorticity}, the top order terms to be considered are
\begin{align*}
	P_i^{\a}\prescript{P}{}{\pi}_{\mu B}^{\sharp}\quad\text{and}\quad  \prescript{P}{}{\pi}_{\mu B}^{\sharp}P_i^{\a}(\zeta-\varrho).
\end{align*}
Note that from Table \ref{table2}, in the deformation tensor $\prescript{P}{}{\pi}_{\mu B}^{\sharp}$, if $P=Y$, then $|\prescript{Y}{}{\pi}_{\mu B \hat{X}}|\les\da$; while if $P=L$, then $|\prescript{L}{}{\pi}_{\mu B \hat{X}}|\les 1$ and the number of $L$ in $P_i^{\a}$ is at least $p-1$. Then,
\begin{align}
	&\int_{W_t^u}\da^{2(p-1)_+}\prescript{P}{}{\pi}_{\mu B}^{\sharp}P_i^{\a}(\zeta-\varrho)\cdot P_i^{\a+1}(\zeta-\varrho)\leq\da\int_0^t\mathbb{V}_{\leq 21}.
\end{align}
For the remaining top order acoustical terms, due to \eqref{toporderzeta1} and \eqref{toporderzeta2}, if $p\geq2$, then 
\begin{align*}
	P_i^{\a}\prescript{P}{}{\pi}_{\mu B\hat{X}}&=Y^{\a'}L^{p-1}\prescript{L}{}{\pi}_{\mu B}^{\sharp}=Y^{\a'}L^{p-1}(X\mu+X\fai+\text{l.o.ts})\\
	&=TY^{\a'}XL^{p-2}\fai+\mu Y^{\a'}XL^{p-1}\fai+Y^{\a'}L^{p-1}X\fai+\text{l.o.ts}.
\end{align*}
Hence, for $p\geq 2$, one has
\begin{align}
	\begin{split}
		&\int_{W_t^u}\da^{2(p-1)_+}Y_i^{\a'}L^p(\zeta-\varrho)\cdot P_i^{\a}\prescript{P}{}{\pi}_{\mu B}^{\sharp}(\zeta-\varrho)\leq
		\da\int_{W_t^u}\da^{2(p-1)_+}(Y_i^{\a'}L^p(\zeta-\varrho))^2\\
		&+\da\cdot\da^{2(p-1)_+}\int_0^t\|TY^{\a'+1}L^{p-1}\fai\|_{L^2(\Sigma_{t'}^u)}^2
		+\da\cdot\da^{2(p-1)_+}\int_{W_t^u}\mu_m^{-1}\mu(Y^{\a'+1}L^{p-1}\fai)^2\\
		\leq&\da\mu_m^{-2b_{21}}\bar{\mathbb{V}}_{\leq21}+C\da\mu_m^{-2b_{20}}\bar{\mathbb{W}}_{\leq20}.
	\end{split}
\end{align}
 It remains to consider the contribution from $P_i^{\a}\prescript{P}{}{\pi}_{\mu B}^{\sharp}$ with $p\leq1$. In view of \eqref{toporderzeta1} and \eqref{toporderzeta2}, $|Y^{\a}\prescript{Y}{}{\pi}_{\mu B}^{\sharp}(\zeta-\varrho)|\leq\da|Y^{\a}\prescript{L}{}{\pi}_{\mu B}^{\sharp}(\zeta-\varrho)|$ and one needs only to consider the contribution from $Y^{\a-1}\slashed{\Delta}\mu\cdot \hat{X}(\zeta-\varrho)$. It follows from \eqref{topordermu} and \eqref{modifiedenergy} that 
\begin{align*}
	\begin{split}
	&\int_{W_t^u}Y^{\a+1}(\zeta-\varrho)Y^{\a-1}\slashed{\Delta}\mu\cdot \hat{X}(\zeta-\varrho)\leq\left(\int_{W_t^u}Y^{\a+1}(\zeta-\varrho)\right)^{\frac{1}{2}}\left(\int_{W_t^u}\da Y^{\a-1}\slashed{\Delta}\mu\right)^{\frac{1}{2}}\\
	\leq&\da^{-1}\int_0^u\mathbb{V}_{\leq 21}+\da\int_0^t\mu_m^{-2}\da||\mu Y^{\a-1}\slashed{\Delta}\mu||_{L^2(\Sigma_{t'}^u)}^2
\end{split}
\end{align*}
and
\begin{align*}
	\begin{split}
		&\da^2||\mu Y^{\a-1}\slashed{\Delta}\mu||_{L^2(\Sigma_{t'}^u)}^2\leq\da^2||\mathscr{H}_{\a}||^2_{L^2(\Sigma_0^u)}+C\da^2+\frac{16}{(b_{20}+\tfrac{1}{2})^2}\mu_m^{-2b_{20}+1}\bar{\mathbb{W}}_{\leq20}\\
		&+C\left(\int_0^t(1+\mu_m^{-\frac{1}{2}})\sqrt{\mathbb{W}_{\leq 20}}\right)^2+C\int_0^u\mathbb{Q}_{\leq 20}+C\left(\int_0^t\sqrt{\mathbb{V}_{\leq21}}\right)^2.
	\end{split}
\end{align*}
In view of Lemma \ref{crucial}, it holds that
\begin{align*}
	\int_0^t\mu_m^{-2}\cdot\mu_m^{-b_{20}+1}\bar{\mathbb{W}}_{\leq20}&\leq \frac{1}{2b_{20}}\mu_m^{-2b_{20}}\bar{\mathbb{W}}_{\leq20},\quad \left(\int_0^t\sqrt{\mathbb{V}_{\leq21}}\right)^2\leq\mu_m^{-2b_{21}+2}\bar{\mathbb{V}}_{\leq21},
 \end{align*}
 and
 \begin{align*}
	\int_0^t\mu_m^{-2}(s)\left(\int_0^s(1+\mu_m^{-\frac{1}{2}}(\tau))\sqrt{\mathbb{W}_{\leq 20}}\right)^2&\leq\int_0^t\mu_m^{-2}(s)\left(\frac{1}{2b_{20}}\sqrt{\bar{\mathbb{W}}_{\leq 20}}\mu_m^{-b_{20}}(s)\right)^2\\
 &\leq\frac{1}{4b_{20}^2(2b_{20}+1)}\mu_m^{-2b_{21}}\bar{\mathbb{W}}_{\leq 20}.
\end{align*}
Therefore, one has
\begin{align}
	\begin{split}
		&\int_{W_t^u}Y^{\a+1}(\zeta-\varrho)Y^{\a-1}\slashed{\Delta}\mu\cdot \hat{X}(\zeta-\varrho)\\
  \leq &\da^2||\mathscr{H}_{\a}||_{L^2(\Sigma_0^u)}^2+C\da^2+C\da(\mu_m^{-2b_{21}+1}+\mu_m^{-2b_{21}})\bar{\mathbb{W}}_{\leq20}\\
&+C\da\mu_m^{-2b_{20}}\int_0^u\bar{\mathbb{Q}}_{\leq20}+C\da\mu_m^{-2b_{21}+2}\bar{\mathbb{V}}_{\leq21}+\da^{-1}\mu_m^{-2b_{21}}\int_0^u\bar{\mathbb{V}}_{\leq 21}.
	\end{split}
\end{align}
It follows from \eqref{toporderenergyzeta} that 
\begin{align*}
	\begin{split}
		&\sum_{p_i^{\a+1}\in\mathcal{P}^{21;3}}\da^{2(p-1)_+}[E^{\zeta}(P_i^{\a+1}(\zeta-\varrho))+F^{\zeta}(P_i^{\a+1}(\zeta-\p))](t,u)\\
  \leq&\sum_{P_i^{\a+1}\in\mathcal{P}^{21;3}}\da^{2(p-1)_+}(E^{\zeta}(P_i^{\a+1}(\zeta-\varrho))(0,u)+\int_{W_t^u}\mu Q_{|\a|+1}^{\zeta}).
	\end{split}
\end{align*}
Denote \begin{align}
    \mathscr{D}_{21}^{\zeta}=\sum_{P_i^{\a+1}\in\mathcal{P}^{21;3}}\da^{2(p-1)_+}E^{\zeta}(P_i^{\a+1}(\zeta-\varrho))(0,u).\label{D21zeta}
\end{align} Therefore, collecting the above results yields
\begin{align}
	\begin{split}\label{toporderenergyzeta1}
		\mathbb{V}_{\leq 21}(t,u)\leq &\sum_{P_i^{\a+1}\in\mathcal{P}^{21;3}}\da^{2(p-1)_+}(E^{\zeta}(P_i^{\a+1}(\zeta-\varrho))(0,u)+\int_{W_t^u}\mu Q_{|\a|+1}^{\zeta})\\
		\leq& \mathscr{D}_{21}^{\zeta}+\da\cdot\da^2\|\mathscr{H}_{\a}\|^2_{L^2(\Sigma_{t'}^u)}+C\da(\mu_m^{-2b_{20}}+\mu_m^{-2b_{21}})\bar{\mathbb{W}}_{\leq 20}+C\da\mu_m^{-2b_{21}}\bar{\mathbb{V}}_{\leq 21}\\	&+C\da\mu_m^{-2b_{20}}\int_0^u\bar{\mathbb{Q}}_{\leq20}+C\da\int_0^t\mathbb{V}_{\leq21}+C\da^{-\frac{1}{2}}\mu_m^{-2b_{21}}\int_0^u\bar{\mathbb{V}}_{\leq21}.
	\end{split}
\end{align}
Thus multiplying $\mu_m^{2b_{21}}(t)$ on both sides yields that for sufficiently small $\da$
\begin{align}
	\begin{split}\label{toporderenergyzeta2}		\bar{\mathbb{V}}_{\leq21}\leq&\mathscr{D}_{21}^{\zeta}+\da\cdot\da^2\|\mathscr{H}_{\a}\|^2_{L^2(\Sigma_{0}^u)}+C\da\bar{\mathbb{W}}_{\leq20}\\
 &+C\da\int_0^u\bar{\mathbb{Q}}_{\leq20}+C\da\int_0^t\bar{\mathbb{V}}_{\leq21}+C\da^{-\frac{1}{2}}\int_0^u\bar{\mathbb{V}}_{\leq21}.
	\end{split}
\end{align}
Applying Gronwall inequality yields
\begin{align}
	\begin{split}\label{toporderenergyzeta3}
 \bar{\mathbb{V}}_{\leq21}\leq\mathscr{D}_{21}^{\zeta}+C\da^2+\da\cdot\da^2\|\mathscr{H}_{\a}\|^2_{L^2(\Sigma_{0}^u)}+C\da\bar{\mathbb{W}}_{\leq20}+C\da\int_0^u\bar{\mathbb{Q}}_{\leq20}.
	\end{split}
\end{align}
\subsection{Top order energy estimates for wave variables}
We consider the following integrals:
\begin{align*}
	-\int_{W_t^u}\mathfrak{F}_{\a+1}\cdot K_{0}Z^{\a}\fai/K_1Z^{\a}\fai\ d\vartheta'du'dt'
\end{align*}
for $Z^\a\in\mathcal{Z}^{20;1}$. In view of Section 4.3, the contributions in $\mathfrak{F}_{\a+1}$ to be considered are
\begin{align*}
	Y^{\a-1}\hat{X}\tr\chi\cdot T\fai,\quad Y^{\a-1}T\tr\chi\mu \cdot T\fai.
\end{align*}
\subsubsection{Contributions associated with $K_0$}
We first consider the space-time integral:
\begin{align}
	\int_{W_t^u}T\fai\cdot Y^{\a-1}\hat{X}\tr\chi\cdot\dl Y^{\a}\fai.\label{K0top1}
\end{align}
In view of Remark \ref{recovertfai}, $|T\fai|\leq 1+C\da$ and then \eqref{K0top1} can be bounded as
\begin{align*}
	\int_{W_t^u}T\fai\cdot Y^{\a-1}\hat{X}\tr\chi\cdot\dl Y^{\a}\fai\leq \int_0^t\mu_m^{-1}\|\mu Y^{\a-1}\hat{X}\tr\chi\|_{L^2(\Sigma_{t'}^u)}\|\dl Y^{\a}\fai\|_{L^2(\Sigma_{t'}^u)}\ dt'.
\end{align*}
We estimate the above integral term by term in view of \eqref{toporderchi}. It follows from \eqref{modifiedenergy} that 
\[
\|\dl Y^{\a}\fai\|_{\ltwou}\leq\sqrt{\mathbb{W}_{\leq 20}}.
\]
To deal with the integrals involving $\mu^{-c}$ for $c>1$, one splits them into shock part $\int_{t_0}^t\mu^{-c}$ and non-shock part $\int_0^{t_0}\mu^{-c}$. It follows from Lemma \ref{crucial} and Remark \ref{crucial2} that 
\begin{align}
	\int_0^{_0}\mu_m^{-c-1}\leq \int_0^{t_0}\mu_m^{-c},\quad \int_{t_0}^t\mu_m^{-c-1}\leq\frac{1+C\da}{c}\mu_m^{-c}.
\end{align}
Hence one has
\begin{equation*}
\begin{aligned}
	&\frac{2}{b_{20}+\tfrac{1}{2}}\int_0^t\mu_m^{-b_{20}-\tfrac{1}{2}}\sqrt{\bar{\mathbb{W}}_{\leq20}}\cdot\|\dl Y^{\a}\fai\|_{L^2(\Sigma_{t'}^u)}\leq\frac{2}{b_{20}+\tfrac{1}{2}}\int_0^t\mu_m^{-2b_{20}-\tfrac{1}{2}}\bar{\mathbb{W}}_{\leq20}\\
	\leq & \frac{2}{(2b_{20}-\tfrac{1}{2})(b_{20}+\tfrac{1}{2})}\mu_m^{-2b_{20}}\bar{\mathbb{W}}_{\leq20}+\frac{1}{3}\int_0^t\mu_m^{-2b_{20}}\bar{\mathbb{W}}_{\leq20},
\end{aligned}
\end{equation*}
\begin{equation*}
\begin{aligned}
&C\int_0^t\mu_m^{-1}\int_0^s(1+\da^{\frac{1}{2}}\mu_m^{-\frac{1}{2}})\sqrt{\mathbb{W}_{\leq20}} ds\cdot\|\dl Y^{\a}\fai\|_{L^2(\Sigma_{t'}^u)}\\
 \leq &\frac{1}{b_{20}+\tfrac{1}{2}}\int_0^t\mu_m^{-2b_{20}-\frac{1}{2}}\bar{\mathbb{W}}_{\leq20}+C\int_0^t\mu_m^{-2b_{20}}\bar{\mathbb{W}}_{\leq20}\\
 \leq &\frac{1}{(b_{20}+\tfrac{1}{2})(2b_{20}-\tfrac{1}{2})}\mu_m^{-2b_{20}}\bar{\mathbb{W}}_{\leq20}+C\int_0^t\mu_m^{-2b_{20}}\bar{\mathbb{W}}_{\leq20},
\end{aligned}
\end{equation*}
\begin{equation*}
\begin{aligned}
&C\int_0^t\sqrt{\int_0^u\mathbb{Q}_{\leq20}}\cdot\|\dl Y^{\a}\fai\|_{L^2(\Sigma_{t'}^u)}\leq C\int_0^t\mu_m^{-2b_{20}}\sqrt{\int_0^u\bar{\mathbb{Q}}_{\leq20}}\sqrt{\bar{\mathbb{W}}_{\leq20}}\\	\leq &\da\mu_m^{-2b_{20}}\bar{\mathbb{W}}_{\leq20}+C\da^{-1}\mu_m^{-2b_{20}}\int_0^u\bar{\mathbb{Q}}_{\leq20}+\da\int_0^t\mu_m^{-2b_{20}}\bar{\mathbb{W}}_{\leq20}.
\end{aligned}
\end{equation*}
It follows from \eqref{toporderenergyzeta} that 
\begin{align*}
	\begin{split}
		&C\int_0^t\mu_m^{-1}\int_0^s\sqrt{\mathbb{V}_{\leq21}}\ ds\cdot \|\dl Y^{\a}\fai\|_{L^2(\Sigma_{t'}^u)}
		\leq \frac{C}{(b_{21}-1)(2b_{20}-\frac{1}{2})}\mu_m^{-2b_{20}}\sqrt{\bar{\mathbb{W}}_{\leq20}}\sqrt{\bar{\mathbb{V}}_{\leq21}}\\
		\leq & C\mu_m^{-2b_{20}}(\mathscr{D}_{21}^{\zeta}+\da^2+\da^2\|\mathscr{H}_{\a}\|^2_{L^2(\Sigma_{0}^u)})
  +\frac{\mu_m^{-2b_{20}}}{(b_{21}-1)^2(2b_{20}-\frac{1}{2})^2}\bar{\mathbb{W}}_{\leq20}+C\da^{\frac{1}{2}}\mu_m^{-2b_{20}}\int_0^u\bar{\mathbb{Q}}_{\leq20}
	\end{split}
\end{align*}
Therefore, collecting the above results yields
\begin{align}
	\begin{split}
	\eqref{K0top1}\leq & C\mu_m^{-2b_{20}}(\mathscr{D}_{21}^{\zeta}+\da^2+\da^2\|\mathscr{F}_{\a}\|^2_{L^2(\Sigma_{0}^u)}+\da^2\|\mathscr{H}_{\a}\|^2_{L^2(\Sigma_{0}^u)})\\
 &+
	\frac{3+C\da^{\frac{1}{2}}}{(2b_{20}-\tfrac{1}{2})(b_{20}+\tfrac{1}{2})}\mu_m^{-2b_{20}}\bar{\mathbb{W}}_{\leq20}\\
	&+C\int_0^t\mu_m^{-2b_{20}}(\bar{\mathbb{W}}_{\leq20}+\bar{\mathbb{Q}}_{\leq20})+C\da^{-1}\mu_m^{-2b_{20}}\int_0^u\bar{\mathbb{Q}}_{\leq20}.
	\end{split}\label{K0top1final}
\end{align}
Next, we estimate the following space-time integral:
\begin{align}
\begin{split}
	&\int_{W_t^u}\da^2 T\fai\cdot Y^{\a-1}T\tr\chi\cdot \dl Y^{\a-1}T\fai\\
\leq&\int_0^t\mu_m^{-1}\da\|\mu Y^{\a-1}T\tr\chi\|_{L^2(\Sigma_{t'}^u)}\cdot\da\|\dl Y^{\a-1}T\fai\|_{L^2(\Sigma_{t'}^u)}.
\end{split}\label{K0top2}
\end{align}
Similarly,
\begin{align*}
	\da\|\dl Y^{\a-1}T\fai\|_{L^2(\Sigma_{t}^u)}\leq\sqrt{\mathbb{W}_{\leq20}}.
\end{align*}
It follows from \eqref{topordermu} and the above arguments that 
\begin{align}
	\begin{split}
	\eqref{K0top2}\leq &  C\mu_m^{-2b_{20}}(\mathscr{D}_{21}^{\zeta}+\da^2+\da^2\|\mathscr{F}_{\a}\|^2_{L^2(\Sigma_{0}^u)}+\da^2\|\mathscr{H}_{\a}\|^2_{L^2(\Sigma_{0}^u)})\\
 &+\frac{5+C\da^{\frac{1}{2}}}{(2b_{20}-\tfrac{1}{2})(b_{20}+\tfrac{1}{2})}\mu_m^{-2b_{20}}\bar{\mathbb{W}}_{\leq20}\\
	&+C\int_0^t\mu_m^{-2b_{20}}(\bar{\mathbb{W}}_{\leq20}+\bar{\mathbb{Q}}_{\leq20})+C\da^{-1}\mu_m^{-2b_{20}}\int_0^u\bar{\mathbb{Q}}_{\leq20}.
\end{split}\label{K0top2final}
\end{align}
\subsubsection{Contributions associated with $K_1$}
We consider the following space-time integral:
\begin{align}
\begin{split}
	&\int_{W_t^u}T\fai\cdot Y^{\a-1}\hat{X}\tr\chi\cdot (1+\eta^{-2}\mu)LY^{\a}\fai\\
	\leq&\int_{W_t^u}Y^{\a-1}\hat{X}\tr\chi\cdot LY^{\a}\fai+\left(\int_0^t||\mu Y^{\a-1}\hat{X}\tr\chi||^2_{L^2(\Sigma_{t'}^u)}\right)^{\frac{1}{2}}\cdot\left(\int_0^u|| LY^{\a}\fai||^2_{L^2(C_{u'}^t)}\right)^{\frac{1}{2}}\\
 := & S_1+S_2.
\end{split} \label{K1top1}
\end{align}
 Note that 
\begin{align*}
	S_2\leq\da^{\frac{1}{2}}\int_0^t\|\mu Y^{\a-1}\hat{X}\tr\chi\|_{L^2(\Sigma_{t'}^u)}^2+\da^{-\frac{1}{2}}\int_0^u\mathbb{Q}_{\leq\a+1}.
\end{align*}
Hence $S_2$ can be estimated similarly as in Subsection 6.2.1. For $S_1$, it follows from Lemma \ref{integralbypartsl} that
\begin{align*}
	\begin{split}
		S_1= &\int_{W_t^u}(L+\tr\chi)\left(Y^{\a-1}\hat{X}\tr\chi\cdot Y^{\a}\fai\right)-\int_{W_t^u}(L+\tr\chi)(Y^{\a-1}\hat{X}\tr\chi)\cdot Y^{\a}\fai\\
		= &\int_{\Sigma_{t}^u}Y^{\a-1}\hat{X}\tr\chi\cdot Y^{\a}\fai-\int_{\Sigma_0^u}Y^{\a-1}\hat{X}\tr\chi\cdot Y^{\a}\fai -\int_{W_t^u}(L+\tr\chi)(Y^{\a-1}\hat{X}\tr\chi)\cdot Y^{\a}\fai\\
		:= &I_0-I_1-I_2,
	\end{split}
\end{align*}
where
\begin{align*}
	I_0=\int_{\Sigma_{t}^u}Y^{\a-1}\hat{X}\tr\chi\cdot Y^{\a}\fai,\quad  I_1=\int_{\Sigma_0^u}Y^{\a-1}\hat{X}\tr\chi\cdot Y^{\a}\fai,\
 \end{align*}
 and
 \begin{align*}
	I_2=\int_{W_t^u}(L+\tr\chi)(Y^{\a-1}\hat{X}\tr\chi)\cdot Y^{\a}\fai.
\end{align*}
Clearly, $I_0\leq C\|Y^{\a}\fai\|^2_{L^2(\Sigma_0^u)}$. For $I_0$, it follows from Lemma \ref{change} that
\begin{align*}
	\begin{split}
		I_0=-\int_{\Sigma_t^u}Y^{\a-2}\hat{X}\tr\chi\cdot Y^{\a+1}\fai-\int_{\Sigma_{t}^u}\frac{1}{2}\tr\prescript{Y}{}{\slashed{\pi}} Y^{\a}\fai\cdot Y^{\a-2}\hat{X}\tr\chi:=-A_1-A_2.
	\end{split}
\end{align*}
In view of Table \ref{table2}, $A_2$ is a lower order term compared with $A_1$. Note that $Y^{\a-2}\hat{X}\tr\chi$ is not a top order term. Then, it follows from Lemma \ref{lotchiul2} that
\begin{equation*}
\begin{aligned}
	A_1&\leq \|Y^{\a-2}\hat{X}\tr\chi\|_{L^2(\Sigma_{t}^u)}\|Y^{\a+1}\fai\|_{\ltwou}\leq\mu_m^{-b_{20}-\frac{1}{2}}\\
	&\leq\frac{\tfrac{3}{2}+C\da}{b_{20}-\frac{1}{2}}\bar{\mathbb{U}}_{\leq20}+C\int_0^t\mu_m^{-2b_{20}}\bar{\mathbb{U}}_{\leq20}.
\end{aligned}
\end{equation*}
To estimate $I_2$, one first writes it as
\begin{align*}
	I_2=\int_{W_t^u}LY^{\a-1}\hat{X}\tr\chi\cdot Y^{\a}\fai+\tr\chi Y^{\a-1}\hat{X}\tr\chi\cdot Y^{\a}\fai:=B_1+B_2.
\end{align*}
In view of Lemma \ref{2ndff}, $\tr\chi\leq\da$ and  then $B_2$ is a lower order term compared with $B_1$. It suffices to estimate $B_1$ and it follows from \eqref{commutator} that
\begin{align*}
  B_1&=\int_{W_t^u}YLY^{\a-2}\hat{X}\tr\chi+\prescript{Y}{}{\pi}_{L\hat{X}}\hat{X}Y^{\a}\hat{X}\tr\chi\cdot Y^{\a}\fai:=B_{11}+B_{12}.
\end{align*}
In view of Table \ref{table2} and Lemma \ref{faiL2}, $B_{12}$ is a lower order term which can be bounded directly as in Section 4.4. For $B_{11}$, due to Lemma \ref{change}, one writes it as
\begin{align*}
	B_{11}&=-\int_{W_t^u}LY^{\a-2}\hat{X}\tr\chi\cdot Y^{\a+1}\fai-\int_{W_t^u}\frac{1}{2}\tr\prescript{Y}{}{\slashed{\pi}}LY^{\a-2}\cdot Y^{\a}\fai:=-B_{111}-B_{112},
\end{align*} 
where $B_{112}$ is a lower order term compared with $B_{111}$. Thanks to \eqref{Lyatrchi}, 
\begin{align*}
	LY^{\a-2}\hat{X}\tr\chi&=(e+2\tr\chi+O(\da))Y^{\a-2}\hat{X}\tr\chi+\frac{3}{2}Y^{\a-2}\hat{X}^3\fai+O(\da)\cdot O(\fai)_1^{\leq\a+1}.
\end{align*}
Hence it follows from Lemma \ref{lotchiul2} that
\begin{align*}
	B_{111}\leq\left(\frac{3}{2}+C\da\right)\int_0^t\mu_m^{-2b_{20}-1}\bar{\mathbb{U}}_{\leq 20}\leq\frac{\tfrac{3}{2}+C\da}{2b_{20}}\bar{\mathbb{U}}_{\leq20}.
\end{align*}
Collecting the above results yields
\begin{align}
	\begin{split}
    \eqref{K1top1}\leq &\left(\frac{\tfrac{3}{2}+C\da}{b_{20}-\tfrac{1}{2}}+\frac{\tfrac{3}{2}+C\da}{2b_{20}}\right)\mu_m^{-2b_{20}}\bar{\mathbb{U}}_{\leq20}+C\da\mu_m^{-2b_{20}}\bar{\mathbb{W}}_{\leq20}\\
    &+C\int_0^t\mu_m^{-2b_{20}}(\bar{\mathbb{U}}_{\leq20}+\bar{\mathbb{W}}_{\leq20})+C\da^{-\frac{1}{2}}\mu_m^{-2b_{20}}\int_0^u\bar{\mathbb{Q}}_{\leq20}.
\end{split}\label{K1top1final}
\end{align}
Finally, one has
\begin{align}
\begin{split}
	&\int_{W_t^u}\da^2T\fai\cdot Y^{\a-1}T\tr\chi\cdot (1+\eta^{-2}\mu) LY^{\a-1}T\fai\\
	\leq&\int_{W_t^u}\da^2 Y^{\a-1}T\tr\chi\cdot LY^{\a-1}T\fai+\int_{W_t^u}\da^2\mu Y^{\a-1}T\tr\chi\cdot LY^{\a-1}T\fai:=R_1+R_2.
 \end{split}\label{K1top2}
\end{align}
Similarly as before,
\begin{align*}
	R_2&\leq\left(\int_0^t\da^2\|\mu Y^{\a-1}T\tr\chi\|^2_{L^2(\Sigma_{t'}^u)}\right)^{\frac{1}{2}}\cdot\left(\int_0^u\da^2\|LY^{\a-1}T\fai\|_{L^2(\Sigma_{t'}^u)}\right)^{\frac{1}{2}}\\
	&\leq\da^{\frac{1}{2}}\int_0^t\da^2\|\mu Y^{\a-1}T\tr\chi\|^2_{L^2(\Sigma_{t'}^u)}+\da^{-\frac{1}{2}}\int_0^u\mathbb{Q}_{\leq20},
\end{align*}
so that $R_2$ can be bounded directly due to \eqref{topordermu}. Again, in view of Lemma \ref{integralbypartsl}, one writes $R_1$ as
\begin{align*}
	\begin{split}
		R_1=&\da^2\int_{W_t^u}(L+\tr\chi)[Y^{\a-1}T\tr\chi\cdot Y^{\a-1}T\fai]-\da^2\int_{W_t^u}(L+\tr\chi)(Y^{\a-1}T\tr\chi)\cdot Y^{\a-1}T\fai\\
		=&\da^2\int_{\Sigma_t^u}Y^{\a-1}T\tr\chi\cdot Y^{\a-1}T\fai-\da^2\int_{\Sigma_0^u}Y^{\a-1}T\tr\chi\cdot Y^{\a-1}T\fai\\
		&-\da^2\int_{W_t^u}(L+\tr\chi)(Y^{\a-1}T\tr\chi)\cdot Y^{\a-1}T\fai:=H_0-H_1-H_2,
	\end{split}
\end{align*}
where
\begin{align}
\begin{split}
	H_0&=\da^2\int_{\Sigma_t^u}Y^{\a-1}T\tr\chi\cdot Y^{\a-1}T\fai,\ H_1=\da^2\int_{\Sigma_0^u}Y^{\a-1}T\tr\chi\cdot Y^{\a-1}T\fai,\\
 H_2&=\da^2\int_{W_t^u}(L+\tr\chi)(Y^{\a-1}T\tr\chi)\cdot Y^{\a-1}T\fai.
 \end{split}
\end{align}
Clearly, $H_1\leq\da^2||Y^{\a-1}T\fai||^2_{L^2(\Sigma_0^u)}$. For $H_0$, it follows from Lemma \ref{change} that
\begin{align*}
	H_0=-\da^2\int_{\Sigma_{t}^u}Y^{\a-2}T\tr\chi\cdot Y^{\a}T\fai-\da^2\int_{\Sigma_{t}^u}\tfrac{1}{2}\tr\prescript{Y}{}{\slashed{\pi}} Y^{\a-2}T\tr\chi\cdot Y^{\a-1}T\fai:=-C_1-C_2.
\end{align*}
Since $|\tr\prescript{Y}{}{\slashed{\pi}}| \les\da$, $C_2$ is a lower order term compared with $C_1$. Due to \eqref{equationtchi}, $Y^{\a-2}T\tr\chi$ is not a top order term and it holds that 
\begin{equation*}
\begin{aligned}
\|Y^{\a-2}T\tr\chi\|_{\ltwou}\leq &\|Y^{\a}\mu\|_{\ltwou}+\da\|Y^{\a-1}Tv^1\|_{\ltwou}+\|Y^{\a-1}Tv^2\|_{\ltwou}\\
+&\mu\|Y^{\a}\fai\|_{\ltwou}+\text{l.o.ts}.
\end{aligned}
\end{equation*}
Hence it follows from Lemmas \ref{lotchiul2} and \ref{crucial} that
\begin{align*}
	\begin{split}
	 	C_1&\leq\da\|Y^{\a-2}T\fai\|_{\ltwou}\cdot\da||Y^{\a}T\fai||_{\ltwou}
   \\
   &\leq C\da\int_0^t(\sqrt{\mathbb{W}_{\leq20}}+\mu_m^{-\frac{1}{2}}\sqrt{\mathbb{U}_{\leq20}})\cdot\mu_m^{-\frac{1}{2}}\sqrt{\mathbb{U}_{\leq20}}\\
	 	&\leq C\da\mu_m^{-2b_{20}}(\bar{\mathbb{W}}_{\leq20}+\bar{\mathbb{U}}_{\leq20}+\bar{\mathbb{W}}^{\text{par}}_{\leq20})+C\da\int_0^t\mu_m^{-2b_{20}}(\bar{\mathbb{W}}_{\leq20}+\bar{\mathbb{U}}_{\leq20}+\bar{\mathbb{W}}^{\text{par}}_{\leq20}).
	\end{split}
\end{align*}
For $H_2$, it can be divided as
\begin{align*}
	-\da^2\int_{W_t^u}LY^{\a-1}T\tr\chi\cdot Y^{\a-1}T\fai-\da^2\int_{W_t^u}\tr\chi Y^{\a-1}T\tr\chi\cdot Y^{\a-1}T\fai:=-D_1-D_2.
\end{align*}
To estimate $D_2$, one uses Lemma \ref{change} as
\begin{equation*}
\begin{aligned}
	D_2&=\int_{W_t^u}Y^{\a-2}T\tr\chi\cdot Y(\tr\chi\cdot Y^{\a-1}T\fai)+\tfrac{1}{2}\tr\prescript{Y}{}{\slashed{\pi}}\tr\chi Y^{\a-2}T\tr\chi\cdot Y^{\a-1}T\fai:=D_{21}+D_{22},
\end{aligned}
\end{equation*}
where $D_{22}$ is a lower order term compared with $D_{21}$. It suffices to consider the contribution from $\tr\chi Y^{\a}T\fai$. Note that $Y^{\a-2}T\tr\chi$ is not a top order term. Thus one has
\[
D_{21}\leq\da\int_0^t\da||Y^{\a-2}T\tr\chi||_{L^2(\Sigma_{t'}^u)}\cdot \da||Y^{\a}T\fai||_{L^2(\Sigma_{t'}^u)},
\]
so that $D_{21}$ can be bounded in the same manner as $C_1$. It remains to estimate $D_1$, which can be written as
\begin{equation*}
\begin{aligned}
	D_1= &-\da^2\int_{W_t^u}YLY^{\a-2}T\tr\chi\cdot Y^{\a-1}T\fai+\ypi_{L\hat{X}}\hat{X}Y^{\a-2}T\tr\chi\cdot Y^{\a-1}T\fai:= -D_{11}-D_{12}.
\end{aligned}  
\end{equation*}
Since $|\ypi_{L\hat{X}}|\leq\da$, then $D_{12}$ can be bounded similarly as $D_{2}$. For $D_{11}$, one again uses Lemma \ref{change} to rewrite it as
\begin{align*}
	D_{11}&=\da^2\int_{W_t^u}LY^{\a-2}T\tr\chi\cdot Y^{\a}T\fai+\tfrac{1}{2}\tr\prescript{Y}{}{\slashed{\pi}} Y^{\a-2}T\tr\chi\cdot Y^{\a-1}T\fai:=D_{111}+D_{112},
\end{align*}
where it suffices to estimate $D_{111}$. Thanks to \eqref{equationtchi} and \eqref{Lyamu}, one obtains
\begin{align*}
	LY^{\a-2}T\tr\chi&=LY^{\a+1}\mu+\hat{X}^iLY^{\a-2}\hat{X}Tv^i+\mu LY^{\a+1}\fai+\text{l.o.ts},\\
	LY^{\a+1}\mu&=\frac{3}{2}\hat{T}^1Y^{\a+1}Tv^1+\frac{3}{2}\mu Y^{\a+1}L\fai+(e+O(\da))Y^{\a+1}\mu+O(\da)\cdot O(\fai)_{1}^{\leq\a+2}.
\end{align*}
Therefore, it follows from Lemma \ref{crucial} that
\begin{align*}
	\begin{split}
	D_{111}&\leq\int_0^t\da(\sqrt{\mathbb{W}_{\leq20}}+\mu_m^{-\frac{1}{2}}\sqrt{\mathbb{U}_{\leq20}}+\mu_m^{-\frac{1}{2}}\sqrt{\mathbb{W}^{\text{par}}_{\leq20}})\cdot\mu_m^{-\frac{1}{2}}\sqrt{\mathbb{U}_{\leq20}}\ dt'\\
	&\leq C\da\mu_m^{-2b_{20}}(\bar{\mathbb{W}}_{\leq20}+\bar{\mathbb{U}}_{\leq20}+\bar{\mathbb{W}}^{\text{par}}_{\leq20})+C\da\int_0^t\mu_m^{-2b_{20}}(\bar{\mathbb{W}}_{\leq20}+\bar{\mathbb{U}}_{\leq20}+\bar{\mathbb{W}}^{\text{par}}_{\leq20}).
\end{split}
\end{align*}
Hence collecting the above results yields
\begin{align}
	\begin{split}
	\eqref{K1top2}\leq &C\da\mu_m^{-2b_{20}}(\bar{\mathbb{W}}_{\leq20}+\bar{\mathbb{U}}_{\leq20}+\bar{\mathbb{W}}^{\text{par}}_{\leq20})\\
	&+C\da\int_0^t\mu_m^{-2b_{20}}(\bar{\mathbb{W}}_{\leq20}+\bar{\mathbb{U}}_{\leq20}+\bar{\mathbb{W}}^{\text{par}}_{\leq20})
	+C\da^{-\frac{1}{2}}\mu_m^{-2b_{20}}\int_0^u\bar{\mathbb{Q}}_{\leq20}.
	\end{split}\label{K1top2final}
\end{align}
\begin{remark}\label{Wpar}
	 It follows from Remark \ref{Tv2} that $|Tv^2|\les\da$. This implies that for $\fai=v^2$, the estimates \eqref{K0top1final}, \eqref{K0top2final}, \eqref{K1top1final} and \eqref{K1top2final} hold with an additional $\da$ in the front of right-hand side of these inequalities. More precisely, for \eqref{K0top1final} with $\fai=v^2$, one has 
	 \begin{align*}
	 	\begin{split}
	 	\int_{W_t^u}Tv^2\cdot Y^{\a-1}\hat{X}\tr\chi\cdot \underline{L}Y^{\a}v^2\leq &  C\da\mu_m^{-2b_{20}}(\bar{\mathbb{W}}_{\leq20}+\bar{\mathbb{U}}_{\leq20}+\bar{\mathbb{W}}^{\text{par}}_{\leq20})\\
	 & +C\da\int_0^t\mu_m^{-2b_{20}}(\bar{\mathbb{W}}_{\leq20}+\bar{\mathbb{U}}_{\leq20})+C\mu_m^{-2b_{20}}\int_0^u\bar{\mathbb{Q}}_{\leq20}.
	 \end{split}
	 \end{align*} 
\end{remark}
\subsection{Top order energy estimates}
For $Z^{\a}\in \mathcal{Z}^{20,1}$, it follows from \eqref{energywave} that 
\begin{align}\label{toporderK11}
\begin{split}
&\sum_{|\a'|\leq|\a|}\da^{2m'}\left(E_1(Z^{\a'}\fai)(t,u)
+F_1(Z^{\a'}\fai)(t,u)+K(Z^{\a'}\fai)(t,u)\right)\\
\leq & C\sum_{|\a'|\leq|\a|}\da^{2m'}E_1(Z^{\a'}_i\fai)(0,u)+
\sum_{|\a'|\leq|\a|}\da^{2m'}\int_{W_t^{u}}\mu Q_{1}^{|\a'|},
\end{split}
\end{align}
where $m'\leq1$ is the number of $T$ in $Z^{\a'}$ and $\mu Q_1^{|\a|}:=(-\mathfrak{F}_{|\a|+1}K_1(Z^{\a}\fai)+\frac{1}{2}\mu T^{\a\be}(Z^{\a}\fai)\pi_{1,\a\be})$. The integrals $\int_{W_t^{u}}\mu Q_{1}^{|\a|}$ contain the following two parts
\begin{itemize}
\item[(1)] Contributions from $T^{\a\be}(Z^{\a}\fai) $ which can be bounded by high order energies and fluxes directly and have been treated in previous sections.
\item[(2)] Major terms of the form
\begin{align}\label{integralQ1a+2}
\int_{W^{u}_t}-\mathfrak{F}_{|\a|+1}\cdot K_1 Z^{\a}\fai\ d\vartheta' du'dt',
\end{align}
where the contributions from the top order acoustical terms have been estimated in the last subsection, see\eqref{K1top1final} and \eqref{K1top2final}. The contributions from other terms in $\mathfrak{F}_{|\a|+1}$ have been given in Lemmas \ref{inhomo1} and \ref{inhomo2}.
\end{itemize}
Collecting the results stated above yields
\begin{align}
	\begin{split}
	&\mu_m^{2b_{20}}\sum_{|\a'|\leq|\a|}\da^{2m'}\left(E_1(Z^{\a'}\fai)(t,u)
	+F_1(Z^{\a'}\fai)(t,u)+K(Z^{\a'}\fai)(t,u)\right)\\
	\leq&\left(\frac{\tfrac{3}{2}}{b_{20}-\tfrac{1}{2}}+\frac{\tfrac{3}{2}}{2b_{20}}+C\da\right)\bar{\mathbb{U}}_{\leq20}+C\da(\bar{\mathbb{W}}_{\leq20}+\bar{\mathbb{W}}_{\leq20}^{\text{par}})+C\da^{\frac{1}{2}}\bar{\mathbb{K}}_{\leq20}\\
 &+C\int_0^t\bar{\mathbb{W}}_{\leq20}+\bar{\mathbb{U}}_{\leq20}\ dt'+\int_0^t\bar{\mathbb{V}}_{\leq21}\\
&+\da^{-1}\int_0^u\bar{\mathbb{V}}_{\leq21}+C\da^{\frac{1}{2}}\int_0^t\mu_m^{-\frac{1}{2}}\mathbb{U}_{\leq20}+C\da^{-\frac{1}{2}}\int_0^u\bar{\mathbb{Q}}_{\leq20}.
	\end{split}\label{energytopK11}
\end{align}
Since $b_{20}=5+\frac{3}{4}$, $\frac{\tfrac{3}{2}}{b_{20}-\tfrac{1}{2}}+\frac{\tfrac{3}{2}}{2b_{20}}<1/2$. It follows from Lemma \ref{crucial} that \[\int_0^t\mu_m^{-\frac{1}{2}}\mathbb{U}_{\leq20}\leq\frac{1}{2b_{20}-\tfrac{1}{2}}\bar{\mathbb{U}}_{\leq20}.\] Therefore, in view of $\mu_m\leq 1$, top order energy estimates for the vorticity \eqref{toporderenergyzeta} and the monotonicity of right hand side of \eqref{energytopK11}, \eqref{energytopK11} implies that for sufficiently small $\da$,
\begin{align}
	\begin{split}
		&\bar{\mathbb{U}}_{\leq20}(t,u)+\bar{\mathbb{Q}}_{\leq20}(t,u)+\bar{\mathbb{K}}_{\leq20}(t,u)\\
		\leq&C\da(\bar{\mathbb{W}}_{\leq20}+\bar{\mathbb{W}}_{\leq20}^{\text{par}})+C\int_0^t\bar{\mathbb{W}}_{\leq20}+\bar{\mathbb{U}}_{\leq20}\ dt'+C\da^{-\frac{1}{2}}\int_0^u\bar{\mathbb{Q}}_{\leq20} +\mathbb{U}_{\leq20}(0,u)+\mathscr{D}_{21}^{\zeta}\\
 &+\da^2\|\mathscr{H}_{\a}\|^2_{L^2(\Sigma_{0}^u)}+\|\mathscr{F}_{\a}\|^2_{L^2(\Sigma_0^u)}+\|Y^{\a}\tr\chi\|^2_{L^2(\Sigma_0^u)}+\|Y^{\a+1}\mu\|_{L^2(\Sigma_0^u)}^2
	\end{split}\label{topenergyK12}
\end{align}
for $|\a|=19$. Note that on $\Sigma_0$, $\mu=\eta$, $\tr\chi=\chi=\pa_2v^2$ and $Y=\pa_2$. Then, 
\begin{equation*}
\da^2\|\mathscr{H}_{\a}\|^2_{L^2(\Sigma_{0}^u)}+\|\mathscr{F}_{\a}\|^2_{L^2(\Sigma_0^u)}+\|Y^{\a}\tr\chi\|^2_{L^2(\Sigma_0^u)}+\|Y^{\a+1}\mu\|_{L^2(\Sigma_0^u)}^2\leq C(\mathbb{W}_{\leq 20}+\mathbb{U}_{\leq 20})(0,u).
\end{equation*}
We denote $$\mathscr{D}_{20}^{\text{wave}}:=(\mathbb{W}_{\leq20}+\mathbb{U}_{\leq20})(0,u)\quad \text{and}\quad  \mathscr{D}_{21}=\mathscr{D}_{21}^{\zeta}+\mathscr{D}_{20}^{\text{wave}}.$$ 
Since $\bar{\mathbb{U}},\ \bar{\mathbb{W}},$ and $\bar{\mathbb{K}}$ are all non-negative, then keeping only $\bar{\mathbb{U}}_{\leq20}$ on the left hand side of \eqref{topenergyK12} yields
\begin{equation}\label{topenergyK13}
\begin{aligned}
\bar{\mathbb{U}}_{\leq20}(t,u)\leq & C\mathscr{D}_{21}+C\da(\bar{\mathbb{W}}_{\leq20}+\bar{\mathbb{W}}_{\leq20}^{\text{par}})+C\int_0^t\bar{\mathbb{W}}_{\leq20}+\bar{\mathbb{U}}_{\leq20}\ dt'+C\da^{-\frac{1}{2}}\int_0^u\bar{\mathbb{Q}}_{\leq20}.
\end{aligned}
\end{equation}
Applying Gronwall inequality to \eqref{topenergyK13} yields
\begin{equation}
\begin{aligned}
	&\bar{\mathbb{U}}_{\leq20}(t,u)+\bar{\mathbb{Q}}_{\leq20}(t,u)+\bar{\mathbb{K}}_{\leq20}(t,u)
 \\
 \leq & C\mathscr{D}_{21}+C\da(\bar{\mathbb{W}}_{\leq20}+\bar{\mathbb{W}}_{\leq20}^{\text{par}})+C\int_0^t\bar{\mathbb{W}}_{\leq20}+C\da^{-\frac{1}{2}}\int_0^u\bar{\mathbb{Q}}_{\leq20}.\label{topenergyK14}
\end{aligned} 
\end{equation}
Applying same argument to $\bar{\mathbb{Q}}_{\leq20}$ yields
\begin{align}
	\bar{\mathbb{U}}_{\leq20}(t,u)+\bar{\mathbb{Q}}_{\leq20}(t,u)+\bar{\mathbb{K}}_{\leq20}(t,u)\leq C\mathscr{D}_{21}+C\da(\bar{\mathbb{W}}_{\leq20}+\bar{\mathbb{W}}_{\leq20}^{\text{par}})+C\int_0^t\bar{\mathbb{W}}_{\leq20}.\label{topenergyK1final}
\end{align}
For any $\a$ with $Z^{\a}\in\mathcal{Z}^{ 20;1}$, it follows from \eqref{energywave} that 
\begin{align}\label{toporderK01}
	\begin{split}		
        &\sum_{|\a'|\leq|\a|}\da^{2m'}\left(E_0(Z^{\a'}\fai)(t,u)
		+F_0(Z^{\a'}\fai)(t,u)\right)\\
		\leq & C\sum_{|\a'|\leq|\a|}\da^{2m'}E_0(Z^{\a'}_i\fai)(0,u)+
		\sum_{|\a'|\leq|\a|}\da^{2m'}\int_{W_t^{u}}\mu Q_{0}^{|\a'|},
	\end{split}
\end{align}
where $m'\leq1$ is the number of $T$ in $Z^{\a'}$ and $\mu Q_1^{|\a|}:=(-\mathfrak{F}_{|\a|+1}K_0(Z^{\a}\fai)+\frac{1}{2}\mu T^{\a\be}(Z^{\a}\fai)\pi_{1,\a\be})$. The error integrals $\int_{W_t^{u}}\mu Q_{1}^{|\a|}$ contain the following two parts
\begin{itemize}
	\item[(1)] Contributions from $T^{\a\be}(Z^{\a}\fai) $ which can be bounded by high order energies and fluxes directly and have been treated in previous sections.
	\item[(2)] Major terms of the form
	\begin{align}\label{integralQ0a+2}
		\int_{W^{u}_t}-\mathfrak{F}_{|\a|+1}\cdot K_0 Z^{\a}\fai\ d\vartheta' du'dt',
	\end{align}
	where the contributions from the top order acoustical terms have been estimated in the last subsection, see\eqref{K0top1final} and \eqref{K0top2final}. The contributions from other terms in $\mathfrak{F}_{|\a|+1}$ have been given in Lemma \ref{inhomo1} and \ref{inhomo2}.
\end{itemize}
Collecting \eqref{K0top1final}, \eqref{K0top2final}, \eqref{topenergyK1final} and Lemmas \ref{inhomo1}, and \ref{inhomo2} yields
\begin{align}
	\begin{split}	&\mu_m^{2b_{20}}\sum_{|\a'|\leq|\a|}\da^{2m'}\left(E_0(Z^{\a'}\fai)(t,u)
	+F_0(Z^{\a'}\fai)(t,u)\right)\\
 \leq & C\mathscr{D}_{21}+\frac{8+C\da^{\frac{1}{2}}}{(2b_{20}-\tfrac{1}{2})(b_{20}+\tfrac{1}{2})}\bar{\mathbb{W}}_{\leq20}+\frac{1}{2}\bar{\mathbb{K}}_{\leq20}\\	&+C\int_0^t(\bar{\mathbb{W}}_{\leq20}+\bar{\mathbb{U}}_{\leq20})+C\da^{\frac{1}{2}}\int_0^t\mu_m^{-\frac{1}{2}}\mathbb{U}_{\leq20}+C\int_0^t\bar{\mathbb{V}}_{\leq21}+C\da^{-1}\int_0^u\bar{\mathbb{Q}}_{\leq20}.
	\end{split}\label{toporderK02}
\end{align}
The choice of $b_{20}$ gives $\frac{8+C\da^{\frac{1}{2}}}{(2b_{20}-\tfrac{1}{2})(b_{20}+\tfrac{1}{2})}<\frac{1}{2}$. Note that $\da^{\frac{1}{2}}\int_0^t\mu_m^{-\frac{1}{2}}\mathbb{U}_{\leq20}\leq C\da\bar{\mathbb{U}}_{\leq20}$ due to Lemma \ref{crucial}. Hence it follows from the top order energy estimates associated with $K_1$ \eqref{topenergyK1final} and for the vorticity \eqref{toporderK02} that
\begin{align}
	\bar{\mathbb{W}}_{\leq20}(t,u)+\bar{\mathbb{Q}}_{\leq20}(t,u)\leq C\mathscr{D}_{21}+C\int_0^t\bar{\mathbb{W}}_{\leq20}+C\da^{-1}\int_0^u\bar{\mathbb{Q}}_{\leq20}. \label{toporderK03}
\end{align}
Therefore, keeping only $\bar{\mathbb{W}}_{\leq20}$ on the left hand side of \eqref{toporderK03} and applying Gronwall inequality to it yield
\begin{align}
    \bar{\mathbb{W}}_{\leq20}(t,u)+\bar{\mathbb{Q}}_{\leq20}(t,u)\leq C\mathscr{D}_{21}+C\da^{-1}\int_0^u\bar{\mathbb{Q}}_{\leq20}. \label{toporderK04}
\end{align}
Similarly, applying Gronwall inequality to $\bar{\mathbb{Q}}_{\leq20}$ yields
\begin{align}
	\bar{\mathbb{W}}_{\leq20}(t,u)+\bar{\mathbb{Q}}_{\leq20}(t,u)\leq C\mathscr{D}_{21}. \label{toporderK0final}
\end{align}
Therefore, one obtains the following estimates for top order energies and fluxes:
\begin{align}
	\begin{split}
	\bar{\mathbb{W}}_{\leq20}(t,u)+\bar{\mathbb{U}}_{\leq20}(t,u)+\bar{\mathbb{Q}}_{\leq20}(t,u)+\bar{\mathbb{K}}_{\leq20}(t,u)&\leq C\mathscr{D}_{21},\\
	\bar{\mathbb{V}}_{\leq21}(t,u)&\leq C\da\mathscr{D}_{21}.
	\end{split}\label{topenergy}
\end{align}
\begin{remark}
	Applying the similar procedure to $\bar{\mathbb{W}}_{\leq20}^{\text{par}}$ and noticing Remark \ref{Wpar} lead to 
	\begin{align}
		\bar{\mathbb{W}}_{\leq20}^{\text{par}}\leq C\da\mathscr{D}_{21}.
	\end{align}
\end{remark}
\section{The decent scheme and lower-order energy hierarchy}\label{section7}
\hspace*{2pt} The top order energy estimates \eqref{topenergy} are equivalent to 
\begin{align}
	\begin{split}
		\mathbb{W}_{\leq20}(t,u)+\mathbb{U}_{\leq20}(t,u)+\mathbb{Q}_{\leq20}(t,u)+\mathbb{K}_{\leq20}(t,u)&\leq C\mu_m^{-10-\tfrac{3}{2}}\mathscr{D}_{21},\\
		\mathbb{V}_{\leq21}&\leq C\da\mu_m^{-11-\frac{3}{2}}\mathscr{D}_{21}.
	\end{split}
\end{align}
The above energy estimates are singular in $\mu$ so that it is not sufficient to close the bootstrap assumptions. The key point is that by lowering one order of derivatives in energy estimates, the power of $\mu$ decreases by one. After several steps in such a process, the power of $\mu$ is eliminated while the remaining order of derivatives in the energy estimates remains large enough to recover the bootstrap assumptions. 

We turn to the energy estimates for next-to-top order. Let $|\a|=19$. First, one has
\begin{align}
\begin{split}
	&\int_{W_t^{u}}T\fai\cdot Y^{\a-1}\hat{X}\tr\chi\cdot\dl Y^{\a}\fai\leq\int_0^t\|Y^{\a-1}\hat{X}\tr\chi\|_{L^2(\Sigma_{t'}^u)}\cdot\|\dl Y^{\a}\fai\|_{L^2(\Sigma_{t'}^u)}.
 \end{split}\label{ntopK01}
\end{align}
Since $|\a|=19$, one has \[\|\dl Y^{\a}\fai\|_{L^2(\Sigma_{t}^u)}\leq\mu_m^{-b_{19}}\sqrt{\bar{\mathbb{W}}_{\leq19}}.\] Note also that $Y^{\a-1}\hat{X}\tr\chi$ is not a top order acoustical term. Then, it follows from Lemma \ref{lotchiul2} that
\begin{equation*}
\begin{aligned}
	\|Y^{\a-1}\hat{X}\tr\chi\|_{\ltwou}\leq 
 &
	C\mathscr{D}_{21}+\frac{\tfrac{3}{2}+C\da}{b_{20}-\tfrac{1}{2}}\mu_m^{-b_{19}-\frac{1}{2}}\sqrt{\bar{\mathbb{U}}_{\leq20}}.
\end{aligned}    
\end{equation*}
Hence, by the top order energy estimates \eqref{topenergy} and Lemma \ref{crucial}, one obtains
\begin{align}
	\begin{split}
		&\int_0^t\|Y^{\a-1}\hat{X}\tr\chi\|_{L^2(\Sigma_{t'}^u)}\cdot\|\dl Y^{\a}\fai\|_{L^2(\Sigma_{t'}^u)}\\
  \leq & C\mathscr{D}_{21}+
		\frac{(\tfrac{3}{2}+C\da)\mu_m^{-2b_{20}+\tfrac{1}{2}}}{(b_{20}-\tfrac{1}{2})(2b_{19}-\tfrac{1}{2})}\sqrt{\bar{\mathbb{U}}_{\leq20}}\sqrt{\bar{\mathbb{W}}_{\leq19}}
  \leq 
		C\mu_m^{-2b_{19}}\mathscr{D}_{21}+\frac{1}{10}\mu_m^{-2b_{19}}\bar{\mathbb{W}}_{\leq19}.
	\end{split}\label{ntopK01final}
\end{align}
Next, one has
\begin{align*}
	&\da^2\int_{W_t^u}T\fai\cdot Y^{\a-1}T\tr\chi\cdot \dl Y^{\a-1}T\fai\leq\int_0^t\da\|Y^{\a-1}T\tr\chi\|_{L^2(\Sigma_{t'}^u)}\cdot\da
 \|\dl Y^{\a-1}T\fai\|_{L^2(\Sigma_{t'}^u)}.
\end{align*}
Similarly, $\da\|\dl Y^{\a-1}T\fai\|_{\ltwou}\leq \sqrt{\mathbb{W}_{\leq19}}$. Since $Y^{\a-1}T\tr\chi$ is a lower order acoustical term, it follows from Lemma \ref{lotchiul2} and \eqref{equationtchi} that
\begin{align*}
	\da\|Y^{\a-1}T\tr\chi\|_{\ltwou}\leq C\da\mathscr{D}_{21}+C\da\int_0^t\sqrt{\mathbb{W}_{\leq_{20}}}+\mu_m^{-\frac{1}{2}}\sqrt{\mathbb{U}_{\leq20}}.
\end{align*}
Hence, it holds that
\begin{align}
	\begin{split}
		&\int_0^t\da\|Y^{\a-1}T\tr\chi\|_{L^2(\Sigma_{t'}^u)}\cdot\da\|\dl Y^{\a-1}T\fai\|_{L^2(\Sigma_{t'}^u)}\\
  \leq & C\da\mathscr{D}_{21}+C\da\mu_m^{-2b_{19}}(\bar{\mathbb{W}}_{\leq20}+\bar{\mathbb{W}}_{\leq20})+C\da\mu_m^{-2b_{19}}\bar{\mathbb{W}}_{\leq19}\\
		\leq &C\da\mu_m^{-2b_{19}}\mathscr{D}_{21}+C\da\mu_m^{-2b_{19}}\bar{\mathbb{W}}_{\leq19}.
	\end{split}\label{ntopK02final}
\end{align}
Next, one has
\begin{align}
	\begin{split}
\int_{W_t^{u}}T\fai\cdot Y^{\a-1}\hat{X}\tr\chi\cdot LY^{\a}\fai&\leq\left(\int_0^t\|Y^{\a-1}\hat{X}\tr\chi\|^2_{L^2(\Sigma_{t'}^u)}\right)^{\frac{1}{2}}\left(\int_0^u\|L Y^{\a}\fai\|^2_{L^2(C_{u'}^t)}\right)^{\frac{1}{2}}\\
&\leq C\da^{\frac{1}{2}}\mathscr{D}_{21}+C\da^{\frac{1}{2}}\int_0^t\mu_m^{-2b_{20}+1}\bar{\mathbb{U}}_{\leq20}+C\da^{-\frac{1}{2}}\int_0^u\mathbb{Q}_{\leq19}\\
&\leq C\da^{\frac{1}{2}}\mu_m^{-2b_{19}}\mathscr{D}_{21}+C\da^{-\frac{1}{2}}\mu_m^{-2b_{19}}\int_0^u\bar{\mathbb{Q}}_{\leq19}.
\end{split}\label{ntopK11final}
\end{align}
where Lemma \ref{lotchiul2} and \eqref{topenergy} have been used. Finally, it holds that
\begin{align}
	\begin{split}
	\da^2\int_{W_t^u}T\fai\cdot Y^{\a-1}T\tr\chi\cdot LY^{\a}\fai &\leq \da\left(\int_0^t\|Y^{\a-1}T\tr\chi\|^2_{L^2(\Sigma_{t'}^u)}\right)^{\frac{1}{2}}\left(\int_0^u\da^2\|L Y^{\a-1}T\fai\|^2_{L^2(C_{u'}^t)}\right)^{\frac{1}{2}}\\
 &\leq C\da\mathscr{D}_{21}+C\da(\bar{\mathbb{W}}_{\leq20}+\bar{\mathbb{U}}_{\leq20})\int_0^t\mu_m^{-2b_{20}+1}\ dt'+C\int_0^u\mathbb{Q}_{\leq19}\\
 &\leq C\da\mu_m^{-2b_{19}}\mathscr{D}_{21}+C\mu_m^{-2b_{19}}\int_0^u\bar{\mathbb{Q}}_{\leq19}.
	\end{split}\label{ntopK12final}
\end{align}
where Lemma \ref{lotchiul2} and \eqref{topenergy} have been used. Then, we turn to next-to-top order estimates for the vorticity. To this end, note that
\begin{align}
\begin{split}
	&\int_{W_t^u}\hat{X}(\zeta-\p)\cdot Y^{\a+1}(\zeta-\p)\cdot Y^{\a-1}\slashed{\Delta}\mu\\
	\leq&\da\int_{W_t^u}Y^{\a+1}(\zeta-\p)\cdot Y^{\a+1}\mu\leq\da\int_0^t||Y^{\a+1}(\zeta-\p)||_{L^2(\Sigma_{t'}^u)}||Y^{\a+1}\mu||_{L^2(\Sigma_{t'}^u)}\\
	\leq &C\da\mathscr{D}_{21}+C\da\int_0^t\mu_m^{-2b_{20}+\frac{1}{2}}\sqrt{\bar{\mathbb{V}}_{\leq20}}\left(\sqrt{\bar{\mathbb{W}}_{\leq20}}+\sqrt{\bar{\mathbb{U}}_{\leq20}}\right)\ dt'\\
 \leq &C\da\mu_m^{-b_{20}}\mathscr{D}_{21}+C\da\mu_m^{-2b_{20}}\bar{\mathbb{V}}_{\leq20}(t,u).
	\end{split}\label{ntopzetafinal}
\end{align}
where the Lemmas \ref{lotchiul2} and \ref{crucial} has been used.
\subsection{Next-to-top order energy estimates}
\hspace*{2pt} We first derive the next-to-top order energy estimates for the vorticity. Let $|\a|=19$ and $P_i^{\a+1}\in\mathcal{P}^{20;3}$. It follows from \eqref{toporderenergyzeta} that 
\begin{align}
	\begin{split}
	&\sum_{|\a'|\leq|\a|}\da^{2(p'-1)_+}\left(E^{\zeta}(P_i^{\a'+1}(\zeta-\varrho))+F^{\zeta}(P_i^{\a'+1}(\zeta-\varrho))\right)(t,u)\\
	\leq&\sum_{|\a'|\leq|\a|}\da^{2(p-1)_+}E^{\zeta}(P_i^{\a'+1}(\zeta-\varrho))(0,u)+\sum_{|\a'|\leq|\a|}\int_{W_t^u}\da^{2(p-1)_+}\mu Q_{|\a'|+1}^{\zeta}\\
	\leq&C\da\mathscr{D}_{21}+C\da\mu_m^{-2b_{20}}\bar{\mathbb{V}}_{\leq20}(t,u)+C(1+\da^{-\frac{1}{2}})\int_0^u\mathbb{V}_{\leq20}\ du'+C\da\mu_m^{-2b_{20}}\bar{\mathbb{W}}_{\leq20}(t,u)\\
	&+C\da\int_0^t\mu_m^{-\frac{1}{2}}\mathbb{U}_{\leq19}\ dt'+C\da\mu_m^{-2b_{19}}\bar{\mathbb{U}}_{\leq19}(t,u)+C\da\mu_m^{-2b_{19}}\int_0^u\bar{\mathbb{Q}}_{\leq19}\ du'
 \end{split}\label{ntopenergyzeta1}
\end{align}
where $p'$ is the number of $L$'s in $P_i^{\a'+1}$, and \eqref{commutationvorticity} and \eqref{ntopzetafinal} have been used. Since the left hand side of \eqref{ntopenergyzeta1} is increasing with respect to $t$, with the aid of \eqref{topenergy}, one obtains that for sufficiently small $\da$,
\begin{align}
	\begin{split}
	\bar{\mathbb{V}}_{\leq20}(t,u)\leq &C\da\mathscr{D}_{21}+C\da^{-\frac{1}{2}}\int_0^u\bar{\mathbb{V}}_{\leq20}\ du'+C\da(\bar{\mathbb{W}}_{\leq19}+\bar{\mathbb{U}}_{\leq19})(t,u)+C\da\int_0^u\bar{\mathbb{Q}}_{\leq19} \ du'.
	\end{split}\label{ntopzeta2}
\end{align}
Applying Gronwall inequality to \eqref{ntopzeta2} yields
\begin{align}
	\bar{\mathbb{V}}_{\leq20}(t,u)\leq C\da\mathscr{D}_{21}+C\da(\bar{\mathbb{U}}_{\leq19}+\bar{\mathbb{W}}_{\leq19})(t,u)+C\da\int_0^u\bar{\mathbb{Q}}_{\leq19}\ du'.\label{ntopenergyzeta}
\end{align}
We then turn to the next-to-top order energy estimates associated with $K_0$ and $K_1$. Let $|\a|=19$ and $Z_i^{\a}\in\mathcal{Z}^{19;1}$. Similar to \eqref{toporderK11}, it follows from \eqref{energytopK11}, \eqref{ntopK11final} and \eqref{ntopK12final} that
\begin{align}
	\begin{split}
		&\sum_{|\a'|\leq|\a|}\da^{2m'}(E_1(Z_i^{\a}\fai)+F_1(Z_i^{\a}\fai)+K(Z_i^{\a}\fai))(t,u)\\		\leq&\sum_{|\a'|\leq|\a|}\da^{2m'}E_1(Z_i^{\a}\fai)(0,u)+C\da\mu_m^{-2b_{19}}\mathscr{D}_{21}\\
   &+C\da\mu_m^{-2b_{19}}\bar{\mathbb{U}}_{\leq19}(t,u)+C\da\mu_m^{-2b_{19}}\bar{\mathbb{W}}_{\leq19}(t,u)
  +C\da^{\frac{1}{2}}\mathbb{K}_{\leq19}(t,u)\\
  &+C\da^{-\frac{1}{2}}\int_0^u\mathbb{Q}_{\leq 19}\ du'+C\int_0^t\mathbb{V}_{\leq20}\ dt'+C\da^{-\frac{1}{2}}\int_0^u\mathbb{V}_{\leq20}.
	\end{split}\label{ntopenergyK1}
\end{align}
Since the left hand side of \eqref{ntopenergyK1} is increasing with respect to $t$, combining \eqref{ntopenergyzeta} and \eqref{ntopenergyK1} yields that for sufficiently $\da$,
\begin{align}
	\begin{split}
	&\bar{\mathbb{U}}_{\leq19}(t,u)+\bar{\mathbb{Q}}_{\leq19}(t,u)+\bar{\mathbb{K}}_{\leq19}(t,u)\\
 \leq & C\mathscr{D}_{21}	+C\da\bar{\mathbb{W}}_{\leq19}+C\da\int_0^t\bar{\mathbb{U}}_{\leq19}+\bar{\mathbb{W}}_{\leq19}\ dt'+C\da^{-\frac{1}{2}}\int_0^u\bar{\mathbb{Q}}_{\leq 19}\ du'.
	\end{split}\label{ntopenergyK11}
\end{align}
Applying Gronwall inequality to \eqref{ntopenergyK11} yields
\begin{align}
	\begin{split}
	&\bar{\mathbb{U}}_{\leq19}(t,u)+\bar{\mathbb{Q}}_{\leq19}(t,u)+\bar{\mathbb{K}}_{\leq19}(t,u)
 \leq  C\mathscr{D}_{21}
+C\da\bar{\mathbb{W}}_{\leq19}+C\da\int_0^t\bar{\mathbb{W}}_{\leq19}\ dt'.
	\end{split}\label{ntopenergyK1final}
\end{align}
Similar to \eqref{toporderK01}, it follows from \eqref{ntopK01final} and \eqref{ntopK02final} that
\begin{align}
	\begin{split}
    &\sum_{|\a'|\leq|\a|}\da^{2m'}(E_0(Z_i^{\a}\fai)+F_0(Z_i^{\a}\fai))(t,u)\\
    \leq&\sum_{|\a'|\leq|\a|}\da^{2m'}E_0(Z_i^{\a}\fai)(0,u)+C\da\mu_m^{-2b_{19}}\mathscr{D}_{21}\\
    &+(\tfrac{1}{10}+C\da)\mu_m^{-2b_{19}}\bar{\mathbb{W}}_{\leq19}(t,u)+C\mathbb{K}_{\leq19}(t,u)
	+C\int_0^t\mathbb{W}_{\leq19}+\mathbb{U}_{\leq19}\ dt'\\
 &+C\da^{\frac{1}{2}}\int_0^t\mu_m^{-\frac{1}{2}}\mathbb{U}_{\leq19}\ dt'+C\da^{-\frac{1}{2}}\int_0^u\mathbb{Q}_{\leq 19}\ du'+C\int_0^t\mathbb{V}_{\leq20}\ dt'.
	\end{split}\label{ntopenergyK01}
\end{align}
In view of \eqref{ntopenergyzeta} and \eqref{ntopenergyK1final}, \eqref{ntopenergyK01} implies that for sufficiently small $\da$,
\begin{align*}
	\begin{split}
		\bar{\mathbb{W}}_{\leq19}(t,u)+\bar{\mathbb{Q}}_{\leq19}(t,u)
		\leq &C\mathscr{D}_{21}+ C\int_0^t\bar{\mathbb{W}}_{\leq19}\ dt'+C\da^{-\frac{1}{2}}\int_0^u\bar{\mathbb{Q}}_{\leq19}.
	\end{split}
\end{align*}
Applying Gronwall inequality yields that
\begin{align*}
\bar{\mathbb{W}}_{\leq19}(t,u)+\bar{\mathbb{Q}}_{\leq19}(t,u)&\leq C\mathscr{D}_{21}.
\end{align*}
In conclusion, one obtains the following next-to-top order energy estimates: 
\begin{align}
	\begin{split}
		\mathbb{W}_{\leq19}(t,u)+\mathbb{U}_{\leq19}(t,u)+\mathbb{Q}_{\leq19}(t,u)+\mathbb{K}_{\leq19}(t,u)&\leq C\mu_m^{-8-\tfrac{3}{2}}\mathscr{D}_{21},\\
		\mathbb{V}_{\leq20}(t,u)&\leq C\mu_m^{-10-\frac{3}{2}}\mathscr{D}_{21}.
	\end{split}\label{ntopenergy}
\end{align}
Up to now, the power of $\mu_m$ in energy estimates has been reduced. In proving \eqref{ntopenergy}, it is noted that to obtain the energy estimates for $N\leq18$, it suffices to bound the following integrals:
\begin{align*}
	\int_0^t\mu_m^{-b_{N}-\frac{1}{2}}\ dt',\quad \int_0^t\mu_m^{-b_N}\ dt',\quad \int_0^t\mu_m^{-b_{N+1}}\sqrt{\bar{\mathbb{V}}_{\leq N+1}}\ dt'.
\end{align*}
By the choice of $b_N$, for $15\leq N\leq19$, it follows from Lemma \ref{crucial} that 
\[
\int\mu_m^{-b_N-\frac{1}{2}}\ dt'\leq \mu_m^{-b_{N}}\quad \text{and}\quad  \int_0^t\mu_m^{-b_{N+1}}\sqrt{\bar{\mathbb{V}}_{\leq N+1}}\ dt'\leq \mu_m^{-b_N}\sqrt{\bar{\mathbb{V}}_{\leq N+1}}.
\]
Therefore, one has
\begin{align*}
	\mathbb{W}_{\leq 15}(t,u)+\mathbb{Q}_{\leq15}(t,u)+\mathbb{U}_{\leq15}(t,u)+\mathbb{K}_{\leq15}(t,u)+&\mathbb{V}_{\leq 15}(t,u)\leq C\mu_m^{-2b_{15}}\mathscr{D}_{21}.
\end{align*}
 Since $b_{15}=\frac{3}{4}>0$ which means the power of $\mu_m$ is energies has not been fully eliminated, we have to consider the next step. For $N=14$, it follows from Lemma \ref{crucial} that 
\begin{align*}
	\int_0^t\mu_m^{-b_N-\frac{1}{2}}\ dt'\leq C\quad\mathrm{and}\quad \int_0^t\mu_m^{-b_{N+1}}\sqrt{\bar{\mathbb{V}}_{\leq N+1}}\ dt'\leq \sqrt{\bar{\mathbb{V}}_{\leq N+1}}.
\end{align*}
Hence, for $N=14$, one obtains
\begin{align}
	\mathbb{W}_{\leq 14}(t,u)+\mathbb{Q}_{\leq14}(t,u)+\mathbb{U}_{\leq14}(t,u)+\mathbb{K}_{\leq14}(t,u)+&\mathbb{V}_{\leq 14}(t,u)\leq C\mathscr{D}_{21}.\label{14energy}
\end{align}
This is the desired estimate since the power of $\mu_m$ has been eliminated. In conclusion, one has  for $15\leq N\leq20$, 
\begin{align}
\mathbb{W}_{\leq N}(t,u)+\mathbb{Q}_{\leq N}(t,u)+\mathbb{U}_{\leq N}(t,u)+\mathbb{K}_{\leq N}(t,u)+&\mathbb{V}_{\leq N}(t,u)\leq C\mu_m^{-2(N-15)-\frac{3}{2}}\mathscr{D}_{21},\\
&\mathbb{V}_{\leq21}(t,u)\leq C\mu_m^{-11-\frac{3}{2}}\mathscr{D}_{21}, 
\end{align}
\section{Recover the bootstrap assumptions and formation of shock}\label{section8}
In this section, we recover the bootstrap assumptions via Sobolev inequality and finish the proof of the main theorem \ref{main}.
\begin{lemma}\label{sobolev}
	There is a universal constant $C$ such that for any $\fai$ defined on $S_{t,u}$, 
	\begin{align*}
		\|\fai\|_{L^{\infty}(S_{t,u})}\leq C\cdot S_{[1]}^{\frac{1}{2}}:=C\cdot \left(\int_{S_{t,u}}|\fai|^2+|Y\fai|^2\right)^{\frac{1}{2}}.
	\end{align*}
\end{lemma}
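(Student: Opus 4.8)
\textbf{Proof proposal for Lemma \ref{sobolev}.}

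The plan is to reduce the estimate to the classical one-dimensional Sobolev/Morrey embedding $W^{1,2}(\mathbb{S}^1)\hookrightarrow L^{\infty}(\mathbb{S}^1)$ and then transfer constants back and forth between the round circle and the curve $S_{t,u}$ equipped with the induced metric $\sg$. Recall from Section \ref{section2} that $S_{t,u}$ is a one-dimensional manifold diffeomorphic to $\mathbb{S}^1$, carrying the coordinate $\vartheta$ with $X=\frac{\pa}{\pa\vartheta}$, $\sg=g(X,X)$, and area element $\sqrt{\sg}\,d\vartheta$; moreover $Y=\sg^{-1}\hat{X}^2\hat{X}$ so that $Y\fai = \sg^{-1}\hat{X}^2\,\hat{X}\fai = \hat{X}^2\,\sg^{-1/2}X\fai$. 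By Lemma \ref{LiTiXi} we have $\hat{X}^2 = 1+O(\da M)$, and after Lemma \ref{2ndff} (together with the remark that $\sg = 1+O(\da M)$ since $L\sg=2\chi$ and $\sg\equiv1$ on $\Sigma_0$), the metric coefficient $\sg$ and the factor $\hat{X}^2$ are both uniformly comparable to $1$. Hence $|Y\fai|$ and $|X\fai|$ (equivalently $|\sg^{-1/2}X\fai|$, the $\sg$-unit-speed derivative) are comparable up to a universal constant, and the same holds for the area element versus $d\vartheta$.

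First I would prove the estimate on the standard circle: for $f\in W^{1,1}(\mathbb{S}^1)$ one has, for any two points $\vartheta,\vartheta'$, $f(\vartheta)-f(\vartheta') = \int_{\vartheta'}^{\vartheta} f'(s)\,ds$, and averaging over $\vartheta'$ gives $|f(\vartheta)| \le \fint |f| + \int |f'| \le C(\|f\|_{L^1}+\|f'\|_{L^1})$. Combining with Cauchy–Schwarz on the circle of finite length yields $\|f\|_{L^\infty(\mathbb{S}^1)}\le C(\|f\|_{L^2(\mathbb{S}^1)}+\|f'\|_{L^2(\mathbb{S}^1)})$. Then I would pull this back: writing $\fai$ as a function of $\vartheta$, the length of $S_{t,u}$ in the $\sg$-metric is $\int \sqrt{\sg}\,d\vartheta = 2\pi(1+O(\da M))$, which is bounded above and below by universal constants for $\da$ small, so the one-dimensional embedding applies with a universal constant. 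Converting the $d\vartheta$-integrals into $\sqrt{\sg}\,d\vartheta$-integrals and replacing $X\fai$ by $Y\fai$ costs only the bounded factors $\sg^{\pm1/2}$ and $(\hat{X}^2)^{\pm1}$, which are absorbed into $C$. This produces exactly $\|\fai\|_{L^\infty(S_{t,u})}\le C\big(\int_{S_{t,u}}|\fai|^2+|Y\fai|^2\big)^{1/2}$.

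The only genuine point requiring care — and hence the main (though mild) obstacle — is ensuring that the constant $C$ is truly universal, i.e.\ independent of $t$, $u$, and the solution, which is why I must invoke the bootstrap-driven bounds $\sg=1+O(\da M)$ and $\hat{X}^2=1+O(\da M)$ from Lemmas \ref{2ndff} and \ref{LiTiXi} to keep the geometry of $S_{t,u}$ uniformly close to the round circle of length $2\pi$; without this uniform non-degeneracy of $\sg$ the comparison constants could in principle degenerate. A secondary bookkeeping point is that this is a Sobolev inequality with a constant, not the scale-invariant Gagliardo–Nirenberg form, so no smallness in the length is needed — only two-sided bounds — which the above estimates supply for sufficiently small $\da$.
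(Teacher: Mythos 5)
Your proposal is correct and follows essentially the same route as the paper: invoke the standard one-dimensional Sobolev embedding on $\mathbb{S}^1$ in the $\vartheta$-coordinate, then use $\sg=1+O(\da M)$ and the bounds on $\hat{X}^2$ from Lemma \ref{LiTiXi} to compare $d\vartheta$ with $\sqrt{\sg}\,d\vartheta$ and $X\fai$ with $Y\fai$, absorbing the $1+O(\da)$ factors into the universal constant. The only difference is that you additionally sketch the proof of the circle embedding itself, which the paper simply cites as standard.
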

\begin{proof}
	The standard Sobolev inequality shows for any $\vartheta\in \mathbb{S}^1$
	\begin{align*}
		|\fai(\vartheta)|\leq C\left(\int_{\vartheta'\in\mathbb{S}^1}\fai^2+|X\fai|^2\ d\vartheta'\right)^{\frac{1}{2}}.
	\end{align*}
	It follows from Lemma \ref{LiTiXi} and the fact that $\sg=1+O(\da M)$ that 
	\begin{align*}
		(1-O(\da))\int_{S_{t,u}}f^2\leq \int_{\vartheta'\in\mathbb{S}^1}f^2\ d\vartheta'\leq (1+O(\da))\int_{S_{t,u}}f^2.
	\end{align*}
	Therefore, in view of $X=\frac{\sg^{\frac{1}{2}}}{\hat{X}^2}Y$,
	\begin{align*}
		\|\fai\|_{L^{\infty}(S_{t,u})}\leq C\|\fai\|_{L^{\infty}(\mathbb{S}^1)}\leq C S_{[1]}^{\frac{1}{2}}.
	\end{align*}
 This finishes the proof of the lemma.
\end{proof}
\begin{proof}[Proof of Theorem \ref{main}]
 Define 
\begin{align*}
	S_{n}(t,u)=\int_{S_{t,u}}\sum_{|\a|\leq n}|\da^m Z_i^{\a}\fai|^2+\sum_{|\a|\leq n}|\da^{(p-1)_+}P_i^{\a}(\zeta-\p)|^2,
\end{align*}
where $m$ is the number of $T$ in $Z_i^{\a}$ and $p$ is the number of $L$ in $P_i^{\a}$, respectively. It follows from Lemma \ref{faiL2} and \eqref{14energy} that
\begin{align*}
	S_{14}\leq C\tilde{\da}(\mathbb{W}_{\leq14}+\mathbb{U}_{\leq14}+\mathbb{V}_{\leq14})\leq C\tilde{\da}\mathscr{D}_{21}.
\end{align*}
Here, the constant $C$ is a universal constant. Then, it follows from Lemma \ref{sobolev} that for any $Z_i^{\a}\in \mathcal{Z}^{13;\leq 1}$, $P_i^{\a}\in \mathcal{P}^{13;\leq 3}$ and all $(t,u)\in [0,t^{\ast})\times[0,\tilde{\da}]$
\begin{align}
	\da^m|Z_i^{\a}\fai|+\da^{(p-1)_+}|P_i^{\a}(\zeta-\p)|\leq C\da\sqrt{\mathscr{D}_{21}}.\label{recover}
\end{align}
Hence, by choosing $M\geq C\sqrt{\mathscr{D}_{21}}$, one recover the bootstrap assumptions \eqref{bs}.

Recall the definitions,
\begin{equation*}
	\begin{split}
	&s_{\ast}=\sup\{t\ |\ t\geq0\ \text{and}\ \mu_m^{\tilde{\da}}(t)>0\},\\
	&t_{\ast}=\sup\{\tau\ |\ \tau\geq0\ \text{such that smooth solution exists for all}\ (t,u)\in[0,\tau)\times[0,\tilde{\da}]\ \text{and}\ \vartheta\in \mathbb{S}^1\},\\
	&s^{\ast}=\min\{s_{\ast},1\}, \quad t^{\ast}=\min\{t_{\ast},s^{\ast}\}.
\end{split}
\end{equation*}

 To finish the proof, it suffices to show $t^{\ast}=s^{\ast}$, i.e. either $t^{\ast}=1$, then the smooth solution exists on $[0,1]\times[0,\tilde{\da}]\times \mathbb{S}^1$, or $t^{\ast}<1$, then at least at one point on $\Sigma_{t^{\ast}}^{\tilde{\da}}$ such that $\mu_m(t^{\ast})=0$, and a shock forms in finite time.
 
 If $t^{\ast}< s^{\ast}$, then \textbf{$\mu_m$ is positive on $\Sigma_{t^{\ast}}^{\tilde{\da}}$}. Hence, the \textbf{Jacobian $\de$ of the transformation between the acoustical coordinates and the Cartesian coordinates never vanishes on $\Sigma_{t^{\ast}}^{\tilde{\da}}$}, i.e. the transformation between two coordinates is regular on $\Sigma_{t^{\ast}}^{\tilde{\da}}$. Moreover, in the acoustical coordinates, $\fai$ and its derivatives are regular on $\Sigma_{t^{\ast}}^{\tilde{\da}}$ due to the bootstrap assumptions. 
Therefore, in the Cartesian coordinates, $\fai$ and its derivatives are regular on $\Sigma_{t^{\ast}}^{\tilde{\da}}$ which belongs to the Sobolev space $H^3$. By the standard local well-posedness theory, one can obtain an extension of the solution to some $t_1>t^{\ast}$. This is a contradiction. Hence, a shock must form.
\end{proof}

 \medskip
 
{\bf Acknowledgements:}
This work is partially supported by National Key R\&D Program of China 2024YFA1013302.
Chen was supported by Shanghai Frontiers Science Center of Modern Analysis.
The research of Xie was partially supported by NSFC grants 12250710674 and 12426203, 
Program of Shanghai Academic Research Leader 22XD1421400.

	\bibliographystyle{plain}

\end{document}